\numberwithin{equation}{section}
\newtheorem{Theorem}{Theorem}[section]
\newtheorem*{Theorem*}{Theorem}
\newtheorem{Corollary}[Theorem]{Corollary}
\newtheorem{Lemma}[Theorem]{Lemma}
\newtheorem{prop}[Theorem]{Proposition}
 { \theoremstyle{definition}
\newtheorem{Definition}[Theorem]{Definition}

\newtheorem{Remark}[Theorem]{Remark} }
\newtheorem{rhp}[Theorem]{Riemann--Hilbert Problem}
\newcommand*\lw{\ensuremath{\vcenter{\hbox{\includegraphics[width=.6em]{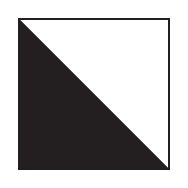}}}}}
\newcommand{\ee}{\mathrm{e}}
\newcommand{\dd}{\mathrm{d}}
\newcommand{\ii}{\mathrm{i}}
\newcommand{\para}[1]{\breve{#1}}
\newcommand{\R}{{\mathbb{R}}}
\newcommand{\N}{{\mathbb{N}}}
 \newcommand{\C}{{\mathbb{C}}}
\newcommand{\Z}{{\mathbb{Z}}}
\newcommand\lb{\left(}
\newcommand\rb{\right)}
\newcommand{\re}{\operatorname{Re}}
\newcommand{\im}{\operatorname{Im}}
\newcommand{\Tr}{\operatorname{Tr}}
\renewcommand{\arg}{\operatorname{Arg}}
\newcommand{\sgn}{\operatorname{sgn}}
\begin{document}

\renewcommand{\thefootnote}{}

\newcommand{\arXivNumber}{2307.11217}

\renewcommand{\PaperNumber}{019}

\FirstPageHeading

\ShortArticleName{Painlev\'e-III Monodromy Maps Under the $D_6\to D_8$ Confluence}

\ArticleName{Painlev\'e-III Monodromy Maps Under the $\boldsymbol{D_6\to D_8}$\\ Confluence and Applications to the Large-Parameter\\ Asymptotics of Rational Solutions\footnote{This paper is a~contribution to the Special Issue on Evolution Equations, Exactly Solvable Models and Random Matrices in honor of Alexander Its' 70th birthday. The~full collection is available at \href{https://www.emis.de/journals/SIGMA/Its.html}{https://www.emis.de/journals/SIGMA/Its.html}}}

\Author{Ahmad BARHOUMI~$^{\rm ab}$, Oleg LISOVYY~$^{\rm c}$, Peter D.~MILLER~$^{\rm a}$ and Andrei PROKHOROV~$^{\rm ad}$}

\AuthorNameForHeading{A.~Barhoumi, O.~Lisovyy, P.D.~Miller and A.~Prokhorov}

\Address{$^{\rm a)}$~Department of Mathematics, University of Michigan,\\
\hphantom{$^{\rm a)}$}~East Hall, 530 Church St., Ann Arbor, MI 48109, USA}
\EmailD{\href{mailto:millerpd@umich.edu}{millerpd@umich.edu}, \href{mailto:andreip@umich.edu}{andreip@umich.edu}}

\Address{$^{\rm b)}$~Department of Mathematics, KTH Royal Institute of Technology,\\
\hphantom{$^{\rm b)}$}~Lindstedtsv\"agen 25, 114 28, Stockholm, Sweden}
\EmailD{\href{mailto:ahmadba@kth.se}{ahmadba@kth.se}}

\Address{$^{\rm c)}$~Institut Denis-Poisson, Universit\'e de Tours, CNRS, Parc de Grandmont, 37200 Tours, France}
\EmailD{\href{mailto:lisovyi@lmpt.univ-tours.fr}{lisovyi@lmpt.univ-tours.fr}}

\Address{$^{\rm d)}$~St. Petersburg State University, Universitetskaya emb.~7/9, 199034 St.~Petersburg, Russia}

\ArticleDates{Received July 24, 2023, in final form January 23, 2024; Published online March 09, 2024}

\Abstract{The third Painlev\'e equation in its generic form, often referred to as Painlev\'e-III($D_6$), is given by
 \[
 \dod[2]{u}{x}=\dfrac{1}{u}\left( \dod{u}{x} \right)^2-\dfrac{1}{x} \dod{u}{x} + \dfrac{\alpha u^2 + \beta}{x}+4u^3-\frac{4}{u}, \qquad \alpha,\beta \in \C.
 \]
 Starting from a generic initial solution $u_0(x)$ corresponding to parameters $\alpha$, $\beta$, denoted as the triple $(u_0(x), \alpha, \beta)$, we apply an explicit B\"acklund transformation to generate a family of solutions $(u_n(x), \alpha + 4n, \beta + 4n)$ indexed by $n \in \N$. We study the large $n$ behavior of the solutions $(u_n(x), \alpha + 4n, \beta + 4n)$ under the scaling $x = z/n$ in two different ways: (a)~analyzing the convergence properties of series solutions to the equation, and (b)~using a Riemann--Hilbert representation of the solution $u_n(z/n)$. Our main result is a proof that the limit of solutions $u_n(z/n)$ exists and is given by a solution of the degenerate Painlev\'e-III equation, known as Painlev\'e-III($D_8$),
 \[
 \dod[2]{U}{z}=\dfrac{1}{U}\left( \dod{U}{z}\right)^2-\dfrac{1}{z} \dod{U}{z} + \dfrac{4U^2 + 4}{z}.
 \]
 A notable application of our result is to rational solutions of Painlev\'e-III($D_6$), which are constructed using the seed solution $(1, 4m, -4m)$ where $m \in \C \setminus \big(\Z + \frac{1}{2}\big)$ and can be written as a particular ratio of Umemura polynomials. We identify the limiting solution in terms of both its initial condition at $z = 0$ when it is well defined, and by its monodromy data in the general case. Furthermore, as a consequence of our analysis, we deduce the asymptotic behavior of generic solutions of Painlev\'e-III, both $D_6$ and $D_8$ at $z = 0$. We also deduce the large $n$ behavior of the Umemura polynomials in a neighborhood of $z = 0$.}

\Keywords{Painlev\'e-III equation; Riemann--Hilbert analysis; Umemura polynomials; large-parameter asymptotics}

\Classification{34M55; 34E05; 34M50; 34M56; 33E17}

\renewcommand{\thefootnote}{\arabic{footnote}}
\setcounter{footnote}{0}

\section{Introduction}
This paper is a study of the confluence of solutions of the generic Painlev\'e-III equation to solutions of its parameter-free degeneration. The six Painlev\'e equations and their solutions, often referred to as \emph{Painlev\'e transcendents}, have been the subject of intense study. This is largely motivated by the fact that Painlev\'e transcendents are generically transcendental, and yet appear in various applications in integrable systems, integrable probability, and random matrix theory to name a few.

\subsection{B\"acklund transformations and rational solutions of Painlev\'e-III}
All Painlev\'e equations but the first are actually families of differential equations indexed by complex parameters appearing as coefficients. However, certain solutions corresponding to different parameters can be related via \emph{B\"acklund transformations}. For example, consider our main object of study, the generic Painlev\'e III equation, known as PIII($D_6$):
\begin{equation}
\dod[2]{u}{x}=\dfrac{1}{u}\left( \dod{u}{x} \right)^2-\dfrac{1}{x} \dod{u}{x} + \dfrac{\alpha u^2 + \beta}{x}+4u^3-\frac{4}{u}, \qquad \alpha,\beta \in \C.
\label{eq:PIII-$D_6$}
\end{equation}
In \cite{Gromak}, Gromak discovered that the transformation
\begin{equation}
 u(x) \mapsto \hat{u}(x) := \dfrac{2xu'(x) + 4xu(x)^2 + 4x -\beta u(x) - 2u(x)}{u(x) ( 2xu'(x) + 4x u(x)^2 +4x + \alpha u(x) + 2u(x) )}
\label{eq:Gromak-transformation}
\end{equation}
mapped solutions of \eqref{eq:PIII-$D_6$} with parameters $(\alpha, \beta)$ to solutions of \eqref{eq:PIII-$D_6$} with parameters $(\alpha + 4,\allowbreak \beta+4)$. With this one can construct from a given seed solution $(u_0, \alpha, \beta)$ a family of solutions $(u_n, \alpha + 4n, \beta + 4n)$ by iterating transformation \eqref{eq:Gromak-transformation}. The paper \cite{MCB} contains a survey of families of solutions of \eqref{eq:PIII-$D_6$} constructed using this and other B\"acklund transformations. A notable family of solutions constructed in this manner is a sequence of rational solutions $u=u_n(x;m)$ obtained from the seed function $u_0(x) \equiv 1$ and parameters $\alpha = -\beta = 4m$. This family of solutions has been numerically and analytically explored in \cite{BMS18}, and many conjectures were formulated there. While some of these were later resolved in the sequel \cite{BM20}, some conjectures remained open, see \cite[Conjectures 4 and 5]{BMS18}. Conjecture 5 is concerned with the behavior of $u_n(x;m)$ near the singular point $x = 0$. As was done in \cite{BMS18}, writing \[z = nx,\qquad U_n(z;m):= u_n(x;m)\] and considering large $n$ for fixed $m$ yields the differential equation
\[
\dod[2]{U_n}{z} = \dfrac{1}{U_n} \left( \dod{U_n}{z}\right)^2 - \dfrac{1}{z} \dod{U_n}{z} + \dfrac{4U_n^2 + 4}{z} + \mathcal{O}\big(n^{-1}\big).
\]
Formally taking the limit and denoting the limiting function $U(z;m)$ yields the parameter-free Painlev\'e-III equation, referred to as PIII($D_8$),
\begin{equation}
\dod[2]{U}{z}=\dfrac{1}{U}\left( \dod{U}{z}\right)^2-\dfrac{1}{z} \dod{U}{z} + \dfrac{4U^2 + 4}{z}.
\label{eq:PIII-$D_8$}
\end{equation}
The content of Conjecture 5 is that this convergence holds at the level of solutions, not just equations.

\subsection{Results}
To begin with, we prove Conjecture 5 from \cite{BMS18} in this work; to be more precise we establish the following theorem.
\begin{Theorem}\label{thm:limit-of-rational-solutions}
Fix $m \in \C \setminus \big(\Z + \frac{1}{2}\big)$ and let $u_n(x;m)$ be the family of rational solutions described above. There exists a unique solution $U(z)=U(z;m)$ of the Painlev\'e-{\rm III}$(D_8)$ equation \eqref{eq:PIII-$D_8$} analytic at the origin with $U(0;m)=\tan\big(\frac{\pi}{2}\big(m+\frac{1}{2}\big)\big)\neq 0$ such that
\begin{gather}
\lim_{j \to \infty} u_{2j}\left(\frac{z}{2j}; m\right) = U(z;m),\nonumber\\
\lim_{j \to \infty} u_{2j+1}\left(\frac{z}{2j+1}; m\right) = -1/U(z;m)\label{eq:even-odd-limit}
\end{gather}
for $z\notin \Sigma(m)$, where $\Sigma(m)$ denotes the union of all poles and zeros of $z\mapsto U(z;m)$. The convergence is uniform on compact subsets of $\C \setminus \Sigma(m)$.
\end{Theorem}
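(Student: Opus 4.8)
The plan is to establish the statement first on a fixed disk about $z=0$ by a power-series analysis and then propagate it to all of $\C\setminus\Sigma(m)$, with the Riemann--Hilbert representation of $u_n(z/n;m)$ as a more robust alternative for the global step. \emph{Step 1 (initial values).} Evaluating the Gromak transformation \eqref{eq:Gromak-transformation} at $x=0$ gives $\hat u(0)=-(\beta+2)/((\alpha+2)u(0))$, so it sends a function analytic and nonvanishing at $x=0$ with parameters $\alpha\neq-2$, $\beta\neq-2$ to another such function. Starting from $u_0\equiv1$ with $(\alpha_0,\beta_0)=(4m,-4m)$, the hypothesis $m\notin\Z+\frac12$ is precisely the statement that none of the parameters $\alpha_n=4m+4n$ or $\beta_n=-4m+4n$ encountered along the chain equals $-2$, so by induction every $u_n(\cdot\,;m)$ is rational, analytic at $x=0$, nonvanishing there, and $u_n(0;m)u_{n-1}(0;m)=\frac{2m+1-2n}{2m-1+2n}$. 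Telescoping this relation over even (resp.\ odd) indices and evaluating the resulting convergent infinite product with the $\Gamma$-function reflection formula gives $u_{2j}(0;m)\to L:=\tan(\frac{\pi}{2}(m+\frac12))$ and $u_{2j+1}(0;m)\to-1/L$; both limits are finite and nonzero precisely because $m\notin\Z+\frac12$.

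\emph{Step 2 (convergence near $z=0$).} Put $U_n(z):=u_n(z/n;m)$; rewriting \eqref{eq:PIII-$D_6$} in the variable $z=nx$ gives
\[
U_n''=\frac{(U_n')^2}{U_n}-\frac{U_n'}{z}+\frac{a_nU_n^2+b_n}{z}+\frac1{n^2}\left(4U_n^3-\frac4{U_n}\right),\qquad a_n:=4+\frac{4m}{n},\quad b_n:=4-\frac{4m}{n},
\]
an $\mathcal{O}(1/n)$ perturbation of \eqref{eq:PIII-$D_8$}. Substituting $U_n(z)=\sum_{k\ge0}d_k^{(n)}z^k$ yields the relation $d_1^{(n)}=a_n(d_0^{(n)})^2+b_n$ and a recursion $(k+1)^2d_0^{(n)}d_{k+1}^{(n)}=Q_k(d_0^{(n)},\dots,d_k^{(n)};a_n,b_n,n^{-2})$, $k\ge1$, with $Q_k$ a fixed polynomial and $d_0^{(n)}=u_n(0;m)$ bounded above and away from $0$ uniformly in $n$ by Step 1. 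A majorant estimate of Nagumo type, adapted to the singularity of \eqref{eq:PIII-$D_6$} at $z=0$, then produces constants $\delta,M>0$ independent of $n$ with $U_n$ analytic and $|U_n|\le M$ on $|z|<\delta$. For each fixed $k$ the recursion coefficients converge and $d_0^{(2j)}\to L$, so by induction $d_k^{(2j)}\to D_k$, the Taylor coefficients of the solution $U=U(\cdot\,;m)$ of \eqref{eq:PIII-$D_8$} analytic at $0$ with $U(0)=L$, which is unique because its Taylor coefficients are determined recursively by $L\neq0$ and exists by the same estimate applied to the limiting equation; combined with the uniform bound, this gives $U_{2j}\to U$ uniformly on $|z|\le\delta/2$. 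Running the argument along odd indices with initial value $-1/L$ produces a limit $\widetilde U$, and since $U\mapsto-1/U$ leaves \eqref{eq:PIII-$D_8$} invariant and $(-1/U)(0)=-1/L$, uniqueness forces $\widetilde U=-1/U$. (Correspondingly, in the $z$-variable the Gromak transformation degenerates to the involution $u\mapsto-1/u$, which is the source of the even/odd dichotomy.)

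\emph{Step 3 (propagation to $\C\setminus\Sigma(m)$).} By the Painlev\'e property of \eqref{eq:PIII-$D_8$}, $U$ continues to a function meromorphic on $\C$, so $\Sigma(m)$ is discrete and $\C\setminus\Sigma(m)$ is connected. The set $G$ of points $z_*\in\C\setminus\Sigma(m)$ near which $U_{2j}\to U$ in $C^1$ is open and, by Step 2, nonempty; it is also relatively closed in $\C\setminus\Sigma(m)$. Indeed, given $z_*\in\overline G\setminus\Sigma(m)$, choose $z_1\in G$ close to it and a small closed disk about $z_*$ on which $U$ is analytic and bounded away from $0$; on that disk the equations for $U_{2j}$ are $\mathcal{O}(1/n)$ perturbations of the nonsingular ODE \eqref{eq:PIII-$D_8$} with Cauchy data at $z_1$ converging to those of $U$, so continuous dependence of ODE solutions on the data and on the equation gives $U_{2j}\to U$ uniformly there, i.e.\ $z_*\in G$. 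Hence $G=\C\setminus\Sigma(m)$, and covering a compact set by finitely many such disks yields the asserted uniform convergence; the odd statement follows verbatim with $-1/U$ in place of $U$, which is analytic and nonvanishing on exactly the same set.

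\emph{Main obstacle.} The two delicate points are (i) the uniform-in-$n$ control of $U_n$ on a fixed disk about $z=0$ in Step 2---the bound one gets by naively iterating the B\"acklund transformation degrades, its $\mathcal{O}(1/n)$ corrections becoming summable only once the solution is tracked against the alternating sequence of fixed points $L,-1/L$ of the limiting map $u\mapsto-1/u$, and $z=0$ is a singular point of the equation---and (ii) the continuation in Step 3 past the movable poles and zeros of $U$, near which the perturbation terms $U_n'/z$ and $4/(n^2U_n)$ lose control; this is exactly why $\Sigma(m)$ must be removed. The Riemann--Hilbert route sidesteps both: one represents $u_n(z/n;m)$ as the solution of a Riemann--Hilbert problem whose jump data converge as $n\to\infty$---the $D_6\to D_8$ confluence of monodromy data---to those of a Riemann--Hilbert problem characterizing $U(z;m)$, whereupon a small-norm argument gives uniform convergence on compact subsets of $\C\setminus\Sigma(m)$ and $\Sigma(m)$ emerges as precisely the locus where the limiting problem fails to be solvable.
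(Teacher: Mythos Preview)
Your proposal is correct, and Steps~1--2 match the paper's approach closely. The paper computes the limits $u_{2k}(0;m)\to\tan(\tfrac{\pi}{2}(m+\tfrac12))$ and $u_{2k+1}(0;m)\to-\cot(\tfrac{\pi}{2}(m+\tfrac12))$ via explicit formulas for the Umemura polynomials at $x=0$ (Lemma~\ref{lem:umemura-at-zero}, Lemma~\ref{cor:1}), rather than your direct use of the Gromak recursion $\hat u(0)=-(\beta+2)/((\alpha+2)u(0))$, but the two routes are equivalent and your telescoping product agrees with the paper's product formula~\eqref{eq:u-at-zero-even}--\eqref{eq:u-at-zero-odd}. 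Your Step~2 majorant estimate is precisely Theorem~\ref{thm:$D_6$-$D_8$-series}: the paper constructs an explicit majorant series $\mathcal U(z)=\sum\Upsilon_k z^k$ satisfying an algebraic equation~\eqref{eq:V-algebraic-equation} whose solvability by the implicit function theorem yields the uniform-in-$n$ radius of convergence.

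Your Step~3, however, takes a genuinely different route from the paper. The paper does \emph{not} use an ODE continuation/connectedness argument; instead it obtains the global statement as a special case of Theorem~\ref{thm:general}, proved entirely by Riemann--Hilbert analysis in Section~\ref{sec:proof}: the monodromy data~\eqref{eq:monodromy-data-rational} for $u_n(z/n;m)$ converge under the Schlesinger--B\"acklund iteration to data for a PIII($D_8$) Riemann--Hilbert problem, and a small-norm argument (Lemma~\ref{lemma:limit}) gives uniform convergence on compact subsets of $\C\setminus\Sigma(m)$, with $\Sigma(m)$ identified (Section~\ref{sec:suleimanov-solution-connection}) as the zero set of the associated $\tau$-function. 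Your propagation argument is more elementary and perfectly adequate for Theorem~\ref{thm:limit-of-rational-solutions}, since the rational solutions are analytic at $x=0$; its cost is that it does not extend to the generic solutions of Theorem~\ref{thm:general}, which typically behave like $ax^p$ with $|\re p|<1$ at the origin, so there is no Maclaurin series to seed the argument. The paper's Riemann--Hilbert approach handles that case and, as a bonus, yields the explicit monodromy identification~\eqref{eq:y1}--\eqref{eq:y3} of the limiting solution. Your ``Main obstacle'' paragraph captures this trade-off accurately.
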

\begin{figure}[t]
 \centering
\includegraphics[scale=0.8]{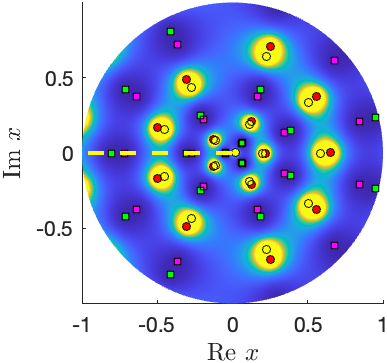}\qquad
\includegraphics[scale=0.8]{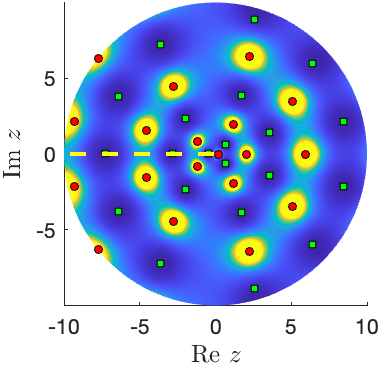}
 \caption{Left: the rational solution $u_{10}(x;0.25)$. Right: the limiting solution $U(z;0.25)$, where we recall the notation $z=nx$ for $n=10$. All poles of $u_{10}(x;0.25)$ are simple with residue $\frac{1}{2}$/$-\frac{1}{2}$, indicated in the plot with red/yellow circles. Likewise, all zeros of $u_{10}(x;0.25)$ are simple with derivative $2$/$-2$, indicated in the plot with pink/green squares. On the other hand, all poles and zeros of $U(z;0.25)$ have multiplicity $2$ and are marked with red circles and green squares respectively. }
 \label{fig:rational}
\end{figure}
We illustrate this theorem in Figure \ref{fig:rational}. The pictures are made using the code from \cite{FFW}, which was generously provided by the authors.

In Section \ref{sec:Frobenius-method}, we study the Maclaurin series solutions of \eqref{eq:PIII-$D_6$}; this characterizes the limiting solution of \eqref{eq:PIII-$D_8$} via its initial conditions and produces a local version of Theorem \ref{thm:limit-of-rational-solutions}, see Theorem \ref{thm:$D_6$-$D_8$-series} and Corollary \ref{cor:local-limit-of-rational-solutions} below.

The rational solutions $u_n(x;m)$ are related to the so-called Umemura polynomials $s_n(x;m)$ by the formula
\begin{equation}
u_n(x;m)=\dfrac{s_n(x;m-1)s_{n-1}(x;m)}{s_n(x;m)s_{n-1}(x;m-1)}.
\label{eq:un-polys}
\end{equation}
Indeed, a sequence of rational functions $x\mapsto s_n(x;m)$ is determined by the recurrence relation
\begin{gather}
s_{n+1}(x;m)\nonumber\\
\quad=\dfrac{(4x+2m+1)s_n(x;m)^2-s_n'(x;m)s_n(x;m)-x\big(s_n''(x;m)s_n(x;m)-s_n'(x;m)^2\big)}{2s_{n-1}(x;m)}
\label{eq:umemura-recurrence}
\end{gather}
with initial conditions
\begin{equation}\label{eq:umemura-recurrence_init}
s_{-1}(x;m)=1,\qquad s_{0}(x;m)=1.
\end{equation}
It was shown in \cite{clarkson2018constructive,Umemura} that the rational functions $s_n(x;m)$ are actually polynomials. In Section~\ref{sec:Umemura}, we use Corollary \ref{cor:local-limit-of-rational-solutions} to deduce asymptotics of the Umemura polynomials themselves. To formulate our result, we need to introduce a certain Fredholm determinant; more precisely, let $K_r\colon L^2[0, r] \to L^2[0, r]$ denote the integral operator with the continuous Bessel kernel
 	\begin{align*}
 	&K(x,y)=\frac{\sqrt{x}J_1\big( \sqrt x\big) J_0\big(\sqrt y\big)-
 	 J_0\big(\sqrt x\big) \sqrt{y}J_1\big( \sqrt y\big)}{2( x-y)}.
 	\end{align*}
For any $\lambda\in\mathbb C$, let $D_\lambda( r)$ be the Fredholm determinant
\[
 	D_\lambda( r) :=\det ( \mathbf 1-\lambda K_r).
\]
It is well known (see, e.g., \cite[Chapter 24]{Lax}) that the Fredholm determinant $D_\lambda( r)$ is an entire function of $\lambda$. Since $K_r$ is a trace-class integral operator, one of several equivalent ways to define~$D_\lambda\lb r\rb$ is via the Plemelj--Smithies formula
 \begin{gather}
 D_\lambda( r)=\exp\biggl(-\sum_{\ell=1}^\infty \Tr K_r^{\ell} \frac{\lambda^\ell}{\ell}\biggr).
 \label{eq:plemelj-smithies}
 \end{gather}
The traces in \eqref{eq:plemelj-smithies} have explicit expressions as iterated integrals
\[
\Tr K_r^\ell := \int_0^r K^{(\ell)}(t, t) \dd t,
\]
where
\[
K^{(1)}(x, y) = K(x, y) \qquad \text{and} \qquad K^{(\ell)}(x, y) = \int_0^r K(x, t) K^{(\ell - 1)}(t, y) \dd t.
\]
By re-scaling the integrals to bring the $r$-dependence to the integrand and observing that $J_0 \big( \sqrt{xy} \big)$ and $\sqrt{xy}J_1\big(\sqrt{xy}\big)$ are both entire functions with respect to both $x$ and $y$, we see that $\Tr K_r^{\ell}$ and~$D_\lambda ( r)$ can be extended to analytic functions of $r$ in a neighborhood of $r=0$, and in fact~$\Tr K_r^{\ell}=\mathcal{O}\big(r^\ell\big)$ as $r\to 0$, from which we obtain $D_\lambda(0)=1$. We are now ready to state our second theorem.
\begin{Theorem}\label{thm:umemura_asym}
Fix $m \in \C \setminus \big(\Z + \frac{1}{2}\big)$. Then, there exists a small enough neighborhood of the origin, $\mathcal{G}$, such that the Umemura polynomials admit the following limits along the even and odd subsequences:
\begin{equation}
 \lim_{j\to\infty}\frac{s_{2j}\big( \frac{z}{2j+1};m \big)}{s_{2j}(0;m)}=\ee^{2\ii z}\left(\frac{U(z;m)}{U(0;m)}\right)^{-\frac{1}{4}} \sqrt{D_{\lambda(m)}\lb 32\ii z\rb},
\label{eq:umemura-asymtotics-1-bessel}
\end{equation}
and
\begin{equation}
 \lim_{j\to\infty}\frac{s_{2j-1} \big(\frac{z}{2j} ;m\big)}{s_{2j-1}(0;m)}=\ee^{2\ii z}\left(\frac{U(z;m)}{U(0;m)}\right)^{\frac{1}{4}} \sqrt{D_{\lambda(m)}\lb 32\ii z\rb},
\label{eq:umemura-asymtotics-2-bessel}
\end{equation}
where $\lambda(m)=1/\big(1+\ee^{2\pi \ii m}\big)$, the square root and fractional powers denote the principal branches taking the value $1$ at $z=0$, and the convergence is uniform for $z\in\mathcal{G}$. Furthermore,
the values of the Umemura polynomials at the origin have the leading asymptotics
\begin{gather}
 s_{2j}(0;m)\sim\sqrt{2\pi}\ee^{4\zeta'(-1)}\frac{j^{2j^2+j+\frac{m^2}{2}+\frac{m}{2}+\frac{1}{24}}\ee^{-3j^2-j}2^{2j^2+2j}(-\cos(\pi m))^j}{G\big(\frac{5}{4}+\frac{m}{2}\big)G\big(\frac{5}{4}-\frac{m}{2}\big)G\big(\frac{7}{4}+\frac{m}{2}\big)G\big(\frac{3}{4}-\frac{m}{2}\big)},\qquad j\to\infty,\label{eq:umemura-zero-asym-even}
\\
 s_{2j-1}(0;m)\sim\frac{\ee^{4\zeta'(-1)}}{\sqrt{2\pi}}\frac{j^{2j^2-j+\frac{m^2}{2}+\frac{m}{2}+\frac{1}{24}}\ee^{-3j^2+j}2^{2j^2}(\cos(\pi m))^j}{G\big(\frac{3}{4}+\frac{m}{2}\big)G\big(\frac{3}{4}-\frac{m}{2}\big)G\big(\frac{5}{4}+\frac{m}{2}\big)G\big(\frac{1}{4}-\frac{m}{2}\big)},\qquad j\to\infty,\label{eq:umemura-zero-asym-odd}
\end{gather}
in which $G$ denotes the Barnes $G$-function and $\zeta$ denotes the Riemann zeta function.
\end{Theorem}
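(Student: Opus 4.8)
The plan is to transfer the convergence established for the rational solutions $u_n$ to the Umemura $\tau$-functions $s_n$ by integrating a suitable logarithmic derivative, and then to treat the limiting object and the normalizing constant $s_n(0;m)$ separately. First I would set up the $\tau$-function picture: the B\"acklund chain generated from the seed $(1,4m,-4m)$ by \eqref{eq:Gromak-transformation} has Okamoto Hamiltonians $H_n(x)$, each a rational function of $u_n(x;m)$, $xu_n'(x;m)$, $x$ and the shifted parameters $(4m+4n,-4m+4n)$, and the associated isomonodromy $\tau$-functions differ from $s_n(x;m)$ only by an elementary gauge $\ee^{a_nx}x^{b_n}$, with $x\frac{\dd}{\dd x}\log\tau_n=H_n+\text{(elementary)}$. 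Dividing the Toda-type recurrence \eqref{eq:umemura-recurrence} by $s_n(x;m)^2$ expresses $s_{n+1}s_{n-1}/s_n^2$ through the first two logarithmic derivatives of $s_n$, hence through $u_n$ and $u_{n+1}$. Setting $x=z/n$ (or $z/(n+1)$, matching the scalings in \eqref{eq:umemura-asymtotics-1-bessel}--\eqref{eq:umemura-asymtotics-2-bessel}) and invoking Corollary~\ref{cor:local-limit-of-rational-solutions} --- which, being a Maclaurin-series statement, yields convergence of $u_n(z/n;m)$ to $U(z;m)$ together with all $z$-derivatives, uniformly on a disc $\mathcal G$ about $0$ chosen inside $\C\setminus\Sigma(m)$ (nonempty because $U(0;m)=\tan(\frac\pi2(m+\frac12))\neq0$ and $U$ is analytic there) --- shows that $z\frac{\dd}{\dd z}\log\big[s_n(z/n;m)/s_n(0;m)\big]$ converges uniformly on $\mathcal G$. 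The surviving $z$-linear part of the limit comes precisely from the $\mathcal O(n)$ parameter-dependent terms of $H_n$ multiplied by $x=\mathcal O(1/n)$; integrating from $0$ to $z$ and restoring the gauge and the parity of $n$ (recall $u_{2j}\to U$ but $u_{2j+1}\to-1/U$) produces the prefactors $\ee^{2\ii z}$ and $(U(z;m)/U(0;m))^{\mp 1/4}$ in \eqref{eq:umemura-asymtotics-1-bessel}--\eqref{eq:umemura-asymtotics-2-bessel}, leaving the ratio $\tau_{D_8}(z;m)/\tau_{D_8}(0;m)$ of the limiting Painlev\'e-III$(D_8)$ $\tau$-function.

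The second step is to identify $\tau_{D_8}(z;m)/\tau_{D_8}(0;m)$ with $\sqrt{D_{\lambda(m)}(32\ii z)}$. Here I would use the Riemann--Hilbert representation of $U(z;m)$ developed later in the paper: the Painlev\'e-III$(D_8)$ isomonodromy problem carries a single essential monodromy parameter, and the rational-solution monodromy is mapped under the $D_6\to D_8$ confluence to $\lambda(m)=1/(1+\ee^{2\pi\ii m})$. The corresponding isomonodromy $\tau$-function is then computed --- via the Jimbo--Miwa--Ueno one-form, or directly from the RHP by the standard operator-determinant formula, exploiting that the relevant kernel is the $\nu=0$ hard-edge Bessel kernel $K$ --- and shown to equal $\det(\mathbf 1-\lambda(m)K_r)$ on $[0,32\ii z]$; the square root (equivalently the factor $\tfrac12$ in front of $\log D_{\lambda(m)}$ that emerges on integration) reflects the double multiplicity of the zeros and poles of $U(z;m)$ visible in Figure~\ref{fig:rational}, i.e.\ the fact that $U$ itself is a ratio of squares of $\tau$-functions. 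The normalization $D_{\lambda(m)}(0)=1$ matches $\tau_{D_8}(0;m)$ and fixes the principal branch.

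The ratio formulas cancel $s_n(0;m)$ exactly, so \eqref{eq:umemura-zero-asym-even}--\eqref{eq:umemura-zero-asym-odd} need an independent input. I would obtain a closed form for $s_n(0;m)$ either from \eqref{eq:umemura-recurrence} specialized near $x=0$ --- which closes into a finite recurrence once one adjoins the first one or two Taylor coefficients of $s_n$ at $0$, each satisfying its own recurrence --- or from a Wronskian/Hankel-determinant representation of the Umemura polynomials whose value at $x=0$ is a classically evaluable determinant; in either case $s_n(0;m)$ comes out as a finite product of Gamma functions with arguments linear in $n$ and $m$. Telescoping via $\prod_{k=1}^{N}\Gamma(k+c)=G(N+1+c)/G(1+c)$ and inserting the asymptotic expansion of the Barnes $G$-function --- which supplies the $\sqrt{2\pi}$, the $\ee^{4\zeta'(-1)}$ and the fractional power $j^{1/24}$ --- then yields \eqref{eq:umemura-zero-asym-even}--\eqref{eq:umemura-zero-asym-odd}, the distinction $n=2j$ versus $n=2j-1$ accounting for the two displays and the factor $(\mp\cos\pi m)^j$.

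The main obstacle is the middle step: rigorously identifying the Painlev\'e-III$(D_8)$ $\tau$-function with the Bessel Fredholm determinant, establishing the exact correspondence $m\leftrightarrow\lambda(m)=1/\big(1+\ee^{2\pi\ii m}\big)$ for the rational-solution monodromy, and --- crucially --- controlling the overall multiplicative constant and branch, since the two $\tau$-functions are a priori defined only up to constants (the ``$\tau$-function constant problem''). The remaining technical point is ensuring the convergence in the first step is uniform up to $z=0$, so that the integration constant is correctly normalized by $s_n(0;m)$; by contrast the $\tau$-function bookkeeping of the first step and the Barnes $G$ asymptotics of the last are routine.
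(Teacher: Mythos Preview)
Your outline for the first and third parts matches the paper closely. For the ratio $s_n(z/n;m)/s_n(0;m)$, the paper does exactly what you describe: it writes $s_n$ as an explicit gauge times a $\tau$-function whose logarithmic derivative is $H_{n+1}+u_{n+1}p_{n+1}/x$ plus elementary terms (formula \eqref{eq:umemura-tau_at_zero}), substitutes $x=z/(n+1)$, and uses the Maclaurin-series convergence of Corollary~\ref{cor:local-limit-of-rational-solutions} (which indeed gives derivatives) to pass to the limit under the integral, arriving at \eqref{eq:umemura-asymtotics-2}--\eqref{eq:umemura-asymtotics-1}. For $s_n(0;m)$, the paper takes the closed product formula of Lemma~\ref{lem:umemura-at-zero} (from Clarkson--Law--Lin), rewrites it via the reflection and duplication formul\ae\ as a ratio of Barnes $G$-functions, and applies the known $G$-asymptotics --- your sketch is the same.

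Where you diverge is the identification step. You propose to compute the limiting $\tau_{D_8}$ via the Riemann--Hilbert problem and the $D_6\to D_8$ monodromy map, reading off $\lambda(m)$ from the monodromy and obtaining the Bessel determinant through an operator-determinant formula; you correctly flag the $\tau$-constant problem as the main obstacle there. The paper bypasses all of this with a purely ODE argument: it introduces the quadratic transformation \eqref{eq:v-to-sigma} sending any PIII($D_8$) solution $U(z)$ to a solution $\sigma(r)$ of the $\sigma$-form \eqref{sigmapiii} of a special PIII($D_6$); quotes Tracy--Widom (Proposition~\ref{prop:bessel}) that $r\frac{\dd}{\dd r}\ln D_\lambda(r)$ also solves \eqref{sigmapiii}; and then proves the elementary uniqueness statement Proposition~\ref{prop:sigma-unique} that a non-affine analytic solution of \eqref{sigmapiii} is determined by $\sigma'(0)$ alone. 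Matching $\sigma'(0)=-\lambda/4$ against the expansion \eqref{eq:sigma-prime-at-zero}, with $U(0;m)=\tan\big(\tfrac{\pi}{2}(m+\tfrac12)\big)$ already known, produces $\lambda(m)=1/(1+\ee^{2\pi\ii m})$ directly --- no monodromy computation. Because one works with $\sigma=r\,\dd\ln D_\lambda/\dd r$ rather than $D_\lambda$ itself, the multiplicative constant never appears, and the final identity \eqref{eq:relation-to-bessel} shows the integrand in \eqref{eq:umemura-asymtotics-2}--\eqref{eq:umemura-asymtotics-1} equals $\tfrac12\frac{\dd}{\dd z}\ln D_{\lambda(m)}(32\ii z)+2\ii\pm\tfrac{U'}{4U}$, whence the square root and the $\ee^{2\ii z}(U/U(0))^{\mp 1/4}$ factors on integration. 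Your RHP route could be made to work, but the paper's approach is shorter and sidesteps exactly the difficulty you singled out.
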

In fact, one can check that the expressions on the right-hand side of \eqref{eq:umemura-asymtotics-1-bessel} and \eqref{eq:umemura-asymtotics-2-bessel} admit analytic continuation from a neighborhood of $z=0$ to the whole $z$-plane. Although it does not follow from our proof, this suggests that the neighborhood $\mathcal{G}$ can be taken to be an arbitrary bounded set.

Our analysis of series solutions in Section \ref{sec:Frobenius-method} points to a more general statement about the coalescence of solutions of \eqref{eq:PIII-$D_6$} to solutions of \eqref{eq:PIII-$D_8$}. The technical result leading to Theorem~\ref{thm:limit-of-rational-solutions} by Maclaurin series (see Theorem \ref{thm:$D_6$-$D_8$-series} below) applies not only to rational solutions, but to all sequences of solutions with initial conditions converging to finite, nonvanishing limits. This, however, is a serious limitation since $x = 0$ is a singular point of Painlev\'e-III, and generic solutions of \eqref{eq:PIII-$D_6$} will be singular at this point and behave like $u(x)\simeq ax^p$, $|{\re}(p)|< 1$. More specifically, based on symbolic computation we expect the asymptotic expansion for solutions of \eqref{eq:PIII-$D_6$} in the form
\[
u(x)\sim \sum_{k=0}^{\infty}\sum_{l=0}^{k+1}\big(b_{kl}x^{2k+(2l-1)p}+c_{kl}x^{2k+1+2lp}\big) \qquad \text{as}\quad x\to 0.
\]
To tackle this issue, we develop a second approach that avoids series expansions and instead relies on the isomonodromy representation of the Painlev\'e transcendents. It was first discovered by Garnier \cite{MR1509146} and further explicated by Jimbo and Miwa in \cite{MR625446} that Painlev\'e equations can be formulated as monodromy-preserving, or isomonodromic, deformations of corresponding~${2 \times 2}$ first-order systems of differential equations. This allows one to characterize solutions of a~given Painlev\'e equation in terms of a $2\times 2$ Riemann--Hilbert problem. Such a monodromy representation was obtained for rational solutions of Painlev\'e-III($D_6$) in \cite{BMS18}. From this point of view, one can show that for fixed $\alpha, \beta \in \C$, the solutions of \eqref{eq:PIII-$D_6$} are parametrized by triples~$(x_1, x_2, x_3) \in \C^3$ on the cubic surface, known as the \emph{monodromy manifold}, given by
\begin{equation}
 \label{eq:cubic-D6} x_1x_2x_3+x_1^2+x_2^2+x_2 \bigl(\ee^{-{\ii\pi \alpha}/{4}}-\ee^{-{\ii\pi \beta}/{4}} \bigr)+x_1 \bigl(1-\ee^{-{\ii\pi(\alpha+ \beta)}/{4}}\bigr)-\ee^{-{\ii\pi(\alpha+ \beta)}/{4}}=0.
\end{equation}
The exponential constants appearing as coefficients in \eqref{eq:cubic-D6} will appear in multiple equations, making it convenient to introduce the notation
\begin{equation}
 e_0:=\ee^{{\ii\pi\alpha}/{8}}\neq 0 \qquad \text{and} \qquad e_\infty:=\ii\ee^{-{\ii\pi\beta}/{8}}\neq 0.
 \label{eq:e0-einfty-alpha-beta}
\end{equation}
In Section \ref{sec:monodromy-rep-$D_6$}, we reproduce the derivation of the cubic surface \eqref{eq:cubic-D6} carried out in \cite{PS} and connect the quantities $x_i$ with other invariant quantities that appear in the Riemann--Hilbert Problem~\ref{rhp:initial} associated with PIII($D_6$). In Section \ref{sec:monodromy-rep-$D_8$}, we present an analogous parametrization of solutions of the $D_8$ degeneration \eqref{eq:PIII-$D_8$} of PIII in terms of triples $(y_1, y_2, y_3) \in \C^3$ appearing in the Riemann--Hilbert Problem~\ref{rhp:D8} and satisfying
\begin{equation}
\label{eq:cubic-D8}
y_1 y_2 y_3 + y_1^2 + y_2^2 + 1 = 0.
\end{equation}
Away from its singular points, we parametrize points $(x_1,x_2,x_3)$ on the cubic surface \eqref{eq:cubic-D6} using parameters $e_1$, $e_2$ appearing naturally from the point of view of the Riemann--Hilbert problem. In fact, $e_1^2$, $e_1^{-2}$ are eigenvalues of a certain monodromy matrix for a circuit about the origin for a linear system, see \eqref{eq:generic-system}. The parameter $e_2$ appears in the connection matrix for the same system, see \eqref{eq:C-zero-infty-diagonalization}.
We call $(e_1,e_2)$ monodromy parameters.
\begin{Definition}[see Section \ref{sec:monodromy-rep-$D_6$} for details]\label{def:generic}\samepage
We say the monodromy parameters $(e_1, e_2)$ are \emph{generic} if
 \begin{enumerate}\itemsep=0pt
 \item[(i)] $e_1^4 \neq 1$,
 \item[(ii)] $e_1 e_2 \neq 0$,
 \item[(iii)] $e_1^2 \neq e_\infty^{\pm 2}$ and $e_1^2 \neq e_0^{\pm 2}$.
 \end{enumerate}
\end{Definition}
Before moving on, we pause to make a few observations.
\begin{itemize}\itemsep=0pt
 \item Condition (ii) implies that generic monodromy parameters are nonvanishing, hence we may write
 \begin{gather}
 e_1=\ee^{\ii\pi\mu}\qquad \text{and}\qquad e_2=\ee^{\ii\pi\eta}. \label{eq:e1-e2-mu-eta}
 \end{gather}
 \item Since $e_1^2$, $e_1^{-2}$ are the essential quantities related to the complex parameter $e_1$ in our parametrization, and the former are insensitive to a change in sign of $e_1$, we may take~$e_1$ to be in the right half plane; in view of \eqref{eq:e1-e2-mu-eta}, this corresponds to $-\frac12 < \re(\mu) \leq \frac12$. Note that the choice of including the upper versus the lower endpoint of this range is arbitrary.
 \item Due to $e_1^2$, $e_1^{-2}$ being eigenvalues of the same matrix (see \eqref{eq:Stokes-products-eigenvectors} below), the parameters~$\mu$,~$-\mu$ correspond to the same solution of \eqref{eq:PIII-$D_6$}. While we could restrict to parameters where $\re(\mu) >0$ (say), we choose not to and in turn arrive at slightly simpler formul\ae; see Remark \ref{remark:e2-mu-minus-mu-change} below.
 \item It turns out that the parameter $e_2$ is determined up to a sign as well, see Remark \ref{remark:change-sign-e2} below. As such, we take it to be in the right half plane as well, or $-\frac12 < \re(\eta) \leq \frac12$.
\end{itemize}
With this in mind, we can now state a more general theorem.
\begin{Theorem}\label{thm:general}
 Let $u_0$ be the solution of \eqref{eq:PIII-$D_6$} corresponding to monodromy data $(\alpha, \beta, x_1, x_2, x_3)$ parametrized by generic monodromy parameters $(e_1,e_2)$ using formul\ae\ $($consistent with \eqref{eq:cubic-D6}$)$
 \begin{gather}
 x_1=
 \frac{e_1^2 \big(e_0^2 e_2^2e_\infty^2 \big(e_1^2 e_\infty^2-1\big)+e_0^2 e_1^2-1\big)\big(\big(e_0^2 e_1^2-1\big)^2 + e_0^2e_2^2e_\infty^2 \big(e_0^2-e_1^2\big) \big(e_\infty^2-e_1^2\big)\big)}{e_0^4 e_2^2e_\infty^2\big(1-e_0^2 e_1^2\big)\big(e_1^4-1\big)^2 },\label{eq:x1}
\\
 x_2= \frac{\big(e_0^2 e_2^2 e_1^2e_\infty^2 \big(e_1^2-e_\infty^2\big)+1-e_0^2 e_1^2\big) \big(\big(e_0^2 e_1^2-1\big)^2\!+e_0^2 e_1^2e_2^2 e_\infty^2 \big(e_0^2-e_1^2\big) \big(e_1^2 e_\infty^2\!-1\big)\big)}{e_0^4e_2^2e_\infty^2 \big(1-e_0^2 e_1^2\big) \big(e_1^4-1\big)^2 }, \!\!\!\!
\\
 x_3=
 e_1^2+\frac{1}{e_1^2},\label{eq:x3}
\end{gather}
and let $U(z)=U(z;y_1,y_2,y_3)$ denote the solution of \eqref{eq:PIII-$D_8$} with monodromy data $($consistent with \eqref{eq:cubic-D8}$)$
 \begin{gather}
 y_1 = \ii\sqrt{\frac{e_\infty^2-e_1^2}{1-e_0^2 e_1^2}} \cdot\frac{ \big(1-e_0^2 e_1^2+e_0^2e_1^6e_2^2e_
 \infty^2\big(e_1^2-e_\infty^2\big)\big)}{e_0 e_1 e_2 e_\infty\big( e_\infty^2-e_1^2\big) \big(e_1^4-1\big) },
 \label{eq:y1}
\\
 y_2 = \ii\sqrt{\frac{e_\infty^2-e_1^2}{1-e_0^2 e_1^2}} \cdot\frac{e_1 \big(1-e_0^2 e_1^2+e_0^2e_1^2e_2^2e_
 \infty^2\big(e_1^2-e_\infty^2\big)\big)}{e_0e_2 e_\infty \big( e_\infty^2-e_1^2\big) \big(e_1^4-1\big)},
 \label{eq:y2}
 \\
 y_3 = -e_1^2-\frac{1}{e_1^2}.
 \label{eq:y3}
 \end{gather}
If $u_n$ is the $n$th iterate of $u_0$ under transformation \eqref{eq:Gromak-transformation}, then for $z\notin \Sigma(y_1,y_2,y_3)$
 \begin{gather*}
 \lim_{j \to \infty} u_{2j}(z/2j) = U(z;y_1,y_2,y_3),\\ \lim_{j \to \infty} u_{2j+1}(z/(2j+1)) = -1/U(z;y_1,y_2,y_3),
 \end{gather*}
where convergence is uniform on compact subsets of $\C\setminus \Sigma(y_1, y_2, y_3)$ slit along $\arg(z)= \pm \pi$ and~$\Sigma(y_1,y_2,y_3)$ is the union of all poles and zeros of $z\mapsto U(z;y_1,y_2,y_3)$.
\end{Theorem}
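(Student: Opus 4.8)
The plan is to work at the level of the isomonodromic Riemann--Hilbert representation of $u_n$ and pass to the limit as $n\to\infty$. First I would record the action of the B\"acklund transformation \eqref{eq:Gromak-transformation} on monodromy data: it is realized by a Schlesinger (rational gauge) transformation of the associated linear system \eqref{eq:generic-system}, so it preserves the Stokes matrices and the suitably normalized connection matrix while shifting the formal exponents at $\lambda=0$ and $\lambda=\infty$ by integers; in the notation \eqref{eq:e0-einfty-alpha-beta} this is $e_0\mapsto e_0\ee^{\ii\pi/2}$, $e_\infty\mapsto e_\infty\ee^{-\ii\pi/2}$, with $(e_1,e_2)$ unchanged. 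Iterating $n$ times, the Riemann--Hilbert Problem~\ref{rhp:initial} for $u_n$ is that for $u_0$ with $e_0\mapsto e_0\ee^{\ii\pi n/2}$, $e_\infty\mapsto e_\infty\ee^{-\ii\pi n/2}$; since $e_0^2$ and $e_\infty^2$ are thereby multiplied by $(-1)^n$, only the parity of $n$ survives in the limit, while the combination $e_0e_\infty$ is untouched. Substituting $x=z/n$ and rescaling the spectral variable $\lambda$ according to the $D_6\to D_8$ confluence, I obtain a family of Riemann--Hilbert problems $\mathrm{RHP}_n(z)$ with explicit jump contour and jump matrices that depend on $n$ only through bounded data and an $\mathcal O(n^{-1})$ deformation.

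The analytic heart is the claim that, along a fixed parity class, the jumps of $\mathrm{RHP}_n(z)$ converge as $n\to\infty$, uniformly on compact subsets of the $\lambda$-plane away from fixed neighborhoods of $0$ and $\infty$, to those of the $D_8$ problem~\ref{rhp:D8} for $U(z)$ when $n$ is even and for $-1/U(z)$ when $n$ is odd. The confluence is concentrated at $\lambda=0$, where the rank-one irregular singularity of \eqref{eq:generic-system}, together with $e_0^2\mapsto(-1)^ne_0^2$, degenerates to the ramified singularity of the $D_8$ linear problem. Accordingly I would install local parametrices on small disks around $\lambda=0$ and $\lambda=\infty$ — the one at the confluent point built from Bessel functions, which is the source of the Bessel kernel in Theorem~\ref{thm:umemura_asym} — and assemble a global parametrix $P$ from these and the outer model $\Psi^{D_8}(\lambda;z)$, the solution of~\ref{rhp:D8}. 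Writing $\Psi_n$ for the solution of $\mathrm{RHP}_n(z)$, the ratio $R_n:=\Psi_nP^{-1}$ then solves an error Riemann--Hilbert problem with jump $I+\mathcal O(n^{-1})$ in $L^\infty\cap L^2$ of the fixed contour, uniformly for $z$ in a compact set.

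Now fix a compact $K\subset\C\setminus\Sigma(y_1,y_2,y_3)$ disjoint from the rays $\arg(z)=\pm\pi$. By definition of $\Sigma$ the $D_8$ problem for $U(z)$ is solvable on $K$, so $\Psi^{D_8}$, and hence $P$, exists there; the small-norm estimate then gives, for all large $n$ and $z\in K$, solvability of $\mathrm{RHP}_n(z)$ — so $u_n(z/n)$ is pole-free on $K$ — together with $R_n\to I$ uniformly, whence $u_n(z/n)\to U(z)$ for $n$ even and $u_n(z/n)\to-1/U(z)$ for $n$ odd, uniformly on $K$, after reading off the relevant coefficients in the expansion of $\Psi_n$. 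For the odd subsequence one must check that the $D_8$ data obtained by the substitution $e_0^2\mapsto-e_0^2$, $e_\infty^2\mapsto-e_\infty^2$ in \eqref{eq:y1}--\eqref{eq:y3} is the image of $(y_1,y_2,y_3)$ under the symmetry $U\mapsto-1/U$ of \eqref{eq:PIII-$D_8$}; at the level of~\ref{rhp:D8} this symmetry is implemented by conjugating $\Psi$ by a fixed constant matrix, which induces an explicit involution of the cubic \eqref{eq:cubic-D8} that one verifies matches that substitution. Finally, specialization to the rational seed — for which one first computes the monodromy parameters $(e_1,e_2)$ attached to $(1,4m,-4m)$ — recovers Theorem~\ref{thm:limit-of-rational-solutions}, with $U(0;m)=\tan\big(\tfrac{\pi}{2}\big(m+\tfrac12\big)\big)$ coming from the initial-condition characterization of the limiting solution.

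The principal obstacle is twofold, and both halves live at the confluent point $\lambda=0$. Analytically, one must construct the Bessel parametrix there and prove it approximates $\Psi_n$ \emph{uniformly in $n$}, controlling how the rank-one formal structure with exponent $\pm e_0^{\pm2}$ collapses onto the ramified $D_8$ structure under the confluence scaling and showing the matching error is genuinely $\mathcal O(n^{-1})$ rather than merely $o(1)$; this is where the bulk of the technical work will go, and it also forces the slit along $\arg(z)=\pm\pi$, across which a rescaled $D_6$ Stokes ray would otherwise have to be dragged past a $D_8$ jump ray. Algebraically, one must verify the routine but intricate fact that the confluence of the $D_6$ data \eqref{eq:x1}--\eqref{eq:x3}, with the shifted $e_0$, $e_\infty$, yields exactly the $D_8$ data \eqref{eq:y1}--\eqref{eq:y3} on the cubic \eqref{eq:cubic-D8} — the identification bridging the two parametrizations and pinning down the branches of the square roots in \eqref{eq:y1}--\eqref{eq:y2}; here the genericity of $(e_1,e_2)$ (Definition~\ref{def:generic}) is what keeps all the denominators, and the limiting $D_8$ data, away from their degenerate loci.
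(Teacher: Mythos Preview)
Your high-level strategy---pass to the Riemann--Hilbert representation of $u_n$, install local parametrices at $\lambda=0,\infty$, compare to the $D_8$ problem, and close with a small-norm argument---matches the paper's. But the key technical step is misidentified, and as written the scheme would not close.

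The gap is the choice of local parametrix. You propose Bessel parametrices at the singular points. For the $D_6$ problem at finite $n$ this does not work: near $\lambda=\infty$ the solution carries the factor $\ee^{\ii x\lambda\sigma_3/2}\lambda_{\lw}^{(n-\Theta_\infty)\sigma_3/2}$ together with Stokes jumps $\mathbf{S}_1^\infty$, $\mathbf{S}_2^\infty e_\infty^{2\sigma_3}$ whose nontrivial entries depend on $e_1,e_\infty$ (and similarly at $\lambda=0$). No Bessel model matches this combination of an $n$-dependent power, an exponential, and the full $D_6$ Stokes data. The paper instead builds \emph{exact} local parametrices from Whittaker (confluent hypergeometric) functions---these are the unique solutions of the local Riemann--Hilbert Problems~\ref{rhp:infinity_parametrix} and~\ref{rhp:zero_parametrix}---and only \emph{after} dividing them out does one obtain an $n$-dependent problem on the unit circle (Riemann--Hilbert Problem~\ref{rhp:Q}) with no singular structure left. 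The Bessel functions appear only at the next stage, as the large-$\kappa$ limit of the Whittaker functions $M_{\kappa,\mu}(\zeta/\kappa)$ via Lemma~\ref{lem:M-expansion}; this is what produces the limiting jump in Proposition~\ref{prop:limiting-jump} and hence the $D_8$ data \eqref{eq:y1}--\eqref{eq:y3}. The paper emphasizes this explicitly: ``the Painlev\'e-III$(D_6)$ Riemann--Hilbert Problem~\ref{rhp:initial} exhibits Stokes phenomenon near both singular points \dots and hence requires the use of confluent hypergeometric parametrices to desingularize the problem before passing to the limit.'' Relatedly, there is no rescaling of the spectral variable: the circle $|\lambda|=1$ is kept fixed and the $n\to\infty$ limit is taken in the jump matrix on it.

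Two smaller corrections. First, your description of the Schlesinger action omits that $e_1^2\mapsto(-1)^n e_1^2$ as well (Proposition~\ref{prop:schlesinger}); in the paper the even/odd dichotomy is carried by $\mu_n$ rather than by substituting $e_0^2\mapsto-e_0^2$, $e_\infty^2\mapsto-e_\infty^2$ into \eqref{eq:y1}--\eqref{eq:y3}, and the relation $U^{\mathrm{odd}}=-1/U^{\mathrm{even}}$ is obtained by an explicit gauge between the two limiting $D_8$ problems (Section~\ref{sec:suleimanov-solution-connection}). Second, the Bessel kernel in Theorem~\ref{thm:umemura_asym} does not arise from the local parametrix here; it comes from an independent argument in Section~\ref{sec:Bessel-determinant} via a quadratic transformation and Tracy--Widom's $\sigma$-PIII$(D_6)$ result.
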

There is nothing fundamental about the exclusion of $\arg(z) = \pm \pi$; in fact, the Riemann--Hilbert analysis below can be continued onto the universal cover of $\C \setminus \{0\}$ with a suitable extension of the set $\Sigma(y_1, y_2, y_3)$. Similar observations apply to Proposition \ref{prop:un-zero-asymptotics} and Theorem~\ref{thm:D8-asymptotics-zero} below.

We illustrate Theorem \ref{thm:general} for solutions that are not single-valued near the origin in Figure \ref{fig:random}. Note that while the point $(x_1,x_2,x_3)\in\mathbb{C}^3$ on the monodromy manifold \eqref{eq:cubic-D6} only depends on the squares of $e_1$, $e_2$, the point $(y_1,y_2,y_3)\in\mathbb{C}^3$ on the monodromy manifold \eqref{eq:cubic-D8} of the limiting solution $U(z;y_1,y_2,y_3)$ of \eqref{eq:PIII-$D_8$} has a sign ambiguity in the coordinates $y_1$ and $y_2$. However, if either $e_1$ or $e_2$ changes sign, then the signs of $y_1$ and $y_2$ change together, and it turns out that the triples $(y_1,y_2,y_3)$ and $(-y_1,-y_2,y_3)$ both lie on the surface \eqref{eq:cubic-D8} together and correspond to the same solution of \eqref{eq:PIII-$D_8$}; see Remark~\ref{rem:minus-y1-y2} below. Similarly, there is no need for us to specify the sign of the square roots in \eqref{eq:y1}--\eqref{eq:y2} provided they are both taken to be the same. One might expect a similar ambiguity to arise from the replacement of $e_1^2 \mapsto e_1^{-2}$, since both are eigenvalues of the same matrix, but it turns out that $(x_1, x_2, x_3)$ is invariant under this change provided~$e_2$ is appropriately modified, and $(y_1, y_2, y_3)$ remains invariant up to the sign ambiguity described above, see Remark~\ref{remark:e2-mu-minus-mu-change} below.

\begin{figure}[t]
 \centering
\includegraphics[scale=0.8]{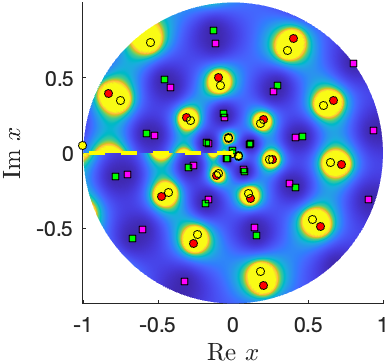}\qquad
\includegraphics[scale=0.8]{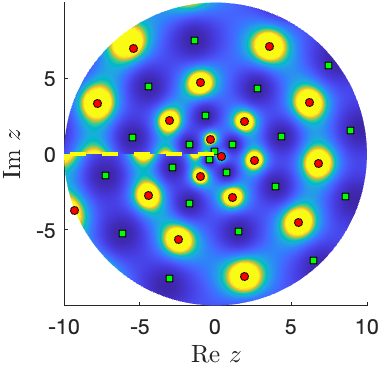}
 \caption{Left: the solution $u_{10}(x)$ of \eqref{eq:PIII-$D_6$} generated by ten iterations of \eqref{eq:Gromak-transformation} with seed $u_0(x)$ corresponding to monodromy data $\mu=0.23+0.39\ii$ (see \eqref{eq:e1-e2-mu-eta}), $e_2=-0.45-0.96\ii$ and $\alpha=40.5+0.63\ii$, $\beta=40.98+0.59\ii$. Right: the limiting solution $U(z)$ of \eqref{eq:PIII-$D_8$}. The labeling of poles and zeros is the same as in Figure \ref{fig:rational}. Note that both $u_{10}(x)$ and $U(z)$ are branched at the origin.}
 \label{fig:random}
\end{figure}

The proof of Theorem \ref{thm:general} is given in Section \ref{sec:proof}, and relies on Riemann--Hilbert analysis. The idea of the proof is to use parametrices constructed out of confluent hypergeometric functions near zero and infinity to reduce the setup to a Riemann--Hilbert Problem~\ref{rhp:Q} on the circle. After some additional transformations, the problem allows taking a large $n$ limit which gives us a~Riemann--Hilbert Problem~\ref{rhp:Rhat-even/odd} with a~jump on the circle in terms of Bessel functions. Further transformations using parametrices constructed out of Bessel functions simplify the jump and we arrive at a Riemann--Hilbert Problem~\ref{rhp:D8} for Painlev\'e-III$(D_8)$. In Section~\ref{sec:suleimanov-solution-connection}, we transform this into another Riemann--Hilbert Problem~\ref{rhp:S-hat} for \eqref{eq:PIII-$D_8$} already known in the literature. It is worth pointing out that even in the case of rational solutions, the Painlev\'e-III$(D_6)$ Riemann--Hilbert Problem~\ref{rhp:initial} exhibits Stokes phenomenon near both singular points \cite{BMS18} and hence requires the use of confluent hypergeometric parametrices to desingularize the problem before passing to the limit.

While the formul\ae\ for $y_i$ are daunting, they drastically simplify in the case of the rational solutions, where $u_0$ has monodromy data parametrized by
\begin{gather}
 \alpha = -\beta = 4m, \qquad e_0^2 = -e_\infty^2 = \ee^{\ii \pi m},\qquad
 e_1^2 = \ii, \qquad
 e_2 = \sqrt{\ee^{-2\pi \ii m}\dfrac{1 - \ii \ee^{\pi \ii m}}{1 + \ii \ee^{\pi \ii m}}},
\label{eq:monodromy-data-rational}
\end{gather}
see Section \ref{sec:rational-solutions-parameters} for details. With parameters chosen as in \eqref{eq:monodromy-data-rational}, the genericity conditions in Definition \ref{def:generic} imply $m \in \C \setminus \big(\Z + \frac{1}{2}\big)$. Then, we have
\[
y_1 = \frac{\ii \ee^{\ii\pi m}}{\sqrt{1+ \ee^{2\pi \ii m}}}, \qquad y_2 = \frac{\ii }{\sqrt{1+ \ee^{2\pi \ii m}}}, \qquad y_3 = 0.
\]
One can check that with these choices of $\alpha$, $\beta$, $e_1,$ and $e_2$ we have $U(z;y_1,y_2,y_3) = U(z;m)$, cf.\ Theorems \ref{thm:limit-of-rational-solutions} and \ref{thm:general}.

By further specializing $U(z;m)$ to $m \in \ii\R+ \Z$, we arrive at highly symmetric solutions of~PIII($D_8$) which have appeared in various works in nonlinear optics \cite{suleimanov} and as a limiting object of various families of solutions to the focusing nonlinear Schr\"odinger equation in different regimes \cite{MR4007631, BLM20}. Furthermore, these solutions can be identified with pure imaginary solutions of the radial reduction of sine-Gordon equation, see, e.g., \cite[Chapter 13]{FIKN}. It is interesting that they are related to another limiting object appearing in the random matrix theory -- the Bessel kernel determinant. The explicit relation is described in Corollary \ref{cor:bessel-relation} below.

A consequence of the analysis in Section \ref{sec:proof} below is a description of the behavior near the origin of solutions $u(x)$ of \eqref{eq:PIII-$D_6$} corresponding to generic monodromy parameters $(e_1, e_2)$.
\begin{prop}
\label{prop:un-zero-asymptotics}
 Let $u(x)$ be the solution of Painlev\'e-{\rm III}$(D_6)$ equation associated to $\mu, \eta \in \C$ via the generic monodromy parameters given in \eqref{eq:e1-e2-mu-eta} with $-\frac{1}{2}<\re(\eta)\leq\frac{1}{2}$. If $0<|{\re}(\mu)|<\frac{1}{2}$, then it holds that
 \begin{align}
 u(x) ={}& -\frac{ \Gamma (1 - 2\epsilon \mu)^2 \Gamma \left( \epsilon\mu -\frac{\alpha}{8}\right) \Gamma \big( \epsilon \mu+\frac{\beta}{8} + \frac{1}{2} \big)}{\Gamma (2 \epsilon \mu )^2 \Gamma \bigl(-\epsilon \mu-\frac{\alpha}{8}+1\bigr) \Gamma \bigl( - \epsilon \mu +\frac{\beta}{8} + \frac{1}{2} \bigr)} \nonumber\\
 &\times \Bigg(\dfrac{e_{0}^2 e_2^2 e_{\infty}^2\big(e_{0}^2-e_{1}^2\big) \big(e_{1}^2-e_{\infty}^2\big)}{\big(e_{0}^2 e_{1}^2-1\big)^2}\Bigg)^\epsilon
 x^{4 \epsilon \mu - 1} \big(1 + \mathcal{O}(x^\delta) \big)
 \label{eq:u-0-leading}
 \end{align}
 as $x\to 0$ with $|{\arg}(x)|<\pi$ where $\delta=\min(1,2-4\re(\mu))$ and $\epsilon = \sgn (\re (\mu))$.
\end{prop}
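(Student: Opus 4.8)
The plan is to read off the $x\to0$ behaviour of $u(x)$ directly from the isomonodromy Riemann--Hilbert problem for Painlev\'e-III$(D_6)$ set up in Section~\ref{sec:monodromy-rep-$D_6$}, reusing the confluent hypergeometric parametrices constructed in Section~\ref{sec:proof}. Recall that $u(x)$ is recovered as a fixed rational expression in the first Laurent coefficients of the solution $\mathbf Y(\lambda)=\mathbf Y(\lambda;x)$ of Riemann--Hilbert Problem~\ref{rhp:initial} at $\lambda=\infty$ (equivalently at $\lambda=0$). First I would rescale the spectral variable $\lambda$ by an appropriate power of $x$ so that the two irregular singularities of the associated linear system \eqref{eq:generic-system} decouple as $x\to0$: on a fixed circle in the rescaled plane the jump matrices are, to leading order, those of two independent model problems --- a confluent hypergeometric Riemann--Hilbert problem attached to $\lambda=0$, with local exponent data governed by $\mu$ together with $-\tfrac{\alpha}{8}$, and a second one attached to $\lambda=\infty$ governed by $\mu$ together with $\tfrac{\beta}{8}+\tfrac12$ --- both carrying the Stokes data of the original problem and hence solvable in closed form. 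The monodromy parameters $e_1=\ee^{\ii\pi\mu}$, $e_2=\ee^{\ii\pi\eta}$ (see~\eqref{eq:e1-e2-mu-eta}) enter through the connection matrix \eqref{eq:C-zero-infty-diagonalization}. The hypothesis $0<|{\re}(\mu)|<\tfrac12$ is exactly what makes the two characteristic powers $\lambda^{\pm2\mu}$ genuinely ordered, which is the non-resonant, non-oscillatory regime in which a single power $x^{4\epsilon\mu-1}$ dominates; the sign $\epsilon=\sgn(\re(\mu))$ records which of $e_1^{\pm2}$ is the dominant eigenvalue, consistently with the fact noted after Definition~\ref{def:generic} that $\mu\mapsto-\mu$ produces the same solution once $e_2$ is adjusted.

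Next I would form $\mathbf E(\lambda):=\mathbf Y(\lambda)\mathbf P(\lambda)^{-1}$, where $\mathbf P$ is the global parametrix glued from the two confluent hypergeometric models and a constant outer piece. By construction $\mathbf E$ has no jumps inside or outside a fixed annulus, and on the bounding circle its jump is $\mathbf 1+\mathcal O\big(x^{\delta}\big)$ with $\delta=\min(1,2-4\re(\mu))$ as in the statement: the competing contributions are the leading Stokes/connection correction of each local model, of size $\mathcal O(x)$, and the leading cross-interaction between the two models across the annulus, of size $\mathcal O\big(x^{2-4\re(\mu)}\big)$. A standard small-norm argument then gives $\mathbf E=\mathbf 1+\mathcal O\big(x^{\delta}\big)$ uniformly, so that in the reconstruction formula only $\mathbf P$ contributes to leading order. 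Evaluating that contribution: the power $x^{4\epsilon\mu-1}$ comes from propagating the exponents $\lambda^{\pm2\mu}$ through the rescaling; the bracketed prefactor in \eqref{eq:u-0-leading} assembles the diagonal rescaling constants together with the entries of \eqref{eq:C-zero-infty-diagonalization} written in terms of $e_0,e_1,e_2,e_\infty$; and the ratio of four $\Gamma$-functions is precisely the product of the connection constants of the two confluent hypergeometric parametrices, namely $\Gamma(1-2\epsilon\mu)\Gamma\big(\epsilon\mu-\tfrac{\alpha}{8}\big)\big/\big[\Gamma(2\epsilon\mu)\Gamma\big(1-\epsilon\mu-\tfrac{\alpha}{8}\big)\big]$ from the $\lambda=0$ model times $\Gamma(1-2\epsilon\mu)\Gamma\big(\epsilon\mu+\tfrac{\beta}{8}+\tfrac12\big)\big/\big[\Gamma(2\epsilon\mu)\Gamma\big(-\epsilon\mu+\tfrac{\beta}{8}+\tfrac12\big)\big]$ from the $\lambda=\infty$ model. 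The branch restriction $|{\arg}(x)|<\pi$ is forced by the cut of $x^{4\epsilon\mu-1}$ and by the placement of the Stokes rays in Riemann--Hilbert Problem~\ref{rhp:initial}.

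The soft part of this argument --- choosing the annulus, the small-norm estimate, and the error exponent $\delta$ --- is routine once the parametrices of Section~\ref{sec:proof} are in hand; I expect the main obstacle to be the exact bookkeeping of constants: carrying the branches of $e_2$ and of the square roots through every conjugation, tracking the sign $\epsilon$, and verifying that the final expression is invariant under $\mu\mapsto-\mu$ with the compensating change of $e_2$ --- this invariance being the consistency check that pins the formula down. A secondary point needing care is uniformity of the estimates as $\arg(x)\to\pm\pi$, together with the verification that no power of $x$ lies strictly between the leading term and the $\mathcal O\big(x^{\delta}\big)$ remainder; both follow from the explicit structure of the confluent hypergeometric functions but must be checked rather than assumed. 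The excluded borderline cases $\re(\mu)=0$ (oscillatory, two competing powers) and $\re(\mu)=\pm\tfrac12$ (resonant, with possible logarithms) genuinely fall outside this scheme.
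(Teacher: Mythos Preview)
Your plan is essentially the paper's own argument in Section~\ref{sec:prop-1-proof}: one reuses the confluent hypergeometric parametrices and the circle Riemann--Hilbert problem $\mathbf{Q}_n$ from Section~\ref{sec:proof}, rescales the spectral variable via $\varsigma=\lambda x$ to decouple the two singular points, lands on a small-norm problem with jump $\mathbb{I}+\mathcal{O}\big(x^{2-|4\re\mu|}\big)$, and reads off the leading term of $u(x)$ from the parametrix data (the $\Gamma$-ratios coming from the Whittaker connection formulas, the $e_j$-factor from $\mathbf{O}_n$), finally specializing to $n=0$. The bookkeeping you flag as the main obstacle---tracking $\mathbf{D}_n$, $\mathbf{J}_n$, $\mathbf{O}_n$ and the branch choices through the conjugations---is exactly what Section~\ref{sec:prop-1-proof} carries out explicitly.
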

Proposition \ref{prop:un-zero-asymptotics} appeared in \cite[Theorem 3.2]{Jimbo} and its derivation is given in \cite{Kitaev87} for an equivalent, degenerate Painlev\'e-V equation. We present its proof using a Riemann--Hilbert approach in Section \ref{sec:prop-1-proof}, which follows the steps of the proof of Theorem \ref{thm:general}. The case $\re(\mu) = 0$ can be handled similarly, but we exclude it here because two distinct terms arise at the same leading order resulting in a more complicated formula. From this formula one can see that if $\re(\mu)=0$ the solution can exhibit sinusoidal oscillations with frequency diverging as $x^{-1}$ consistent with an essential singularity at the origin.

To apply Proposition \ref{prop:un-zero-asymptotics} to the rational solutions \eqref{eq:un-polys}, or more generally to the sequence of B\"acklund iterates starting from any seed solution of \eqref{eq:PIII-$D_6$}, requires knowledge of the corresponding sequence of monodromy data. This is the content of the following proposition, which we prove in Section \ref{sec:schlesinger}.
\begin{prop}\label{prop:schlesinger}
Let $u_0(x)$ be the solution of \eqref{eq:PIII-$D_6$} with parameters $(\alpha,\beta)$ and monodromy data $(\mu,\eta)$ $($see \eqref{eq:e1-e2-mu-eta}$)$ with $-\frac{1}{2}<\re (\mu), \re(\eta)\le\frac{1}{2}$. Then, the B\"acklund iterates $u_n(x)$ are parametrized by the following monodromy data
\[
e^2_{1, n} = \ee^{2\pi \ii \mu_n} ,\qquad e_{2, n} = e_2, \qquad e_{0,n}=\ee^{{\ii\pi(\alpha+4n)}/{8}}, \qquad e_{\infty,n}=\ii\ee^{-{\ii\pi(\beta+4n)}/{8}}, \]
where\footnote{To ensure $-1/2<\re(\mu_n)\leq 1/2$, we set $\epsilon = -1$ in the case where $\re(\mu) = 0$.}
\[
\mu_n = \begin{cases} \mu, & n \in 2\Z, \\ \mu - \frac{\epsilon}{2}, & n + 1 \in 2\Z, \end{cases} \qquad \text{and}\qquad \epsilon = \sgn (\re (\mu)).
\]
\end{prop}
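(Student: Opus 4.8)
The plan is to realize the Gromak transformation \eqref{eq:Gromak-transformation} at the level of the isomonodromic linear system underlying Riemann--Hilbert Problem~\ref{rhp:initial} as an elementary \emph{Schlesinger transformation}, and then to read off its action on the monodromy data in the parametrization set up in Section~\ref{sec:monodromy-rep-$D_6$}. If $\Psi(\lambda;x)$ is the fundamental solution of the linear system \eqref{eq:generic-system} attached to $(u_0,\alpha,\beta)$, the claim is that the fundamental solution attached to the Gromak iterate $(\hat u_0,\alpha+4,\beta+4)$ is obtained by a left gauge transformation $\Psi\mapsto R(\lambda;x)\Psi(\lambda;x)$ with $R$ rational in the spectral variable $\lambda$, whose local structure at the singular points $\lambda=0$ and $\lambda=\infty$ is exactly the one producing the shift $(\alpha,\beta)\to(\alpha+4,\beta+4)$ in the formal data, equivalently $e_0\mapsto\ii e_0$ and $e_\infty\mapsto-\ii e_\infty$ (so $e_0^2\mapsto-e_0^2$ and $e_\infty^2\mapsto-e_\infty^2$). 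I would establish this either by constructing $R$ directly --- fixing its pole/zero structure so that $R\Psi$ has the correct formal expansions at $\lambda=0$ and $\lambda=\infty$, pinning the residual scalar freedom by compatibility with the $x$-part of the Lax pair, and verifying that the associated Painlev\'e function is precisely $\hat u_0$ from \eqref{eq:Gromak-transformation} --- or by invoking the known identification of \eqref{eq:Gromak-transformation} with a Schlesinger transformation of Painlev\'e-III from the literature. The structural output I need is standard: because $R$ is diagonal to leading order at each of the two (irregular) singular points, all Stokes multipliers are unchanged, and the $0$-to-$\infty$ connection matrix changes only by left/right multiplication by the diagonal factors coming from the leading behavior of $R$ at $\lambda=0$ and $\lambda=\infty$.

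The consequences for $(e_1,e_2)$ then follow by a short computation at the two singular points. The parameter $e_2$ is extracted from the connection matrix in \eqref{eq:C-zero-infty-diagonalization} in a way that is insensitive to the diagonal conjugations just described, so $e_{2,n}=e_2$ for every $n$. For $e_1$, the invariance of the Stokes/connection structure together with the half-integer bump built into the Schlesinger step forces $e_1^2\mapsto-e_1^2$ at each step --- the half-integer (rather than integer) shift of the formal monodromy exponent at the origin being tied to the simultaneous half-step $\alpha\mapsto\alpha+4$, $\beta\mapsto\beta+4$. Hence $e_{1,n}^2=(-1)^n e_1^2$, and it remains only to name the representative $\mu_n$ with $e_{1,n}^2=\ee^{2\pi\ii\mu_n}$ and $-\frac12<\re(\mu_n)\le\frac12$, using that $\pm\mu_n$ describe the same solution because $e_{1,n}^{\pm2}$ are the two eigenvalues of the same local monodromy matrix. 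For $n$ even this representative is $\mu$ itself; for $n$ odd, the representative of $\mu-\frac12$ modulo $1$ (or of its negative) lying in the strip is $\mu-\frac12$ when $\re(\mu)>0$ and $\mu+\frac12$ when $\re(\mu)<0$, i.e.\ $\mu-\frac{\epsilon}{2}$ with $\epsilon=\sgn(\re(\mu))$, the boundary value $\re(\mu)=0$ being covered by the stated convention $\epsilon=-1$. Iterating the sign flips of $e_0^2$ and $e_\infty^2$ and using \eqref{eq:e0-einfty-alpha-beta} gives the formulas for $e_{0,n}$ and $e_{\infty,n}$, and an induction on $n$ completes the proof.

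I expect the main obstacle to be the first step: matching the \emph{specific} Schlesinger transformation --- pole locations, normalization, branch conventions --- to the \emph{specific} B\"acklund transformation \eqref{eq:Gromak-transformation}, consistently with the conventions fixed in Section~\ref{sec:monodromy-rep-$D_6$} for Riemann--Hilbert Problem~\ref{rhp:initial} and for the parameters $(e_1,e_2)$. Once $R$ is correctly identified, the invariance of the Stokes and connection data and the extraction of the rule $e_1^2\mapsto-e_1^2$ are routine linear algebra at $\lambda=0$ and $\lambda=\infty$; the only remaining fiddly point is the bookkeeping of the fundamental-domain representative of $\mu_n$, which is exactly what the case split on $\sgn(\re(\mu))$ and the footnote convention are designed to handle.
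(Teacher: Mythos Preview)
Your strategy matches the paper's: realize Gromak's map as a Schlesinger transformation of the linear problem and read off its effect on $(e_0,e_\infty,e_1,e_2)$. The paper does this explicitly (following \cite{BMS18}) and obtains exactly the rules $e_0^2\mapsto-e_0^2$, $e_\infty^2\mapsto-e_\infty^2$, $e_1^2\mapsto-e_1^2$, $e_2\mapsto e_2$ that you anticipate, followed by the same $\mu_n$ bookkeeping.

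One point to correct in your sketch: the gauge factor is neither rational in $\lambda$ nor diagonal to leading order. The paper's transformation is $\hat{\mathbf{\Psi}}=\bigl(\sigma_+\lambda_{\lw}^{1/2}+\hat{\mathbf{S}}(x)\lambda_{\lw}^{-1/2}\bigr)\mathbf{\Psi}$ with $\sigma_+=\left(\begin{smallmatrix}1&0\\0&0\end{smallmatrix}\right)$ a rank-one projection and $\hat{\mathbf{S}}$ a full matrix built from expansion coefficients of $\mathbf{\Psi}$ at $0$ and $\infty$. The mechanism for preserving monodromy data is therefore not ``diagonal leading term $\Rightarrow$ Stokes invariant, connection conjugated by diagonals''; rather, left multiplication by an invertible factor analytic off the branch cut of $\lambda_{\lw}^{1/2}$ leaves all jump conditions unchanged \emph{except} along that cut, where the sign change of $\lambda_{\lw}^{1/2}$ flips $\mathbf{S}_2^0 e_0^{-2\sigma_3}\mapsto-\mathbf{S}_2^0 e_0^{-2\sigma_3}$ and $\mathbf{S}_2^\infty e_\infty^{2\sigma_3}\mapsto-\mathbf{S}_2^\infty e_\infty^{2\sigma_3}$. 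In particular the connection matrices $\mathbf{C}_{0\infty}^\pm$ are exactly invariant, not merely up to diagonal factors. Reinterpreting the sign-flipped jumps through the parametrization \eqref{eq:stokes-parameters} then yields $(e_0^2,e_\infty^2,e_1^2)\mapsto(-e_0^2,-e_\infty^2,-e_1^2)$, and invariance of $\mathbf{C}_{0\infty}^\pm$ gives $e_2\mapsto e_2$. Your conclusions survive, but the argument you would actually write differs from your outline at precisely the step you flagged as the main obstacle.
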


One notable application of Propositions \ref{prop:un-zero-asymptotics} and \ref{prop:schlesinger} is the case corresponding to the rational solutions of PIII($D_6$) described above. In \cite{clarkson2018constructive}, the authors found a product formula for $u_n(0;m)$ (see \eqref{eq:u-n-zero-rational} in Section \ref{sec:Frobenius-method}). Applying Propositions~\ref{prop:un-zero-asymptotics} and \ref{prop:schlesinger} to this case yields the closed-form formula
\begin{equation}\label{eq:u-n-rat-leading}
u_n(0;m)=\frac{\Gamma\big(\frac{1}{4}-\frac{m}{2}-\frac{n}{2}\big)}{\Gamma \big(\frac{1}{4}-\frac{m}{2}+\frac{n}{2} \big)}\frac{\Gamma \big(\frac{3}{4}-\frac{m}{2}+\frac{n}{2} \big)}{\Gamma \big(\frac{3}{4}-\frac{m}{2}-\frac{n}{2} \big)}.
\end{equation}
Another observation is that the expression on the right-hand side of \eqref{eq:u-0-leading} in Proposition \ref{prop:un-zero-asymptotics} evaluated at the $n$-dependent monodromy data from Proposition \ref{prop:schlesinger} and at argument $x=\frac{z}{n}$ has a finite limit along even and odd subsequences of $n$. The limiting expressions relate to the behavior of $U(z;y_1,y_2,y_3)$, which we can take from the literature:
\begin{Theorem}[\cite{FIKN,IN,N}] \label{thm:D8-asymptotics-zero}
 Let $U(z;y_1,y_2,y_3)$ be the solution of the Painlev\'e-{\rm III}$(D_8)$ equation~\eqref{eq:PIII-$D_8$} associated to $(y_1,y_2,y_3) \in \C^3$ parametrized by generic monodromy parameters $(e_1,e_2)$ using formul\ae\ \eqref{eq:y1}--\eqref{eq:y3}. Then, it holds that
 \begin{equation*}
 U(z) = -\frac{ \Gamma (1 - 2\epsilon \mu)^2 }{\Gamma (2\epsilon \mu )^2 2^{4\varepsilon\mu-1} } z^{4 \epsilon \mu - 1} (1 + \mathcal{O}(z) )\Bigg(\dfrac{e_{0}^2 e_2^2 e_{\infty}^2 \big(e_{\infty}^2-e_{1}^2\big)}{
 \big(e_{0}^2 e_{1}^2-1\big)}\Bigg)^{\epsilon}
 \end{equation*}
 as $z\to 0$ with $|{\arg}(z)|<\pi$.
\end{Theorem}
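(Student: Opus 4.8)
\emph{Proof idea.} The plan is to reduce the statement to the classical connection analysis for Painlev\'e-III$(D_8)$ and then rewrite the answer in our monodromy coordinates. Recall that $U(z;y_1,y_2,y_3)$ is characterized by Riemann--Hilbert Problem~\ref{rhp:D8}, which in Section~\ref{sec:suleimanov-solution-connection} is shown to be equivalent to Riemann--Hilbert Problem~\ref{rhp:S-hat} --- the form of the $(D_8)$ problem appearing in the literature --- with Stokes and connection data at $z=0$ and $z=\infty$ expressed through $(e_1,e_2)$ (and $e_0$, $e_\infty$) via \eqref{eq:y1}--\eqref{eq:y3}. The behaviour of such solutions as $z\to0$ is the connection problem treated by Its--Novokshenov \cite{IN} and Novokshenov \cite{N} and collected in \cite{FIKN}; it is also the radial reduction of sine-Gordon, \cite[Chapter~13]{FIKN}. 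Under the genericity hypotheses $z=0$ is non-resonant, so $U(z)\sim a\,z^{4\epsilon\mu-1}$ with $a$ an explicit ratio of Gamma functions in $\mu$ times a monomial in the connection multipliers. Thus I would (i) quote that formula in the normalization of \cite{FIKN, IN, N}, and (ii) substitute \eqref{eq:y1}--\eqref{eq:y3}, use the cubic relation \eqref{eq:cubic-D8}, and simplify to the stated form. Alternatively one could re-run the small-$z$ Riemann--Hilbert analysis of RHP~\ref{rhp:D8} directly, following the proof of Proposition~\ref{prop:un-zero-asymptotics} with a local parametrix at $z=0$; this avoids the literature but repeats essentially the same work.

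I expect the normalization matching to be the main obstacle. The cited sources differ in the placement of the branch cut (here $|{\arg}(z)|<\pi$), in the ordering and normalization of the Stokes matrices at $z=0$, and in the scale of the independent variable: the $z\to0$ model problem for $(D_8)$ is of Bessel type --- the same mechanism producing the Bessel-kernel determinant in Theorem~\ref{thm:umemura_asym} --- and the natural variable $\sqrt z$ is what yields the factor $2^{4\epsilon\mu-1}$ in the denominator. Verifying that the Gamma-prefactor collapses to $\Gamma(1-2\epsilon\mu)^2/\Gamma(2\epsilon\mu)^2$ and that the product of connection multipliers collapses to $\big(e_0^2e_2^2e_\infty^2(e_\infty^2-e_1^2)/(e_0^2e_1^2-1)\big)^{\epsilon}$ is where the care lies; the cubic relation \eqref{eq:cubic-D8} and the reflection formula for $\Gamma$ are the tools.

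It is worth carrying out an independent cross-check of all of these normalizations by passing to the limit in the Painlev\'e-III$(D_6)$ asymptotics \eqref{eq:u-0-leading}. Evaluate its right-hand side at the $n$-dependent data of Proposition~\ref{prop:schlesinger} --- $\mu\mapsto\mu_n$ (equal to $\mu$ on even and $\mu-\frac{\epsilon}{2}$ on odd subsequences), $\alpha\mapsto\alpha+4n$, $\beta\mapsto\beta+4n$, so $e_0^2\mapsto(-1)^n e_0^2$ and $e_\infty^2\mapsto(-1)^n e_\infty^2$ --- and put $x=z/n$. Using $\Gamma(w+a)/\Gamma(w+b)\sim w^{a-b}$ as $\re w\to+\infty$, after a reflection-formula step for the $\alpha$-dependent Gammas (whose arguments run to $-\infty$), the two $n$-dependent Gamma ratios together contribute $(n/2)^{4\epsilon\mu-1}$ times a residual ratio of sines in $\alpha$, $\mu$; the power $(n/2)^{4\epsilon\mu-1}$ cancels against the $n^{1-4\epsilon\mu}$ from $x^{4\epsilon\mu-1}=(z/n)^{4\epsilon\mu-1}$, leaving exactly $2^{1-4\epsilon\mu}z^{4\epsilon\mu-1}$, while the residual sine ratio equals $\big((1-e_0^2e_1^2)/(e_0^2-e_1^2)\big)^{\epsilon}$ (using $e_0=\ee^{\ii\pi\alpha/8}$, $e_1=\ee^{\ii\pi\mu}$) and, absorbed into the parenthesis of \eqref{eq:u-0-leading}, turns $(e_0^2-e_1^2)(e_1^2-e_\infty^2)/(e_0^2e_1^2-1)^2$ into $(e_\infty^2-e_1^2)/(e_0^2e_1^2-1)$. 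This reproduces the claimed formula along the even subsequence (the odd one reproduces $-1/U$), consistently with Theorem~\ref{thm:general}. This confluence computation is only heuristic as a proof, however, since the $\mathcal{O}(x^\delta)$ error in \eqref{eq:u-0-leading} is not known to be uniform in $n$; accordingly the theorem is recorded as taken from \cite{FIKN, IN, N}, the computation here serving to pin down all normalizations.
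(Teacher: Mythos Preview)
Your proposal is correct and matches the paper's treatment: the theorem is recorded as a citation from \cite{FIKN,IN,N}, with no independent proof given, and the paragraph preceding the theorem makes exactly the heuristic confluence observation you describe---that the right-hand side of \eqref{eq:u-0-leading} at the $n$-dependent data of Proposition~\ref{prop:schlesinger} and $x=z/n$ has finite even/odd limits relating to $U(z)$. One small addition: to pin down the normalization against the literature, the paper uses the sine-Gordon correspondence \eqref{eq:U-w-correspondence} (see the Remark in Section~\ref{sec:suleimanov-solution-connection}), rather than the direct substitution of \eqref{eq:y1}--\eqref{eq:y3} into the cited formula that you propose; either route works, but the sine-Gordon bridge is what the paper actually invokes.
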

We pause to note that coalescence between Painlev\'e equations has long been in the literature; a coalescence diagram of all six Painlev\'e equations already appeared in Okamoto's work \cite{O1}, and was later expanded on in \cite{OO}. Later, a geometric interpretation of the coalescence was given in \cite{CMR}. That being said, the above degenerations are carried out on the level of the differential equation, so that given a solution of a Painlev\'e equation, one does not have a characterization of the solution one arrives at under the coalescence procedure. Confluence on the level of the solutions of the differential equation has also appeared in the literature; one of the most interesting examples is the merging of regular singularities and corresponding creation of an irregular singularity. This process was studied in the works \cite{Glutsuk,Kitaev06}. In the PhD thesis \cite{Horrobin} the confluence was studied in more detail in the cases Painlev\'e VI $\to$ Painlev\'e V and Painlev\'e V $\to$ Painlev\'e III$(D_6)$. In the works \cite{Kapaev94,Kapaev_Kitaev} the authors considered a transition from Painlev\'e II $\to$ Painlev\'e I that is different in nature.

\subsection{Overview of the paper} In Section \ref{sec:Frobenius-method}, we describe the coalescence map $u \mapsto U$ in terms of initial conditions and prove Theorem \ref{thm:limit-of-rational-solutions} using Maclaurin series of these solutions. We apply it to Umemura polynomials in Section \ref{sec:Umemura}. In Sections \ref{sec:monodromy-rep-$D_6$} and \ref{sec:monodromy-rep-$D_8$}, we describe the monodromy representations of PIII($D_6$) and PIII($D_8$), respectively. In Section~\ref{sec:schlesinger}, we explain the Schlesinger transformations underlying Gromak's B\"acklund transformation \eqref{eq:Gromak-transformation} and prove Proposition~\ref{prop:schlesinger}. In Section \ref{sec:proof}, we~prove Theorem~\ref{thm:general} by Riemann--Hilbert methods. We recycle the same methodology to prove Proposition~\ref{prop:un-zero-asymptotics} in Section \ref{sec:prop-1-proof}. In Section \ref{sec:suleimanov-solution-connection}, we perform a Fabry-type\footnote{Named after Eug\`ene Fabry for his work in \cite{fabry1885}, see also \cite[Chapter 17.53]{MR0010757}.} transformation to the Painlev\'e-III$(D_8)$ Riemann--Hilbert problem naturally arising from our limit process to put it in more canonical form and justify its solvability.

\section[Identifying the solution of the limiting Painlev\'e-III(D\_8) equation using Maclaurin series]{Identifying the solution of the limiting Painlev\'e-III($\boldsymbol{D_8}$)\\ equation using Maclaurin series}
\label{sec:Frobenius-method}

The Painlev\'e-III($D_6$) equation \eqref{eq:PIII-$D_6$} for $u_n(x;\alpha, \beta)$ implies the following equivalent differential equation for $U_n(z;\alpha, \beta):=u_n(z/n;\alpha, \beta)$:
\begin{equation}
U_n''=\frac{(U_n')^2}{U_n}-\frac{U_n'}{z} +\frac{\alpha_nU_n^2}{z}+\frac{\beta_n}{z}+\gamma_nU_n^3+\frac{\delta_n}{U_n},
\label{eq:v-ODE}
\end{equation}
where
\begin{equation}
\alpha_n:=4+\frac{\alpha}{n},\qquad\beta_n:=4 + \frac{\beta}{n},\qquad\gamma_n:=\frac{4}{n^2},\qquad\delta_n:=-\frac{4}{n^2}.
\label{eq:$D_6$-parameters}
\end{equation}
Note that for arbitrary $\alpha\in\mathbb{C}$ and $\beta\in\mathbb{C}$ fixed and $n>0$ sufficiently large we have the following crude inequalities:
\begin{equation}
 |\alpha_n|\le 5,\qquad|\beta_n|\le 5,\qquad |\gamma_n|\le 1,\qquad |\delta_n|\le 1.
 \label{eq:$D_6$-parameters-inequalities}
\end{equation}
We construct solutions of \eqref{eq:v-ODE} analytic at $z=0$ as follows. First multiply \eqref{eq:v-ODE} through by~$zU_n(z)$ to obtain
\begin{equation}
 -zU_nU_n'' + z(U_n')^2-U_nU_n'+\alpha_nU_n^3+\beta_nU_n+\gamma_nzU_n^4 + \delta_nz=0.
 \label{eq:v-ODE-II}
\end{equation}
We substitute into \eqref{eq:v-ODE-II} a power series
\begin{equation}
U_n(z)=\sum_{k=0}^\infty \upsilon_kz^k,
\label{eq:v-series}
\end{equation}
and express all products through the Cauchy product formula. The left-hand side of \eqref{eq:v-ODE-II} is then a formal power series in $z$, and assuming that $\upsilon_0\neq 0$, the coefficient of $z^0$ yields
\begin{equation}
\upsilon_1=\beta_n+\alpha_n\upsilon_0^2,
\label{eq:c1-exact}
\end{equation}
the coefficient of $z^1$ yields
\begin{gather}
 \upsilon_2 = \frac{1}{4\upsilon_0}\big[3\alpha_n\upsilon_0^2\upsilon_1+\beta_n\upsilon_1+\gamma_n\upsilon_0^4 + \delta_n\big],
\end{gather}
and for $k\ge 2$, the coefficient of $z^k$ yields
\begin{align}
 & \upsilon_{k+1}= \frac{1}{\upsilon_0(k+1)^2}\Bigg[\sum_{a=0}^ka(k+1-2a)\upsilon_a\upsilon_{k+1-a} +
 \alpha_n\sum_{a=0}^{k}\sum_{b=0}^{k-a}\upsilon_a\upsilon_b \upsilon_{k-a-b}\nonumber\\
&\hphantom{\upsilon_{k+1}=}{}
+\beta_n\upsilon_k +
 \gamma_n\sum_{a=0}^{k-1}\sum_{b=0}^{k-1-a}\sum_{c=0}^{k-1-a-b}\upsilon_a\upsilon_b \upsilon_c \upsilon_{k-1-a-b-c}\Bigg],\qquad k\ge 2.
 \label{eq:xi-general-exact}
\end{align}
We may omit the term with $a=0$ from the first sum on the right-hand side.
Using
\[
k+1\ge 1\qquad\text{and}\qquad \frac{a|k+1-2a|}{(k+1)^2}\le 1\qquad\text{for}\quad a=0,\dots,k,
\]
along with the inequalities \eqref{eq:$D_6$-parameters-inequalities}, the coefficients in the series \eqref{eq:v-series} are subject to the inequalities
\begin{gather}
 |\upsilon_1|\le 5\big(1+|\upsilon_0|^2\big),\qquad
 |\upsilon_2|\le \frac{1}{4|\upsilon_0|}\big[15|\upsilon_0|^2|\upsilon_1| + 5|\upsilon_1|+|\upsilon_0|^4 + 1\big],\nonumber\\
 |\upsilon_{k+1}|\le \frac{1}{|\upsilon_0|}\Bigg[\sum_{a=1}^k|\upsilon_a||\upsilon_{k+1-a}| +
 5\sum_{a=0}^k\sum_{b=0}^{k-a}|\upsilon_a||\upsilon_b||\upsilon_{k-a-b}|\nonumber\\
 \phantom{|\upsilon_{k+1}|\le}{}+5|\upsilon_k| + \sum_{a=0}^{k-1}\sum_{b=0}^{k-1-a}\sum_{c=0}^{k-1-a-b}
 |\upsilon_a||\upsilon_b||\upsilon_c||\upsilon_{k-1-a-b-c}|\Bigg],\qquad k\ge 2.
 \label{eq:xi-inequalities}
\end{gather}
Now we define a sequence of positive numbers $\{\Upsilon_k\}_{k=0}^\infty$ by taking $\Upsilon_0>0$ arbitrary and setting
\begin{gather}
 \Upsilon_1= 5\big(1+\Upsilon_0^2\big), \label{eq:V-coeffsU1}\\
 \Upsilon_2=\frac{1}{4\Upsilon_0^2}\big[15\Upsilon_0^2\Upsilon_1+5\Upsilon_1 + \Upsilon_0^4+1\big]=\frac{1}{4\Upsilon_0^2}\big[76\Upsilon_0^4 + 100\Upsilon_0^2 + 26\big], \label{eq:V-coeffsU2}\\
 \Upsilon_{k+1}=\frac{1}{\Upsilon_0}\Bigg[\sum_{a=1}^k\Upsilon_a\Upsilon_{k+1-a} + 5\sum_{a=0}^k\sum_{b=0}^{k-a}\Upsilon_a\Upsilon_b\Upsilon_{k-a-b}\nonumber\\
 \phantom{\Upsilon_{k+1}=}{}+5\Upsilon_k + \sum_{a=0}^{k-1}\sum_{b=0}^{k-1-a}\sum_{c=0}^{k-1-a-b}\Upsilon_a\Upsilon_b\Upsilon_c\Upsilon_{k-1-a-b-c}\Bigg],\qquad k\ge 2.
 \label{eq:V-coeffs}
\end{gather}
Following \cite[Proposition 1.1.1, p.\ 261]{Iwasaki_Kimura_Shimomura}, we construct an \emph{algebraic} equation formally satisfied by the power series
\begin{equation}
\mathcal{U}(z)=\sum_{k=0}^\infty\Upsilon_kz^k.
\label{eq:V-series}
\end{equation}
We first rewrite the generic $k\ge 2$ equation in \eqref{eq:V-coeffs} in the equivalent form
\begin{gather}
-3\Upsilon_0\Upsilon_{k+1} + \sum_{a=0}^{k+1}\Upsilon_a\Upsilon_{k+1-a} +
5\sum_{a=0}^k\sum_{b=0}^{k-a}\Upsilon_a\Upsilon_b\Upsilon_{k-a-b}\nonumber\\
\phantom{-3\Upsilon_0\Upsilon_{k+1} }{}+ 5\Upsilon_k +\sum_{a=0}^{k-1}\sum_{b=0}^{k-1-a}\sum_{c=0}^{k-1-a-b}\Upsilon_a\Upsilon_b\Upsilon_c\Upsilon_{k-1-a-b-c}=0,\qquad k\ge 2.
\label{eq:V-coeff-kge2}
\end{gather}
Comparing with \eqref{eq:V-series}, this is the coefficient of $z^k$ in the power series expansion about $z=0$ of the equation
\[
-\frac{3\Upsilon_0}{z}\mathcal{U} +\frac{1}{z}\mathcal{U}^2 +5\mathcal{U}^3+ 5\mathcal{U} +z\mathcal{U}^4=0.
\]
More generally, since $k\ge 2$ holds in \eqref{eq:V-coeff-kge2}, these relations are consistent also with the equation
\begin{gather}
-\frac{3\Upsilon_0}{z}\mathcal{U} +\frac{1}{z}\mathcal{U}^2 +5\mathcal{U}^3+ 5\mathcal{U} +z\mathcal{U}^4= \frac{A}{z}+B+Cz.
\label{eq:generic-V-algebraic-equation}
\end{gather}
We now pick the constants $A$, $B$, $C$ so that \eqref{eq:generic-V-algebraic-equation} is also consistent with \eqref{eq:V-coeffsU1}--\eqref{eq:V-coeffsU2} and $\mathcal{U}(0)=\Upsilon_0$ in the series \eqref{eq:V-series}. Indeed, $\mathcal{U}(0)=\Upsilon_0$ is equivalent to the following equation obtained from the coefficient of $z^{-1}$ in \eqref{eq:generic-V-algebraic-equation}:
\[
-3\Upsilon_0^2 + \Upsilon_0^2 = A\implies A=-2\Upsilon_0^2.
\]
Then taking $\Upsilon_1$ from \eqref{eq:V-coeffsU1}, the constant term in \eqref{eq:generic-V-algebraic-equation} gives the equation
\[
-3\Upsilon_0\Upsilon_1 + 2\Upsilon_0\Upsilon_1 + 5\Upsilon_0^3+5\Upsilon_0 =B\implies B=
-5\Upsilon_0(1+\Upsilon_0^2) + 5\Upsilon_0^3+5\Upsilon_0=0.
\]
Finally, obtaining also $\Upsilon_2$ from \eqref{eq:V-coeffsU2}, the coefficient of $z^1$ in \eqref{eq:generic-V-algebraic-equation} is
\begin{gather*}
-3\Upsilon_0\Upsilon_2 + 2\Upsilon_0\Upsilon_2+\Upsilon_1^2 + 15\Upsilon_0^2\Upsilon_1 + 5\Upsilon_1 + \Upsilon_0^4 = C\implies\\
C=-\Upsilon_0\Upsilon_2 +\Upsilon_1^2+15\Upsilon_0^2\Upsilon_1+5\Upsilon_1+\Upsilon_0^4\\
\phantom{C}{}=-\frac{1}{4}\big[76\Upsilon_0^4+100\Upsilon_0^2+26\big]+25\big(1+\Upsilon_0^2\big)^2 + 75\Upsilon_0^2\big(1+\Upsilon_0^2\big) +25\big(1+\Upsilon_0^2\big) + \Upsilon_0^4\\
\phantom{C}{}=82\Upsilon_0^4 +125\Upsilon_0^2 + \frac{87}{2}.
\end{gather*}
The formal series \eqref{eq:V-series} with the recurrence relations \eqref{eq:V-coeffsU1}--\eqref{eq:V-coeffs} is therefore consistent with the algebraic equation (rewriting \eqref{eq:generic-V-algebraic-equation} with the above expressions for $A$, $B$, $C$):
\begin{equation}
 -3\Upsilon_0 \mathcal{U} +\mathcal{U}^2+2\Upsilon_0^2 = z\bigg[\left(82\Upsilon_0^4 + 125\Upsilon_0^2 +\frac{87}{2}\right)z-5\mathcal{U}^3 - 5\mathcal{U} - z\mathcal{U}^4\bigg].
 \label{eq:V-algebraic-equation}
\end{equation}
However, it is a straightforward application of the implicit function theorem to observe that equa\-tion~\eqref{eq:V-algebraic-equation} has a unique solution $\mathcal{U}=\mathcal{U}(z)$ analytic at $z=0$ with $\mathcal{U}(0)=\Upsilon_0>0$ (this condition guarantees that the root $\mathcal{U}=\Upsilon_0$ of the quadratic on the left-hand side of \eqref{eq:V-algebraic-equation} is simple). This proves that the formal series \eqref{eq:V-series} with coefficients determined from \eqref{eq:V-coeffsU1}--\eqref{eq:V-coeffs} has a~positive radius of convergence for each given value $\Upsilon_0> 0$.
\begin{Theorem}\label{thm:$D_6$-$D_8$-series}
Fix $\alpha\in\mathbb{C}$ and $\beta\in\mathbb{C}$ and let $\{U_n(z;\alpha,\beta)\}_{n=1}^\infty$ be a sequence of solutions of~\eqref{eq:v-ODE} that are analytic at the origin $z=0$ and suppose that
\[\lim_{n\to\infty}U_n(0;\alpha,\beta)=\upsilon_{\infty,0}=\upsilon_{\infty,0}(\alpha,\beta)\neq 0.\]
 Then there exists a radius $\rho>0$ such that for all $n$ sufficiently large $U_n(z;\alpha,\beta)$ is analytic for $|z|<\rho$ and such that $U_n(z;\alpha,\beta)\to U_\infty(z;\alpha,\beta)$ as $n\to\infty$ uniformly for $|z|<\rho$,
where~${U(z)=U_\infty(z;\alpha,\beta)}$ is the unique solution of the Painlev\'e-{\rm III}$(D_8)$ equation \eqref{eq:PIII-$D_8$}
that is analytic at the origin with $U_\infty(0;\alpha,\beta)=\upsilon_{\infty,0}$.
\end{Theorem}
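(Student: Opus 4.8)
The plan is to prove Theorem~\ref{thm:$D_6$-$D_8$-series} by the method of majorants, leveraging the preparatory construction already carried out above. The key observation is that the recurrence \eqref{eq:xi-general-exact} for the Maclaurin coefficients $\upsilon_k=\upsilon_k(n)$ of $U_n$ has coefficients bounded uniformly in $n$ (for $n$ large) by \eqref{eq:$D_6$-parameters-inequalities}, so the dominating sequence $\{\Upsilon_k\}$ defined in \eqref{eq:V-coeffsU1}--\eqref{eq:V-coeffs}, built from a fixed $\Upsilon_0>0$, provides a majorant once we arrange $|\upsilon_0(n)|$ to stay below $\Upsilon_0$ and $1/|\upsilon_0(n)|$ below $1/\Upsilon_0$ — that is, once $\Upsilon_0$ is chosen with $0<\Upsilon_0<\liminf_n|\upsilon_0(n)|$ (possible since $\upsilon_{\infty,0}\neq 0$, shrinking if necessary so also $\Upsilon_0\le|\upsilon_{\infty,0}|\le$ whatever upper bound we need). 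An induction on $k$ using \eqref{eq:xi-inequalities} versus \eqref{eq:V-coeffsU1}--\eqref{eq:V-coeffs} then gives $|\upsilon_k(n)|\le\Upsilon_k$ for all $k$ and all large $n$; since we showed $\sum\Upsilon_kz^k=\mathcal U(z)$ has a positive radius of convergence $\rho$ via the algebraic equation \eqref{eq:V-algebraic-equation} and the implicit function theorem, the series \eqref{eq:v-series} for $U_n$ converges on $|z|<\rho$ with a uniform-in-$n$ bound, establishing equianalyticity of the family $\{U_n\}$ on that disk.

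Next I would extract the limit. Fix $z$ with $|z|<\rho$. Each coefficient $\upsilon_k(n)$ is, by \eqref{eq:c1-exact}--\eqref{eq:xi-general-exact}, a fixed rational expression in $\upsilon_0(n),\dots,\upsilon_{k}(n)$ and in $\alpha_n,\beta_n,\gamma_n,\delta_n$ with denominator a nonzero power of $\upsilon_0(n)$; since $\alpha_n\to4$, $\beta_n\to4$, $\gamma_n,\delta_n\to0$ and $\upsilon_0(n)\to\upsilon_{\infty,0}\neq0$, an easy induction on $k$ shows $\upsilon_k(n)\to\upsilon_{\infty,k}$ where $\{\upsilon_{\infty,k}\}$ satisfies the limiting recurrence, namely \eqref{eq:c1-exact}--\eqref{eq:xi-general-exact} with $(\alpha_n,\beta_n,\gamma_n,\delta_n)$ replaced by $(4,4,0,0)$ — which is precisely the recurrence one obtains by substituting a Maclaurin series into \eqref{eq:PIII-$D_8$} multiplied through by $zU$. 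The coefficientwise bound $|\upsilon_{\infty,k}|\le\Upsilon_k$ passes to the limit, so $U_\infty(z):=\sum_k\upsilon_{\infty,k}z^k$ is analytic on $|z|<\rho$; and $U_n\to U_\infty$ uniformly on $|z|\le\rho'$ for any $\rho'<\rho$ by dominated convergence of the tails against the convergent series $\sum\Upsilon_k(\rho')^k$ (split a tail $\sum_{k>N}$, bounded uniformly in $n$ by $\sum_{k>N}\Upsilon_k(\rho')^k$, from a finite head where termwise convergence applies). Uniformity on the full open disk $|z|<\rho$ then follows, or one simply states the result for $|z|<\rho$ after possibly shrinking $\rho$.

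Finally I would identify $U_\infty$ with the Painlev\'e-III$(D_8)$ solution. Since the $\upsilon_{\infty,k}$ solve the recurrence obtained from inserting \eqref{eq:v-series} into the $D_8$ analogue of \eqref{eq:v-ODE-II} (equation \eqref{eq:PIII-$D_8$} times $zU$), the convergent series $U_\infty$ formally satisfies that polynomial ODE; being analytic and nonvanishing at $z=0$ (as $U_\infty(0)=\upsilon_{\infty,0}\neq0$), it is a genuine solution of \eqref{eq:PIII-$D_8$} near the origin. Uniqueness of such a solution with prescribed value $\upsilon_{\infty,0}$ at $z=0$ is immediate from the recurrence: \eqref{eq:c1-exact}--\eqref{eq:xi-general-exact} with $(4,4,0,0)$ determine every $\upsilon_{\infty,k}$ from $\upsilon_{\infty,0}$ alone, so the Maclaurin series — hence the analytic solution — is unique. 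This gives all assertions of the theorem.

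I expect the main obstacle to be the bookkeeping in the majorant induction — verifying cleanly that the passage from the exact recurrence \eqref{eq:xi-general-exact} to the inequalities \eqref{eq:xi-inequalities} and then to the defining relations \eqref{eq:V-coeffsU1}--\eqref{eq:V-coeffs} is monotone in the right way, in particular that replacing $|\upsilon_0(n)|$ in denominators by the \emph{smaller} number $\Upsilon_0$ only increases the bounds, and handling the low-index cases $k=0,1$ (where $\upsilon_1$ and $\upsilon_2$ have their own special formulas) consistently with the general step. None of this is deep, but it must be done carefully so that the single fixed majorant series $\mathcal U(z)$ works simultaneously for all large $n$; everything downstream (convergence, the limit, identification with \eqref{eq:PIII-$D_8$}) is then routine.
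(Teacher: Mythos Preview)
Your approach is the paper's: majorize the Maclaurin coefficients by the sequence $\{\Upsilon_k\}$ from \eqref{eq:V-coeffsU1}--\eqref{eq:V-coeffs}, use its positive radius of convergence (established via \eqref{eq:V-algebraic-equation} and the implicit function theorem) to get a common disk of analyticity, pass to the coefficientwise limit by induction on \eqref{eq:c1-exact}--\eqref{eq:xi-general-exact}, and conclude by dominated convergence. Your second and third paragraphs match the paper and are correct.

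The step that fails as written is the choice of $\Upsilon_0$. You correctly identify the tension --- $|\upsilon_0(n)|$ appears in numerators (needing $|\upsilon_0(n)|\le\Upsilon_0$) and in the prefactor $1/|\upsilon_0(n)|$ (needing $|\upsilon_0(n)|\ge\Upsilon_0$) --- and these are incompatible for a single constant. You then settle on $\Upsilon_0<\liminf_n|\upsilon_0(n)|$, but this kills the induction at the base: with $|\upsilon_0(n)|>\Upsilon_0$ one cannot bound $|\upsilon_1|\le 5(1+|\upsilon_0(n)|^2)$ by $\Upsilon_1=5(1+\Upsilon_0^2)$, and the cubic and quartic sums in \eqref{eq:xi-inequalities} (which begin at $a=0$) propagate the failure. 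The paper goes the other way, taking $\Upsilon_0>2|\upsilon_{\infty,0}|$. The clean resolution you are circling is to decouple the two roles: fix $c\in(0,|\upsilon_{\infty,0}|)$ so that $|\upsilon_0(n)|>c$ for all large $n$, fix $\Upsilon_0>\sup_n|\upsilon_0(n)|$, and redefine the majorant recurrence with prefactor $1/c$ in place of $1/\Upsilon_0$. The algebraic equation \eqref{eq:V-algebraic-equation} changes only in its constant coefficients, the implicit-function-theorem argument for a positive radius of convergence is unaffected, and the induction $|\upsilon_k(n)|\le\Upsilon_k$ then goes through. With that bookkeeping fixed, everything downstream is correct as you wrote it.
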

\begin{proof}
Let $\{\upsilon_{n,k}\}_{k=0}^\infty$ denote the power series coefficients of $U_n(z;\alpha,\beta)$ as in \eqref{eq:v-series}. Define~$\Upsilon_0$ by~$\Upsilon_0>2|\upsilon_{\infty,0}|$ (say), and obtain the subsequent coefficients $\{\Upsilon_k\}_{k=1}^\infty$ via \eqref{eq:V-coeffsU1}--\eqref{eq:V-coeffs}. Comparing \eqref{eq:xi-inequalities}--\eqref{eq:V-coeffs} then shows that for all $n$ sufficiently large,
$|\upsilon_{n,k}|\le \Upsilon_k$ holds for all \linebreak ${k=0,1,2,\dots}$. For each fixed $k=0,1,2,\dots$, the recurrence relations \eqref{eq:c1-exact}--\eqref{eq:xi-general-exact} together with the limit ${\upsilon_{n,0}\to\upsilon_{\infty,0}}$ show that $\upsilon_{n,k}$ tends to a limiting value $\upsilon_{\infty,k}$ as $n\to\infty$, with $|\upsilon_{\infty,k}|\le\Upsilon_k$. The convergence of $U_n(z;\alpha,\beta)$ to a limiting analytic function $U_\infty(z;\alpha,\beta)$ with $U_\infty(0;\alpha,\beta)=\upsilon_{\infty,0}(\alpha,\beta)$ then follows by dominated convergence. That the limiting analytic function $U_\infty(z;\alpha,\beta)$ is a~solution of~\eqref{eq:PIII-$D_8$} follows from passing to the limit in each term of \eqref{eq:v-ODE} using \eqref{eq:$D_6$-parameters}. That this solution is the unique analytic solution of \eqref{eq:PIII-$D_8$} with the specified value at $z=0$ then follows from passing to the limit in the recurrence relations \eqref{eq:c1-exact}--\eqref{eq:xi-general-exact}.
\end{proof}

Now we apply this result to the rational solutions $u_n(x;m)$ of equation~\eqref{eq:PIII-$D_6$}, corresponding to ${\alpha = -\beta= 4m}$. To this end, we point out that in \cite{clarkson2018constructive}, the authors studied the Umemura polynomials $s_n(x;m)$ at $x=0$, and we begin by recalling one of their results.
\begin{Lemma}[\cite{clarkson2018constructive}]\label{lem:umemura-at-zero}
Set $y:=m+\tfrac{1}{2}$ and write $\phi_n(y):=s_n(0;m)$. If $n=2k$ is even, then
\begin{equation*}
\phi_{2k}(y)=y^k\big(y^2-1\big)^k\prod_{j=1}^{k-1}\big(y^2-(2j)^2\big)^{k-j}\big(y^2-(2j+1)^2\big)^{k-j},\qquad k=1,2,3,\dots,
\end{equation*}
while if $n=2k-1$ is odd, then
\begin{equation}
\phi_{2k-1}(y)=\phi_{2k}(y)\prod_{j=1}^{k}\big(y^2-(2j-1)^2\big)^{-1},\qquad k=1,2,3,\dots.
\label{eq:phi-shift-down}
\end{equation}
\end{Lemma}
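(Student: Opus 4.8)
The plan is to prove Lemma~\ref{lem:umemura-at-zero} by evaluating the Umemura recurrence \eqref{eq:umemura-recurrence} directly at $x=0$. Setting $x=0$ kills the term $x\big(s_n''s_n-(s_n')^2\big)$, so the recurrence collapses to
\[
s_{n+1}(0;m)=\frac{(2m+1)s_n(0;m)^2-s_n'(0;m)s_n(0;m)}{2s_{n-1}(0;m)}.
\]
This still involves the unwanted quantity $s_n'(0;m)$, so the first step is to obtain a companion recurrence for $s_n'(0;m)$. The cleanest route is to differentiate \eqref{eq:umemura-recurrence} once in $x$, evaluate at $x=0$, and note that the term $-x\big(s_n'''s_n-s_n's_n''\big)$ again contributes nothing at $x=0$ after differentiation produces the leftover factor of $x$ — leaving a polynomial relation among $s_{n\pm1}(0), s_{n\pm1}'(0), s_n(0), s_n'(0), s_n''(0)$. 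One then needs to eliminate the second derivative $s_n''(0)$; for this I would differentiate once more, or better, use the original Painlevé-III$(D_6)$ equation \eqref{eq:PIII-$D_6$} together with \eqref{eq:un-polys} to express $s_n''(0)$ in terms of lower data. In practice the simplest presentation is to change variables to $y=m+\tfrac12$ and guess (from the claimed product formulas) the auxiliary quantity $r_n(y):=s_n'(0;m)/s_n(0;m)$; plugging the ansatz into the $x=0$ recurrence should reduce everything to a scalar first-order recurrence for $r_n$, which can be checked directly.

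The bulk of the argument is then a double induction: assuming the product formulas hold for indices $n-1$ and $n$ (in the right parity class), verify that the closed form for $s_{n+1}(0;m)$ produced by the $x=0$ recurrence matches $\phi_{n+1}(y)$. Here the even/odd split in the statement is essential: going from odd index $2k-1$ to even index $2k$, and from even $2k$ to odd $2k+1$, the factor $\prod_{j=1}^k\big(y^2-(2j-1)^2\big)^{\pm1}$ in \eqref{eq:phi-shift-down} appears and disappears, so the induction naturally alternates. I would organize it as: (i) establish the base cases $\phi_{-1}=\phi_0=1$ (from \eqref{eq:umemura-recurrence_init}), compute $\phi_1$ and $\phi_2$ by hand to seed the recursion; (ii) prove the inductive step for even-to-odd; (iii) prove it for odd-to-even. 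Each step reduces to a polynomial identity in $y$ that factors telescopically — the exponents $k-j$ in the product shift to $k-j$ or $k+1-j$ and the leading factors $y^k(y^2-1)^k$ absorb the new terms.

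The main obstacle is \textbf{controlling $s_n'(0;m)$}. The recurrence \eqref{eq:umemura-recurrence} at $x=0$ does not close on the values $s_n(0;m)$ alone; one genuinely needs the derivative data, and showing that the derivative satisfies a clean enough recurrence (or has a clean enough closed form, compatible with the product formula) is the crux. A convenient way to sidestep some of this is to invoke \eqref{eq:un-polys}: since $u_n(x;m)$ is a known rational solution and $u_n(0;m)$ has the closed form \eqref{eq:u-n-rat-leading} quoted in the excerpt (equivalently $u_n(0;m)=\phi_n(y)\phi_{n-1}(y+\text{shift})/\cdots$ after translating $m\to m-1$), one gets a first-order relation $\phi_n(m-\tfrac12)\phi_{n-1}(m+\tfrac12)\big/\big(\phi_n(m+\tfrac12)\phi_{n-1}(m-\tfrac12)\big)$ equal to the ratio of Gamma functions, which pins down the ratios $\phi_n(y)/\phi_{n-1}(y)$ up to the $m$-independent normalization fixed by the base cases. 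Combined with the symmetry $\phi_n(y)=\phi_n(-y)$ (manifest in the recurrence, since \eqref{eq:umemura-recurrence} at $x=0$ is even in $2m+1=2y$ once the derivative term is handled) this should determine $\phi_n$ uniquely and match the stated product. I would present whichever of these two routes — direct derivative recurrence, or the detour through \eqref{eq:un-polys} — turns out to require the least computation; the derivative-recurrence route is more self-contained, so I would attempt that first and fall back on the Gamma-function identity only if the derivative bookkeeping becomes unwieldy.
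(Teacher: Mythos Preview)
The paper does not prove this lemma; it is quoted with attribution to \cite{clarkson2018constructive} and used as a black box. There is therefore no ``paper's own proof'' to compare against---the original argument lives in the Clarkson--Law--Lin paper, which proves it by induction on $n$ directly from the recurrence \eqref{eq:umemura-recurrence}, tracking both $s_n(0;m)$ and $s_n'(0;m)$ simultaneously.

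Your outline is in the right spirit, but two of your proposed shortcuts don't hold up. First, the claimed symmetry $\phi_n(y)=\phi_n(-y)$ is \emph{not} manifest from the recurrence at $x=0$: what you get is $2\phi_{n-1}\phi_{n+1}=2y\,\phi_n^2 - s_n'(0;m)\,\phi_n$, which is linear in $y$, not even, unless you already know something specific about $s_n'(0;m)$. The evenness is a consequence of the full structure, not an input you can invoke to avoid computation. Second, your fallback route through \eqref{eq:un-polys} and the closed form \eqref{eq:u-n-rat-leading} is not circular in this paper (that formula comes independently from the Riemann--Hilbert analysis), but it imports the entire monodromy machinery of Sections~\ref{sec:monodromy-rep-$D_6$}--\ref{sec:prop-1-proof} to prove an elementary polynomial identity; and even then, knowing the ratios $\phi_n(y-1)\phi_{n-1}(y)/\big(\phi_n(y)\phi_{n-1}(y-1)\big)$ does not by itself determine $\phi_n(y)$. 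The honest route is the one you list first: carry $s_n'(0;m)$ along as a second unknown, differentiate \eqref{eq:umemura-recurrence} once to get a coupled recurrence for the pair $(\phi_n,\,s_n'(0;m))$, guess the closed form for $s_n'(0;m)$ from small cases, and verify both by simultaneous induction. That is essentially what \cite{clarkson2018constructive} does.
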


It follows from two identities above that also
\begin{equation}
\phi_{2k+1}(y)=\phi_{2k}(y)\cdot y\cdot \prod_{j=1}^{k}\big(y^2-(2j)^2\big),\qquad k=1,2,3,\dots.
\label{eq:phi-shift-up}
\end{equation}
Using \eqref{eq:un-polys} and $s_n(0;m-1)=\phi_n\big(m-\frac{1}{2}\big)$ one has
\begin{equation}
\label{eq:u-n-zero-rational}
 u_n(0;m)=\frac{\phi_n\big(m-\tfrac{1}{2}\big)\phi_{n-1}\big(m+\tfrac{1}{2}\big)}{\phi_n\big(m+\tfrac{1}{2}\big)\phi_{n-1}\big(m-\tfrac{1}{2}\big)}.
\end{equation}
Therefore, from \eqref{eq:phi-shift-down} one gets that
\begin{gather}
u_{2k}(0;m)=\prod_{j=1}^k\frac{\big(m-\tfrac{1}{2}\big)^2-(2j-1)^2}{\big(m+\tfrac{1}{2}\big)^2-(2j-1)^2} = \frac{\displaystyle \prod_{j=1}^k \Bigg(1-\bigg(\frac{m-\tfrac{1}{2}}{2j-1}\bigg)^2\Bigg)}{\displaystyle\prod_{j=1}^k\Bigg(1-\bigg(\frac{m+\tfrac{1}{2}}{2j-1}\bigg)^2\Bigg)}.
\label{eq:u-at-zero-even}
\end{gather}
Similarly, from \eqref{eq:phi-shift-up} one gets
\begin{gather}
u_{2k+1}(0;m)=\frac{m-\tfrac{1}{2}}{m+\tfrac{1}{2}}\prod_{j=1}^k\frac{\big(m-\tfrac{1}{2}\big)^2-(2j)^2}{\big(m+\tfrac{1}{2}\big)^2-(2j)^2} = \frac{m-\tfrac{1}{2}}{m+\tfrac{1}{2}}\cdot\frac{\displaystyle\prod_{j=1}^k\Bigg(1-\bigg(\frac{m-\tfrac{1}{2}}{2j}\bigg)^2\Bigg)}{\displaystyle\prod_{j=1}^k\Bigg(1-\bigg(\frac{m+\tfrac{1}{2}}{2j}\bigg)^2\Bigg)}.
\label{eq:u-at-zero-odd}
\end{gather}
Using the infinite product formul\ae\ (see \cite[equations~(4.22.1)--(4.22.2)]{DLMF})
\[
\sin(x)=x\prod_{j=1}^\infty\left(1-\frac{x^2}{\pi^2j^2}\right),\qquad \cos(x)=\prod_{j=1}^\infty\left(1-\frac{4x^2}{\pi^2(2j-1)^2}\right),
\]
we get the following result.
\begin{Lemma} \label{cor:1} Assume that $m\in\mathbb{C}\setminus\big(\mathbb{Z}+\frac{1}{2}\big)$. Then
\begin{gather*}
\lim_{k\to\infty}u_{2k}(0;m)=
\tan\bigg(\frac{\pi}{2}\left(m+\frac{1}{2}\right)\bigg),\qquad
\lim_{k\to\infty}u_{2k+1}(0;m)=
-\cot\bigg(\frac{\pi}{2}\left(m+\frac{1}{2}\right)\bigg).
\end{gather*}
\end{Lemma}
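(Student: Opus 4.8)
The plan is to pass to the limit $k\to\infty$ directly in the closed finite-product formulae \eqref{eq:u-at-zero-even} and \eqref{eq:u-at-zero-odd} and then recognize the resulting infinite products through the classical factorizations of $\sin$ and $\cos$ recalled just above the statement.

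First I would record that the hypothesis $m\in\C\setminus\big(\Z+\tfrac12\big)$ forces $m\pm\tfrac12\notin\Z$, so none of the factors $1-\big((m\mp\tfrac12)/(2j-1)\big)^2$ or $1-\big((m\mp\tfrac12)/(2j)\big)^2$ vanishes. Since these factors are $1+\mathcal O(j^{-2})$, the infinite products $\prod_{j\ge1}\big(1-((m\mp\frac12)/(2j-1))^2\big)$ and $\prod_{j\ge1}\big(1-((m\mp\frac12)/(2j))^2\big)$ converge absolutely, and the denominators in \eqref{eq:u-at-zero-even}--\eqref{eq:u-at-zero-odd} converge to nonzero limits (indeed $\cos\big(\tfrac\pi2(m+\tfrac12)\big)\ne0$ and $\sin\big(\tfrac\pi2(m+\tfrac12)\big)\ne0$ precisely because $m\notin\Z+\tfrac12$). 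Hence the partial products converge as $k\to\infty$ and the limit of each ratio equals the ratio of the limits.

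For the even subsequence, rewriting the cosine product as $\cos(x)=\prod_{j\ge1}\big(1-(2x/\pi)^2/(2j-1)^2\big)$ and setting $x=\tfrac\pi2\big(m\mp\tfrac12\big)$ gives
\[
\prod_{j=1}^\infty\left(1-\left(\frac{m\mp\frac12}{2j-1}\right)^2\right)=\cos\!\left(\frac\pi2\Big(m\mp\tfrac12\Big)\right),
\]
so $\lim_{k\to\infty}u_{2k}(0;m)=\cos\big(\tfrac\pi2(m-\tfrac12)\big)\big/\cos\big(\tfrac\pi2(m+\tfrac12)\big)$; the phase shift $\cos\big(\tfrac\pi2(m-\tfrac12)\big)=\cos\big(\tfrac\pi2(m+\tfrac12)-\tfrac\pi2\big)=\sin\big(\tfrac\pi2(m+\tfrac12)\big)$ then yields $\tan\big(\tfrac\pi2(m+\tfrac12)\big)$. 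For the odd subsequence I would argue analogously with $\sin(x)/x=\prod_{j\ge1}\big(1-(x/(\pi j))^2\big)$ at $x=\tfrac\pi2\big(m\mp\tfrac12\big)$, which produces $\prod_{j=1}^\infty\big(1-((m\mp\frac12)/(2j))^2\big)=\sin\big(\tfrac\pi2(m\mp\tfrac12)\big)\big/\big(\tfrac\pi2(m\mp\tfrac12)\big)$; the prefactor $\tfrac{m-1/2}{m+1/2}$ in \eqref{eq:u-at-zero-odd} exactly cancels the ratio of the linear denominators $(m\mp\tfrac12)$, leaving $\sin\big(\tfrac\pi2(m-\tfrac12)\big)\big/\sin\big(\tfrac\pi2(m+\tfrac12)\big)$, and $\sin\big(\tfrac\pi2(m-\tfrac12)\big)=-\cos\big(\tfrac\pi2(m+\tfrac12)\big)$ gives $-\cot\big(\tfrac\pi2(m+\tfrac12)\big)$.

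There is no serious obstacle here: the only points meriting care are the (routine) justification that the infinite products converge to nonzero values under the stated restriction on $m$, which legitimizes exchanging "ratio" with "limit", and the bookkeeping of the index shifts and of the $\pi/2$ phase shifts relating $\cos$ and $\sin$.
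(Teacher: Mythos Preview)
Your proof is correct and follows exactly the route the paper takes: pass to the limit in the finite-product formulae \eqref{eq:u-at-zero-even}--\eqref{eq:u-at-zero-odd} using the classical infinite-product factorizations of $\sin$ and $\cos$, then simplify with the $\pi/2$ phase shift. You have in fact been more careful than the paper, which merely states ``using the infinite product formul\ae\ \dots\ we get the following result'' without spelling out the convergence justification or the trigonometric bookkeeping.
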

We then apply Theorem~\ref{thm:$D_6$-$D_8$-series} and obtain the following Corollary, which completes the local proof of Theorem~\ref{thm:limit-of-rational-solutions}.
\begin{Corollary}\label{cor:local-limit-of-rational-solutions}
Let $m\in\mathbb{C}\setminus\big(\mathbb{Z}+\tfrac{1}{2}\big)$, and let $U=U(z;m)$ denote the unique solution of the Painlev\'e-III$(D_8)$ equation \eqref{eq:PIII-$D_8$} that is analytic at the origin with $U(0;m)=\tan\big(\frac{\pi}{2}\big(m+\frac{1}{2}\big)\big)$. Then for all $z$ in a neighborhood of the origin, we have
\[
\lim_{k\to\infty} u_{2k}\left(\dfrac{z}{2k};m\right)=U(z;m),
\qquad
\lim_{k\to\infty} u_{2k+1}\left(\dfrac{z}{2k+1};m\right)=-\frac{1}{U(z;m)}.
\]
\end{Corollary}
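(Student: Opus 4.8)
The plan is to apply Theorem~\ref{thm:$D_6$-$D_8$-series} separately along the even and odd subsequences of $\{u_n(\cdot;m)\}$, feeding in the limiting initial values supplied by Lemma~\ref{cor:1}. Take $\alpha=4m$ and $\beta=-4m$, so that $u_n(x;m)$ solves \eqref{eq:PIII-$D_6$} with parameters $(\alpha+4n,\beta+4n)$ and $U_n(z;m):=u_n(z/n;m)$ solves \eqref{eq:v-ODE} with parameters \eqref{eq:$D_6$-parameters}. First I would observe that $u_n(\cdot;m)$ is analytic at $x=0$ with $u_n(0;m)\neq0$: by \eqref{eq:un-polys} this reduces to the non-vanishing of $\phi_n$ and $\phi_{n-1}$ at $m\pm\tfrac12$, which follows at once from the product formulas in Lemma~\ref{lem:umemura-at-zero} precisely because $m\notin\Z+\tfrac12$. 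Thus each $U_n(z;m)$ is analytic at $z=0$.

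Along the even subsequence, Lemma~\ref{cor:1} gives $U_{2k}(0;m)=u_{2k}(0;m)\to\tan\big(\tfrac{\pi}{2}\big(m+\tfrac12\big)\big)$, which is finite and nonzero since $m\notin\Z+\tfrac12$. One then invokes Theorem~\ref{thm:$D_6$-$D_8$-series}. Although that theorem is phrased for the full index $n$, its proof uses only the uniform bounds \eqref{eq:$D_6$-parameters-inequalities} and the limits $\alpha_n\to4$, $\beta_n\to4$, $\gamma_n\to0$, $\delta_n\to0$, all of which are inherited by any subsequence; so it applies verbatim with $n$ replaced by $2k$. This produces a radius $\rho>0$ such that $U_{2k}(z;m)$ converges uniformly for $|z|<\rho$ to the unique solution of \eqref{eq:PIII-$D_8$} analytic at the origin with value $\tan\big(\tfrac{\pi}{2}\big(m+\tfrac12\big)\big)$ there, which is $U(z;m)$ by its very definition. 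This is the first claimed limit.

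For the odd subsequence, Lemma~\ref{cor:1} gives $U_{2k+1}(0;m)\to-\cot\big(\tfrac{\pi}{2}\big(m+\tfrac12\big)\big)=-1/U(0;m)$, again finite and nonzero. The same reasoning shows $U_{2k+1}(z;m)$ converges uniformly near the origin to the unique analytic solution $V$ of \eqref{eq:PIII-$D_8$} with $V(0)=-1/U(0;m)$. To identify $V$ I would use the elementary symmetry that if $U$ solves \eqref{eq:PIII-$D_8$} then so does $-1/U$ (a one-line substitution into the equation). Since $U(0;m)\neq0$, the function $-1/U(\cdot;m)$ is analytic near $0$, solves \eqref{eq:PIII-$D_8$}, and takes the value $-1/U(0;m)$ at the origin; by the uniqueness clause in Theorem~\ref{thm:$D_6$-$D_8$-series} it coincides with $V$, yielding the second limit. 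Shrinking $\rho$ so that it works for both subsequences (and so that $U(\cdot;m)$ is nonvanishing on $|z|<\rho$) completes the argument.

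Since Theorem~\ref{thm:$D_6$-$D_8$-series} and Lemma~\ref{cor:1} carry the analytic weight, there is no real obstacle; the only points needing care are (i) that Theorem~\ref{thm:$D_6$-$D_8$-series} localizes to subsequences, (ii) that the hypothesis $m\notin\Z+\tfrac12$ is exactly what guarantees both limiting initial values are finite and nonzero, so that the uniqueness part of that theorem can be invoked, and (iii) the $U\mapsto-1/U$ symmetry of \eqref{eq:PIII-$D_8$} used to recognise the odd-subsequence limit.
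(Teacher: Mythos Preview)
Your proposal is correct and follows essentially the same approach as the paper: the corollary is stated immediately after Lemma~\ref{cor:1} with the one-line justification ``We then apply Theorem~\ref{thm:$D_6$-$D_8$-series},'' and the $U\mapsto -1/U$ symmetry you use to identify the odd-subsequence limit is noted by the paper right after the corollary. Your additional care in checking that Theorem~\ref{thm:$D_6$-$D_8$-series} localizes to subsequences and that $u_n(0;m)$ is finite and nonzero for $m\notin\mathbb{Z}+\tfrac12$ fills in details the paper leaves implicit.
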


Note that the equation \eqref{eq:PIII-$D_8$} is invariant under the $\mathbb{Z}_2$-B\"acklund transformation $U(z)\mapsto -U(z)^{-1}$, so the even/odd subsequences of rational solutions both tend to related solutions of the same equation.

\section{Asymptotic behavior of Umemura polynomials}
\label{sec:Umemura}
In this section, we obtain asymptotic results about the Umemura polynomials $s_n(x;m)$ and, as a consequence, particular $2j - k$ determinants (see \eqref{eq:lag} below).

\subsection[Painlev\'e-III tau functions, the Toda lattice, and expressing s\_n(x;m) in terms of u\_\{n+1\}(x;m)]{Painlev\'e-III tau functions, the Toda lattice,\\ and expressing $\boldsymbol{s_n(x;m)}$ in terms of $\boldsymbol{u_{n+1}(x;m)}$}

As a first step, we would like to obtain an expression of the Umemura polynomials $s_n(x;m)$ in terms of the rational Painlev\'e-III solutions themselves. We follow closely the works \cite{FW03,O87}. We introduce the Hamiltonian $H_n \equiv H_n(x;m)$ via the equation
\begin{gather}
 xH_n(x;m) = 2p_n(x;m)^2u_n(x;m)^2+p_n(x;m)\big(2x-2xu_n(x;m)^2 \nonumber\\
\hphantom{xH_n(x;m) =}{}
+(1+2m-2n)u_n(x;m)\big) -(2m+1)xu_n(x;m),
 \label{eq:$D_6$-Hamiltonian}
\end{gather}
where the momentum, $p_n \equiv p_n(x;m)$, is given by
\begin{equation}
 p_n=\frac{x}{4u_n^2}\dod{u_n}{x}+\frac{x}{2}-\frac{x}{2u_n^2}-\frac{2m-2n+1}{4u_n}.
 \label{eq:Hamiltonian-v}
\end{equation}
In other words, the canonical system
\[
\frac{\dd u_n}{\dd x}=\frac{\partial H_n}{\partial p_n}\qquad\text{and}\qquad\frac{\dd p_n}{\dd x}=-\frac{\partial H_n}{\partial u_n}
\]
is equivalent to the definition \eqref{eq:Hamiltonian-v} of $p_n$ and the PIII($D_6$) equation \eqref{eq:PIII-$D_6$} for $u=u_n(x;m)$ where $\alpha=4(n+m)$ and $\beta=4(n-m)$.

The tau function $\tau_n(x;m)$ can be defined up to a constant of integration by the relation
\begin{equation}
\dod{}{x}\ln(\tau_n(x;m))=H_n(x;m)+\frac{1}{x}u_n(x;m)p_n(x;m).
\label{eq:tau-function-log-derivative}
\end{equation}
We would like to fix the constant in this definition by choosing a path of integration going to~$x=\infty$ in the sector $|{\arg}(x)|<\pi$. To this end, it was shown in \cite{BMS18} that the rational functions~$u_n(x;m)$ behave at infinity as $u_n(x;m)=1+\mathcal{O}\big(x^{-1}\big)$. In fact, using this in the Painlev\'e-III$(D_6)$ equation \eqref{eq:PIII-$D_6$} with $\alpha=4(n+m)$ and $\beta=4(n-m)$ gives the more refined asymptotics
\begin{equation*}
 u_n(x;m)=1-\frac{n}{2x}+\frac{n(2m+n)}{8x^2} -\frac{n\big(4m^2+4mn+1\big)}{32x^3}+\mathcal{O}\big(x^{-4}\big) \qquad \text{as} \quad x\to \infty,
\end{equation*}
which, together with \eqref{eq:Hamiltonian-v} implies that the right-hand side of \eqref{eq:tau-function-log-derivative} satisfies
\begin{gather}
H_n(x;m)+\frac{1}{x}u_n(x;m)p_n(x;m) \nonumber\\
\qquad=-2 m-1-\frac{(2 m+1) (2 m-4 n+3)}{8 x}+\frac{(1+2m)(1-n)n}{8x^2}\nonumber\\
\phantom{\qquad=}{}+\frac{(1+2m)^2(n-1)n}{32x^3}+\mathcal{O}\big(x^{-4}\big) \qquad \text{as} \quad x\to \infty.
\label{eq:tau-function-log-derivative-expansion}
\end{gather}
Now, every pole $x_0\neq 0$ of $u_n(x;m)$ is simple with residue $\pm\frac{1}{2}$, and moreover
directly from \eqref{eq:PIII-$D_6$}, we find that
\[
u_n(x;m)=\pm\frac{1}{2}(x-x_0)^{-1} -\frac{1}{2x_0}\left(n+m\pm\frac{1}{2}\right)+\mathcal{O}(x-x_0),\qquad x\to x_0.
\]
Similarly, all zeros $x_0\neq 0$ of $u_n(x;m)$ are simple, with $u_n'(x_0;m)=\pm 2$, and again from \eqref{eq:PIII-$D_6$} we have
\[
u_n(x;m)=\pm 2(x-x_0)+\frac{2}{x_0}\lb \pm \frac{1}{2} +m-n\rb (x-x_0)^2 + \mathcal{O}\big((x-x_0)^3\big),\qquad x\to x_0.
\]
These expansions can be differentiated with respect to $x$ to obtain corresponding expansions of~$p_n(x;m)$ via \eqref{eq:Hamiltonian-v} and then of~$H_n(x;m)$ via \eqref{eq:$D_6$-Hamiltonian}. These expansions show that the only possible singularities $x_0\neq 0$ of the right-hand side of \eqref{eq:tau-function-log-derivative} are simple poles of residue $1$ that occur at simple zeros of $u_n(x;m)$ with $u_n'(x_0;m)=-2$. Furthermore, if $m\not\in\mathbb{Z}+\frac{1}{2}$, then $u_n(x;m)$ is analytic and nonzero at $x=0$, and it follows that the right-hand side of \eqref{eq:tau-function-log-derivative} has a simple pole at the origin with residue $-\frac{1}{8}\big(4(m-n+1)^2-1\big)$.
Therefore, arbitrarily fixing an integration constant, the tau function $\tau_n(x;m)$ then can be defined for $m\not\in\mathbb{Z}+\frac{1}{2}$ and $|{\arg}(x)|<\pi$ by
\begin{align}
\tau_n(x;m)={}&\ee^{- (2 m+1)x}x^{-\frac{(2 m+1) (2 m-4 n+3)}{8 }}\nonumber\\
&\times \exp\biggl(-\int_x^\infty\biggl( H_n(y;m)+\frac{u_n(y;m)p_n(y;m)}{y}\nonumber\\
&\phantom{\times}{}+2m+1+\frac{(2 m+1) (2 m-4 n+3)}{8 y}\biggr) \dd y\biggr),
\label{eq:tau-def-infinity}
\end{align}
where the power function denotes the principal branch, the path of integration lies in the sector~$|{\arg}(y)|<\pi$ avoiding all poles of the meromorphic integrand, and then the integral is independent of path modulo $2\pi\ii$. It then follows from \eqref{eq:tau-function-log-derivative-expansion} that $\tau_n(x;m)$ admits the expansion
\begin{gather}
\tau_n(x;m)=\ee^{- (2 m+1)x}x^{-\frac{(2 m+1) (2 m-4 n+3)}{8 }}\nonumber\\
 \phantom{\tau_n(x;m)=}{}\times \left(1+\frac{(2m+1)(n-1)n}{8x}+\frac{(2m+1)^2\big(n^2-1\big)(n-2)n}{128x^2}+\mathcal{O}\big(x^{-3}\big)\right),\nonumber \\
 \text{as} \quad x\to \infty,\qquad|{\arg}(x)|<\pi,\label{eq:tau_asym}
\end{gather}
and that $\tau_n(x;m)x^{(4(m-n+1)^2-1)/8}$ extends to a neighborhood of $x=0$ as an analytic nonvanishing function.
From the point of view of the function $\tau_n(x;m)$ the recurrence \eqref{eq:umemura-recurrence}, which defines the Umemura polynomials, is equivalent to the Toda equation. More precisely, if we define the function \begin{equation}\label{eq:hn-def}
h_n(x;m)=H_n(x;m)+\frac{u_n(x;m)p_n(x;m)}{x}-2x+\frac{n^2}{x},\end{equation}
then using Gromak's B\"acklund transformation \eqref{eq:Gromak-transformation} with $u=u_n(x;m)$, $\hat{u}=u_{n+1}(x;m)$, and~${\alpha=4(n+m)}$, $\beta=4(n-m)$, we can check that $h_n$ satisfies the identity
\begin{equation}\label{eq:first-hn}
 h_{n+1}(x;m)-h_n(x;m)=-\frac{2u_n(x;m)p_n(x;m)}{x}+\frac{2n+1}{x}.
\end{equation}
Similarly, using the inverse of Gromak's transformation \eqref{eq:Gromak-transformation}:
\begin{equation*}
 \hat{u}(x) \mapsto u(x) =\dfrac{2x\hat{u}'(x) -4x\hat{u}(x)^2 -4x +(\beta+4) \hat{u}(x) - 2\hat{u}(x)}{\hat{u}(x) \cdot \big( 2x\hat{u}'(x) - 4x \hat{u}(x)^2 -4x - (\alpha+4) \hat{u}(x) + 2\hat{u}(x)\big)},
\end{equation*}
in which $u(x)$ solves \eqref{eq:PIII-$D_6$} and $\hat{u}(x)$ solves the same equation with parameters $(\alpha,\beta)$ replaced by $(\alpha+4,\beta+4)$, one can check the identity
 \begin{equation} \label{eq:second-hn}
h_{n-1}(x;m)-h_n(x;m)=
-\frac{2u_n(x;m)p_n(x;m)}{x}-\frac{2m+1}{x}+\frac{1-2n}{x-p_n(x;m)}.
 \end{equation}
 Combining \eqref{eq:first-hn} and \eqref{eq:second-hn}, we get
 \begin{gather*}
h_{n+1}(x;m)+h_{n-1}(x;m)-2h_{n}(x;m)=
-\frac{4u_n(x;m)p_n(x;m)}{x}+\frac{2(n-m)}{x}+\frac{1-2n}{x\!-\!p_n(x;m)}.
 \end{gather*}
Differentiating \eqref{eq:hn-def}, we can notice that
\begin{equation*} \dod{}{x}\ln\left(x \dod{}{x}\left( xh_n(x;m)\right)\right)={h_{n+1}(x;m)+h_{n-1}(x;m)-2h_n(x;m)}. \end{equation*}
Given any $K_n(m) \in \C$, if we now define the function
\begin{equation*}\hat{\tau}_n(x;m)= K_n(m)\ee^{-x^2}x^{n^2}{\tau}_n(x;m), \end{equation*}
then $h_n(x;m)=\dod{}{x}\ln \hat{\tau}_n(x;m)$ and we see that
$\hat{\tau}_n(x;m)$ satisfies the Toda equation
 \begin{equation}\label{eq:toda}
 x \dod{}{x} x\dod{}{x}\ln\hat{\tau}_n(x;m)=C_n(m)\frac{\hat{\tau}_{n+1}(x;m)\hat{\tau}_{n-1}(x;m)}{\hat{\tau}_n(x;m)^2} \end{equation}
 with some constants $\{C_n(m)\}_{n=0}^\infty$ depending on the $\{K_n(m)\}_{n=0}^\infty$. We now choose the constants~$K_n(m)$ so as to have $C_n(m) = 1$. To this end, using the detailed asymptotics \eqref{eq:tau_asym}, one can check that the leading term of both sides of \eqref{eq:toda} as $x\to\infty$ is proportional to $x^2$ and equating those coefficients under the assumption that $C_n(m)=1$ yields the equation
 \[
 -4 = \dfrac{K_{n + 1}(m)K_{n-1}(m)}{K_n(m)^2},
 \]
 of which we choose a particular solution $K_n(m) = (2\ii)^{n^2}$, which yields the expression
 \begin{gather}
 \hat{\tau}_n(x;m)= (2\ii)^{n^2}\ee^{-x^2}x^{n^2}{\tau}_n(x;m).
 \label{eq:hattau-tau}
 \end{gather}
 Now if we put
\[
s_n(x;m)=\ii^{-{(n+1)^2}{}}2^{-\frac{(n+1)(n+2)}{2}} \ee^{(2 m+1)x+x^2} x^{\frac{4m^2-4n^2-8mn-16n-9}{8}}\hat{\tau}_{n+1}(x;m),
\]
then it follows from \eqref{eq:toda} with $C_n(m)=1$ that $s_n(x;m)$ satisfies the Umemura recurrence relation \eqref{eq:umemura-recurrence}. Moreover, using
\[
u_0(x;m)=1 \qquad\text{and}\qquad u_1(x;m)=\frac{8x+4m-2}{8x+4m+2}
\]
shows that the integrand in the exponent of $\tau_0(x;m)$ and $\tau_1(x;m)$ vanishes identically, from which it follows that the initial conditions \eqref{eq:umemura-recurrence_init} are satisfied as well. Since the recurrence relation and initial conditions together have a unique solution,
using \eqref{eq:tau-def-infinity} and \eqref{eq:hattau-tau}, the Umemura polynomials are given by
\begin{align}
&s_n(x;m)= 2^{\frac{n(n+1)}{2}} \ee^{ (2 m+1)x} x^{\frac{4(m-n)^2-1}{8}}{\tau}_{n+1}(x;m)\nonumber\\
 &\hphantom{s_n(x;m)}{}=(2x)^{\frac{n(n+1)}{2}} \exp\biggr(-\int_x^\infty\bigg(H_{n+1}(y;m)+\frac{u_{n+1}(y;m)p_{n+1}(y;m)}{y}\nonumber\\
&\hphantom{s_n(x;m)=}{}+2m+1+\frac{(2m+1)(2m-4n-1)}{8y}\bigg) \dd y\biggl).
 \label{eq:umemura-tau}
\end{align}
This formula achieves the goal of explicitly expressing $s_n(x;m)$ in terms of $u_{n+1}(x;m)$.
Since
\begin{align*}
\begin{split}
\frac{\dd}{\dd x}\ln(\tau_n(x;m))={}&H_n(x;m)+\frac{1}{x}u_n(x;m)p_n(x;m) \\
={}&-\frac{4(m-n+1)^2-1}{8 x}+\mathcal{O}(1) \qquad \text{as} \quad x\to 0,
\end{split}
\end{align*}
we can also use analyticity of $s_n(x;m)$ at the origin and the first line of \eqref{eq:umemura-tau} to write the alternative expression
\begin{align}
 &s_n(x;m)=s_n(0;m) \ee^{ (2 m+1)x}\exp\bigg(\int_0^x\bigg( H_{n+1}(y;m)+\frac{u_{n+1}(y;m)p_{n+1}(y;m)}{y}\nonumber\\
&\hphantom{s_n(x;m)=}{} +\frac{4(m-n)^2-1}{8 y}\bigg) \dd y\bigg).
\label{eq:umemura-tau_at_zero}
\end{align}
We could equally well have used \eqref{eq:umemura-tau_at_zero} to derive the Toda equation instead of \eqref{eq:umemura-tau}, but it is nice to have two different formul\ae\ for Umemura polynomials.

\subsection[The ratio s\_n(x;m)/s\_n(0;m) for large n and small x]{The ratio $\boldsymbol{s_n(x;m)/s_n(0;m)}$ for large $\boldsymbol{n}$ and small $\boldsymbol{x}$}

The representation \eqref{eq:umemura-tau_at_zero} can be combined with Theorem~\ref{thm:limit-of-rational-solutions} to obtain a limiting formula for~$s_n(x;m)/s_n(0;m)$ as $n\to\infty$ and $x\to 0$ at related rates. First, we note that with the notation $U_n(z;m):=u_n(z/n;m)$, from \eqref{eq:Hamiltonian-v} we obtain
\begin{align*}
 &p_n\left(\frac{z}{n};m\right)= \frac{n}{2U_n(z;m)}\\
 &\hphantom{p_n\left(\frac{z}{n};m\right)=}{} \times\bigg[1+\left(\frac{zU_n'(z;m)}{2U_n(z;m)}-m-\frac{1}{2}\right)\frac{1}{n}+\left(zU_n(z;m)-\frac{z}{U_n(z;m)}\right)\frac{1}{n^2}\bigg].
\end{align*}
Next, note that by Theorem \ref{thm:$D_6$-$D_8$-series} we can differentiate the limit in Theorem \ref{thm:limit-of-rational-solutions} for $z$ near the origin, and hence for small $z$ and $n$ even we have $U_n(z;m)\to U(z;m)$ and $U_n'(z;m)\to U'(z;m)$, while for $n$ odd we have instead $U_n(z;m)\to -U(z;m)^{-1}$ and $U_n'(z;m)\to U(z;m)^{-2}U'(z;m)$. Therefore, we have the following limit:
\begin{gather*}
\lim_{n\to\infty}\frac{1}{n}\bigg( H_n(x;m)+\frac{2}{x}u_n(x;m)p_n(x;m) +\frac{4(m-n+1)^2-1}{8x}\bigg|_{x=z/n}\bigg)\\
\qquad=\frac{zU'(z;m)^2}{8U(z;m)^2}\pm\frac{U'(z;m)}{4U(z;m)}-U(z;m)+\frac{1}{U(z;m)},
\end{gather*}
where we take the plus sign for $n$ even and the minus sign for $n$ odd, and the convergence is uniform for $|z|$ sufficiently small. It follows that if $x=z/(n+1)$ in \eqref{eq:umemura-tau_at_zero}, by the corresponding substitution $y\mapsto y/(n+1)$
\begin{equation}
\lim_{j\to\infty}\frac{s_{2j-1} \big(\frac{z}{2j} ;m\big)}{s_{2j-1}(0;m)}= \exp \bigg(\int_0^{z}\bigg(\frac{y U'(y;m)^2}{8 U(y;m)^2}+\frac{U'(y;m)}{4 U(y;m)}-U(y;m)+\frac{1}{U(y;m)}\bigg) \dd y\bigg),
\label{eq:umemura-asymtotics-2}
\end{equation}
and
\begin{equation}
 \lim_{j\to\infty}\frac{s_{2j}\big( \frac{z}{2j+1};m \big)}{s_{2j}(0;m)}=\exp\bigg(\displaystyle\int_0^z\bigg(\frac{y U'(y;m)^2}{8 U(y;m)^2}-\frac{U'(y;m)}{4 U(y;m)}-U(y;m)+\frac{1}{U(y;m)}\bigg) \dd y\bigg),
\label{eq:umemura-asymtotics-1}
\end{equation}
with the limits being uniform for $|z|$ sufficiently small.
To reduce the right-hand side in each case to the corresponding formula presented in Theorem \ref{thm:umemura_asym} we refer to Section~\ref{sec:Bessel-determinant} below.

\subsection[Asymptotic behavior of s\_n(0;m) for large n]{Asymptotic behavior of $\boldsymbol{s_n(0;m)}$ for large $\boldsymbol{n}$}
We now compute the large $n$ asymptotics of $s_n(0;m)=\phi_n\big(m+\frac{1}{2}\big)$. First we write the formula for $\phi_n(y)$ from Lemma \ref{lem:umemura-at-zero} in terms of Gamma functions
\begin{gather*}
\phi_{n}(y)=\begin{cases}\displaystyle
 \mathop{\prod}\limits_{j=1}^{\frac{n}{2}}\frac{\Gamma(y+2j)}{\Gamma(y+1-2j)},& \text{$n$ even},\\
 \displaystyle \mathop{\prod}\limits_{j=1}^{\frac{n+1}{2}}\frac{\Gamma(y+2j-1)}{\Gamma(y+2-2j)},& \text{$n$ odd.}
\end{cases}
\end{gather*}
Since we are interested in asymptotics for large $n$, we need to use the reflection formula for the Gamma function \cite[equation~(5.5.3)]{DLMF} in the denominator:
\begin{gather*}
\phi_{n}(y)=\begin{cases} \displaystyle
 \left(-\frac{\sin(\pi y)}{\pi}\right)^\frac{n}{2} \mathop{\prod}\limits_{j=1}^{\frac{n}{2}}{\Gamma(2j+y)\Gamma(2j-y)},& \text{$n$ even},\\
 \displaystyle \left(\frac{\sin(\pi y)}{\pi}\right)^\frac{n+1}{2}\mathop{\prod}\limits_{j=1}^{\frac{n+1}{2}}{\Gamma(2j-1+y)\Gamma(2j-1-y)},& \text{$n$ odd.}\\
\end{cases}
\end{gather*}
Next, we use the Gamma duplication formula \cite[equation~(5.5.5)]{DLMF} and get
\begin{gather*}
\phi_{n}(y)=
\begin{cases} \displaystyle
 \left(-\frac{\sin(\pi y)}{\pi^2}\right)^\frac{n}{2} 2^\frac{n^2}{2}
\\
 \qquad \displaystyle{}\times\prod\limits_{j=1}^{\frac{n}{2}}\Gamma\left(j+\frac{y}{2}\right)
 \Gamma\left(j-\frac{y}{2}\right)\Gamma\left(j+\frac12+\frac{y}{2}\right)\Gamma\left(j+\frac12-\frac{y}{2}\right),& \text{$n$ even,}\\
 \displaystyle \left(\frac{\sin(\pi y)}{\pi^2}\right)^\frac{n+1}{2}2^\frac{n^2-1}{2}
\\
 \qquad \displaystyle{} \times\prod \limits_{j=1}^{\frac{n+1}{2}}\Gamma\left(j+\frac{y}{2}\right)
 \Gamma\left(j-\frac{y}{2}\right)\Gamma\left(j-\frac12+\frac{y}{2}\right)\Gamma\left(j-\frac12-\frac{y}{2}\right),& \text{$n$ odd.}
 \end{cases}
\end{gather*}
Now we can rewrite $\phi_n(y)$ in terms of the Barnes $G$-function:
\begin{gather*}
\phi_{n}(y)=\begin{cases} \displaystyle
 \left(-\frac{\sin(\pi y)}{\pi^2}\right)^\frac{n}{2} 2^\frac{n^2}{2}\vspace{1mm}\\
 \qquad\displaystyle{}\times\frac{G\big(\frac{n + 2 + y}{2}\big)G\big(\frac{n + 2 - y}{2}\big)G\big(\frac{n + 3 + y}{2}\big)G\big(\frac{n + 3 - y}{2}\big)}{G\big(1+\frac{y}{2}\big)G\big(1-\frac{y}{2}\big)G\big(\frac32+\frac{y}{2}\big)G\big(\frac32-\frac{y}{2}\big)},
 &\text{$n$ even}, \vspace{1mm}\\
 \displaystyle \left(\frac{\sin(\pi y)}{\pi^2}\right)^\frac{n+1}{2}2^\frac{n^2-1}{2} \vspace{1mm}\\
 \qquad\displaystyle{}\times{\frac{G\big(\frac{n+3 + y}{2}\big)G\big(\frac{n+3 - y}{2}\big)G\big(\frac{n+2 + y}{2}\big)G\big(\frac{n+2 - y}{2}\big)}{G\big(1+\frac{y}{2}\big)G\big(1-\frac{y}{2}\big)G\big(\frac12+\frac{y}{2}\big)G\big(\frac12-\frac{y}{2}\big)}}, &{\text{$n$ odd.}}
\end{cases}
\end{gather*}
Using the large argument asymptotics of the Barnes G-function \cite[equation~(5.17.5)]{DLMF}, we get
\begin{gather*}
\phi_{n}(y)\sim \begin{cases} \displaystyle
 \frac{n^{\frac{n^2+n}{2}}\ee^{-\frac{3n^2}{4}-\frac{n}{2}}(-\sin(\pi y))^\frac{n}{2}2^{\frac{n}{2}}n^{-\frac{1}{12}+\frac{y^2}{2}}\sqrt{\pi}\ee^{\frac{1}{3}}2^{\frac{7}{12}-\frac{y ^2}{2}}}{A^4G\big(1+\frac{y}{2}\big)G\big(1-\frac{y}{2}\big)G\big(\frac32+\frac{y}{2}\big)G\big(\frac32-\frac{y}{2}\big)},& \text{$n$ even}, \vspace{1mm}\\
 \displaystyle \frac{n^{\frac{n^2+n}{2}}\ee^{-\frac{3n^2}{4}-\frac{n}{2}}(\sin(\pi y))^\frac{n+1}{2}2^{\frac{n}{2}}n^{-\frac{1}{12}+\frac{y^2}{2}}\ee^{\frac{1}{3}}2^{\frac{1}{12}-\frac{y^2}{2}}} {\sqrt{\pi}A^4G\big(1+\frac{y}{2}\big)G\big(1-\frac{y}{2}\big)G\big(\frac12+\frac{y}{2}\big)G\big(\frac12-\frac{y}{2}\big)},& \text{$n$ odd}
\end{cases}
\end{gather*}
as $n\to\infty$, where $A=\ee^{\frac{1}{12}-\zeta'(-1)}$ is Glaisher's constant \cite[equation~(5.17.6)]{DLMF}. Recalling $y=m+\frac{1}{2}$, we complete the proof of the formul\ae\ \eqref{eq:umemura-zero-asym-even}--\eqref{eq:umemura-zero-asym-odd}.

\subsection{Connection with the Fredholm determinant of the Bessel kernel}
\label{sec:Bessel-determinant}
 We have already seen how the PIII($D_8$) equation \eqref{eq:PIII-$D_8$} can be obtained from the PIII($D_6$) equation \eqref{eq:PIII-$D_6$} by confluence. There exists another, less known relation between the two equations -- namely, a quadratic transformation mapping the solutions of PIII($D_8$) to solutions of PIII($D_6$) with special parameter values. Moreover, for precisely this parameter choice the relevant PIII($D_6$) admits a family of transcendental analytic solutions that can be expressed in terms of Fredholm determinants of the continuous Bessel kernel. Under quadratic transformations, they are mapped to solutions of PIII($D_8$) analytic at $z=0$. This allows one to give yet another characterization of the PIII($D_8$) transcendent describing the large-order asymptotics of the rational PIII($D_6$)
 solutions.

 Indeed, let $U(z)$ be an arbitrary solution of the PIII($D_8$) equation \eqref{eq:PIII-$D_8$}. It is then straightforward to check that the function $\sigma(r)$ defined by
 	\begin{gather}
 	\sigma(r):=\frac{z^2U'(z)^2}{4U(z)^2}-
 2z\lb U(z)-\frac1{U(z)}\rb-4\ii z, \qquad r = 32\ii z, \label{eq:v-to-sigma}
 	\end{gather}
 satisfies the $\sigma$-form of a particular PIII($D_6$) equation, namely,\footnote{Observe that our definition of $\sigma$ differs from that of \cite{TWBessel} by a negative sign.}
 	\begin{gather}\label{sigmapiii}
 	\lb r\sigma''(r)\rb^2=\sigma'(r)(4\sigma'(r)+1)(\sigma(r)-r\sigma'(r)).
 	\end{gather}
 Indeed, letting
$
 {\varsigma}(t) := \sigma(4t) + t
$
 transforms equation \eqref{sigmapiii} to
 \begin{equation*}
 (t\varsigma''(t) )^2 = 4\varsigma'(t)(\varsigma'(t) - 1)(\varsigma(t) - t \varsigma'(t)).
 \end{equation*}
 The latter appears in \cite[equation (3.13)]{Jimbo} and \cite[equation~($E_{\mathrm{III}'}$)]{Okamoto_1980}. These relate to \eqref{eq:PIII-$D_6$} via the following transformations; letting
 \[
 q(t):= -\dfrac{t \varsigma''(t)}{2\varsigma'(t)(\varsigma'(t)-1)}
 \]
 yields (a special case of) the so-called ``prime" version of Painlev\'e-III
 \[
 \dod[2]{q}{t} = \frac{1}{q} \left(\dod{q}{t}\right)^2 - \frac1t \dod{q}{t}+ \frac{1}{t^2}q^3 + \frac1t - \frac{1}{q}.
 \]
 Next, letting $t = x^2$ and $q(t) = xu(x)$ yields \eqref{eq:PIII-$D_6$} with parameters $\alpha = 0$ and $\beta = 4$. Combining the transformations $U(z)\mapsto\sigma(r)\mapsto\varsigma(t)\mapsto q(t)\mapsto u(x)$ yields an explicit formula for $u(x)$ in terms of $U(z)$:
 \[
 u(x)=\frac{1}{2}\frac{\dd}{\dd x}\log\left(\frac{1+\ii U\big(\frac{x^2}{8\ii}\big)}{1-\ii U\big(\frac{x^2}{8\ii}\big)}\right).
 \]
 Correcting for a typo,\footnote{The relevant equation in \cite{BLM20} should be corrected to read
 \[
 u(x):=-\frac{8}{x}\Bigg[\frac{\dd}{\dd X}\log\left(X\frac{W'(X)}{W(X)}\right)\bigg|_{X=-\frac{1}{8}x^2}\Bigg]^{-1}.
 \]
 Here $W(X)$ is related to a solution $U(z)$ of \eqref{eq:PIII-$D_8$} by $W(X)=\ii U(\ii X)$.} this is equivalent to \cite[equation~(112)]{BLM20}. Note that if $U(z)$ is a solution of~\eqref{eq:PIII-$D_8$} that is analytic at $z=0$ with $U(0)\neq 0$, and hence also from \eqref{eq:PIII-$D_8$} $U'(0)=4\big(1+U(0)^2\big)$, then \eqref{eq:v-to-sigma} implies that $\sigma(r)$ is analytic at $r=0$ with $\sigma(0)=0$, and, in fact,
 \begin{gather}
 \sigma(r)=\bigg(\frac{\ii}{16}\left(U(0)-\frac{1}{U(0)}\right)-\frac{1}{8}\bigg)r + \frac{1}{256}\left(U(0)+\frac{1}{U(0)}\right)^2r^2 + \mathcal{O}\big(r^3\big),\qquad \! r\to 0.\!\!\!
 \label{eq:sigma-prime-at-zero}
 \end{gather}
 Also, differentiating \eqref{eq:v-to-sigma} and using \eqref{eq:PIII-$D_8$} to eliminate $U''(z)$ yields the relation
\begin{gather}
 -16\ii \sigma'(r)=U(z)-\frac1{U(z)}+2\ii,\qquad r=32\ii z,\label{eq:sigma-prime}
\end{gather}
 which can be regarded as an algebraic equation expressing $U(z)$ in terms of $\sigma'(32\ii z)$.

 Conversely, any solution $\sigma(r)$ of \eqref{sigmapiii} different from an affine function $a r+b$ can be mapped to a pair of solutions $U(z)$ of PIII($D_8$) related by the $\mathbb Z_2$ B\"acklund transformation ${U(z)\mapsto -1/U(z)}$ with the help of the formula \eqref{eq:sigma-prime}. To see this, one first uses \eqref{sigmapiii} to explicitly express $\sigma(r)$ in terms of its derivatives and $r$, and then differentiates the resulting expression with respect to $r$. Each term of the resulting equation has a common factor of $r\sigma''(r)$. Hence if~$\sigma(r)$ is non-affine, one may cancel this factor, and then $\sigma'(r)$, $\sigma''(r)$, and $\sigma'''(r)$ can be eliminated from the reduced equation using \eqref{eq:sigma-prime} and its derivatives. This implies that either~$U(z)^2+1=0$ or $U(z)$ is a solution of \eqref{eq:PIII-$D_8$}, and the latter admits precisely the constant solutions $U(z)=\pm\ii$ so we may conclude that any meromorphic function $U(z)$ obtained from a~non-affine solution of \eqref{sigmapiii} via \eqref{eq:sigma-prime} is a solution of \eqref{eq:PIII-$D_8$}.

 Now recall a classical result of Tracy and Widom \cite[equation (1.21) with $\alpha = 0$]{TWBessel}.
 \begin{prop} 	
\label{prop:bessel}
 The logarithmic derivative
\begin{gather}
 \sigma(r)=r \dod{}{r}\ln D_\lambda( r) \label{eq:sigma-Fredholm-det}
\end{gather}
 satisfies the $\sigma$-{\rm PIII}$(D_6)$ equation \eqref{sigmapiii}.
 \end{prop}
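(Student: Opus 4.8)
The plan is to recognize Proposition~\ref{prop:bessel} as the $\alpha=0$ specialization of the Tracy--Widom Bessel-kernel result \cite{TWBessel}. The statement concerns only the function $r\mapsto D_\lambda(r)$, so no change of variable is involved; the single point to watch is the overall sign in our definition of $\sigma$, flagged in the footnote. Since $\sigma\mapsto-\sigma$ turns \eqref{sigmapiii} into the sign variant $(r\sigma'')^2=\sigma'(1-4\sigma')(\sigma-r\sigma')$, and since \eqref{sigmapiii} is polynomial in $r,\sigma,\sigma',\sigma''$, matching it against \cite[equation~(1.21)]{TWBessel} after putting the Bessel index equal to zero is immediate. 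This is the proof I would actually write down.

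For a self-contained derivation I would run the Its--Izergin--Korepin--Slavnov/Tracy--Widom scheme directly. The kernel is of integrable type: putting $f(x):=\sqrt{x}\,J_1(\sqrt x)$ and $g(x):=J_0(\sqrt x)$ gives $K(x,y)=\dfrac{f(x)g(y)-g(x)f(y)}{2(x-y)}$, together with the first-order relations $f'=\tfrac12 g$ and $g'=-\tfrac{1}{2x}f$ coming from the Bessel identities. Introduce the resolvent $\varrho:=(\mathbf{1}-\lambda K_r)^{-1}$ on $L^2[0,r]$, the functions $Q:=\varrho f$, $P:=\varrho g$, their endpoint values $q:=Q(r)$, $p:=P(r)$, and the auxiliary inner products $(f,\varrho f)$, $(f,\varrho g)$, $(g,\varrho g)$. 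Differentiating in the endpoint $r$ and using the relations above along with the standard kernel identities for integrable operators yields a closed autonomous first-order system for this finite collection of quantities, together with the basic identity
\[
\sigma(r)=r\dod{}{r}\ln D_\lambda(r)=-r\,R_\lambda(r,r),
\]
where $R_\lambda(r,r)$ is the diagonal value of the resolvent kernel of $\lambda K_r$ and is a known bilinear expression in $q$ and $p$.

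The last step is elimination: express $\sigma'$ and $\sigma''$ through the system variables, use the system to remove all auxiliary quantities, and check that the surviving relation is exactly \eqref{sigmapiii}. Everything here is routine once the integrable structure and the differentiation identities are in place; the part demanding the most care---and hence the main obstacle in practice---is the bookkeeping, namely tracking the factor $\tfrac12$ in the kernel, the $\alpha=0$ specialization of the Bessel relations, and the chosen sign of $\sigma$, so that one lands on \eqref{sigmapiii} and not on its sign variant.
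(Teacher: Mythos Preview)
Your proposal is correct and matches the paper's treatment: the paper does not prove Proposition~\ref{prop:bessel} but simply cites it as \cite[equation~(1.21) with $\alpha=0$]{TWBessel}, noting the sign convention in a footnote, exactly as you do. Your additional sketch of the IIKS/Tracy--Widom derivation is accurate and goes beyond what the paper provides.
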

The Bessel kernel can be equivalently written as
 	 	\begin{align}
 \label{kernelconvolution}	K( x,y)=& \frac14\int_{0}^1
 	J_0\big(\sqrt{xz}\big) J_0\big(\sqrt{yz}\big) \dd z = \sum_{m=0}^\infty\sum_{n=0}^\infty
 	\frac{\lb -1\rb^{m+n}2^{-2\lb m+n+1\rb}}{\lb m!\rb^2\lb n!\rb^2 \lb m+n+1\rb} x^my^n.
 	\end{align}
The first of these identities follows from the easily verified differentiation formula
\begin{gather*}
\dod{}{z}( z K( xz,yz))=\frac14 J_0\big( \sqrt{xz}\big) J_0\big( \sqrt{yz}\big),
\end{gather*}
whereas the second one is obtained by substituting into the integral expression the standard series representation of $J_0(\cdot)$ \cite[equation~(10.2.2)]{DLMF}. Using \eqref{eq:plemelj-smithies} along with representation \eqref{kernelconvolution}, then enables one to compute the traces of powers of $K_r$ in the form of a series in $r$. It yields
\begin{gather}\label{seriesdet}
 	\ln D_\lambda( r)=-\sum_{\ell=1}^\infty\frac{\lambda^\ell r^\ell}{2^{2\ell}\ell}\sum_{n_1=0}^{\infty}\cdots\sum_{n_{2\ell}=0}^\infty \frac{\lb-r/4\rb^{\sum_{k=1}^{2\ell} n_k}}{\prod_{k=1}^{2\ell}\lb n_k!\rb^2\lb n_k+n_{k+1}+1\rb},\qquad n_{2\ell+1}=n_1.
\end{gather}
Expansions of such form are known for Fredholm determinants appearing in random matrix theory, see \cite[Section 20.5]{Mehta}.
 Let us record explicitly the few first terms of \eqref{seriesdet}:
 	\begin{align*}\ln D_\lambda( r)={}& -\frac{\lambda r}{4}\lb 1-\frac{r}{8}+\frac{r^2}{96}-\frac{5r^3}{9216}\rb-\frac{\lambda^2r^2}{32}\lb 1-\frac{r}{4}+\frac{41r^2}{1152}\rb\\
 & -
 	\frac{\lambda^3r^3}{192}\lb 1-\frac{3r}{8}\rb-
 	\frac{\lambda^4r^4}{1024}+\mathcal{O}\big( r^5\big),\qquad r\to 0,
 \end{align*}
 which implies that the Bessel determinant solution of \eqref{sigmapiii} guaranteed by Proposition~\ref{prop:bessel} has the asymptotics
 	\begin{align} \sigma(r)=r\frac{\dd}{\dd r}\ln D_\lambda(r)=& -\frac{\lambda r}{4}+\frac{1}{16} \big(\lambda -\lambda ^2\big) r^2+\frac{1}{128} \bigl(-2 \lambda ^3+3 \lambda ^2-\lambda \bigr) r^3\nonumber\\& +\frac{\bigl(-36 \lambda ^4+72 \lambda ^3-41 \lambda ^2+5 \lambda \bigr) r^4}{9216}+\mathcal{O}\big( r^5\big),\qquad r\to 0. \label{eq:sigma-Bessel-expand}
 \end{align}
 This expression is of course consistent with the differential equation \eqref{sigmapiii}.

 On the other hand, if $U(z)=U(z;m)$ is the particular solution of \eqref{eq:PIII-$D_8$} relevant to Theorem~\ref{thm:limit-of-rational-solutions}, which for $m\in\mathbb{C}\setminus\big(\mathbb{Z}+\frac{1}{2}\big)$ is analytic at the origin with
 \[U(0;m)=\tan\left(\frac{\pi}{2}\left(m+\frac{1}{2}\right)\right)\in\mathbb{C},\]
 then according to \eqref{eq:sigma-prime-at-zero}, the corresponding solution of \eqref{sigmapiii} analytic at the origin satisfies
 \begin{equation}
 \sigma(r)=-\frac{\lambda(m)}{4}r + \frac{r^2}{64\cos^2(\pi m)} + \mathcal{O}\big(r^3\big),\qquad r\to 0,\qquad \lambda(m):=\frac{1}{1+\ee^{2\pi\ii m}}.
 \label{eq:sigma-v-expand}
 \end{equation}
 Note that $\lambda(m)$ is necessarily finite for $m\in\mathbb{C}\setminus\big(\mathbb{Z}+\frac{1}{2}\big)$ and there are only two values it never takes for any $m$: $\lambda(m)\neq 0,1$. Also, the coefficient of $r^2$ cannot vanish for any $m\in\mathbb{C}$. Now we need the following result.

 \begin{prop}
 \label{prop:sigma-unique}
 Let $\sigma_1(r)$ and $\sigma_2(r)$ denote two non-affine solutions of \eqref{sigmapiii} both analytic at the origin and both satisfying $\sigma_j(r)=-\frac{1}{4}\lambda r + \mathcal{O}\big(r^2\big)$ as $r\to 0$ with $\lambda\neq 0,1$. Then $\sigma_1(r)=\sigma_2(r)$ on a neighborhood of $r=0$.
 \end{prop}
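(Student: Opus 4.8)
The plan is to prove more than the bare statement: the entire Taylor expansion at $r=0$ of any non-affine solution of \eqref{sigmapiii} that is analytic at the origin and satisfies $\sigma(r)=-\tfrac14\lambda r+\mathcal{O}(r^2)$ with $\lambda\neq 0,1$ is uniquely determined by $\lambda$. Since $\sigma_1$ and $\sigma_2$ are analytic near $r=0$ and carry the same value of $\lambda$, this forces $\sigma_1\equiv\sigma_2$ on a neighborhood of the origin. First I would write $\sigma(r)=\sum_{k\geq1}c_kr^k$; the hypotheses give $\sigma(0)=0$ and $c_1=-\tfrac14\lambda$. Substituting into \eqref{sigmapiii} and comparing coefficients of powers of $r$, the basic structural facts are that $r\sigma''(r)=\sum_{k\geq2}k(k-1)c_kr^{k-1}$ vanishes at $r=0$, so the left-hand side $(r\sigma'')^2$ is $\mathcal{O}(r^2)$; that $\sigma(r)-r\sigma'(r)=\sum_{k\geq2}(1-k)c_kr^k$ is also $\mathcal{O}(r^2)$, and crucially has \emph{no} $r^1$ term; and that $\sigma'(4\sigma'+1)$ has value $c_1(4c_1+1)=\tfrac14(\lambda^2-\lambda)$ at $r=0$, so the right-hand side is $\mathcal{O}(r^2)$ as well.

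Comparing coefficients of $r^2$ gives the quadratic relation $4c_2^2=-c_1(4c_1+1)c_2$, so either $c_2=0$ or $c_2=-\tfrac14 c_1(4c_1+1)=\tfrac1{16}(\lambda-\lambda^2)$. I would rule out the first possibility using non-affineness: if $c_2=0$ and $j\geq3$ were the smallest index with $c_j\neq0$, then the right-hand side of \eqref{sigmapiii} would have nonzero coefficient $c_1(4c_1+1)(1-j)c_j$ at order $r^j$ -- nonzero because $c_1\neq0$, $4c_1+1=1-\lambda\neq0$, $j\neq1$ and $c_j\neq0$ -- whereas the left-hand side vanishes to order $r^{2j-2}$ with $2j-2>j$; this contradiction shows no such $j$ exists, so $\sigma(r)=c_1r$ would be affine, against hypothesis. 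Hence $c_2=-\tfrac14 c_1(4c_1+1)$ is determined by $\lambda$ and coincides with the coefficient in the known expansion \eqref{eq:sigma-Bessel-expand}.

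For the induction, fix $m\geq3$ and track how $c_m$ enters the coefficient of $r^m$ on each side. On the left, because $(r\sigma'')^2$ is a Cauchy square and $r\sigma''$ has lowest-order term $2c_2r$, the only contribution of $c_m$ comes from the two symmetric cross terms and equals $4m(m-1)c_2c_m$; on the right, because $\sigma-r\sigma'$ has no $r^1$ term, the only contribution of $c_m$ is the product of $(1-m)c_m$ (the $r^m$ coefficient of $\sigma-r\sigma'$) with $c_1(4c_1+1)$ (the value of $\sigma'(4\sigma'+1)$ at $r=0$); all other occurrences of coefficients in the three factors carry indices strictly below $m$. So the balance at order $r^m$ is linear in $c_m$, with all remaining terms depending only on $c_1,\dots,c_{m-1}$, and the coefficient of $c_m$ is $(m-1)\bigl[4mc_2+c_1(4c_1+1)\bigr]$. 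Substituting $c_2=-\tfrac14 c_1(4c_1+1)$ reduces this to $-(m-1)^2c_1(4c_1+1)$, which is nonzero for every $m\geq3$ exactly because $\lambda\neq0,1$. Hence each $c_m$, $m\geq3$, is uniquely solvable in terms of $c_1,\dots,c_{m-1}$, and by induction every $c_m$ is a function of $\lambda$ alone; the proposition follows.

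The main obstacle is conceptual rather than computational: \eqref{sigmapiii} is quadratic in $\sigma''$, so near the singular point $r=0$ the standard ODE existence and uniqueness theory does not apply, and a priori the data $\sigma(0)=0$, $\sigma'(0)=-\tfrac14\lambda$ need not determine a unique analytic solution. The heart of the argument is therefore (i) reading the $r^2$-balance as a genuine indicial equation with only two admissible roots and eliminating the spurious root $c_2=0$ by the non-affine hypothesis, and (ii) verifying that the recurrence past order $2$ is non-degenerate, i.e.\ that the resonance factor $-(m-1)^2c_1(4c_1+1)$ never vanishes -- which is precisely where $\lambda\neq0$ and $\lambda\neq1$ are used. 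The only place demanding care is the Cauchy-product bookkeeping needed to confirm that no coefficient $c_{m'}$ with $m'>m$ contaminates the $r^m$ balance, so that the recurrence is genuinely triangular.
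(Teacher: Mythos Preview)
Your proof is correct and follows essentially the same approach as the paper's own proof: both expand $\sigma$ as a Taylor series at $r=0$, read off the quadratic indicial relation at order $r^2$, eliminate the root $c_2=0$ via the non-affine hypothesis (the paper does this by an inductive argument showing all higher coefficients would vanish, you do it by a minimal-index contradiction, which amounts to the same thing), and then verify that for $m\ge3$ the balance at order $r^m$ is linear in $c_m$ with nonvanishing coefficient $-(m-1)^2c_1(4c_1+1)=\tfrac14(m-1)^2\lambda(1-\lambda)$. The only cosmetic difference is that the paper writes out the full recurrence explicitly, whereas you track only the $c_m$-dependent terms; both lead to the same conclusion.
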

 \begin{proof}
 If $\sigma(r)=\sigma_{1,2}(r)$ is a solution of \eqref{sigmapiii} analytic at the origin with $\sigma'(0)=-\frac{1}{4}\lambda$, then it has a locally-convergent Taylor series
 \[
 \sigma(r)=-\frac{\lambda}{4} r + \sum_{k=2}^\infty s_kr^k,\qquad |r|<\rho
 \]
 for some $\rho>0$. Using this in the differential equation \eqref{sigmapiii}, from the coefficient of $r^2$ one obtains
 \begin{equation}
 4 s_2^2 -\frac{1}{4}\lambda(1-\lambda)s_2 = 0,
 \label{eq:sigma-first}
 \end{equation}
 whereas from the coefficient of $r^k$ for $k\ge 3$,
 \begin{gather}
 \left(4k s_2 - \frac{1}{4}\lambda (1 -\lambda)\right)s_k \nonumber\\
 \qquad = \dfrac{1}{k - 1} \Bigg( \sum_{\ell = 2}^{k - 1} \ell(k - \ell) s_{\ell } s_{k - \ell+1} - \sum_{\ell = 3}^{k - 1}(\ell - 1)\ell(k - \ell + 1)(k - \ell+2) s_{\ell } s_{k - \ell+2} \nonumber\\
 \phantom{\qquad =}{}+ \sum_{\ell = 2}^{k - 1} \sum_{j = 0}^{\ell-1}(j + 1)(\ell - j ) (k-\ell)s_{j+1} s_{\ell - j} s_{k - \ell+1}\Bigg) , \qquad k\geq 3,
 \label{eq:sigma-recurrence}
 \end{gather}
 where on the right-hand side, $s_1:=-\frac{1}{4}\lambda$. Now, \eqref{eq:sigma-first} implies that either $s_2=0$ or $s_2=\frac{1}{16}\lambda(1-\lambda)\neq 0$.

 Suppose first that $s_2=0$. Then setting $k=3$ in \eqref{eq:sigma-recurrence} gives
 \begin{equation}
 -\frac{1}{4}\lambda(1-\lambda)s_3=0\implies s_3=0,
 \label{eq:sigma-induction-base}
 \end{equation}
 since $\lambda\neq 0,1$. We now use \eqref{eq:sigma-induction-base} as the base case for an inductive argument. Suppose ${s_2=s_3=\cdots=s_k=0}$. Using \eqref{eq:sigma-recurrence} for the coefficient of $r^{k+1}$, we obtain
 \[
 -\frac{1}{4}\lambda(1-\lambda)s_{k+1}=0\implies s_{k+1}=0,
 \]
 from which it follows that $\sigma(r)=-\frac{1}{4}\lambda r$ exactly. This is a contradiction, because $\sigma(r)$ is not affine. Therefore, $s_2\neq 0$.

 Taking $s_2=\frac{1}{16}\lambda(1-\lambda)\neq 0$ as necessary, we note that in \eqref{eq:sigma-recurrence} for $k\ge 3$, $s_k$ appears only on the left-hand side with coefficient
 \[
 4ks_2 -\frac{1}{4}\lambda(1-\lambda) = \frac{1}{4}(k-1)\lambda(1-\lambda)\neq 0,
 \]
 while the right-hand side only involves $s_1,\dots,s_{k-1}$. Therefore, all subsequent coefficients $s_k$,~${k\!\ge \!3}$ are uniquely determined by the recurrence, implying that $\sigma_1(r)=\sigma_2(r)$.
 \end{proof}

\begin{Remark}
 It is worth noting that $D_1(r)=\ee^{-r/4}$, for which $\sigma(r)$ defined by \eqref{eq:sigma-Fredholm-det} is an affine function. See \cite{TWBessel}.
\end{Remark}

 Since the analytic solutions with expansions given in \eqref{eq:sigma-Bessel-expand} and \eqref{eq:sigma-v-expand} have the same leading term if $\lambda=\lambda(m)\neq 0,1$, and neither solution is an affine function, they coincide for small $|r|$. Because the function $U(z;m)$ is then determined up to the involution $U\mapsto -U^{-1}$ by \eqref{eq:sigma-prime}, we have proved the following result.

 \begin{Corollary} \label{cor:bessel-relation}
 Let $m\in\mathbb{C}\setminus\big(\mathbb{Z}+\tfrac{1}{2}\big)$. The function $U(z;m)$ appearing in the asymptotics \eqref{eq:even-odd-limit} of Theorem~{\rm \ref{thm:limit-of-rational-solutions}} is related to the continuous Bessel kernel determinant by
 \begin{gather*}
 U( z;m)-\frac1{U( z;m)}=-2\ii-\frac12\dod{}{z}z\dod{}{z}\ln D_{\lambda( m)}( 32\ii z),
 \end{gather*}
 with $\lambda( m)={1}/\big( 1+\ee^{2\pi \ii m}\big)$. In particular, the expansion of $U( z;m)$ in powers of $z$ can be read off from the series representation \eqref{seriesdet}.
 \end{Corollary}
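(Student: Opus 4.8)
The plan is to recognize that $U(z;m)$ and the Bessel-kernel determinant $D_{\lambda(m)}$ produce \emph{the same} solution of the $\sigma$-PIII$(D_6)$ equation \eqref{sigmapiii} near $r=0$, and then to read off $U$ itself via the algebraic relation \eqref{eq:sigma-prime}. Concretely I would introduce two candidate solutions. First, $\sigma_1(r):=r\frac{\dd}{\dd r}\ln D_{\lambda(m)}(r)$, which solves \eqref{sigmapiii} by Proposition~\ref{prop:bessel}, is analytic at $r=0$ since $D_{\lambda(m)}(0)=1$, and by the expansion \eqref{eq:sigma-Bessel-expand} has $\sigma_1(r)=-\frac14\lambda(m)r+\frac1{16}\big(\lambda(m)-\lambda(m)^2\big)r^2+\mathcal{O}\big(r^3\big)$. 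Second, $\sigma_2(r)$, the function built from the Theorem~\ref{thm:limit-of-rational-solutions} transcendent $U(z;m)$ through \eqref{eq:v-to-sigma} with $r=32\ii z$; the computation preceding the statement shows $\sigma_2$ solves \eqref{sigmapiii}, is analytic at $r=0$ because $U(0;m)\neq 0$, and by \eqref{eq:sigma-v-expand} has $\sigma_2(r)=-\frac14\lambda(m)r+\frac1{64\cos^2(\pi m)}r^2+\mathcal{O}\big(r^3\big)$.

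Next I would check the hypotheses of Proposition~\ref{prop:sigma-unique}: the constraint $m\in\C\setminus\big(\Z+\frac12\big)$ forces $\lambda(m)\neq 0,1$ and $\cos(\pi m)\neq 0$, so the $r^2$-coefficients of $\sigma_1$ and $\sigma_2$ are both nonzero and neither $\sigma_j$ is affine, while both have the same leading coefficient $-\frac14\lambda(m)$ with $\lambda(m)\neq 0,1$. Proposition~\ref{prop:sigma-unique} then gives $\sigma_1\equiv\sigma_2$ on a neighborhood of $r=0$. The relation \eqref{eq:sigma-prime} between $\sigma_2$ and $U(z;m)$ holds by construction (differentiate \eqref{eq:v-to-sigma} and use \eqref{eq:PIII-$D_8$}), so $U(z;m)-1/U(z;m)=-16\ii\,\sigma_2'(r)-2\ii=-16\ii\,\sigma_1'(r)-2\ii$ with $r=32\ii z$. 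Converting $r$-derivatives to $z$-derivatives via $r\frac{\dd}{\dd r}=z\frac{\dd}{\dd z}$ under $r=32\ii z$ gives $\sigma_1(r)=z\frac{\dd}{\dd z}\ln D_{\lambda(m)}(32\ii z)$ and $\sigma_1'(r)=\frac1{32\ii}\frac{\dd}{\dd z}\big(z\frac{\dd}{\dd z}\ln D_{\lambda(m)}(32\ii z)\big)$; since $-16\ii\cdot\frac1{32\ii}=-\frac12$, this is precisely the asserted identity.

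For the last assertion of the corollary I would observe that the displayed identity is a quadratic equation $U^2-F(z)U-1=0$ for $U$, where $F(z)$ is its right-hand side, whose two roots near $z=0$ are $U(z;m)$ and $-1/U(z;m)$; the normalization $U(0;m)=\tan\big(\frac\pi2(m+\frac12)\big)$ from Theorem~\ref{thm:limit-of-rational-solutions} selects the branch analytic at the origin, so the Taylor coefficients of $U(z;m)$ follow from those of $F$, which are produced by the explicit series \eqref{seriesdet} for $\ln D_{\lambda(m)}$. I do not anticipate a genuine obstacle: the substantive input is already contained in Propositions~\ref{prop:bessel} and \ref{prop:sigma-unique}, and the only points requiring care are confirming that both $\sigma_j$ are non-affine so that Proposition~\ref{prop:sigma-unique} is applicable, and the elementary change of variables relating $\frac{\dd}{\dd r}$ at $r=32\ii z$ to $\frac{\dd}{\dd z}$.
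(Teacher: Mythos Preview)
Your proposal is correct and follows essentially the same route as the paper: define the two $\sigma$-functions (one from $D_{\lambda(m)}$ via Proposition~\ref{prop:bessel}, one from $U(z;m)$ via \eqref{eq:v-to-sigma}), verify both are analytic, non-affine, and share the leading term $-\tfrac14\lambda(m)r$ with $\lambda(m)\neq 0,1$, invoke Proposition~\ref{prop:sigma-unique} to identify them, and then read off $U-1/U$ from \eqref{eq:sigma-prime}. You are slightly more explicit than the paper about the change of variables $r=32\ii z$ and about selecting the correct root of the quadratic via the initial condition, but the argument is the same.
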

Furthermore, using \eqref{sigmapiii} and \eqref{eq:sigma-Fredholm-det} shows that the integrand in \eqref{eq:umemura-asymtotics-2}, \eqref{eq:umemura-asymtotics-1} is given by
\begin{gather}
\frac{z U'(z;m)^2}{8 U(z;m)^2} \pm \frac{U'(z;m)}{4 U(z;m)}-U(z;m)+\frac{1}{U(z;m)} \nonumber\\
\quad= \dfrac{1}{2z}\sigma(32\ii z) + 2\ii \pm \dfrac{U'(z; m)}{4U(z; m)}=\frac{1}{2}\frac{\dd}{\dd z}\ln D_{\lambda(m)}(32\ii z) + 2\ii \pm\frac{U'(z;m)}{4U(z;m)}\label{eq:relation-to-bessel}
\end{gather}
and we get Theorem \ref{thm:umemura_asym}.

\subsection[Connection with 2j - k determinants]{Connection with $\boldsymbol{2j - k}$ determinants}
On the other hand, the Umemura polynomials admit the following Wronskian determinant representation \cite{Kajiwara_Masuda}:
\[ s_{n}(x;m)=\prod_{k=1}^n(2k-1)!!\det\big(L_{2i-j}^{(m+1/2-2i+j)}(-2x)\big)_{i,j=1}^{n},\]
where for parameter $\alpha\in\mathbb{C}$ and index $k\in\mathbb{Z}$, $L_{k}^{(\alpha)}(x)$ are generalized Laguerre polynomials for~$k\geq 0$, while $L_{k}^{(\alpha)}(x)=0$ for $k<0$.

Expressions like that on the right-hand side are called Wronskian Appell polynomials in~\cite{BHSS}; similar formul\ae\ hold for rational solutions of other Painlev\'e equations as well. Wronskian determinants of generalized Laguerre polynomials were also studied in \cite{CD}.

The generalized Laguerre polynomials admit the following integral representation \cite[equation~(18.10.8)]{DLMF}:
\begin{equation}\label{eq:contlaguerre}
 L_k^{(\alpha)}(x)=\frac{\ee^x x^{-\alpha}}{2\pi \ii}\int_{|z-x|=\varepsilon}z^k\frac{z^\alpha \ee^{-z}}{(z-x)^{k+1}} \dd z
\end{equation}
for $|{\arg}(x)|<\pi$ (analytically continuable to $x\in\mathbb{C}$) where $\varepsilon >0$ is small enough so that the branch cut $z\le 0$ is outside the contour of integration. Making the transformation \[z=x+\frac{yx}{2}\] in \eqref{eq:contlaguerre} yields
\begin{equation*}
 L_k^{(\alpha)}(x)=\frac{1}{2^{\alpha}}\int_{|y|=\varepsilon}y^{-k}{(y+2)^{k+\alpha} \ee^{-\frac{yx}{2}}}\frac{\dd y}{2\pi \ii y}.
\end{equation*}
We denote
\[w(y;x,m):=(y+2)^{m+\frac{1}{2}} \ee^{{yx}{}} \qquad \text{and} \qquad w_k(x,m):=\int_{|y|=1}y^{-k}w(y;x,m)\frac{\dd y}{2\pi \ii y}.\]
Using this notation, we obtain
\begin{equation}\label{eq:lag}
 s_{n}(x;m)=\Bigg[\prod_{\ell=1}^n(2\ell-1)!!\Bigg]2^{mn-\frac{n^2}{2}}\det(w_{2j-k}(x,m))_{j,k=1}^{n}.
\end{equation}
Similar ``$2j - k$" determinants have appeared in various works in the literature, see, e.g., \cite{gharakhloo_witte}. Denoting
\begin{equation*}
\boldsymbol{\mathscr{D}}_n(x;m)=\det\left(w_{2j-k}(x,m)\right)_{j,k=1}^{n},
\end{equation*}
it immediately follows from \eqref{eq:umemura-asymtotics-1}, \eqref{eq:umemura-asymtotics-2} that in the limit $j\to\infty$,
\begin{align*}
\boldsymbol{\mathscr{D}}_{2j}\left(\frac{z}{2j+1};m\right)\sim{}&\frac{s_{2j}(0;m)2^{2{j^2}{}-2mj}}{\prod_{\ell=1}^{2j}(2\ell-1)!!}\\
&\times \exp\Bigg(\int_0^z\left(\frac{y U'(y;m)^2}{8 U(y;m)^2}-\frac{U'(y;m)}{4 U(y;m)}-U(y;m)+\frac{1}{U(y;m)}\right) \dd y\Bigg)
\end{align*}
and
\begin{align*}
\boldsymbol{\mathscr{D}}_{2j-1}\left(\frac{z}{2j};m\right) \sim{}&\frac{s_{2j-1}(0;m)2^{2j^2-2(m+1)j+m+\frac{1}{2}}}{\prod_{\ell=1}^{2j-1}(2\ell-1)!!}\\
&\times \exp\Bigg(\int_0^{z}\left(\frac{y U'(y;m)^2}{8 U(y;m)^2}+\frac{U'(y;m)}{4 U(y;m)}-U(y;m)+\frac{1}{U(y;m)}\right) \dd y\Bigg).
\end{align*}
To write the analog of Theorem \ref{thm:umemura_asym} for $\boldsymbol{\mathscr{D}}_n(x;m)$ we need to compute the asymptotic behavior of $\prod_{k=1}^{n}(2k-1)!!$. We use \cite[equation~(5.4.2)]{DLMF} to get
\begin{align*}
\prod_{\ell=1}^{2n-1}(2\ell-1)!!&= \pi^{-\frac{n}{2}}2^{\frac{n(n+1)}{2}}\prod_{\ell=1}^n\Gamma\left(\ell+\frac12\right)=\pi^{-\frac{n}{2}}2^{\frac{n(n+1)}{2}}\frac{G\big(n+\frac32\big)}{G\big(\frac32\big)}\\
&\sim n^{\frac{n^2}{2}+\frac{n}{2}}\ee^{-\frac{3n^2}{4}-\frac{n}{2}}2^{\frac{n^2}{2}+n}n^\frac{1}{24}2^{\frac{5}{24}}\ee^{-\frac{\zeta'(-1)}{2}},\qquad n\to\infty.
\end{align*}
Combining this with formul\ae\ \eqref{eq:umemura-zero-asym-even}, \eqref{eq:umemura-zero-asym-odd} and using \eqref{eq:relation-to-bessel}, we get
\begin{gather*}
\boldsymbol{\mathscr{D}}_{2j}\left(\frac{z}{2j+1};m\right) \\
\qquad\sim\frac{2^{-j(2m+1)}(-\cos(\pi m))^jj^{\frac{m^2}{2}+\frac{m}{2}}2^{\frac{1}{4}}\sqrt{\pi}\ee^{\frac{9\zeta'(-1)}{2}}}{G\big(\frac{3}{4}-\frac{m}{2}\big)G\big(\frac{5}{4}-\frac{m}{2}\big)G\big(\frac{5}{4}+\frac{m}{2}\big)G\big(\frac{7}{4}+\frac{m}{2}\big)}\ee^{2\ii z}\left(\frac{U(z;m)}{U(0;m)}\right)^{-\frac{1}{4}} \sqrt{D_{\lambda(m)}\lb 32\ii z\rb},\\
\boldsymbol{\mathscr{D}}_{2j-1}\left(\frac{z}{2j};m\right) \\
\qquad\sim\frac{2^{-j(2m+1)}(\cos(\pi m))^jj^{\frac{m^2}{2}+\frac{m}{2}}2^{\frac{1}{4}+m}\ee^{\frac{9\zeta'(-1)}{2}}}{\sqrt{\pi}G\big(\frac{1}{4}-\frac{m}{2}\big)G\big(\frac{3}{4}-\frac{m}{2}\big)G\big(\frac{3}{4}+\frac{m}{2}\big)G\big(\frac{5}{4}+\frac{m}{2}\big)}\ee^{2\ii z}\left(\frac{U(z;m)}{U(0;m)}\right)^{\frac{1}{4}} \sqrt{D_{\lambda(m)}\lb 32\ii z\rb},
\end{gather*}
both in the limit $j\to\infty$.
\section[General monodromy data: Painleve-III(D\_6)]{General monodromy data: Painlev\'e-III($\boldsymbol{D_6}$)}
\label{sec:monodromy-rep-$D_6$}

\subsection[Lax pair for Painleve-III(D\_6)]{Lax pair for Painlev\'e-III($\boldsymbol{D_6}$)}
\label{sec:lax-pair-$D_6$}

Following Jimbo and Miwa \cite{MR625446}, we use the fact that each Painlev\'e equation can be recast as an isomonodromic deformation condition for a $2\times 2$ system of linear ODEs with rational coefficients. The case of Painlev\'e-III ($D_6$) corresponds to the situation where the coefficient matrix for the equation in the spectral variable, $\lambda$, has exactly two poles on the Riemann sphere leading to irregular singularities at $\lambda = 0$ and $\lambda = \infty$, at each of which the leading term is diagonalizable. After some normalization, the differential equation can be written in the form\footnote{Henceforth, we use bold capital letters to denote matrices, with the only exceptions being the identity matrix, denoted $\mathbb{I}$ and the Pauli matrices, denoted $\sigma_k$, $k = 1, 2, 3$.}
\begin{equation}
\label{eq:generic-system}
\frac{\partial\mathbf{\Psi}}{\partial\lambda} = \mathbf{\Lambda}^{(6)}(\lambda, x) \mathbf{\Psi}(\lambda, x),
\end{equation}
where
\begin{equation}
\label{eq:lambda-coefficient-D6}
\mathbf{\Lambda}^{(6)}(\lambda, x) = \frac{\ii x}{2}\sigma_3 + \dfrac{1}{2\lambda} \begin{bmatrix} -\Theta_\infty & 2y \\ 2v & \Theta_\infty \end{bmatrix} + \dfrac{1}{2\lambda^2} \begin{bmatrix} \ii x - 2\ii st & 2\ii s \\ -2\ii t(st - x) & -\ii x + 2\ii st\end{bmatrix},
\end{equation}
In this case, the deformation equation is
\begin{equation}
 \frac{\partial\mathbf{\Psi}}{\partial x} = \mathbf{X}(\lambda, x) \mathbf{\Psi}(\lambda, x),
 \label{eq:generic-deformation}
\end{equation}
where
\begin{equation*}
\mathbf{X}(\lambda, x) = \dfrac{\ii \lambda}{2} \sigma_3 + \dfrac{1}{x} \begin{bmatrix} 0 & y \\ v & 0 \end{bmatrix} - \dfrac{1}{2\lambda x} \begin{bmatrix} \ii x - 2\ii st & 2\ii s \\ -2\ii t(st - x) & -\ii x + 2\ii st\end{bmatrix}.
\end{equation*}
In the expressions for $\mathbf{\Lambda}^{(6)}(\lambda, x)$ and $\mathbf{X}(\lambda, x)$, $\Theta_\infty$ is a complex parameter and $s = s(x)$, $t = t(x)$, $ v = v(x)$, $y = y(x)$. The equations \eqref{eq:generic-system} and \eqref{eq:generic-deformation} constitute an over-determined system with compatibility condition
\[
\dpd{\mathbf{\Lambda}^{(6)}}{x}(\lambda, x) - \dpd{\mathbf X}{\lambda}(\lambda, x) + \big[\mathbf{\Lambda}^{(6)}(\lambda, x), \mathbf X(\lambda, x)\big] = \mathbf{0},
\]
where $\big[\mathbf{\Lambda}^{(6)}, \mathbf{X}\big]$ is the commutator. This boils down to the scalar equations
\begin{alignat}{3}
& x \dod{y}{x} = -2xs + \Theta_\infty y, \qquad&&
x \dod{v}{x} = -2xt(st - x) - \Theta_\infty,&\nonumber \\
&x \dod{s}{x} = (1 - \Theta_\infty)s - 2xy + 4yst, \qquad&&
x \dod{t}{x} = \Theta_\infty t - 2yt^2 + 2v.&
\label{eq:scalar-equations}
\end{alignat}
If we let
\begin{equation}
u(x) := - \dfrac{y(x)}{s(x)},
\label{eq:u-from-y-s}
\end{equation}
then it follows from \eqref{eq:scalar-equations} that
\[
x \dod{u}{x} = 2x - (1 - 2\Theta_\infty) u + 4stu^2 - 2xu^2,
\]
which can be seen to be equivalent to \eqref{eq:PIII-$D_6$} by taking another $x$-derivative and using \eqref{eq:scalar-equations} again, after which the quantity
\[
I(x) := \dfrac{2\Theta_\infty }{x} st - \Theta_\infty - \dfrac{2yt}{x}(st - x) + \dfrac{2sv}{x},
\]
appears. However, from \eqref{eq:scalar-equations} it follows that $I'(x) = 0$, so denoting the constant value of $I$ by~$\Theta_0$, we arrive at \eqref{eq:PIII-$D_6$} with parameters
\begin{equation}
 \Theta_0=\frac{\alpha}{4}, \qquad \Theta_\infty=1-\frac{\beta}{4}.
 \label{eq:thetas-alpha-beta}
\end{equation}

The constants $\Theta_0$, $\Theta_\infty$ can be naturally interpreted on the level of the $2 \times 2$ system \eqref{eq:generic-system}, which we now explore. For all the calculations that follow, we assume for simplicity that $x > 0$. The system \eqref{eq:generic-system} admits formal solutions near the singular points\footnote{Here, we use the standard notation $f^{\sigma_3} := \mathrm{diag}\big(f, f^{-1}\big)$.}
\begin{equation}
\mathbf{\Psi}_{\text{formal}}^{(\infty)}(\lambda, x) = \big( \mathbb{I} +\mathbf{\Xi}^{(6)}(x)\lambda^{-1}+ \mathcal{O}\big(\lambda^{-2}\big) \big) \ee^{\ii x \lambda \sigma_3 /2} \lambda^{-\Theta_\infty \sigma_3/2} \qquad \text{as} \quad \lambda \to \infty,
\label{eq:formal-infty}
\end{equation}
and
\begin{equation}
\mathbf{\Psi}^{(0)}_{\text{formal}}(\lambda, x) = \big( \mathbf{\Delta}^{(6)}(x) + \mathcal{O}(\lambda) \big) \ee^{-\ii x \lambda^{-1} \sigma_3 /2} \lambda^{\Theta_0 \sigma_3/2} \qquad \text{as} \quad \lambda \to 0.
\label{eq:formal-zero}
\end{equation}
Here $\mathbf{\Delta}^{(6)}(x)$ is an (invertible) eigenvector matrix of the coefficient of $\lambda^{-2}$ in \eqref{eq:lambda-coefficient-D6}, so the leading term of $\mathbf{\Delta}^{(6)}(x)^{-1}\mathbf{\Lambda}^{(6)}(\lambda,x)\mathbf{\Delta}^{(6)}(x)$ at $\lambda=0$ is diagonal.

For $k = 1, 2,3$, we define the Stokes sectors,
\begin{gather*}
S_k^{(\infty)} = \bigl\{ \lambda \in \C \colon |\lambda| > R, \ k\pi - 2\pi < \arg (\lambda) < k\pi \bigr \},\\
S_k^{(0)} = \bigl \{ \lambda \in \C \colon |\lambda| < r, \ k\pi - 2\pi < \arg (\lambda) < k\pi \bigr\}.
\end{gather*}
 It follows from the classical theory of linear systems that there exist \emph{canonical solutions} \smash{$\mathbf{\Psi}_k^{(\infty)}$} and~\smash{$\mathbf{\Psi}_k^{(0)}$} analytic for \smash{$\lambda\in S_k^{(\infty)}$} and \smash{$\lambda\in S_k^{(0)}$} respectively and determined uniquely by the asymptotic condition
\begin{equation}
\mathbf{\Psi}_k^{(\nu)}(\lambda, x) = \mathbf{\Psi}^{(\nu)}_{\text{formal}}(\lambda, x), \qquad \lambda \in S_k^{(\nu)}, \qquad \nu \in \{0, \infty\}, \qquad k=1,2,3.
\label{eq:psi-asymptotic-condition}
\end{equation}
In these asymptotic conditions, the meaning of the power functions in \eqref{eq:formal-infty} and \eqref{eq:formal-zero} is determined from the range of $\arg(\lambda)$ in the definition of \smash{$S_k^{(\nu)}$}.
The canonical solutions in consecutive Stokes sectors are related to one another by multiplication on the right with \emph{Stokes matrices}, i.e.,
\begin{align}
\label{eq:stokes-condition-1}\mathbf{\Psi}_2^{(\infty, 0)}(\lambda, x) &= \mathbf{\Psi}_1^{(\infty, 0)}(\lambda, x) \mathbf{S}^{\infty, 0}_{1}, \qquad
\mathbf{\Psi}_3^{(\infty, 0)}(\lambda, x) = \mathbf{\Psi}_2^{(\infty, 0)}( \lambda, x) \mathbf{S}^{\infty, 0}_{2},
\end{align}
where for some \emph{Stokes multipliers} $s_j^{\infty,0}\in\mathbb{C}$, $j=1,2$,
\begin{equation}
\mathbf{S}^{\infty, 0}_{1}= \begin{bmatrix}1&s^{\infty, 0}_1\\0&1\end{bmatrix},\qquad \mathbf{S}^{\infty, 0}_{{2}}=
\begin{bmatrix}1&0\\{s^{\infty, 0}_2}&1\end{bmatrix}.
 \label{eq:Stokes-infty}
\end{equation}
Likewise, by uniqueness and the different interpretation of the multi-valued powers in the formal solutions on the otherwise identical sectors $S^{(\infty,0)}_1$ and $S^{(\infty,0)}_3$, we have the identities
\begin{align}
\mathbf{\Psi}_3^{(\infty)}(\lambda, x) &= \mathbf{\Psi}_1^{(\infty)}\big(\ee^{-2\pi \ii} \lambda, x\big) e_\infty^{-2\sigma_3},
\qquad
\mathbf{\Psi}_3^{(0)}(\lambda, x) = \mathbf{\Psi}_1^{(0)}\big(\ee^{-2\pi \ii} \lambda, x\big) e_0^{2\sigma_3},\label{eq:stokes-condition-4}
\end{align}
where, combining \eqref{eq:e0-einfty-alpha-beta} with \eqref{eq:thetas-alpha-beta} gives
\begin{equation}
 e_0 = \ee^{\ii \pi \Theta_0/2},\qquad e_\infty = \ee^{\ii \pi \Theta_\infty/2}.
 \label{eq:e0-einfty}
\end{equation}

Canonical solutions in, say $S_k^{(\infty)}$ admit analytic continuation into $S_k^{(0)}$ and since both canonical solutions solve \eqref{eq:generic-system} in the same domain, they must be related by multiplication on the right by a constant \emph{connection matrix}, which we define using
\begin{align}
\label{eq:connection-matrix-1}\mathbf{\Psi}_1^{(0)}(\lambda, x) &= \mathbf{\Psi}_1^{(\infty)}(\lambda, x) \mathbf{C}_{0\infty}^+, \\
\label{eq:connection-matrix-2}\mathbf{\Psi}_2^{(0)}(\lambda, x) &= \mathbf{\Psi}_2^{(\infty)}(\lambda, x) \mathbf{C}_{0\infty}^-.
\end{align}

The condition that the coefficients $y$, $v$, $s$, $t$ in the matrix $\mathbf{\Lambda}^{(6)}(\lambda,x)$ depend on $x$ as a solution of \eqref{eq:scalar-equations} implies simultaneous solvability of \eqref{eq:generic-system} and \eqref{eq:generic-deformation}, and the latter system implies that the Stokes matrices and connection matrices are, like $\Theta_0$ and $\Theta_\infty$, independent of $x$. We show below in Sections~\ref{subsec:cyclic} and \ref{subsec:monodromy-D6} that the four Stokes multipliers and the elements of the two connection matrices are determined from just two essential \emph{monodromy parameters} that we denote by $e_1$ and $e_2$.

\subsection[Riemann--Hilbert problem for Painlev\'e-III(D\_6)]{Riemann--Hilbert problem for Painlev\'e-III($\boldsymbol{D_6}$)} \label{sec:initial-rhp}
Using the canonical solutions, we define the following sectionally-analytic function
\[
\mathbf{\Psi}(\lambda, x) = \begin{cases}
\mathbf \Psi ^{(\infty)}_1(\lambda, x), & |\lambda| >1 \ \text{and} \ \re (\lambda) > 0, \\
\mathbf \Psi ^{(\infty)}_2(\lambda, x), & |\lambda| >1 \ \text{and} \ \re (\lambda) < 0, \\
\mathbf \Psi ^{(0)}_1(\lambda, x), & |\lambda| <1 \ \text{and} \ \re (\lambda) > 0, \\
\mathbf \Psi ^{(0)}_2(\lambda, x), & |\lambda| <1 \ \text{and} \ \re (\lambda) < 0. \\
\end{cases}
\]
Then, it follows from the asymptotic conditions \eqref{eq:psi-asymptotic-condition} and the relations \eqref{eq:stokes-condition-1}--\eqref{eq:stokes-condition-4} and \eqref{eq:connection-matrix-1}--\eqref{eq:connection-matrix-2} that $\mathbf{\Psi}$ solves the following $2 \times 2$ Riemann--Hilbert problem.
Let $\lambda_{\lw}^p$ denote the branch of the power function analytic in $\C\setminus \ii \R_-$ with argument chosen so that
\begin{equation}
-\dfrac{\pi}{2} < \mathrm{arg}_{\lw}(\lambda) < \dfrac{3\pi}{2}.
\label{eq:lw-branch-x-real}
\end{equation}
The notation reminds us that the branch cut of these functions is the contour carrying lower triangular Stokes matrices.
\begin{rhp} \label{rhp:initial}
Fix generic monodromy parameters $( e_1, e_2 )$ determining the Stokes and connection matrices, and $x>0$. We seek a $2 \times 2$ matrix function $\lambda\mapsto\mathbf \Psi(\lambda, x)$ satisfying:
\begin{itemize}\itemsep=0pt
\item Analyticity: $\mathbf \Psi (\lambda, x)$ is analytic in $\C \setminus L^{(6)}$, where $L^{(6)} = \{ | \lambda| = 1\} \cup \ii \R$ is the jump contour shown in Figure {\rm\ref{fig:1}}.
\item  Jump condition: $\mathbf \Psi (\lambda, x)$ has continuous boundary values on $L^{(6)} \setminus \{0\}$ from each component of $\mathbb{C}\setminus L^{(6)}$, which satisfy
$\mathbf \Psi_+ (\lambda, x) = \mathbf \Psi_- (\lambda, x) \mathbf J_{\mathbf \Psi}(\lambda)$, where $\mathbf J_{\mathbf \Psi}(\lambda)$ is as shown in Figure {\rm\ref{fig:1}} and where the $+$ $($resp., $-)$ subscript denotes a boundary value taken from the left $($resp., right$)$ of an arc of $L^{(6)}$.
\item  Normalization: $\mathbf \Psi$ satisfies the asymptotic conditions
\begin{equation}
\label{eq:Psi-asymptotic-infinity}
\boldsymbol \Psi(\lambda, x) = \big( \mathbb{I} +\mathbf{\Xi}^{(6)}(x)\lambda^{-1}+ \mathcal{O}\big(\lambda^{-2}\big) \big) \ee^{\ii x \lambda \sigma_3 /2} \lambda_{\lw}^{-\Theta_\infty \sigma_3/2} \qquad \text{as} \quad \lambda \to \infty,
\end{equation}
and
\begin{equation}
\label{eq:Psi-asymptotic-zero}
\boldsymbol \Psi(\lambda, x) = \big( \mathbf{\Delta}^{(6)}(x) + \mathcal{O}(\lambda) \big) \ee^{-\ii x \lambda^{-1} \sigma_3 /2} \lambda_{\lw}^{\Theta_0 \sigma_3/2} \qquad \text{as} \quad \lambda \to 0,
\end{equation}
where $\mathbf{\Delta}^{(6)}(x)$ is a matrix determined from $\boldsymbol\Psi(\lambda,x)$ having unit determinant.
\end{itemize}
\end{rhp}
\begin{figure}[t]\centering
\includegraphics[scale = 1]{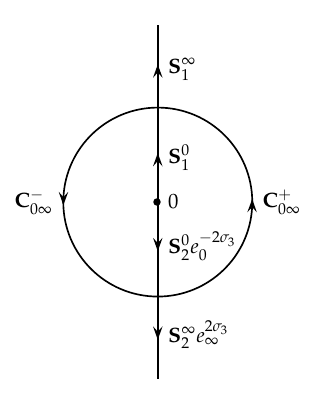}
\caption{The jump contour $L^{(6)}$ for $\mathbf{\Psi}(\lambda, x)$ and definition of $\mathbf J_{\mathbf \Psi}(\lambda)$ when $x>0$.}
\label{fig:1}
\end{figure}

Observe that if $\mathbf \Psi$ solves Riemann--Hilbert Problem~\ref{rhp:initial}, then the following limit exists:
\begin{equation}
\label{eq:T-definition}
 \mathbf{\Xi}^{(6)}(x):=\lim_{\lambda\to\infty}\lambda\big[\mathbf{\Psi}(\lambda,x)\ee^{-\ii x\lambda\sigma_3/2}\lambda_{\lw}^{\Theta_\infty\sigma_3/2}-\mathbb{I}\big].
\end{equation}
 Existence of a solution $\mathbf{\Psi}(\lambda, x)$ to Riemann--Hilbert Problem \ref{rhp:initial} which is meromorphic in $x$ on a covering of the plane is well established; see, e.g., \cite[Theorem 5.4]{FIKN}. Furthermore, it follows from the Riemann--Hilbert problem that $\det (\mathbf{\Psi}(\lambda; x)) = 1$; hence $\mathbf{\Xi}^{(6)}(x)$ defined by \eqref{eq:T-definition} has zero trace. The solution of the Painlev\'e-III($D_6$) equation for the initial data that generated the matrices for the inverse monodromy problem is given by
 \begin{equation}
 u(x)=\frac{-\ii \Xi^{(6)}_{12}(x)}{{\Delta}^{(6)}_{11}(x){\Delta}^{(6)}_{12}(x)},
 \label{eq:u-recover-general}
 \end{equation}
where $\mathbf{\Delta}^{(6)}$, $\mathbf{\Xi}^{(6)}$ are as in \eqref{eq:Psi-asymptotic-zero}, \eqref{eq:T-definition}, respectively.

To study the direct monodromy problem and obtain the jump matrices given just the values of $u$ and $u'$ at an initial point $x_0$, it is necessary to introduce artificial initial values of the auxiliary functions $s$, $t$, $v$, $y$ at $x_0$ in way consistent with the definition \eqref{eq:u-from-y-s} of $u(x)$. Different consistent choices lead to different jump matrices, but the jump matrices determine the same function $u(x)$ via \eqref{eq:u-recover-general}. This symmetry is reflected at the level of $\mathbf{\Psi}(\lambda,x)$ by the conjugation
 $\mathbf{\Psi}(\lambda,x)\mapsto \delta^{-\sigma_3}\mathbf{\Psi}(\lambda,x)\delta^{\sigma_3}$
 for any $\delta\neq 0$. Another symmetry that also leaves $u(x)$ invariant but changes the jump matrices $\mathbf{C}_{0\infty}^\pm$ is multiplication of $\mathbf{\Psi}(\lambda,x)$ on the right for $|\lambda|<1$ only by a~unit-determinant diagonal matrix. Therefore, having obtained the jump matrices for the inverse monodromy problem via a direct monodromy calculation, after the fact we may introduce an arbitrary transformation of $\mathbf{\Psi}(\lambda,x)$ of the form
\begin{equation}\label{eq:transf}
 \boldsymbol \Psi(\lambda,x)\mapsto \widetilde{\mathbf{\Psi}}(\lambda,x):=\begin{cases}
 \delta^{-\sigma_3} \boldsymbol \Psi(\lambda,x)\gamma^{\sigma_3},&|\lambda|<1,\\
 \delta^{-\sigma_3}\boldsymbol \Psi(\lambda,x)\delta^{\sigma_3},&|\lambda|>1
 \end{cases}
\end{equation}
without changing $u(x)$. This transformation modifies the Stokes matrices as follows:
 \begin{equation}\label{eq:stokes-transformation}
 \mathbf{S}^\infty_{1,2}\mapsto \widetilde{\mathbf{S}}^\infty_{1,2}:=\delta^{-\sigma_3}\mathbf{S}^\infty_{1,2}\delta^{\sigma_3}\qquad\text{and}\qquad
 \mathbf{S}^0_{1,2}\mapsto \widetilde{\mathbf{S}}^0_{1,2}:=\gamma^{-\sigma_3}\mathbf{S}^0_{1,2}\gamma^{\sigma_3}
 \end{equation}
 and it modifies the connection matrices as
 \begin{equation}
 \mathbf{C}_{0\infty}^\pm\mapsto\widetilde{\mathbf{C}}_{0\infty}^\pm:=\delta^{-\sigma_3}\mathbf{C}_{0\infty}^\pm\gamma^{\sigma_3}.
 \label{eq:connection-modify}
 \end{equation}

\subsection[Monodromy parameters (e\_1, e\_2)]{Monodromy parameters $\boldsymbol{(e_1, e_2)}$}\label{subsec:cyclic}
The cyclic products of the jump matrices for the inverse monodromy problem about the two non-singular self-intersection points of the jump contour $\lambda=\pm\ii$ read
 \begin{gather}
 \text{about } \lambda=+\ii \colon\ (\mathbf{C}_{0\infty}^-)^{-1}(\mathbf{S}^\infty_{1})^{-1}\mathbf{C}_{0\infty}^+\mathbf{S}^0_{1}=\mathbb{I},\nonumber\\
 \text{about } \lambda=-\ii\colon\
 \mathbf{S}_{2}^\infty e_\infty^{2\sigma_3}\mathbf{C}_{0\infty}^+e_0^{2\sigma_3}\big(\mathbf{S}^0_{2}\big)^{-1}(\mathbf{C}_{0\infty}^-)^{-1}=\mathbb{I}.
 \label{eq:cyclic-condition}
 \end{gather}
 We can use the second relation to explicitly write $\mathbf{C}_{0\infty}^+$ in terms of two Stokes matrices and the other connection matrix:
 \begin{equation} \mathbf{C}_{0\infty}^+=e_\infty^{-2\sigma_3}(\mathbf{S}_{2}^\infty )^{-1}\mathbf{C}_{0\infty}^-\mathbf{S}^0_{2}e_0^{-2\sigma_3}.
 \label{eq:right-connection-formula}
 \end{equation}
This identity is an analog of \cite[equation~(3.17)]{Jimbo}.
 Under the condition that $\det \mathbf{C}_{0\infty}^- = 1$, we immediately get that $\det \mathbf{C}_{0\infty}^+ = 1$. Furthermore, using \eqref{eq:right-connection-formula} we eliminate $\mathbf{C}_{0\infty}^+$ from the first equation of \eqref{eq:cyclic-condition} to obtain the identity
 \begin{equation}
 \big(\mathbf{S}^0_{1}\big)^{-1}e_0^{2\sigma_3}\big(\mathbf{S}_{2}^0 \big)^{-1} = \big(\mathbf{C}_{0\infty}^-\big)^{-1}\big(\mathbf{S}^\infty_{1}\big)^{-1}e_\infty^{-2\sigma_3}(\mathbf{S}_{2}^\infty )^{-1}\mathbf{C}_{0\infty}^-.
 \label{eq:left-connection-identity}
 \end{equation}

 In other words, $ (\mathbf{S}^\infty_{1})^{-1}e_\infty^{-2\sigma_3}(\mathbf{S}_{2}^\infty )^{-1}$ and $\big(\mathbf{S}^0_{1}\big)^{-1}e_0^{2\sigma_3}\big(\mathbf{S}_{2}^0 \big)^{-1}$ are similar unit-determinant matrices. Note that this is merely reflective of the fact that both products are monodromy matrices, possibly expressed in terms of different bases of fundamental solutions, for a simple circuit about the origin for solutions of the system \eqref{eq:generic-system}. Let us assume that they have distinct eigenvalues that we will denote $e_1^{\pm 2}$. Then, both products are diagonalizable, so there exist unit-determinant eigenvector matrices $\mathbf{E}^\infty$ and $\mathbf{E}^0$ such that
 \begin{equation}
 (\mathbf{S}^\infty_{1})^{-1}e_\infty^{-2\sigma_3}(\mathbf{S}_{2}^\infty )^{-1}\mathbf{E}^\infty=\mathbf{E}^\infty e_1^{2\sigma_3}\qquad\text{and}\qquad
 \big(\mathbf{S}^0_{1}\big)^{-1}e_0^{2\sigma_3}\big(\mathbf{S}_{2}^0 \big)^{-1}\mathbf{E}^0=\mathbf{E}^0 e_1^{2\sigma_3}.
 \label{eq:Stokes-products-eigenvectors}
 \end{equation}
To specify the eigenvector matrices $\mathbf{E}^\infty$, $\mathbf{E}^0$ uniquely, we agree that their (2,2) entries are both equal to $1$.

 Using \eqref{eq:Stokes-products-eigenvectors} in \eqref{eq:left-connection-identity} gives a homogeneous linear equation on $\mathbf{C}_{0\infty}^-$ that can be written in commutator form as
 \begin{equation*}
 \big[e_1^{2\sigma_3},(\mathbf{E}^\infty)^{-1}\mathbf{C}_{0\infty}^-\mathbf{E}^0\big]=\mathbf{0}.
 \end{equation*}
 The diagonal matrix $e_1^{2\sigma_3}$ can be written in the form
 \begin{equation*}
 e_1^{2\sigma_3}=f\mathbb{I}+g\sigma_3,\qquad f:= \frac{1}{2}\big(e_1^2+e_1^{-2}\big),\qquad g:=\frac{1}{2}\big(e_1^2-e_1^{-2}\big).
 \end{equation*}
 Under the assumption $e_1^4\neq 1$ we already invoked to obtain diagonalizability, $g\neq 0$ so the commutator equation implies that $(\mathbf{E}^\infty)^{-1}\mathbf{C}_{0\infty}^-\mathbf{E}^0$ is a diagonal unit-determinant matrix that we may write in the form $e_2^{\sigma_3}$. Thus we have the identity
 \begin{equation}
 \label{eq:C-zero-infty-diagonalization}
 \mathbf{C}_{0\infty}^-=\mathbf{E}^\infty e_2^{\sigma_3}\big(\mathbf{E}^0\big)^{-1}.
 \end{equation}
 \begin{Remark}\label{remark:change-sign-e2}
 Changing the sign of $e_2$ changes the sign of the connection matrix. This corresponds to multiplication of the solution of Riemann--Hilbert Problem \ref{rhp:initial} by $-1$ inside of unit disc. Looking at formula \eqref{eq:u-recover-general}, we see that solution $u(x)$ does not change after such transformation. Therefore, we can assume $-\frac{\pi}{2}<\arg(e_2)\leq\frac{\pi}{2}$.
 \end{Remark}

 Using \eqref{eq:C-zero-infty-diagonalization} in \eqref{eq:right-connection-formula} then gives the equivalent representations
 \begin{equation}
 \mathbf{C}_{0\infty}^+
 =e_\infty^{-2\sigma_3}(\mathbf{S}_{2}^\infty )^{-1}\mathbf{E}^\infty e_2^{\sigma_3}\big(\mathbf{E}^0\big)^{-1}\mathbf{S}^0_{2}e_0^{-2\sigma_3}
 =\mathbf{S}_{1}^\infty \mathbf{E}^\infty e_2^{\sigma_3}\big(\mathbf{E}^0\big)^{-1}\big(\mathbf{S}^0_{1}\big)^{-1}.
 \label{eq:C-zero-infty-plus-diagonalization}
 \end{equation}

 \subsection{Parametrization of Stokes multipliers and connection matrix}
 Taking the trace of \eqref{eq:left-connection-identity}, we get
\begin{equation*}
e_1^2+\frac{1}{e_1^2}=e_\infty^2+\frac1{e_\infty^2}+{s^\infty_1 s^\infty_2}{e_\infty^2}=e_0^2+\frac1{e_0^2}+\frac{s^0_1s^0_2}{e_0^2}.
\end{equation*}
It is clear that one can solve for the products $s^\infty_1s^\infty_2$ and $s_1^0s_2^0$ in terms of $\big(e_1^2,e_\infty^2\big)$ and $\big(e_1^2,e_0^2\big)$ respectively. Using the transformation \eqref{eq:stokes-transformation}, we can take a particular solution of this relation and hence obtain the Stokes multipliers:
\begin{equation}
\label{eq:stokes-parameters}
 s^\infty_1=\frac{e_\infty^2-e_1^2}{e_1^2e_\infty^4},\qquad s^\infty_2=1-e_1^2e_\infty^2,\qquad s^0_1=\frac{e_1^2-e_0^2}{e_1^2},\qquad s^0_2={e_1^2e_0^2 - 1}.
\end{equation}
With the Stokes matrices specified in this way, the eigenvector matrices $\mathbf{E}^\infty$ and $\mathbf{E}^0$ are uniquely specified as mentioned earlier by taking the $(2,2)$ entry to be $1$ in each case, which yields
\begin{gather}
\mathbf{E}^{\infty}=\left[
\begin{matrix}
 \dfrac{e_1^2 \big(e_1^2-e_\infty^2\big)}{e_1^4-1} & -\dfrac{1}{e_1^2e_\infty^2} \vspace{1mm} \\
 \dfrac{e_1^2 e_\infty^2\big(e_1^2 e_\infty^2-1\big)}{e_1^4-1} & 1
\end{matrix}
\right],\label{eq:E-infinity-formula}\\
\mathbf{E}^{0}=\left[
\begin{matrix}
 \dfrac{e_1^2 \big(e_0^2 e_1^2-1\big)}{e_0^2 \big(e_1^4-1\big)} & \dfrac{e_1^2-e_0^2}{e_1^2 \big(e_0^2 e_1^2-1\big)} \vspace{1mm}\\
 \dfrac{e_1^2 \big(1 - e_0^2 e_1^2\big)}{e_0^2 \big(e_1^4-1\big)} & 1
\end{matrix}
\right].\label{eq:E-zero-formula}
\end{gather}
After making such choices, we obtain the formul\ae\ \eqref{eq:x1}--\eqref{eq:x3}. At this point, it can be directly checked that our choices are consistent with the full equation \eqref{eq:left-connection-identity} with $\mathbf C_{0\infty}^-$ given by \eqref{eq:C-zero-infty-diagonalization}.

One can think of fixing the $(2,2)$ entry in the following way: the eigenvector matrices $\mathbf{E}^\infty$ and~$\mathbf{E}^0$ represent ``internal degrees of freedom'' that have an additional symmetry, namely, arbitrary scalings of the eigenvectors that preserve determinants. In other words, while \eqref{eq:transf} induces a conjugation symmetry on the eigenvector matrices, there is an additional symmetry for each involving multiplication on the right by an arbitrary unit-determinant diagonal matrix. Thus, the matrices $\mathbf{E}^\infty$ and $\mathbf{E}^0$ undergo the transformations
 \begin{equation*}
 \mathbf{E}^{\infty}\mapsto \widetilde{\mathbf{E}}^\infty:=\delta^{-\sigma_3}\mathbf{E}^\infty\delta^{\sigma_3}\epsilon_\infty^{\sigma_3}\qquad\text{and}\qquad
 \mathbf{E}^{0}\mapsto \widetilde{\mathbf{E}}^0:=\gamma^{-\sigma_3}\mathbf{E}^0\gamma^{\sigma_3}\epsilon_0^{\sigma_3}
 \end{equation*}
 for some arbitrary nonzero quantities $\epsilon_\infty$, $\epsilon_0$. Note that these transformations along with~\eqref{eq:connection-modify} and $e_2^{\sigma_3}=(\mathbf{E}^\infty)^{-1}\mathbf{C}_{0\infty}^-\mathbf{E}^0$ imply that
 \begin{equation*}
 e_2\mapsto\widetilde{e}_2:=\frac{\gamma\epsilon_0}{\delta\epsilon_\infty}e_2.
 \end{equation*}
 By contrast, $e_1^2\mapsto\widetilde{e}_1^2:=e_1^2$ is a symmetry invariant.

\begin{Remark} \label{rmk:general-arg-x}
 In the case where one is interested in values of $x \in \C$ with $|{\arg} (x)| < \pi$, the analogue of Figure \ref{fig:1} is shown in Figure \ref{fig:rotated-contour}, where the nonsingular self-intersection points are at $\lambda=\pm \ii \ee^{\pm \ii \arg(x)}$ (independent $\pm$ signs).
 \begin{figure}
 \centering
 \includegraphics{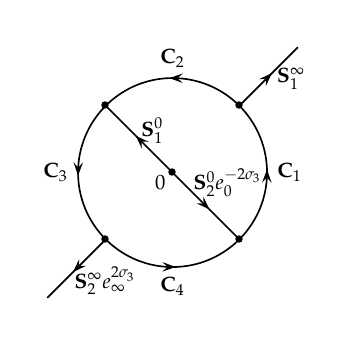}
 \caption{The analogue of the contour $L^{(6)}$ in Figure \ref{fig:1} when $|{\arg}(x)| \neq 0$.}
 \label{fig:rotated-contour}
 \end{figure}
 The angles of the rays in the contour $L^{(6)}$ are chosen so that $\ii \lambda x \in \R$ on the rays extending to $\lambda = \infty$, and $\ii \lambda^{-1} x \in \R$ on the rays extending to $\lambda = 0$. Similar to Section~\ref{sec:initial-rhp}, one can formulate a Riemann--Hilbert problem for a sectionally analytic function~$\lambda\mapsto\mathbf \Psi(\lambda, x)$ off of the contour $L^{(6)}$ and one finds \emph{four} connection matrices instead of two, denoted $\mathbf{C}_1$ through~$\mathbf{C}_4$, defined on corresponding arcs of the unit circle as shown in Figure~\ref{fig:rotated-contour}. These satisfy cyclic conditions similar to~\eqref{eq:cyclic-condition}, namely,
 \begin{alignat*}{3}
 & \text{about}\ \lambda=\ii\ee^{-\ii \arg(x)}\colon\ &&\mathbf{C}_1^{-1}\mathbf{S}_1^\infty \mathbf{C}_2= \mathbb{I}, &\\
 & \text{about} \ \lambda=-\ii\ee^{\ii \arg(x)}\colon\ &&\mathbf{C}_1\big(\mathbf{S}_2^0e_0^{-2\sigma_3}\big)^{-1} \mathbf{C}_4^{-1} = \mathbb{I}, &\\
 & \text{about} \ \lambda=-\ii\ee^{-\ii \arg(x)}\colon\ &&\mathbf{C}_3^{-1} \mathbf{S}_2^\infty e_\infty^{2\sigma_3} \mathbf{C}_4 = \mathbb{I},&\\
 & \text{about}\ \lambda=\ii\ee^{\ii \arg(x)}\colon\ &&\mathbf{C}_3 \big(\mathbf S_1^0\big)^{-1} \mathbf C_2^{-1} = \mathbb{I}.&
 \end{alignat*}
 Eliminating all but $\mathbf{C}_3$ from the above identities yields the analog of \eqref{eq:left-connection-identity}, namely,
 \[
 \big(\mathbf S_1^0\big)^{-1} e_0^{2\sigma_3} \big(\mathbf S_2^0\big)^{-1} = \mathbf C_3^{-1} (\mathbf S_1^\infty)^{-1} e_\infty^{-2\sigma_3} (\mathbf S_2^\infty)^{-1} \mathbf C_3.
 \]
 Reasoning similar to that of Section \ref{subsec:cyclic} yields
 \begin{equation}
 \mathbf{C}_3 = \mathbf{E}^\infty e_2^{\sigma_3} \big(\mathbf{E}^0\big)^{-1},
 \label{eq:general-connection-factorization-1}
 \end{equation}
 which, in turn, yields
 \begin{gather}
 \mathbf{C}_1 = \mathbf S_1^\infty \mathbf{E}^\infty e_2^{\sigma_3} \big(\mathbf{E}^0\big)^{-1} \big(\mathbf S_1^0\big)^{-1}, \qquad
 \mathbf{C}_2 = \mathbf{E}^\infty e_2^{\sigma_3} \big(\mathbf{E}^0\big)^{-1} \big(\mathbf S_1^0\big)^{-1},\nonumber \\
 \mathbf{C}_4 = e_\infty^{-2\sigma_3}\big(\mathbf{S}_2^\infty\big)^{-1}\mathbf{E}^\infty e_2^{\sigma_3} \big(\mathbf{E}^0\big)^{-1} = \mathbf S_1^\infty \mathbf E^\infty e_1^{2\sigma_3} e_2^{\sigma_3} (\mathbf{E}^0)^{-1}.
 \label{eq:general-connection-factorization-2}
 \end{gather}
 In this setting, we must adjust our choice of the branch $\lambda\mapsto \mathrm{arg}_{\lw}(\lambda)$, and we choose a branch which satisfies (cf.\ \eqref{eq:lw-branch-x-real} when $\arg(x) = 0$)
 \begin{equation*} 
 -\dfrac{\pi}{2} - \arg(x) < \mathrm{arg}_{\lw} (\lambda) < \dfrac{3\pi}{2} - \arg(x), \qquad |\lambda| \to \infty,
 \end{equation*}
 and
 \begin{equation*} 
 -\dfrac{\pi}{2} + \arg(x) < \mathrm{arg}_{\lw} (\lambda) < \dfrac{3\pi}{2} + \arg(x), \qquad |\lambda| \to 0.
 \end{equation*}
 A concrete branch cut is chosen later, see Remark \ref{rmk:rotated-lenses} below.
\end{Remark}

\subsection[Example: rational solutions of Painlev\'e-III(D\_6)]{Example: rational solutions of Painlev\'e-III($\boldsymbol{D_6}$)}\label{sec:rational-solutions-parameters}
One can check that the Painlev\'e-III($D_6$) equation with parameters related by $\Theta_0=\Theta_\infty-1$ admits the constant solution $u(x)\equiv 1$. Its monodromy data was calculated in \cite[Section 4]{BMS18} by taking advantage of the fact that the compatible $x$-equation \eqref{eq:generic-deformation} in the Lax pair has simple coefficients. Denoting $m=\Theta_0=\Theta_\infty-1$ gives $e_0^{-2}=\ee^{-\ii\pi m}$ and $e_\infty^2=-\ee^{\ii\pi m}$. Choosing $\gamma$, $\delta$ in~\eqref{eq:transf} satisfying
\[ 
\delta^2 = \ee^{-2\pi \ii m} \big(1 - \ii \ee^{\pi \ii m}\big)\dfrac{\Gamma\big(\frac{1}{2} - m\big)}{\sqrt{2\pi}} \qquad \text{and} \qquad \gamma^2 = \big(1 + \ii \ee^{\pi \ii m}\big)\dfrac{\Gamma\big(\frac{1}{2} - m\big)}{\sqrt{2\pi}},
\]
one obtains
 \begin{alignat*}{3}
& s^0_1=\frac{\sqrt{2\pi}}{\Gamma\big(\tfrac{1}{2}-m\big)},\qquad&&
 s^0_2=-\ee^{\ii\pi m}\frac{\sqrt{2\pi}}{\Gamma\big(\tfrac{1}{2}+m\big)},&\\
& s^\infty_1=-\frac{\sqrt{2\pi}}{\Gamma\big(\tfrac{1}{2}-m\big)},\qquad&&
 s^\infty_2=\ee^{-\ii\pi m}\frac{\sqrt{2\pi}}{\Gamma\big(\tfrac{1}{2}+m\big)}.&
 \end{alignat*}
 With this choice of $\gamma$, $\delta$ and $\mathbf{E}^\infty$, $\mathbf{E}^0$ chosen as in \eqref{eq:E-infinity-formula}, \eqref{eq:E-zero-formula} (that is, we insist that the $(2,2)$ entry of $\mathbf{E}^\infty$, $\mathbf{E}^0$ is 1 by setting $\epsilon_0 = \epsilon_\infty = 1$), the connection matrices are
 \begin{equation*}
 \mathbf{C}_{0\infty}^+=\begin{bmatrix}1&-\dfrac{\sqrt{2\pi}}{\Gamma\big(\tfrac{1}{2}-m\big)} \\0&1\end{bmatrix},\qquad\mathbf{C}_{0\infty}^-=\begin{bmatrix}1&\dfrac{\sqrt{2\pi}}{\Gamma\big(\tfrac{1}{2}-m\big)} \\0&1\end{bmatrix},
 \end{equation*}
 and
 \begin{equation*}
 e_1^2=\ii \qquad \text{and} \qquad e_2 = 1.
 \end{equation*}
\begin{Remark}
 The above gauge is only needed to match our setup with that of \cite{BMS18}; in the sequel we will be working with $\gamma = \delta = 1$. Formula \eqref{eq:C-zero-infty-diagonalization} then implies
 \[
 e_2^2 = \ee^{-2\pi \ii m}\dfrac{1 - \ii \ee^{\pi \ii m}}{1 + \ii \ee^{\pi \ii m}}.
 \]
 This is important to note when, for example, one tries to verify that \eqref{eq:u-n-leading} below reduces to~\eqref{eq:u-n-rat-leading}.
\end{Remark}

Before beginning to study the large $n$ behavior of $u_n$, we must first establish a similar monodromy representation of the limiting solution of Painlev\'e-III($D_8$), which we do in Section~\ref{sec:monodromy-rep-$D_8$} below.

\subsection{Monodromy manifold}
\label{subsec:monodromy-D6}
It is known that the monodromy manifold for Painlev\'e-III($D_6$) can be given by a cubic equation (see, e.g., \cite{PS}), which can be recovered from our point of view as follows. Denote
\begin{equation*}
 \mathbf{C}_{0\infty}^-=\begin{bmatrix}
 \ell_1&\ell_2\\\ell_3&\ell_4
 \end{bmatrix}
\end{equation*}
and
\begin{equation*}
 \mathbf{S}^0_{2}e_0^{-2\sigma_3}\mathbf{S}_{1}^0 =e_0^{-2} \begin{bmatrix}
 1& s^0_1\\ s^0_2& \left(e_0^4+s^0_1s^0_2 \right)
 \end{bmatrix}=\begin{bmatrix}
 m_1&m_2\\m_3&m_4
 \end{bmatrix}.
\end{equation*}
Then, the inverse of the cyclic relation \eqref{eq:left-connection-identity} allows us to solve for $s_1^\infty$, $s_2^\infty$ in terms of parameters $m_i$, $\ell_i$, and imposes the constraint
\begin{equation}\label{eq:beta}
 e_\infty^2=\ell_1\ell_4m_1-\ell_1\ell_3m_2+\ell_2\ell_4m_3-\ell_2\ell_3m_4.
\end{equation}
Hence, we are left with these eight parameters subject to the constraint \eqref{eq:beta} and the unit-determinant conditions
\begin{equation}\label{eq:det}
 \ell_1\ell_4-\ell_2\ell_3=1,\qquad m_1m_4-m_2m_3=1.
\end{equation}
We may define coordinates which are invariant under the transformation \eqref{eq:transf}:
\begin{equation*}
 I_1:= \ell_1\ell_4,\qquad I_2 := m_2\ell_1\ell_3,\qquad I_3 :=m_3\ell_2\ell_4,\qquad I_4:= m_4, \qquad I_5:= m_1.
 \end{equation*}
Equations \eqref{eq:beta}, \eqref{eq:det} imply
\begin{equation*}
 e_\infty^2 =e_0^{-2} I_1-I_2+I_3-I_4(I_1-1),\qquad I_2I_3-I_1\big(e_0^{-2} I_4-1\big)(I_1-1)=0.
\end{equation*}
We eliminate $I_3$ and get
\begin{gather*}
 -I_1+e_0^{-2} I_1 I_4+I_1^2-e_0^{-2} I_4 I_1^2+e_\infty^2 I_2- I_2I_4 -e_0^{-2}I_1I_2+I_1I_2I_4+I_2^2=0.
\end{gather*}
Introducing new variables
\begin{equation}
x_1:=I_1-1,\qquad x_2:=-e_0^{-2} I_1+I_2,\qquad x_3:=I_4+e_0^{-2}
\label{eq:x_i-def}
\end{equation}
yields the following equation, which defines the \emph{monodromy manifold} for the problem
\begin{equation}
 \label{eq:cubic} x_1x_2x_3+x_1^2+x_2^2+x_2\big(e_0^{-2}+e_\infty^2\big)+x_1\big(1+e^{-2}_0 e^{2}_\infty\big)+e^{-2}_0e^{2}_\infty=0.
\end{equation}
This matches \eqref{eq:cubic-D6} upon using \eqref{eq:thetas-alpha-beta} and \eqref{eq:e0-einfty}. Using \eqref{eq:x_i-def}, we obtain formul\ae\ \eqref{eq:x1}--\eqref{eq:x3}.

To find the singularities of \eqref{eq:cubic}, we adjoin to \eqref{eq:cubic} the three equations obtained by setting to zero the components of the gradient vector of the left-hand side of \eqref{eq:cubic} with respect to $(x_1,x_2,x_3)$. There is therefore at most one singularity:
\begin{alignat}{3}\label{eq:crit1}
 &\text{for}\ e^{-2}_0=e^2_\infty\colon\ &&(x_1,x_2,x_3)=\big(0,-e^{-2}_0,e^{2}_0+e_0^{-2}\big),&\\
 &\text{for}\ e^{2}_0=e_\infty^{2}\colon\ &&(x_1,x_2,x_3)=\bigl(-1,0,e^{2}_0+e_0^{-2}\bigr).&\label{eq:crit2}
\end{alignat}
In particular, if neither $e^{-2}_0=e^2_\infty$ nor $e^{2}_0=e^2_\infty$, then the monodromy manifold is a smooth curve with no singular points.
Notice that we can use $(x_1,x_2)$ as parameters for the generic collection of points on monodromy manifold \eqref{eq:cubic} for which $x_1x_2\neq 0$, because $x_3$ can be explicitly expressed in terms of the other coordinates. The points satisfying \eqref{eq:cubic} with $x_1=0$ form a 1-dimensional variety consisting in general of two distinct lines:
\begin{equation*}
 (x_1,x_2,x_3)=\big(0,-e^{-2}_0,x_3\big)\qquad\text{or}\qquad (x_1,x_2,x_3)=\big(0,-e^2_\infty,x_3\big)
\end{equation*}
each parametrized by $x_3\in\mathbb{C}$. If $e_0^{-2}=e_\infty^2$, the two lines coincide and pass through the critical point \eqref{eq:crit1} of \eqref{eq:cubic}. Likewise there are generally two lines on \eqref{eq:cubic} along which $x_2=0$ each parametrized by $x_3\in\mathbb{C}$:
\begin{equation*}
 (x_1,x_2,x_3)=(-1,0,x_3)\qquad\text{or}\qquad (x_1,x_2,x_3)=\bigl(-e^{-2}_0e^2_\infty,0,x_3\bigr)
\end{equation*}
and if $e^{2}_0=e_\infty^{2}$, the two lines again coincide and pass through the critical point \eqref{eq:crit2} of \eqref{eq:cubic}.

\section[General monodromy data: Painlev\'e-III(D\_8)]{General monodromy data: Painlev\'e-III($\boldsymbol{D_8}$)} \label{sec:monodromy-rep-$D_8$}

\subsection[Lax pair for Painlev\'e-III(D\_8)]{Lax pair for Painlev\'e-III($\boldsymbol{D_8}$)}
\label{sec:lax-pair-$D_8$}

The Painlev\'e-III($D_8$) equation \eqref{eq:PIII-$D_8$} can also be formulated as an isomonodromic deformation of a linear system. In this case we need two ramified irregular singularities at $\lambda = 0$ and $\lambda = \infty$, i.e., we consider the system
\begin{align}\label{eq:lax_pair_D8}
\frac{\partial\mathbf{\Omega}}{\partial\lambda} (\lambda, z) &= {\mathbf{\Lambda}}^{(8)}(\lambda, z) \mathbf{\Omega}(\lambda, z), \\
\frac{\partial\mathbf{\Omega}}{\partial z} (\lambda, z) &= {\mathbf{Z}}(\lambda, z) \mathbf{\Omega}(\lambda, z),
\label{eq:lax_pair_D8_2}
\end{align}
where
\begin{gather*}
{\mathbf{\Lambda}}^{(8)}(\lambda, z) = \begin{bmatrix}0&{\ii z}{} \\0&0\end{bmatrix} +\dfrac{1}{4\lambda}\begin{bmatrix}V(z) & {W}(z)\\2 & -V(z)\end{bmatrix} + \dfrac{1}{\lambda^2}\begin{bmatrix}X(z) & -2\ii X(z)^2U(z)\\-\ii/(2U(z)) & -X(z)\end{bmatrix},
\\
{\mathbf{Z}}(\lambda, z) = \lambda\begin{bmatrix}0& {\ii} \\0&0\end{bmatrix} + \frac{1}{4z}\begin{bmatrix}V(z) & W(z)\\2 & -V(z)\end{bmatrix} - \frac{1}{z\lambda}\begin{bmatrix}X(z) & -2\ii X(z)^2U(z)\\-\ii/(2U(z)) & -X(z)\end{bmatrix},
\end{gather*}
and functions $U(z)$, $V(z)$, $W(z)$, $X(z)$ satisfy the identities
\begin{gather}
 W(z) +4zU(z) +4\ii U(z)V(z)X(z) + 8 U(z )^2X(z)^2 =0,
 \label{eq:UVWXidentity1}\\
 U(z) V(z)^2-4 U(z) V(z)+2 U(z) W(z)+3 U(z)+8 z=0.\label{eq:UVWidentity1}
\end{gather}
Note the characteristic feature that the leading terms of $\mathbf{\Lambda}^{(8)}(\lambda,z)$ and of $\mathbf{Z}(\lambda,x)$ at the singular points $\lambda=0,\infty$ are singular and nondiagonalizable matrices.

Since $\mathbf{\Omega}(\lambda,z)$ is a simultaneous fundamental solution matrix of the Lax system \eqref{eq:lax_pair_D8}--\eqref{eq:lax_pair_D8_2}, the zero-curvature compatibility condition for that system is therefore satisfied:
\begin{gather*}
 \frac{\partial\mathbf{\Lambda}^{(8)}}{\partial z}(\lambda,z)-\frac{\partial\mathbf{Z}}{\partial\lambda}(\lambda,z)+\big[\mathbf{\Lambda}^{(8)}(\lambda,z),\mathbf{Z}(\lambda,z)\big]=\mathbf{0}.
\end{gather*}
Equating to zero the coefficients of different powers of $\lambda$ on the left-hand side gives a first-order system of four differential equations on the four functions $U(z)$, $V(z)$, $W(z)$, and $X(z)$:
\begin{gather}
 zU'(z)=V(z)U(z)-U(z)-4\ii X(z)U(z)^2,\qquad
 V'(z)=\frac{4}{U(z)},\nonumber\\
 W'(z)=-16\ii X(z),\qquad
 zX'(z)=X(z)+2\ii U(z)X(z)^2-\frac{\ii W(z)}{4U(z)}.
 \label{eq:PossiblyD8system}
\end{gather}
It is possible to express the functions $W(z)$, $X(z)$, and $V(z)$ in terms of $U(z)$ and $U'(z)$ using \eqref{eq:UVWXidentity1}, \eqref{eq:UVWidentity1}, and \eqref{eq:PossiblyD8system}, but since we do not use these formul\ae, we do not present them here. Using \eqref{eq:PossiblyD8system} to repeatedly eliminate all derivatives, it is straightforward to obtain the following identity:
\begin{gather*}
 U''(z)-\frac{U'(z)^2}{U(z)}+\frac{U'(z)}{z} -\frac{4U(z)^2+4}{z}\\
 \qquad=-\frac{U(z)}{z^2}\big[ W(z)+4 z U(z) +4 \ii U(z) V(z) X(z) + 8 U(z)^2 X(z)^2\big].
\end{gather*}
Of course the right-hand side vanishes as a result of the identity \eqref{eq:UVWXidentity1}. Hence $U(z)$ is a solution of \eqref{eq:PIII-$D_8$}, the Painlev\'e-III($D_8$) equation.

For all the calculations that follow, we assume for simplicity that $z > 0$. The system \eqref{eq:lax_pair_D8} admits formal solutions near the singular points
\begin{equation}\label{eq:Omega-expand-infty-early}
\mathbf{\Omega}_{\text{formal}}^{(\infty)}(\lambda,z) = \bigg( \mathbb{I} + \frac{\mathbf{\Xi}^{(8)}(z)}{\lambda}+\mathcal{O}\big(\lambda^{-2}\big) \bigg) \rho_\infty^{\sigma_3/2}
 {\dfrac{1}{\sqrt{2}}} \begin{bmatrix} \ii & -1\\ 1 &-\ii \end{bmatrix} \ee^{\ii \rho_\infty \sigma_3} \qquad \text{as} \quad \lambda \to \infty,
\end{equation}
and
\begin{align}
\mathbf{\Omega}^{(0)}_{\text{formal}}(\lambda, z) ={}& {\mathbf{\Delta}}^{(8)}(z)\big( \mathbb{I} +\mathbf{\Pi}(z)\lambda+ \mathcal{O}\big(\lambda^2\big) \big)\nonumber\\
&\times\rho_0^{\sigma_3/2} \ee^{-\pi \ii \sigma_3/4}{\dfrac{1}{\sqrt{2}}} {}{} \begin{bmatrix} \ii & -1 \\ 1 &-\ii \end{bmatrix} \ee^{ \rho_0 \sigma_3} \qquad \text{as} \quad \lambda \to 0,\label{eq:Omega-expand-zero-early}
\end{align}
where
\begin{equation*}
\rho_\infty=\sqrt{-2 \ii z\lambda},\qquad \rho_0= \sqrt{2 \ii z\lambda^{-1}}
\end{equation*}
 and the square roots denote principal branches.
The function ${\mathbf{\Delta}}^{(8)}(z)$ satisfies the identity
\begin{equation}
{\mathbf{\Delta}}^{(8)}(z)\begin{bmatrix}0&-\ii z\\0&0\end{bmatrix}{\mathbf{\Delta}}^{(8)}(z)^{-1}=\begin{bmatrix}X(z) & -2\ii X(z)^2U(z)\\-\ii/(2U(z)) & -X(z)\end{bmatrix},
\label{eq:Delta-U-X-identity}
\end{equation}
and hence the solution $U(z)$ can be expressed as
\begin{equation}\label{eq:un-U-early}
U(z):=-\frac{1}{2z\Delta^{(8)}_{21}(z)^2}.
\end{equation}
For $k = 1, 2$, we define the Stokes sectors,
\begin{align*}
&\mathcal S_k^{(\infty)}= \left \{ \lambda \in \C \colon  |\lambda| > R, \, 2\pi k - \frac{7\pi}{2} < \arg (\lambda) < 2\pi k+\frac{\pi}{2} \right \},\\
&\mathcal S_k^{(0)}= \left \{ \lambda \in \C \colon  |\lambda| < r, \, 2\pi k - \frac{5\pi}{2} < \arg (\lambda) < 2\pi k+\frac{3\pi}{2}\right \}.
\end{align*}

It follows from the classical theory of linear systems that there exist canonical solutions \smash{$\mathbf{\Omega}_k^{(\infty)}$},~\smash{$\mathbf{\Omega}_k^{(0)}$} determined uniquely by the asymptotic condition
\begin{equation}
\mathbf{\Omega}_k^{(\nu)}(\lambda, z) = \mathbf{\Omega}^{(\nu)}_{\text{formal}}(\lambda, z), \qquad \lambda \in \mathcal S_k^{(\nu)}, \quad \nu \in \{0, \infty\}.
\label{eq:psi-asymptotic-condition-D8}
\end{equation}
The canonical solutions in consecutive Stokes sectors at $\lambda=0,\infty$ are related to one another by multiplications on the right with Stokes matrices, i.e.,
\begin{gather}
\label{eq:stokes-condition-D8-1}\mathbf{\Omega}_2^{(\infty)}(\lambda, z) = \mathbf{\Omega}_1^{(\infty)}(\lambda,z) \mathbf{S}^{\infty}_{1},\qquad \lambda\in \mathcal S_1^{(\infty)}\cap \mathcal S_2^{(\infty)},\\
\mathbf{\Omega}_1^{(0)}(\lambda, z) = \mathbf{\Omega}_0^{(0)}(\lambda, z) \mathbf{S}^{0}_{0}, \qquad \lambda\in \mathcal S_0^{(0)}\cap \mathcal S_1^{(0)}, \\
\mathbf{\Omega}_{2}^{(\infty)}(\lambda, z) = \mathbf{\Omega}_1^{(\infty)}\big(\ee^{-2\pi \ii} \lambda, z\big) (-\ii\sigma_2), \\
\label{eq:stokes-condition-D8-4}\mathbf{\Omega}_{1}^{(0)}(\lambda, z) = \mathbf{\Omega}_0^{(0)}\big(\ee^{-2\pi \ii} \lambda, z\big) (\ii\sigma_2),
\end{gather}
where
\begin{equation}
\mathbf{S}^\infty_{1}= \begin{bmatrix}1&t^\infty_1\\0&1\end{bmatrix},\qquad \mathbf{S}^0_{{0}}=
 \begin{bmatrix}1&0\\{t^0_0}&1\end{bmatrix} .
 \label{eq:Stokes-zero}
\end{equation}
Canonical solutions in, say, $\mathcal S_k^{(\infty)}$ admit analytic continuation into $\mathcal S_k^{(0)}$ and since both canonical solutions solve \eqref{eq:generic-system} in the same domain, they must be related by multiplication on the right by a constant connection matrix, which we define using
\begin{align}
\mathbf{\Omega}_0^{(0)}(\lambda, z) &= \mathbf{\Omega}_1^{(\infty)}(\lambda, z) \mathbf{C}_{0\infty}.
\label{eq:connection-matrix-D8}
\end{align}

\subsection[Riemann--Hilbert problem for Painlev\'e-III(D\_8)]{Riemann--Hilbert problem for Painlev\'e-III($\boldsymbol{D_8}$)} In a fashion similar to Section \ref{sec:initial-rhp}, we now formulate a $2\times 2$ Riemann--Hilbert problem for a~sectionally-analytic function ${\mathbf{\Omega}}$ defined by
\[
{\mathbf{\Omega}}(\lambda, z) =
 \begin{cases}
\mathbf \Omega_1^{(\infty)}(\lambda, z) , & |\lambda| >1 \ \text{and} \ -\dfrac{\pi}{2} < \arg (\lambda) < \dfrac{3\pi}{2}, \vspace{1mm}\\
\mathbf \Omega_0^{(0)}(\lambda, z) , & |\lambda| <1 \ \text{and} \ -\dfrac{\pi}{2} < \arg (\lambda) < \dfrac{3\pi}{2}.
\end{cases}
\]
Then, it follows from the asymptotic conditions \eqref{eq:psi-asymptotic-condition-D8} and the relations \eqref{eq:stokes-condition-D8-1}--\eqref{eq:stokes-condition-D8-4} and \eqref{eq:connection-matrix-D8} that $\mathbf{\Omega}$ solves the following $2 \times 2$ Riemann--Hilbert problem.
\begin{rhp} \label{rhp:D8}
Fix monodromy data $\big( t_0^0, t_1^\infty \big)$ and $z>0$. We seek a $2 \times 2$ matrix function $\lambda\mapsto{\mathbf{\Omega}}(\lambda, z)$ satisfying:
\begin{itemize}\itemsep=0pt
\item Analyticity: ${\mathbf{\Omega}} (\lambda, z)$ is analytic in $\C \setminus L^{(8)}$, where $L^{(8)} = \{ | \lambda| = 1\} \cup \ii \R_-$ is the jump contour shown in Figure {\rm\ref{fig:D8}}.
\item Jump condition: ${\mathbf{\Omega}} (\lambda, z)$ has continuous boundary values on $L^{(8)} \setminus \{0\}$ from each component of $\mathbb{C}\setminus L^{(8)}$, which satisfy
\[{\mathbf{\Omega}}_+ (\lambda, z) = {\mathbf{\Omega}}_- (\lambda, z) \mathbf J_{{\mathbf{\Omega}}}(\lambda),\] where $\mathbf J_{{\mathbf{\Omega}}}(\lambda)$ is as shown in Figure {\rm \ref{fig:D8}}.
\item Normalization: ${\mathbf{\Omega}}(\lambda,z)$ satisfies the asymptotic conditions
\begin{equation*}
{\mathbf{\Omega}}(\lambda, z) = \big( \mathbb{I} + \mathcal{O}\big(\lambda^{-1}\big) \big) \rho_\infty^{\sigma_3/2}
 {\dfrac{1}{\sqrt{2}}} \begin{bmatrix} \ii & -1\\ 1 &-\ii \end{bmatrix} \ee^{\ii \rho_\infty \sigma_3} \qquad \text{as} \quad \lambda \to \infty,
\end{equation*}
and
\begin{equation*}
{\mathbf{\Omega}}(\lambda, z) = \big( {\mathbf{\Delta}}^{(8)}(z) + \mathcal{O}(\lambda) \big)\rho_0^{\sigma_3/2} {\dfrac{\ee^{-\frac{\ii\pi \sigma_3} {4}}}{\sqrt{2}}} {}{} \begin{bmatrix} \ii & -1\\ 1 &-\ii \end{bmatrix} \ee^{ \rho_0 \sigma_3} \qquad \text{as} \quad \lambda \to 0,
\end{equation*}
where ${\mathbf{\Delta}}^{(8)}(z)$ is a matrix determined from $\mathbf{\Omega}(\lambda,z)$ having unit determinant.
\end{itemize}
\end{rhp}
Solvability of Riemann--Hilbert Problem \ref{rhp:D8} is discussed in Section~\ref{sec:suleimanov-solution-connection}.

\begin{figure}[t]\centering
\includegraphics[scale = 1]{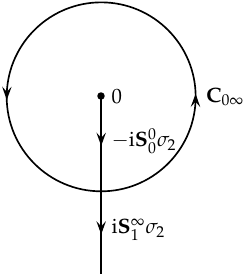}
\caption{The jump contour $L^{(8)}$ and definition of $\mathbf J_{{\mathbf{\Omega}}}(\lambda)$ when $z>0$.}\label{fig:D8}
\end{figure}

 \subsection[Lax pair equations for Omega (lambda,z)]{Lax pair equations for $\boldsymbol{\Omega(\lambda,z)}$}\label{sec:Lax-pair}

Since the jump matrices depend on neither $\lambda$ nor $z$, the matrices
\begin{equation}
 \mathbf{\Lambda}^{(8)}(\lambda,z):=\frac{\partial\mathbf{\Omega}}{\partial\lambda}(\lambda,z)\mathbf{\Omega}(\lambda,z)^{-1}\qquad\text{and}\qquad\mathbf{Z}(\lambda,z):=\frac{\partial\mathbf{\Omega}}{\partial z}(\lambda,z)\mathbf{\Omega}(\lambda,z)^{-1}
 \label{eq:Lambda-Z-def}
\end{equation}
are both analytic functions of $\lambda$ in the domain $\mathbb{C}\setminus\{0\}$.
We determine these analytic functions by computing sufficiently many terms in their asymptotic expansions as $\lambda\to\infty$ and $\lambda\to 0$ using \eqref{eq:Omega-expand-infty-early}--\eqref{eq:Omega-expand-zero-early}.
We will use the identities
\begin{equation}
 \frac{\partial\rho_\infty}{\partial\lambda} = -\ii z\rho_\infty^{-1}\qquad\text{and}\qquad\frac{\partial\rho_\infty}{\partial z} = -\ii\lambda\rho_\infty^{-1}
 \label{eq:rhoinfty-derivs}
\end{equation}
and
\begin{equation}
 \frac{\partial\rho_0}{\partial\lambda}=-\ii z\lambda^{-2}\rho_0^{-1}\qquad\text{and}\qquad\frac{\partial\rho_0}{\partial z} = \ii\lambda^{-1}\rho_0^{-1}.
 \label{eq:rhozero-derivs}
\end{equation}
Using \eqref{eq:rhoinfty-derivs} and \eqref{eq:Omega-expand-infty-early} gives, in the limit $\lambda\to\infty$, the expansions
\begin{gather}
 \mathbf{\Lambda}^{(8)}(\lambda,z)= \begin{bmatrix}0&{\ii z}{} \\0&0\end{bmatrix}
 +\frac{1}{4\lambda}\begin{bmatrix}1-{4\ii z}\Xi_{21}^{(8)}(z) & {4\ii z}{}(\Xi_{11}^{(8)}(z)-\Xi_{22}^{(8)}(z)) \vspace{1mm}\\ 2 &-1 +{4\ii z}\Xi_{21}^{(8)}(z)\end{bmatrix}+\mathcal{O}\big(\lambda^{-2}\big),\\
 \mathbf{Z}(\lambda,z)=
 \lambda\begin{bmatrix}0 & {\ii}{} \\0&0\end{bmatrix}
 +\frac{1}{4z}\begin{bmatrix}
 1-{4\ii z}\Xi_{21}^{(8)}(z) &{4\ii z}{}(\Xi_{11}^{(8)}(z)-\Xi_{22}^{(8)}(z)) \vspace{1mm}\\2 & -1+{4\ii z}\Xi_{21}^{(8)}(z)
 \end{bmatrix}
 +\mathcal{O}\big(\lambda^{-1}\big).
\label{eq:Lax-coefficients-infty}
\end{gather}
Actually, we can also go to higher order and compute the coefficient of $\lambda^{-2}$ in the matrix element~$\Lambda_{21}(\lambda,z)$,
in the limit $\lambda\to\infty$:
\begin{align}
\Lambda_{21}^{(8)}(\lambda,z)=\frac{1}{2\lambda} +\frac{1}{2\lambda^2}\bigl(-\Xi_{21}^{(8)}(z)-{2\ii z}{}\Xi_{21}^{(8)}(z)^2-\Xi_{11}^{(8)}(z)+\Xi_{22}^{(8)}(z)\bigr)
+\mathcal{O}\big(\lambda^{-3}\big).
 \label{eq:Lambda21-higher-order}
\end{align}
Likewise, using \eqref{eq:rhozero-derivs} and \eqref{eq:Omega-expand-zero-early} gives that as $\lambda \to 0$
\begin{gather}
 \mathbf{\Lambda}^{(8)}(\lambda,z)=\mathbf{\Delta}^{(8)}(z)\Bigg(\frac{1}{\lambda^2}\begin{bmatrix}0&{-\ii z}{} \\0&0\end{bmatrix}\nonumber \\ \phantom{\mathbf{\Lambda}^{(8)}(\lambda,z)=}{}-\frac{1}{4\lambda}\begin{bmatrix}1-{4\ii z}\Pi_{21}(z)&{4\ii z}{}(\Pi_{11}(z)-\Pi_{22}(z))\\2&-1+{4\ii z}\Pi_{21}(z)\end{bmatrix}\Bigg)\mathbf{\Delta}^{(8)}(z)^{-1}
+ \mathcal{O}(1),\nonumber
 \\
 \mathbf{Z}(\lambda,z)=\frac{1}{\lambda}\mathbf{\Delta}^{(8)}(z)\begin{bmatrix}0&{\ii}{} \\0&0\end{bmatrix}\mathbf{\Delta}^{(8)}(z)^{-1} + \mathcal{O}(1).
\label{eq:Lax-coefficients-zero}
\end{gather}
Applying Liouville's theorem yields the exact expressions
\begin{align}
 & \mathbf{\Lambda}^{(8)}(\lambda,z)= \begin{bmatrix}0&{\ii z}{} \\0&0\end{bmatrix} +\frac{1}{4\lambda}\begin{bmatrix}1-{4\ii z}\Xi^{(8)}_{21}(z) & {4\ii z}{}\big(\Xi^{(8)}_{11}(z)-\Xi^{(8)}_{22}(z)\big) \\ 2 &-1 +{4\ii z}{}\Xi^{(8)}_{21}(z)\end{bmatrix}\nonumber
 \\
 &\hphantom{\mathbf{\Lambda}^{(8)}(\lambda,z)= }{}
 +\frac{\ii z}{\lambda^2}\begin{bmatrix}\Delta^{(8)}_{11}(z)\Delta^{(8)}_{21}(z) & -\Delta^{(8)}_{11}(z)^2 \\
 \Delta^{(8)}_{21}(z)^2 & -\Delta^{(8)}_{11}(z)\Delta^{(8)}_{21}(z)\end{bmatrix},
\label{eq:Lambda-exact}
\end{align}
and
\begin{align*}
 & \mathbf{Z}(\lambda,z)= \lambda\begin{bmatrix}0 & {\ii}{} \\0&0\end{bmatrix}+\frac{1}{4z}\begin{bmatrix}
 1-{4\ii z}{}\Xi^{(8)}_{21}(z) &{4\ii z}{}\big(\Xi^{(8)}_{11}(z)-\Xi^{(8)}_{22}(z)\big) \\2 & -1+{4\ii z}{}\Xi^{(8)}_{21}(z)
 \end{bmatrix} \\
 &\hphantom{\mathbf{Z}(\lambda,z)=}{} -
 \frac{\ii }{ \lambda}\begin{bmatrix}\Delta^{(8)}_{11}(z)\Delta^{(8)}_{21}(z) & -\Delta^{(8)}_{11}(z)^2 \\
 \Delta^{(8)}_{21}(z)^2 & -\Delta^{(8)}_{11}(z)\Delta^{(8)}_{21}(z)\end{bmatrix}.
\end{align*}
Using the notation \eqref{eq:un-U-early} and noting the structure of the coefficients of the different powers of $\lambda$, it is convenient to reparametrize the coefficients as follows:
\begin{equation*}
 \mathbf{\Lambda}^{(8)}(\lambda,z)=\begin{bmatrix}0&{\ii z}{} \\0&0\end{bmatrix} +\frac{1}{4\lambda}\begin{bmatrix}V(z) & W(z)\\2 & -V(z)\end{bmatrix} +\frac{1}{\lambda^2}\begin{bmatrix} X(z) & -2 \ii X(z)^2U(z)\\-\ii/(2U(z)) & -X(z)\end{bmatrix}
\end{equation*}
and
\begin{equation*} \mathbf{Z}(\lambda,z)=\lambda\begin{bmatrix}0&{\ii}{} \\0&0\end{bmatrix} + \frac{1}{4z}\begin{bmatrix}V(z) & W(z)\\2 & -V(z)\end{bmatrix}-\frac{1}{z\lambda}\begin{bmatrix}X(z) & -2\ii X(z)^2U(z)\\-\ii/(2U(z)) & -X(z)\end{bmatrix}.
\end{equation*}
The quantities $U(z)$, $V(z)$, $W(z)$, and $X(z)$ are not independent; comparing the $21$-element of the coefficient of $\lambda^{-1}$ in the expansion of $\mathbf{\Delta}^{(8)}(z)^{-1}\mathbf{\Lambda}^{(8)}(\lambda,z)\mathbf{\Delta}^{(8)}(z)$ computed using \eqref{eq:Lax-coefficients-infty} and~\eqref{eq:Lax-coefficients-zero} gives the identity \eqref{eq:UVWXidentity1}.
At the same time from formula \eqref{eq:Lambda21-higher-order} we get identity \eqref{eq:UVWidentity1}.

Since \eqref{eq:Lambda-Z-def} holds for the same matrix function $\mathbf{\Omega}(\lambda,z)$, the latter satisfies the equations of a compatible Lax system
\begin{equation}
 \frac{\partial\mathbf{\Omega}}{\partial\lambda}(\lambda,z)=\mathbf{\Lambda}^{(8)}(\lambda,z)\mathbf{\Omega}(\lambda,z)\qquad\text{and}\qquad\frac{\partial\mathbf{\Omega}}{\partial z}(\lambda,z)=\mathbf{Z}(\lambda,z)\mathbf{\Omega}(\lambda,z),
\label{eq:Lax-system}
\end{equation}
which coincides with the system \eqref{eq:lax_pair_D8}--\eqref{eq:lax_pair_D8_2}.

\subsection{Monodromy manifold}
\label{sec:monodromy-rep-$D_8$-manifold}
Introducing notation for the connection matrix elements
\begin{equation*}
 \mathbf{C}_{0\infty}=\begin{bmatrix}
 n_1&n_2\\n_3&n_4
 \end{bmatrix},\qquad \det(\mathbf{C}_{0\infty})=1,
\end{equation*}
we have the cyclic relation around the unique nonsingular point of self-intersection of $L^{(8)}$
 \begin{equation*}
\mathbf{S}^\infty_{{1}}\ii\sigma_2= \mathbf{C}_{0\infty}\mathbf{S}^0_{{0}}(-\ii\sigma_2)(\mathbf{C}_{0\infty})^{-1},
\end{equation*}
which implies
\[
n_1=-n_4,\qquad t_1^{\infty}=t_{0}^0,\qquad n_3=n_2-n_4t_1^{\infty}.
\]
Denoting
\[
y_1=n_3,\qquad y_2=n_4,\qquad y_3=t_1^{\infty},
\]
the condition $\det(\mathbf{C}_{0\infty})=1$ implies that the coordinates $(y_1,y_2,y_3)$ are related by the cubic equation \eqref{eq:cubic-D8}.

\begin{Remark}
 If the solution $\mathbf{\Omega}(\lambda,z)$ is multiplied by the scalar $-1$ for $|\lambda|<1$ and left unchanged for $|\lambda|>1$, then the elements of the connection matrix $\mathbf{C}_{0\infty}$ change sign while the Stokes multiplier $t_1^\infty$ is invariant. Therefore, this transformation changes $(y_1,y_2,y_3)$ to $(-y_1,-y_2,y_3)$, yielding a different point on the cubic \eqref{eq:cubic-D8}. The matrix coefficient $\mathbf{\Delta}^{(8)}(z)$ also changes sign, however \smash{$\Delta_{21}^{(8)}(z)^2$} is invariant, so the solution $U(z)$ of the Painlev\'e-III($D_8$) equation \eqref{eq:PIII-$D_8$} is the same for both points.
 \label{rem:minus-y1-y2}
\end{Remark}

\section{Schlesinger transformation and proof of Proposition~\ref{prop:schlesinger}}\label{sec:schlesinger}

Fix generic monodromy parameters $(e_1, e_2)$. In view of the parametrization of the Stokes multipliers in \eqref{eq:stokes-parameters} and the eigenvector matrices in \eqref{eq:E-infinity-formula}, \eqref{eq:E-zero-formula}, this data determines from Riemann--Hilbert Problem~\ref{rhp:initial} a matrix $\mathbf{\Psi}(\lambda, x)$ which is meromorphic in $x$ and satisfies asymptotic conditions \eqref{eq:Psi-asymptotic-infinity} and \eqref{eq:Psi-asymptotic-zero}, which we write in the form\footnote{The coefficients $\mathbf{\Psi}_j^{\infty}$, $\mathbf{\Psi}_j^{0}$ should not be confused with the fundamental solutions discussed in the previous sections. The reader can rest assured that this notation will only appear in this section.}
\begin{align*}
&\mathbf{\Psi}(\lambda, x) \lambda_{\lw}^{\Theta_\infty \sigma_3 /2} \ee^{-\ii x \lambda \sigma_3 /2} = \mathbb{I} + \mathbf{\Psi}_1^{\infty}(x) \lambda^{-1} + \mathcal{O}\big(\lambda^{-2}\big) , \qquad \lambda \to \infty, \\
&\mathbf{\Psi}(\lambda, x) \lambda_{\lw}^{-\Theta_0 \sigma_3 /2} \ee^{\ii x \lambda^{-1} \sigma_3 /2} = \mathbf{\Psi}_0^{0}(x) + \mathbf{\Psi}_1^{0}(x) \lambda + \mathcal{O}\big(\lambda^{2}\big) , \qquad \lambda \to 0.
\end{align*}
Define the matrices
\begin{equation*}
\sigma_+ = \begin{bmatrix} 1 & 0 \\ 0 & 0 \end{bmatrix} \qquad \text{and} \qquad \sigma_- = \begin{bmatrix} 0 & 0 \\ 0 & 1 \end{bmatrix}.
\end{equation*}
Following \cite{BMS18}, assuming the $(1, 1)$ entry of $\mathbf{\Psi}_0^{0}(x)$, denoted ${\Psi}_{0, 11}^{0}(x)$, is not identically zero, we consider the Schlesinger transformation
\begin{equation*}
\hat{\mathbf{\Psi}}(\lambda, x) := \big( \sigma_+ \lambda_{\lw}^{1/2} + \hat{\mathbf{S}}(x) \lambda_{\lw}^{-1/2}\big) \mathbf{\Psi}(\lambda, x),
\end{equation*}
where
\begin{equation*}
\hat{\mathbf{S}} (x):= \begin{bmatrix}
\Psi_{0, 21}^0(x) \Psi_{1, 12}^\infty(x)/\Psi^0_{0,11}(x) & -\Psi_{1, 12}^\infty(x) \vspace{1mm}\\ -\Psi_{0, 21}^0(x)/\Psi_{0, 11}^0(x) & 1
\end{bmatrix}.
\end{equation*}
Since $\lambda_{\lw}^{\pm 1/2}$ has its branch cut along part of the curve $L^{(6)}$, we see that $\hat{\mathbf{\Psi}}(\lambda, x)$ is analytic in $\C \setminus L^{(6)}$ and, by direct calculation, has the jumps on \smash{$L^{(6)}$} summarized by Figure \ref{fig:1}, with the exception of the sign changes
\[
\mathbf{S}_2^0 e_0^{-2\sigma_3} \mapsto -\mathbf{S}_2^0 e_0^{-2\sigma_3} \qquad \text{and} \qquad \mathbf{S}_2^\infty e_\infty^{2\sigma_3} \mapsto -\mathbf{S}_2^\infty e_\infty^{2\sigma_3}.
\]
Furthermore, one can verify using the definition of $\hat{\mathbf{\Psi}}(\lambda, x)$ that
\begin{equation}
\label{eq:psi-hat-asymptotics-infty}
\hat{\mathbf{\Psi}}(\lambda, x) \lambda_{\lw}^{(\Theta_\infty - 1)\sigma_3/2} \ee^{-\ii x \lambda \sigma_3/2}= \mathbb{I} + \hat{\mathbf{\Psi}}^\infty_1(x) \lambda^{-1} + \mathcal{O}\big(\lambda^{-2}\big), \qquad \lambda \to \infty,
\end{equation}
where
\[
\hat{\mathbf{\Psi}}_1^\infty(x) := \sigma_+ \mathbf{\Psi}^\infty_1(x) \sigma_+ + \sigma_+ \mathbf{\Psi}_2^\infty(x) \sigma_- + \hat{\mathbf{S}}(x) \sigma_+ + \hat{\mathbf{S}}(x) \mathbf{\Psi}_1^\infty (x) \sigma_-.
\]
Similarly, one can check that
\begin{equation}
\label{eq:psi-hat-asymptotics-zero}
\hat{\mathbf{\Psi}}(\lambda, x) \lambda_{\lw}^{-(\Theta_0 + 1)\sigma_3/2} \ee^{\ii x \lambda^{-1} \sigma_3/2}= \hat{\mathbf{\Psi}}^0_0(x) + \hat{\mathbf{\Psi}}^0_1(x) \lambda + \mathcal{O}\big(\lambda^{2}\big), \qquad \lambda \to 0,
\end{equation}
where
\[
\hat{\mathbf{\Psi}}^0_0(x) = \hat{\mathbf{S}}(x) \mathbf{\Psi}^0_0(x) \sigma_- + \hat{\mathbf{S}}(x) \mathbf{\Psi}^0_1(x) \sigma_+ + \sigma_+ \mathbf{\Psi}^0_0(x) \sigma_+.
\]
The transformation $\mathbf{\Psi} \mapsto \hat{\mathbf{\Psi}}$ is invertible so long as $\hat{\Psi}_{0, 22}^0(x)$ does not identically vanish, and its inverse is given by
\[
\mathbf{\Psi}(x, \lambda) \mapsto \check{\mathbf{\Psi}}(x, \lambda) := \big( \sigma_- \lambda^{1/2}_{\lw} + \check{\mathbf{S}}(x) \lambda^{-1/2}_{\lw} \big) \ \mathbf{\Psi}(x, \lambda),
\]
where
\[
\check{\mathbf{S}}(x) := \begin{bmatrix}
1 & - \Psi_{0, 12}^0(x) /\Psi_{0, 22}^0(x) \vspace{1mm}\\ - \Psi_{1, 21}^\infty(x) & \Psi_{0, 12}^0(x) \Psi_{1, 21}^\infty(x) /\Psi_{0, 22}^0(x)
\end{bmatrix}.
\]
It follows that $\check{\mathbf{\Psi}}$ satisfies conditions similar to \eqref{eq:psi-hat-asymptotics-infty} and \eqref{eq:psi-hat-asymptotics-zero} as $\lambda$ approaches $\infty, 0$, respectively. That these operations are inverses of one another is the content of \cite[Lemma~1]{BMS18}.

In this way, starting with $\mathbf{\Psi}$ and iterating the map $\mathbf{\Psi \mapsto \hat{\mathbf{\Psi}}}$ (assuming $\Psi^0_{0, 11}(x)$, $\Psi^0_{0, 22}(x)$ do not identically vanish after each step), we may define the $n$th iterate of this Schlesinger transformation, which we denote $\mathbf{\Psi}_n$. This matrix, if it exists, satisfies the following Riemann--Hilbert problem.
\begin{rhp} \label{rhp:initial-with-n}
Fix generic monodromy parameters $( e_1, e_2 )$, $n \in \Z$, and $x>0$. We seek a $2 \times 2$ matrix function $\lambda\mapsto\mathbf \Psi(\lambda, x)$ satisfying:
\begin{itemize}\itemsep=0pt
\item  Analyticity: $\mathbf \Psi_n (\lambda, x)$ is analytic in $\C \setminus L^{(6)}$, where $L^{(6)} = \{ | \lambda| = 1\} \cup \ii \R$ is the jump contour shown in Figure {\rm \ref{fig:1}}.
\item  Jump condition: $\mathbf \Psi_n (\lambda, x)$ has continuous boundary values on $L^{(6)} \setminus \{0\}$ from each component of $\mathbb{C}\setminus L^{(6)}$, which satisfy
\[\mathbf \Psi_{n, +} (\lambda, x) = \mathbf \Psi_{n, -} (\lambda, x) \mathbf J_{\mathbf \Psi_n}(\lambda),\] where $\mathbf J_{\mathbf \Psi_n}(\lambda)$ is as shown in Figure {\rm \ref{fig:1}} but with the modification
\[
\mathbf{S}_2^0 e_0^{-2\sigma_3} \mapsto (-1)^n\mathbf{S}_2^0 e_0^{-2\sigma_3} \qquad \text{and} \qquad \mathbf{S}_2^\infty e_\infty^{2\sigma_3} \mapsto (-1)^n \mathbf{S}_2^\infty e_\infty^{2\sigma_3}.
\]
\item  Normalization: $\mathbf \Psi_n(\lambda,x)$ satisfies the asymptotic conditions
\begin{equation}
\label{eq:Psi-n-asymptotic-infinity}
\boldsymbol \Psi_n(\lambda, x) = \big( \mathbb{I}+\mathbf{\Xi}^{(6)}_n(x) + \mathcal{O}\big(\lambda^{-2}\big) \big) \ee^{\ii x \lambda \sigma_3 /2} \lambda_{\lw}^{(n - \Theta_\infty) \sigma_3/2} \qquad \text{as} \quad \lambda \to \infty,
\end{equation}
and
\begin{equation}
\label{eq:Psi-n-asymptotic-zero}
\boldsymbol \Psi_n(\lambda, x) = \big( \mathbf{\Delta}^{(6)}_n(x) + \mathcal{O}(\lambda) \big) \ee^{-\ii x \lambda^{-1} \sigma_3 /2} \lambda_{\lw}^{(\Theta_0 + n) \sigma_3/2} \qquad \text{as} \quad \lambda \to 0,
\end{equation}
where $\mathbf{\Delta}^{(6)}_n(x)$ is a matrix determined from $\mathbf{\Psi}_n(\lambda,x)$ having unit determinant.
\end{itemize}
\end{rhp}
That $\mathbf{\Psi}_n$ solves the above Riemann--Hilbert problem implies the existence of the limit
\begin{equation}
\label{eq:T-n-definition}
 \mathbf{\Xi}^{(6)}_n(x):=\lim_{\lambda\to\infty}\lambda\big[\mathbf{\Psi}_n(\lambda,x)\ee^{-\ii x\lambda\sigma_3/2}\lambda_{\lw}^{\Theta_\infty\sigma_3/2}-\mathbb{I}\big].
\end{equation}
It follows that the function
\begin{equation}
\label{eq:u-n-recover}
u_n(x) = \dfrac{-\ii \Xi^{(6)}_{n, 12}(x)}{\Delta^{(6)}_{n, 11}(x) \Delta^{(6)}_{n, 12}(x)}
\end{equation}
satisfies PIII($D_6$) in the form
\begin{equation*}\label{eq:PIII-D6-n}
u_n''=\dfrac{(u_n')^2}{u_n}-\dfrac{u_n'}{x}+\dfrac{4(n+\Theta_0)u_n^2}{x}+\dfrac{4(1 + n-\Theta_\infty)}{x}+4u_n^3-\frac{4}{u_n}.
\end{equation*}
It was shown in \cite[Lemma 2]{BMS18} that if for some $n\in\mathbb{Z}$ the inverse monodromy problem is solvable for a given $x \in D$, where $D$ is a domain in $\C \setminus \{0\}$, then $\mathbf{\Psi}_n$ satisfies the Lax pair
\begin{align*}
\dpd{\mathbf{\Psi}_n}{\lambda}(\lambda, x) &= \Bigg( \frac{\ii x}{2}\sigma_3 + \frac{1}{2\lambda}\begin{bmatrix} n-\Theta_\infty & 2y \\ 2v & \Theta_\infty-n \end{bmatrix} + \frac{1}{2\lambda^2} \begin{bmatrix} \ii x - 2\ii st & 2\ii s \\ 2\ii t(x - st) & 2\ii st-\ii x\end{bmatrix} \Bigg) \mathbf{\Psi}_n(\lambda, x), \\
\dpd{\mathbf{\Psi}_n}{x}(\lambda, x) &= \Bigg(\dfrac{\ii \lambda}{2} \sigma_3 + \dfrac{1}{x} \begin{bmatrix} 0 & y \\ v & 0\end{bmatrix} - \dfrac{1}{2\lambda x} \begin{bmatrix} \ii x - 2\ii st & 2\ii s \\ 2\ii t(x -st ) & 2\ii st-\ii x\end{bmatrix}\Bigg)\mathbf{\Psi}_n(\lambda, x),
\end{align*}
where potentials $s$, $t$, $u$, $v$, $y$ all depend on $x$ and $n$. Furthermore, in this domain, the functions $\Psi_{0, 11}^{0}(x)$, $\Psi_{0, 22}^{0}(x)$ extracted from $\mathbf{\Psi}_n(\lambda,x)$ are not identically zero.\footnote{Lemma~2 in~\cite{BMS18} was stated for parameters corresponding to rational solutions of Painlev\'e-III, but the proof is almost exactly the same in this case.}

One can check that if a solution to Riemann--Hilbert Problem~\ref{rhp:initial-with-n} exists, it must be unique, and we attempt to identify this solution as a solution of Riemann--Hilbert Problem~\ref{rhp:initial} with possibly different monodromy data. The diagonal elements of $\mathbf{S}_2^0 e_0^{-2\sigma_3}$, $\mathbf{S}_2^\infty e_\infty^{2\sigma_3}$ alternate signs which implies the change
\[
e_0^2 \mapsto (-1)^n e_0^2, \qquad e_\infty^2 \mapsto (-1)^n e_\infty^2.
\]
Furthermore, in view of \eqref{eq:stokes-parameters}, we can write
\[
(-1)^n s_2^\infty e_\infty^2 = (-1)^n\big(1 - e_1^2 e_\infty^2\big) e_\infty^2 = \big(1 - (-1)^ne_1^2(-1)^ne_\infty^2\big)(-1)^ne_\infty^2,
\]
and
\[
(-1)^n s_2^0 e_0^{-2} = (-1)^n\big(e_1^2 e_0^2 - 1\big) e_0^{-2} = \big((-1)^ne_1^2(-1)^ne_0^2 -1\big)(-1)^ne_0^{-2}.
\]
Combining the above with the fact that $\mathbf{C}_{0 \infty}^\pm$ remain invariant under the iterated Schlesinger transformations implies the change in monodromy data
\begin{gather} \label{eq:monodromy-n-dependence}
e_1^2 \mapsto (-1)^ne_1^2 \qquad \text{and} \qquad e_2 \mapsto e_2.
\end{gather}
Since $e_1$, $e_2$ are assumed to be nonvanishing, we may write them in the form \eqref{eq:e1-e2-mu-eta}
for some ${\mu, \eta \in \C}$ with $-1 < \re(\mu), \re(\eta) \leq 1$. Moreover, since the transformations $e_1\mapsto-e_1$, ${e_2\mapsto -e_2}$ preserve the monodromy data, we can assume $-\frac{1}{2}<\re(\eta)\le\frac{1}{2}$ and $-\frac{1}{2}<\re(\mu) \leq \frac{1}{2}$. Equation~\eqref{eq:monodromy-n-dependence} implies in turn that $\eta$ does not depend on $n \in \Z$, while $\mu$ is replaced with
\begin{equation}
\label{eq:mu-n}
\mu \mapsto \mu_n := \begin{cases} \mu, & n \in 2\Z, \\ \mu-\frac{1}{2}, & n +1 \in 2\Z, \end{cases}
\end{equation}
This proves Proposition \ref{prop:schlesinger}. We end this section with two important remarks.

\begin{Remark}
\label{remark:e2-mu-minus-mu-change}
It was noted in the introduction that one could restrict $0 < \re(\mu_n) \leq 1/2$, in which case, the above iterations interchange the roles of $e_1^2$, $e_1^{-2}$ and we have to perform the transformation $\mu\to -\mu$, which corresponds to the replacements
\begin{gather*}
\mathbf{E}^\infty\to \left(\sqrt{\frac{e_\infty^2-e_1^2}{e_1^2\big(e_1^2e_\infty^2-1\big)}}\right)^{\sigma_3}\mathbf{E}^\infty\left(\sqrt{\frac{e_\infty^2-e_1^2}{e_1^2\big(e_1^2e_\infty^2-1\big)}}\right)^{-\sigma_3}\sigma_1\left(\frac{e_1^2 e_\infty^2 \big(1-e_1^2 e_\infty^2\big)}{ \big(e_1^4-1\big)}\right)^{\sigma_3},\\
\mathbf{E}^0\to \left(\sqrt{\frac{e_0^2-e_1^2}{e_1^2\big(e_1^2e_0^2-1\big)}}\right)^{\sigma_3}\mathbf{E}^0\left(\sqrt{\frac{e_0^2-e_1^2}{e_1^2\big(e_1^2e_0^2-1\big)}}\right)^{-\sigma_3}\sigma_1\left(\frac{e_1^2 \big(e_0^2 e_1^2-1\big)}{ e_0^2 \big(e_1^4-1\big)}\right)^{\sigma_3}.
\end{gather*}
This gauge transformation then allows us to identify the monodromy parameter pairs
\begin{gather}
(e_1,e_2)\sim \left(\frac{1}{e_1},\frac{1}{e_2e_0^2e_\infty^2}\sqrt{\frac{\big(e_1^2-e_0^2\big)\big(1-e_0^2e_1^2\big)}{\big(e_1^2-e_\infty^2\big)\big(1-e_1^2e_\infty^2\big)}}\right).
\label{eq:e1-to-1/e1}
\end{gather}
Therefore, we alternatively can write the monodromy data for the Schlesinger transformation as
\begin{gather*}
\mu_n = \begin{cases} \mu, & n \in 2\Z, \\ \frac{1}{2}-\mu, & n +1 \in 2\Z, \end{cases} \\
 e_{2,n}=\begin{cases} e_2, & n \in 2\Z, \\ \dfrac{1}{e_2e_0^2e_\infty^2}\sqrt{\dfrac{\big(e_1^2-e_0^2\big)\big(1-e_0^2e_1^2\big)}{\big(e_1^2-e_\infty^2\big)\big(1-e_1^2e_\infty^2\big)}}, & n +1 \in 2\Z. \end{cases}
\end{gather*}
Furthermore, one can check that $(x_1, x_2, x_3)$ in \eqref{eq:x1}--\eqref{eq:x3} remain invariant under the map described in \eqref{eq:e1-to-1/e1}, whereas $(y_1, y_2, y_3) \mapsto (\pm y_1, \pm y_2, y_3)$ where the sign depends on the choice of the square root in \eqref{eq:e1-to-1/e1} and \eqref{eq:V-root} below. In both cases, the corresponding solution of \eqref{eq:PIII-$D_8$} remains invariant, see Remark \ref{rem:minus-y1-y2}.
\end{Remark}

\begin{Remark}
 \label{rem:n-dependent-notation}
 Moving forward, we will slightly abuse notation by suppressing the $n$-de\-pen\-den\-ce in the parameters
\begin{equation}
\label{eq:e-definitions}
e_\infty = \ee^{\pi \ii (\Theta_\infty - n)/2}, \qquad e_0 = \ee^{\pi \ii (\Theta_0 + n)/2}, \qquad e_1 = \ee^{\pi \ii \mu_n}, \qquad e_2 = \ee^{\pi \ii \eta }.
\end{equation}
\end{Remark}

\section[Asymptotics for large n and small x and proof of Theorem 1.4]{Asymptotics for large $\boldsymbol{n}$ and small $\boldsymbol{x}$\\ and proof of Theorem \ref{thm:general}}
\label{sec:proof}
Let $(e_1, e_2)$ be generic monodromy parameters, see Definition \ref{def:generic}. At this point, we can see more clearly the meaning of the genericity conditions formulated there:
\begin{enumerate}\itemsep=0pt
\item[(i)] $e_1^4 \neq 1$; this is to guarantee diagonalizability in \eqref{eq:Stokes-products-eigenvectors},
\item[(ii)] $e_1 e_2 \neq 0$; this is to guarantee the unit-determinant condition in \eqref{eq:Stokes-products-eigenvectors} and \eqref{eq:C-zero-infty-diagonalization},
\item[(iii)] $e_1^2 \neq e_\infty^{\pm 2}$ and $e_1^2 \neq e_0^{\pm 2}$; this, in particular, implies that the Stokes multipliers \eqref{eq:stokes-parameters} are nonvanishing.
\end{enumerate}

\subsection{Opening the lenses} First, we define a new unknown matrix by $\mathbf \Phi_n(\lambda,x):=\boldsymbol \Psi_n (\lambda,x)\mathbf{L}$ where $\mathbf{L}$ is the piece-wise constant matrix shown in the left-hand panel of Figure~\ref{2}. It follows from \eqref{eq:C-zero-infty-diagonalization}, \eqref{eq:C-zero-infty-plus-diagonalization} that the resulting jump conditions satisfied by $\mathbf \Phi_n(\lambda,x)$ are as shown in the right-hand panel of Figure~\ref{2}.

 \begin{figure}[t]
 \centering
 \raisebox{7mm}{\includegraphics[width=0.43\linewidth]{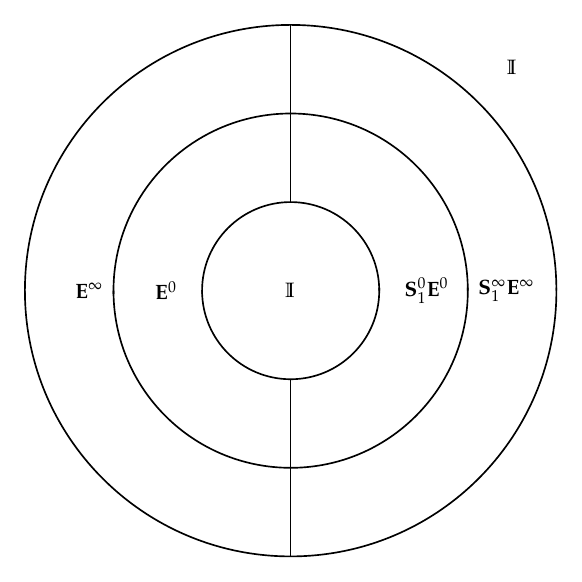}}\quad
 \includegraphics[width=0.53\linewidth]{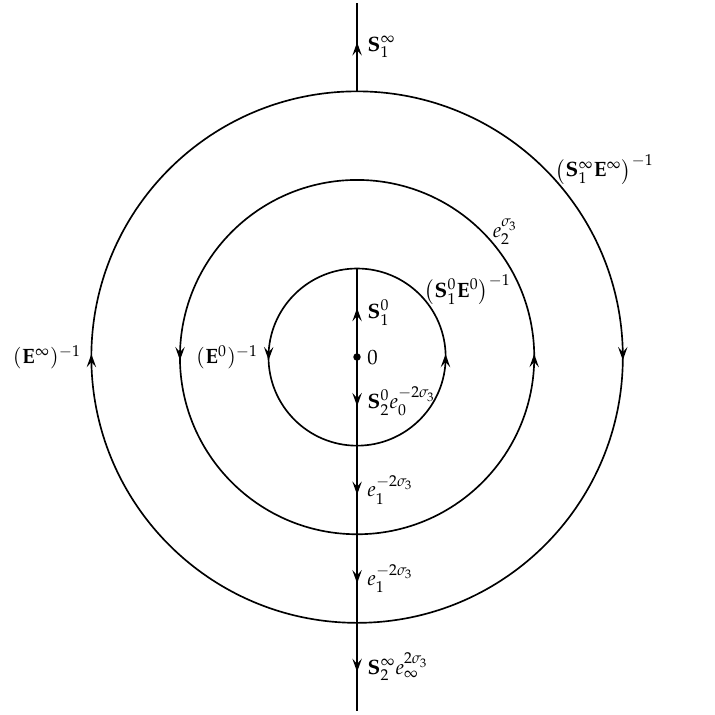}
\caption{Left panel: the definition of the matrix $\mathbf L$; the circles are centered at the origin and have radii $\frac{1}{2}$, $1$, and $2$. Right panel: the jump contour $\Gamma$ and jump conditions for $\mathbf{\Phi}_n(\lambda,x)$.}\label{2}
\end{figure}

\begin{Remark}\label{rmk:rotated-lenses}
In the general case $|{\arg}(x)|<\pi$, the lenses shown in Figure \ref{2} must be rotated in the manner shown in the left panel of Figure \ref{fig:2-rotated}. The resulting jumps follow from the identities~\eqref{eq:general-connection-factorization-1}--\eqref{eq:general-connection-factorization-2} and are shown in the right panel of Figure \ref{fig:2-rotated}.
\begin{figure}[t]
 \centering
\raisebox{-0.5\height}{\includegraphics[width=0.435\linewidth]{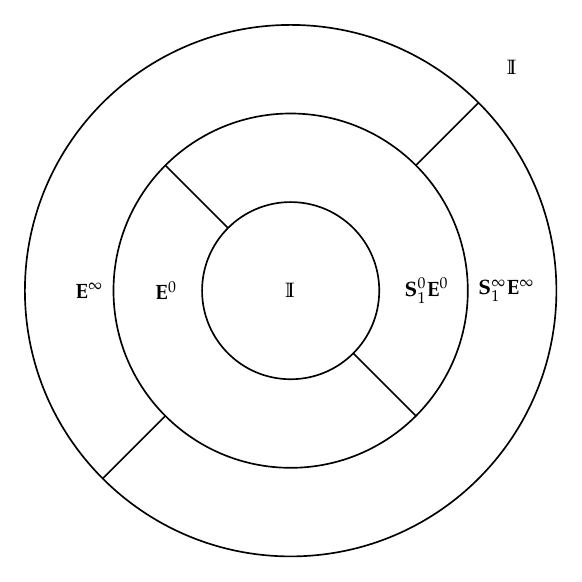}}\hfill%
\raisebox{-0.5\height}{\includegraphics[width=0.535\linewidth]{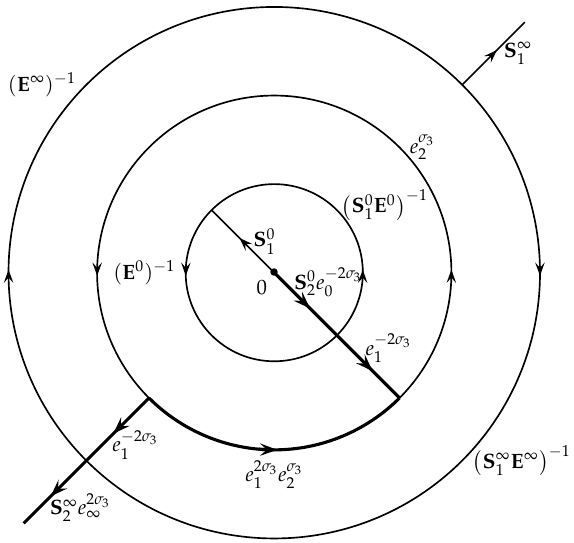}}
\caption{Analogue of Figure \ref{2} when $\arg(x) \neq 0$. The thick line represents the branch cut for the argument chosen as in Remark \ref{rmk:general-arg-x}.}
\label{fig:2-rotated}
\end{figure}
\end{Remark}

 \begin{figure}[h!]
 \centering
\includegraphics[width=0.5\columnwidth]{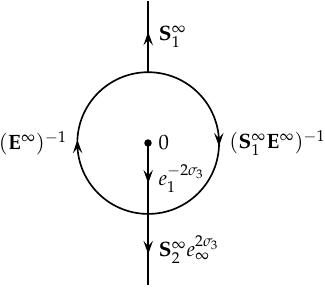}
\caption{The contour $\Gamma^{(\infty)}$ includes the circle $|\lambda|=2$.}\label{fig:6}
\end{figure}

\subsection[Parametrix for Phi\_n(lambda,x) near lambda=infty]{Parametrix for $\boldsymbol{{\Phi}_n(\lambda,x)}$ near $\boldsymbol{\lambda=\infty}$}
\label{sec:Parametrix-infinity}
By definition, the parametrix
$\para{\boldsymbol \Phi}_n^{(\infty)}(\lambda,x)$
satisfies the following Riemann--Hilbert problem.
\begin{rhp} \label{rhp:infinity_parametrix}
Fix generic monodromy parameters $( e_1, e_2 )$ determining the Stokes and connection matrices, $n\in\mathbb{Z}$, and $x>0$. We seek a $2 \times 2$ matrix function $\lambda\mapsto\para{\boldsymbol \Phi}_n^{(\infty)}(\lambda,x)$ satisfying:
\begin{itemize}\itemsep=0pt
\item Analyticity: $\para{\boldsymbol \Phi}_n^{(\infty)}(\lambda,x)$ is analytic in $\C \setminus \Gamma^{(\infty)}$, where
\[\Gamma^{(\infty)} = \{ | \lambda| = 2\} \cup (\ii \R\cap \{ |{\im}\,\lambda-1| > 1\})\]
 is the jump contour shown in Figure {\rm\ref{fig:6}}.
\item  Jump condition: $\para{\boldsymbol \Phi}_n^{(\infty)}(\lambda,x)$ has continuous boundary values on $\Gamma^{(\infty)} \setminus \{0\}$ from each component of $\mathbb{C}\setminus \Gamma^{(\infty)}$, which satisfy
\[
\para{\boldsymbol \Phi}_{n,+}^{(\infty)}(\lambda,x) = \para{\boldsymbol \Phi}_{n,-}^{(\infty)}(\lambda,x)\mathbf J_{\para{\boldsymbol \Phi}_n^{(\infty)}}(\lambda),
\]
 where \smash{$\mathbf J_{\para{\boldsymbol \Phi}_n^{(\infty)}}(\lambda)$} is as shown in Figure {\rm\ref{fig:6}} and where the $+$ $($resp., $-)$ subscript denotes a~boundary value taken from the left $($resp., right$)$ of an arc of $\Gamma^{(\infty)}$.
\item Normalization: \smash{$\para{\boldsymbol \Phi}_n^{(\infty)}(\lambda,x)$} satisfies the asymptotic conditions{\samepage
\begin{gather}
\label{eq:Phi-asymptotic-infinity}
\para{\boldsymbol \Phi}_n^{(\infty)}(\lambda,x) = \bigg(\mathbb I + \dfrac{\mathbf{A}_{n}(x)}{\lambda} + \mathcal{O}\left( \dfrac{1}{\lambda^2}\right) \bigg) \ee^{\ii x \lambda \sigma_3/2} \lambda_{\lw}^{(n - \Theta_\infty )\sigma_3/2} \qquad \text{as} \quad \lambda \to \infty,
\\
\label{eq:Phi-asymptotic-zero}
\para{\boldsymbol \Phi}_n^{(\infty)}(\lambda,x) = ( \mathbf{B}_{n}(x) + \mathcal{O}(\lambda) ) \lambda_{\lw}^{\mu_n \sigma_3} \qquad \text{as} \quad \lambda \to 0,
\end{gather}
where $\mathbf{A}_n(x)$ has zero trace and $ \mathbf{B}_{n}(x)$ has unit determinant.}
\end{itemize}
\end{rhp}

It is easy to see that \smash{$\para{\mathbf{\Phi}}_n^{(\infty)}(\lambda,x)$} necessarily has unit determinant. Furthermore, note that the jump matrix being \smash{$e_1^{-2\sigma_3}$} across the arc terminating at the origin implies
$e_1 = \ee^{\pi \ii \mu_n}$, which is consistent with \eqref{eq:e-definitions}.

\subsubsection[Dependence on lambda]{Dependence on $\boldsymbol{\lambda}$}
It follows from assuming differentiability of the asymptotics in \eqref{eq:Phi-asymptotic-infinity}--\eqref{eq:Phi-asymptotic-zero} that
\begin{gather}
 \frac{\partial\para{\boldsymbol \Phi}_{n}^{(\infty)}}{\partial\lambda}(\lambda,x) \para{\boldsymbol \Phi}^{(\infty)}_n(\lambda,x)^{-1}\nonumber\\
\qquad= \left(\mathbb I + \dfrac{\mathbf{A}_{n}(x)}{\lambda} + \mathcal{O}\big( \lambda^{-2}\big) \right)\left( \dfrac{\ii x}{2} + \dfrac{n - \Theta_\infty}{2 \lambda} \right) \sigma_3 \left(\mathbb I + \dfrac{\mathbf{A}_{n}(x)}{\lambda} + \mathcal{O}\big( \lambda^{-2}\big) \right)^{-1} + \mathcal{O}\big(\lambda^{-2}\big)\nonumber\\
 \qquad= \dfrac{\ii x}{2} \sigma_3 + \left( \dfrac{\ii x}{2} [\mathbf{A}_{n}(x), \sigma_3] + \dfrac{n - \Theta_\infty}{2} \sigma_3 \right) \dfrac{1}{\lambda} + \mathcal{O}\big( \lambda^{-2} \big) \qquad \text{as} \quad \lambda \to \infty,
\label{psi-infty-expansion}
\end{gather}
and
\begin{align}
 \frac{\partial\para{\boldsymbol \Phi}_{n}^{(\infty)}}{\partial\lambda}(\lambda,x)\para{\boldsymbol \Phi}^{(\infty)}_n(\lambda,x)^{-1} &= \left( \mathbf{B}_{n}(x) + \mathcal{O}(\lambda) \right) \left( \dfrac{\mu_n}{\lambda} \sigma_3 \right) \left( \mathbf{B}_{n}(x) + \mathcal{O}(\lambda) \right)^{-1} + \mathcal{O}(1)
 \nonumber\\
 &= \dfrac{\mu_n}{\lambda} \mathbf{B}_{n}(x) \sigma_3 \mathbf{B}_{n}(x)^{-1} + \mathcal{O}(1) \qquad \text{as} \quad \lambda \to 0.
\label{eq:Lambda-expansion-zero}
\end{align}
Since the quantity on the left-hand side of \eqref{psi-infty-expansion} and \eqref{eq:Lambda-expansion-zero} is otherwise an analytic function of~$\lambda$, it follows from Liouville's Theorem that
\begin{gather}
 \frac{\partial\para{\boldsymbol \Phi}_{n}^{(\infty)}}{\partial\lambda}(\lambda,x) \para{\boldsymbol \Phi}_n^{(\infty)}(\lambda,x)^{-1} = \dfrac{\ii x}{2} \sigma_3 + \dfrac{\mu_n}{\lambda} \mathbf{B}_{n}(x) \sigma_3 \mathbf{B}_{n}(x)^{-1} \implies\nonumber\\ \frac{\partial\para{\boldsymbol \Phi}_{n}^{(\infty)}}{\partial\lambda}(\lambda,x)= \left( \dfrac{\ii x}{2} \sigma_3 + \dfrac{\mu_n}{\lambda} \mathbf{B}_{n}(x) \sigma_3 \mathbf{B}_{n}(x)^{-1} \right) \para{\boldsymbol \Phi}_n^{(\infty)} (\lambda,x).
 \label{Lax-system-lambda}
\end{gather}
Noting that $\Tr\big(\mathbf{B}_{n}(x)\sigma_3\mathbf{B}_{n}(x)^{-1}\big)=0$ and $\det \big(\mathbf{B}_{n}(x) \sigma_3 \mathbf{B}_{n}(x)^{-1}\big) = -1$, we may write
\begin{equation}
\mathbf{B}_{n}(x) \sigma_3 \mathbf{B}_{n}(x)^{-1} = \begin{bmatrix} a_n(x) & b_n(x) \\ c_n(x) & -a_n(x) \end{bmatrix} \qquad \text{subject to}\quad a_n(x)^2 + b_n(x)c_n(x) = 1
\label{eq:zero-coefficient-matrix}
\end{equation}
and use this form in \eqref{Lax-system-lambda} to write a coupled scalar system of differential equations satisfied by the elements $\phi_1(\lambda,x)$ and $\phi_2(\lambda,x)$ of the first and second rows, respectively, of any column of~$\para{\boldsymbol \Phi}_n^{(\infty)}(\lambda,x)$:
\begin{align}
\label{psi1-psi2-sys1} \frac{\partial\phi_{1}}{\partial\lambda}(\lambda,x) &= \left(\dfrac{\ii x}{2} + \dfrac{\mu_n a_n(x)}{\lambda}\right) \phi_1(\lambda,x) + \dfrac{\mu_n b_n(x)}{\lambda} \phi_2(\lambda,x), \\
\label{psi1-psi2-sys2} \frac{\partial\phi_{2}}{\partial\lambda}(\lambda,x) &= \dfrac{\mu_n c_n(x)}{\lambda} \phi_1(\lambda,x)-\left(\dfrac{\ii x}{2}+ \dfrac{\mu_n a_n(x)}{\lambda}\right) \phi_2(\lambda,x).
\end{align}
Before beginning to solve this system, observe that equating the coefficients of $\lambda^{-1}$ in \eqref{psi-infty-expansion} and~\eqref{eq:Lambda-expansion-zero} yields the identity
\begin{equation}
\label{C-A relation}
\mu_n \mathbf{B}_{n}(x) \sigma_3 \mathbf{B}_{n}(x)^{-1} = \dfrac{\ii x}{2} [\mathbf{A}_{n}(x), \sigma_3] + \dfrac{n - \Theta_\infty}{2} \sigma_3.
\end{equation}
Since $[\mathbf{A}_{n}(x), \sigma_3]$ is off-diagonal, we arrive at
\begin{equation}
\mu_n a_n(x) = \frac{n - \Theta_\infty}{2}.
\label{eq:little-an}
\end{equation}
Since $\mu_n$ and $n$ are constants, this equation implies that $a_n(x)$ is independent of $x$, so we will simply write $a_n$ going forward.
Now, solving for $\phi_1(\lambda,x)$ in \eqref{psi1-psi2-sys2} and eliminating it from \eqref{psi1-psi2-sys1} yields (assuming $c_n(x) \neq 0$ and using $b_n(x)c_n(x) = 1 - a_n^2$)
\begin{align*}
 \lambda \frac{\partial^2\phi_{2}}{\partial\lambda^2}(\lambda,x) + \frac{\partial\phi_{2}}{\partial\lambda}(\lambda,x) + \left[ \dfrac{\ii x}{2} + \dfrac{x^2}{4} \lambda - \ii x\mu_n a_n - \dfrac{\mu_n^2}{\lambda} \right]\phi_2(\lambda,x) = 0.
\end{align*}
It is easy to see that the first-order derivative term is removed by the substitution $\phi_2(\lambda,x)=\lambda^{-1/2}w(\lambda,x)$.
Indeed, $w(\lambda,x)$ satisfies
\begin{align*}
 \frac{\partial^2w}{\partial\lambda^2}(\lambda,x) + \bigg[ \dfrac{x^2}{4} + \ii x\left( \dfrac{1}{2} - \mu_n a_n \right) \dfrac{1}{\lambda} + \left( \dfrac{1}{4} - \mu_n^2 \right) \dfrac{1}{\lambda^2} \bigg]w(\lambda,x) &= 0.
\end{align*}
Finally, the explicit $x$-dependence in the coefficients can be removed by setting $Z:= \ii x \lambda$ and writing $w(\lambda,x)=W(Z)$. Note that the notation $W(Z)$ here is not related to $W(z)$ appearing in Section~\ref{sec:monodromy-rep-$D_8$}. In this case, $W(Z)$ satisfies the ordinary differential equation
\begin{equation}
 W''(Z) + \bigg[ - \dfrac{1}{4} + \left( \dfrac{1}{2} - \mu_n a_n \right) \dfrac{1}{Z} + \left( \dfrac{1}{4} - \mu_n^2 \right) \dfrac{1}{Z^2} \bigg] W(Z) = 0,
 \label{Whittaker}
\end{equation}
which is Whittaker's equation (see \cite[Chapter~13]{DLMF}) with parameter
\begin{equation}\label{kappa-mu}
 \kappa = \kappa_n:=\frac{1}{2} - \mu_n a_n = \dfrac{1+\Theta_\infty - n }{2}.
\end{equation}
Given $\phi_2(\lambda,x)=\lambda^{-1/2}W(Z)$ for $Z=\ii x\lambda$, and a solution $W(Z)$ of \eqref{Whittaker}, it follows from \eqref{psi1-psi2-sys2} that the corresponding first-row entry is
\begin{equation}
 \phi_1(\lambda,x)=\frac{\ii x\lambda^{1/2}}{\mu_n c_n(x)}\bigg(W'(Z)+\left(\frac{1}{2}-\frac{\kappa_n}{Z}\right)W(Z)\bigg).
 \label{first-row}
\end{equation}
A fundamental pair of solutions of \eqref{Whittaker} is given by $W(Z)=W_{\pm \kappa_n, \mu_n} (\pm Z)$, $\mathrm{arg}(\pm Z) \in (-\pi, \pi) $.
If we take the particular solution $\phi_2(\lambda,x)=\lambda^{-1/2}W_{\kappa_n,\mu_n}(Z)$, then
using the identity
\begin{equation*}
 W'_{\kappa,\mu_n}(Z)=\left(\dfrac{\kappa}{Z}-\dfrac{1}{2}\right)W_{\kappa,\mu_n}(Z)+\bigg(\left(\dfrac{1}{2}-\kappa\right)^2-\mu_n^2\bigg)\frac{1}{Z}W_{\kappa-1,\mu_n}(Z)
\end{equation*}
(see \cite[equation~(13.15.23)]{DLMF}) in \eqref{first-row} gives
\begin{gather*}
 \phi_2(\lambda,x)=\lambda^{-1/2}W_{\kappa_n,\mu_n}(Z)\\
\implies \phi_1(\lambda,x)=\frac{\ii x\lambda^{1/2}}{\mu_n c_n(x)}\bigg(\left(\frac{1}{2}-\kappa_n\right)^2-\mu_n^2\bigg)Z^{-1}W_{\kappa_n-1,\mu_n}(Z).
\end{gather*}
Likewise, if we take the particular solution $\phi_2(\lambda,x)=\lambda^{-1/2}W_{-\kappa_n,\mu_n}(-Z)$, then using the identity
\begin{equation*}
 W'_{\kappa, \mu_n}(Z) = \left( \dfrac{1}{2} - \dfrac{\kappa}{Z} \right)W_{\kappa, \mu_n}(Z) - \dfrac{1}{Z} W_{\kappa+1, \mu_n}(Z)
\end{equation*}
(see \cite[equation~(13.15.26)]{DLMF}) in \eqref{first-row} yields
\begin{equation*}
 \phi_2(\lambda,x)=\lambda^{-1/2}W_{-\kappa_n,\mu_n}(-Z)\implies \phi_1(\lambda,x)=-\frac{\ii x\lambda^{1/2}}{\mu_n c_n(x)}Z^{-1}W_{1-\kappa_n,\mu_n}(-Z).
\end{equation*}
Taking linear combinations with coefficients depending generally on the parameter $x$, the general solution matrix for the system \eqref{Lax-system-lambda} can be written in the form
\begin{equation}
 \para{\boldsymbol\Phi}_n^{(\infty)}(\lambda,x)=\widetilde{\boldsymbol\Phi}^{(\infty)}_n(\lambda,x)\mathbf{K}(x),\qquad\widetilde{\boldsymbol\Phi}_n^{(\infty)}(\lambda,x):=\mathbf{H}(\lambda,x)\mathbf{W}(\ii\lambda x;\kappa_n,\mu_n),
 \label{Psi0-define}
\end{equation}
where $\widetilde{\mathbf{\Phi}}_n^{(\infty)}(\lambda,x)$ is a specific fundamental solution matrix of \eqref{Lax-system-lambda} constructed from
\begin{equation}
 \mathbf{H}(\lambda,x):=\lambda^{\sigma_3/2}\begin{bmatrix}\displaystyle\frac{\ii x}{\mu_n c_n(x)} & 0\\0 & 1\end{bmatrix}
 \label{H-define}
\end{equation}
and
\begin{gather}
 \mathbf{W}(Z;\kappa,\mu_n):=
 \begin{bmatrix}
 \alpha_{\kappa,\mu_n}Z^{-1}W_{\kappa-1,\mu_n}(Z) & -Z^{-1}W_{1-\kappa,\mu_n}(-Z)\\
 W_{\kappa,\mu_n}(Z) & W_{-\kappa,\mu_n}(-Z)
 \end{bmatrix},\nonumber\\ \alpha_{\kappa,\mu_n}:=\left(\frac{1}{2}-\kappa\right)^2-\mu_n^2,
 \label{M-alpha-define}
\end{gather}
in which $\kappa=\kappa_n$ and $\mu_n$ are given by \eqref{kappa-mu} and $\mathbf{K}(x)$ is a matrix of free coefficients.

\subsubsection[Dependence on x]{Dependence on $\boldsymbol{x}$}
Going back to \eqref{eq:Phi-asymptotic-infinity}--\eqref{eq:Phi-asymptotic-zero} and now assuming that the asymptotics are differentiable with respect to $x$,
\begin{gather*}
 \frac{\partial\para{\boldsymbol \Phi}^{(\infty)}_n}{\partial x}(\lambda,x) \para{\boldsymbol \Phi}_n^{(\infty)} (\lambda,x)^{-1}\\
 \qquad= \left(\mathbb I + \dfrac{\mathbf{A}_{n}(x)}{\lambda} + \mathcal{O}\big( \lambda^{-2}\big) \right) \left( \dfrac{\ii \lambda}{2}\sigma_3 \right)\left(\mathbb I + \dfrac{\mathbf{A}_{n}(x)}{\lambda} + \mathcal{O}\big( \lambda^{-2}\big)\right)^{-1} + \mathcal{O}\big( \lambda^{-1} \big)\\
 \qquad = \dfrac{\ii \lambda}{2} \sigma_3 + \dfrac{\ii}{2} [\mathbf{A}_{n}(x), \sigma_3] + \mathcal{O}\big( \lambda^{-1} \big) \qquad \text{as} \quad \lambda \to \infty,
\end{gather*}
and
\begin{align*}
 \frac{\partial\para{\boldsymbol \Phi}^{(\infty)}_n}{\partial x}(\lambda,x) \para{\boldsymbol \Phi}_n^{(\infty)}(\lambda,x)^{-1} &= \mathbf{B}_n'(x) \mathbf{B}_n(x)^{-1} + \mathcal{O}(\lambda) \qquad \text{as} \quad \lambda \to 0.
\end{align*}
So, applying Liouville's theorem yields
\begin{equation}
 \frac{\partial\para{\boldsymbol \Phi}^{(\infty)}_{n}}{\partial x} (\lambda,x) = \left( \dfrac{\ii \lambda}{2} \sigma_3 + \dfrac{\ii}{2} [\mathbf{A}_n(x), \sigma_3] \right) \para{\boldsymbol \Phi}_n^{(\infty)}(\lambda,x),
 \label{Lax-system-x}
\end{equation}
and it follows from \eqref{C-A relation} that
\begin{align}
\dfrac{\ii}{2}[\mathbf{A}_n(x), \sigma_3] &= \dfrac{1}{x} \left( \mu_n\mathbf{B}_n(x) \sigma_3 \mathbf{B}_n(x)^{-1} +\left(\kappa_n-\frac{1}{2}\right) \sigma_3 \right)\nonumber\\
& = \frac{1}{x}\begin{bmatrix} 0 & \dfrac{\mu_n\big(1 - a_n^2\big)}{c_n(x)} \\ \mu_n c_n(x) & 0 \end{bmatrix},
\label{eq:Ainfty-identity}
\end{align}
where $\mu_n$, $a_n$ are independent of $\lambda$, $x$. To determine the $x$-dependence of $c_n(x)$, we use \eqref{eq:Ainfty-identity} to assemble \eqref{Lax-system-lambda} (using also \eqref{C-A relation} and \eqref{kappa-mu}) and \eqref{Lax-system-x} to give the Lax system
\begin{gather}
 \frac{\partial\para{\boldsymbol \Phi}_{n}^{(\infty)}}{\partial\lambda}(\lambda,x) =\para{\boldsymbol\Lambda}(\lambda,x) \para{\boldsymbol \Phi}_n^{(\infty)}(\lambda,x),\nonumber\\
 \para{\mathbf{\Lambda}}(\lambda,x):=\dfrac{\ii x}{2} \sigma_3 + \dfrac{1}{\lambda}\begin{bmatrix} -\frac{1}{2}(\Theta_\infty -n) & \displaystyle \frac{\mu_n\big(1-a_n^2\big)}{c_n(x)} \\ \mu_n c_n(x) & \frac{1}{2}(\Theta_\infty-n) \end{bmatrix}, \label{lax-lambda} \\
 \frac{\partial\para{\boldsymbol \Phi}_{n}^{(\infty)}}{\partial x}(\lambda,x) =\para{\mathbf X}(\lambda,x) \para{\boldsymbol \Phi}_n^{(\infty)}(\lambda,x),\nonumber\\
 \para{\mathbf{X}}(\lambda,x):= \dfrac{\ii\lambda}{2} \sigma_3 + \dfrac{1}{x}\begin{bmatrix} 0 & \displaystyle \frac{\mu_n\big(1-a_n^2\big)}{c_n(x)} \\ \mu_n c_n(x) & 0 \end{bmatrix}. \label{lax-x}
\end{gather}
Since $\para{\boldsymbol\Phi}_n^{(\infty)}(\lambda,x)$ is a simultaneous fundamental solution matrix for these equations, the Lax system is compatible. The compatibility condition reads
\begin{equation*}
 \para{\boldsymbol\Lambda}_{x}(\lambda,x) - \para{\mathbf X}_{\lambda}(\lambda,x) + \big[\para{\boldsymbol{\Lambda}}(\lambda,x), \para{\mathbf X}(\lambda,x)\big] = \mathbf{0},
\end{equation*}
which is equivalent to
\begin{equation}
 xc_n'(x) = (1-2\kappa_n) c_n(x) \implies c_n(x) = \gamma_n x^{1-2\kappa_n},
 \label{c-of-x}
\end{equation}
for some constant $\gamma_n\neq 0$. Thus, the coefficient $c_n(x)$ is determined up to the choice of the constant $\gamma_n$. Note also that the coefficient matrices $\para{\boldsymbol{\Lambda}}(\lambda,x)$ and $\para{\mathbf{X}}(\lambda,x)$ are obviously related by the simple identity
\begin{equation}
 \para{\mathbf{X}}(\lambda,x)-\frac{\lambda}{x}\para{\boldsymbol\Lambda}(\lambda,x)=\frac{1}{x}\left(\kappa_n-\frac{1}{2}\right)\sigma_3.
 \label{XLambda-relation}
\end{equation}
\sloppy Since the fundamental matrix $\widetilde{\boldsymbol \Phi}_n^{(\infty)}(\lambda,x)$ defined by \eqref{Psi0-define} satisfies \eqref{lax-lambda}, then so does \[\para{\boldsymbol \Phi}_n^{(\infty)}(\lambda,x)=\widetilde{\boldsymbol\Phi}_n^{(\infty)}(\lambda,x) \mathbf K(x),\] and $\mathbf K(x)$ must now be chosen so that \eqref{lax-x} is satisfied.
Substituting into \eqref{lax-x}, we obtain an ordinary differential equation on $\mathbf{K}(x)$:
\begin{equation}
 \mathbf{K}'(x)=\bigg(\widetilde{\boldsymbol\Phi}_n^{(\infty)}(\lambda,x)^{-1}\mathbf{X}(\lambda,x)\widetilde{\boldsymbol\Phi}_n^{(\infty)}(\lambda,x)-
 \widetilde{\boldsymbol\Phi}_n^{(\infty)}(\lambda,x)^{-1}\frac{\partial\widetilde{\boldsymbol\Phi}_{n}^{(\infty)}}{\partial x}(\lambda,x)\bigg)\mathbf{K}(x).
 \label{K-ODE}
\end{equation}
Now, from the form of $\widetilde{\boldsymbol\Phi}_n^{(\infty)}(\lambda,x)$ written in \eqref{Psi0-define}, we have both
\begin{gather*}
 \widetilde{\boldsymbol{\Phi}}_n^{(\infty)}(\lambda,x)^{-1}\frac{\partial\widetilde{\boldsymbol\Phi}_{n}^{(\infty)}}{\partial x}(\lambda,x)=\widetilde{\boldsymbol{\Phi}}_n^{(\infty)}(\lambda,x)^{-1}\frac{\partial\mathbf{H}}{\partial x}(\lambda,x)\mathbf{H}(\lambda,x)^{-1}\widetilde{\boldsymbol\Phi}_n^{(\infty)}(\lambda,x) \\
\phantom{ \widetilde{\boldsymbol{\Phi}}_n^{(\infty)}(\lambda,x)^{-1}\frac{\partial\widetilde{\boldsymbol\Phi}_{n}^{(\infty)}}{\partial x}(\lambda,x)=}{}+ \ii\lambda\widetilde{\boldsymbol\Phi}_n^{(\infty)}(\lambda,x)^{-1}\mathbf{H}(\lambda,x)\mathbf{W}'(Z;\kappa_n,\mu_n),\\
 \widetilde{\boldsymbol{\Phi}}_n^{(\infty)}(\lambda,x)^{-1}\frac{\partial\widetilde{\boldsymbol\Phi}_{n}^{(\infty)}}{\partial\lambda}(\lambda,x)=\widetilde{\boldsymbol{\Phi}}_n^{(\infty)}(\lambda,x)^{-1}\frac{\partial\mathbf{H}}{\partial\lambda}(\lambda,x)\mathbf{H}(\lambda,x)^{-1}\widetilde{\boldsymbol\Phi}_n^{(\infty)}(\lambda,x) \\
\phantom{ \widetilde{\boldsymbol{\Phi}}_n^{(\infty)}(\lambda,x)^{-1}\frac{\partial\widetilde{\boldsymbol\Phi}_{n}^{(\infty)}}{\partial\lambda}(\lambda,x)=}{}+ \ii x\widetilde{\boldsymbol\Phi}_n^{(\infty)}(\lambda,x)^{-1}\mathbf{H}(\lambda,x)\mathbf{W}'(Z;\kappa_n,\mu_n),
\end{gather*}
so it follows that
\begin{gather*}
 \widetilde{\boldsymbol{\Phi}}_n^{(\infty)}(\lambda,x)^{-1}\frac{\partial\widetilde{\boldsymbol\Phi}_{n}^{(\infty)}}{\partial x}(\lambda,x)\\
\quad=\frac{\lambda}{x}\widetilde{\boldsymbol{\Phi}}_n^{(\infty)}(\lambda,x)^{-1}\frac{\partial\widetilde{\boldsymbol{\Phi}}_{n}^{(\infty)}}{\partial\lambda}(\lambda,x)\\
 \quad\phantom{=}{}+ \widetilde{\boldsymbol{\Phi}}_n^{(\infty)}(\lambda,x)^{-1}\left[\frac{\partial\mathbf{H}}{\partial x}(\lambda,x)\mathbf{H}(\lambda,x)^{-1}-\frac{\lambda}{x}\frac{\partial\mathbf{H}}{\partial\lambda}(\lambda,x)\mathbf{H}(\lambda,x)^{-1}\right]\widetilde{\boldsymbol{\Phi}}_n^{(\infty)}(\lambda,x)\\
 \quad=\widetilde{\boldsymbol{\Phi}}_n^{(\infty)}(\lambda,x)^{-1}\left[\frac{\lambda}{x}\breve{\boldsymbol\Lambda}(\lambda,x) + \frac{\partial\mathbf{H}}{\partial x}(\lambda,x)\mathbf{H}(\lambda,x)^{-1}-\frac{\lambda}{x}\frac{\partial\mathbf{H}}{\partial\lambda}(\lambda,x)\mathbf{H}(\lambda,x)^{-1}\right]\widetilde{\boldsymbol{\Phi}}_n^{(\infty)}(\lambda,x),
\end{gather*}
where we also used \eqref{lax-lambda}.
Using this in \eqref{K-ODE} along with the explicit definition \eqref{H-define} of $\mathbf{H}(\lambda,x)$ and the identities \eqref{c-of-x} and \eqref{XLambda-relation} gives
\begin{gather*}
 \mathbf{K}'(x)\mathbf{K}(x)^{-1}\\
\quad =\widetilde{\boldsymbol{\Phi}}_n^{(\infty)}(\lambda,x)^{-1}\left[\breve{\mathbf{X}}(\lambda,x)-\frac{\lambda}{x}\breve{\boldsymbol{\Lambda}}(\lambda,x)
\right.\\
\left.
\hphantom{\quad =\widetilde{\boldsymbol{\Phi}}_n^{(\infty)}(\lambda,x)^{-1}}{}
+\frac{\lambda}{x}\frac{\partial\mathbf{H}}{\partial\lambda}(\lambda,x)\mathbf{H}(\lambda,x)^{-1}-\frac{\partial\mathbf{H}}{\partial x}(\lambda,x)\mathbf{H}(\lambda,x)^{-1}\right]\widetilde{\boldsymbol{\Phi}}_n^{(\infty)}(\lambda,x)\\
 \quad =\widetilde{\boldsymbol\Phi}_n^{(\infty)}(\lambda,x)^{-1}\Bigg[\frac{1}{x}\left(\kappa_n-\frac{1}{2}\right)\sigma_3 +\frac{1}{2x}\sigma_3+\frac{\dd}{\dd x}\log\left(\frac{c_n(x)}{x}\right)\begin{bmatrix}1&0\\0&0\end{bmatrix}\Bigg]\widetilde{\boldsymbol{\Phi}}_n^{(\infty)}(\lambda,x)\\
 \quad=\frac{\kappa_n}{x}\widetilde{\boldsymbol{\Phi}}_n^{(\infty)}(\lambda,x)^{-1}\Bigg[\sigma_3-2
 \begin{bmatrix}1&0\\0&0\end{bmatrix}\Bigg]\widetilde{\boldsymbol{\Phi}}_n^{(\infty)}(\lambda,x)
=-\frac{\kappa_n}{x}\mathbb{I}.
\end{gather*}
Therefore, the $x$-dependence of the matrix $\mathbf{K}(x)$ is explicitly given by
\begin{equation*}
 \mathbf{K}(x)=x^{-\kappa_n}\mathbf{K},
\end{equation*}
where $\mathbf{K}$ is now independent of both $\lambda$ and $x$. However, as the domain of analyticity of $\para{\mathbf{\Phi}}_n^{(\infty)}(\lambda,x)$ in the $\lambda$-plane consists of three disjoint regions, we expect to have to specify a different matrix~$\mathbf{K}$ for each. Note also that the constant $\gamma_n$ remains to be determined.

\subsubsection[The parametrix Phi\_n\^infty(lambda,x) on the two regions with |lambda|>2]{The parametrix $\boldsymbol{\para{\mathbf{\Phi}}_n^{(\infty)}(\lambda,x)}$ on the two regions with $\boldsymbol{|\lambda|>2}$}
To fully specify the parametrix $\para{\boldsymbol{\Phi}}_n^{(\infty)}(\lambda,x)$ for $|\lambda|>2$, we concretely take the jump contours for $|\lambda|>2$ to lie along the real axis in the $Z$-plane, corresponding to $\R_+$ and $\R_-$, respectively. Thus, the part of the domain of analyticity of $\para{\boldsymbol\Phi}_n^{(\infty)}(\lambda,x)$ with $|\lambda|>2$ has two components, corresponding to the upper and lower half $Z$-planes. To properly define $\para{\boldsymbol{\Phi}}_n^{(\infty)}(\lambda,x)$ in these two exterior domains, we firstly take the matrix factor $\mathbf{H}(\lambda,x)$ defined in \eqref{H-define} in the precise form
\begin{equation}
 \mathbf{H}(\lambda,x)=\lambda_{\lw}^{\sigma_3/2}\begin{bmatrix}\displaystyle \frac{\ii x^{2\kappa_n}}{\mu_n \gamma_n} & 0\\0 & 1
 \end{bmatrix} = x^{\kappa_n}\lambda_{\lw}^{\sigma_3/2}x^{\kappa_n\sigma_3}\mathbf{D}_n,\qquad\mathbf{D}_n:=\begin{bmatrix}\displaystyle\frac{\ii}{\mu_n\gamma_n} & 0\\0 & 1\end{bmatrix}.
 \label{H-define-2}
\end{equation}
Then, we assume different constant matrices $\mathbf{K}=\mathbf{K}_n^\pm$ in the two domains by writing the parametrix for $|\lambda|>2$ and $\pm\im(Z)> 0$ as
\begin{align}
 &\para{\boldsymbol{\Phi}}_n^{(\infty)}(\lambda,x)=\para{\boldsymbol{\Phi}}_n^{(\infty)\pm}(\lambda,x)=x^{-\kappa_n}\mathbf{H}(\lambda,x)\mathbf{W}(\ii x\lambda;\kappa_n,\mu_n)\mathbf{K}_n^\pm \nonumber\\&\phantom{\para{\boldsymbol{\Phi}}_n^{(\infty)}(\lambda,x)}{}= \lambda_{\lw}^{\sigma_3/2}x^{\kappa_n\sigma_3}\mathbf{D}_n\mathbf{W}(\ii x\lambda;\kappa_n,\mu_n)\mathbf{K}_n^\pm.
 \label{infinity-parametrix-form}
\end{align}
We now express the matrices $\mathbf{K}_n^\pm$ in terms of the remaining constants $\mu_n$ and $\gamma_n$ by enforcing the asymptotic condition \eqref{eq:Phi-asymptotic-infinity} in each of the two sectors with $|\lambda|>2$. According to \cite[equation~(13.19.3)]{DLMF},
\begin{equation*}
 W_{\kappa,\mu_n}(Z)=\ee^{-Z/2}Z^\kappa\big(1+\mathcal{O}\big(Z^{-1}\big)\big),\qquad Z\to\infty,\qquad |\mathrm{arg}(Z)|\le\frac{3\pi}{2}-\delta
\end{equation*}
holds for each $\delta>0$. Hence also
\begin{gather*} 
\begin{split}
& \mathbf{W}(Z;\kappa,\mu_n)=\begin{bmatrix}
 \alpha_{\kappa,\mu_n}Z^{\kappa-2}\big(1+\mathcal{O}\big(Z^{-1}\big)\big) & (-Z)^{-\kappa}\big(1+\mathcal{O}\big(Z^{-1}\big)\big)\vspace{1mm}\\
 Z^\kappa\big(1+\mathcal{O}\big(Z^{-1}\big)\big) & (-Z)^{-\kappa}\big(1+\mathcal{O}\big(Z^{-1}\big)\big)
 \end{bmatrix}\ee^{-Z\sigma_3/2},\\
 & Z\to\infty,\qquad |\mathrm{arg}(Z)|\le\frac{3\pi}{2}-\delta.
\end{split}
\end{gather*}
Under the condition given on $\mathrm{arg}(Z)$, we have
\begin{equation*}
 (-Z)^{-\kappa}=Z^{-\kappa}\begin{cases} \ee^{\ii\pi\kappa},& \text{$\im(Z)>0$ \ (i.e., $0<\arg(Z)<\pi$),}\\
 \ee^{-\ii\pi\kappa},& \text{$\im(Z)<0$ \ (i.e., $-\pi<\arg(Z)<0$).}
 \end{cases}
\end{equation*}
To calculate $Z^{\pm \kappa}$, we recall $Z=\ii x\lambda$ and use \cite[equation~(49)]{BMS18}:
\[
-\dfrac{\pi}{2} - \arg(x) < \mathrm{arg}_{\lw} (\lambda) < \dfrac{3\pi}{2} - \arg(x), \qquad |\lambda| \to \infty.
\]
Next,
\begin{align*}
\im(Z)>0 &\implies 0<\arg(Z) <\pi\!\!\!\! &&\implies\! -\frac{\pi}{2} - \arg(x)< \mathrm{arg}_{\lw}(\lambda) < \frac{\pi}{2} - \arg(x),\!& \\
\im(Z)<0 &\implies -\pi<\arg(Z) < 0\!\!\!\!\! &&\implies\! -\frac{3\pi}{2} - \arg(x) < \mathrm{arg}_{\lw}(\lambda) - 2\pi <-\dfrac{\pi}{2} - \arg(x)\!&
\end{align*}
and hence, for any $x \in \C \setminus \{0\}$ such that $|{\arg}(x)| < \pi$,
\[
Z^{\pm \kappa} = x^{\pm \kappa} \lambda_{\lw}^{\pm \kappa}\begin{cases} \ee^{\pm \ii \pi \kappa/2}, &\im(Z) > 0, \\ \ee^{\mp 3\ii \pi \kappa/2}, & \im(Z) < 0.
\end{cases}
\]
Therefore, for $\lambda$ large such that $\im(Z)>0$,
\begin{gather*}
 \lambda_{\lw}^{\sigma_3/2}x^{\kappa_n\sigma_3}\mathbf{D}_n\mathbf{W}(\ii x\lambda;\kappa_n,\mu_n)\\
 \qquad=
 \mathbf{D}_n
 \begin{bmatrix}
 \mathcal{O}(\lambda^{-1}) & \ee^{\ii\pi\kappa_n/2}\big(1+\mathcal{O}\big(\lambda^{-1}\big)\big)\\
 \ee^{\ii\pi\kappa_n/2}\big(1+\mathcal{O}\big(\lambda^{-1}\big)\big) & \mathcal{O}\big(\lambda^{-1}\big)
 \end{bmatrix}\lambda_{\lw}^{(\kappa_n -\frac{1}{2})\sigma_3}\ee^{-\ii x\lambda\sigma_3/2},
\end{gather*}
so choosing $\mathbf{K}_n^+$ so that \eqref{infinity-parametrix-form} is consistent with \eqref{eq:Phi-asymptotic-infinity} in the sector $\im(Z)>0$ requires that $\mathbf{K}_n^+$ is an off-diagonal matrix, namely,
\begin{equation*}
 \mathbf{K}_n^+:=\begin{bmatrix}
 0 & \ee^{-\ii\pi\kappa_n/2}\\
 -\ii\ee^{-\ii\pi\kappa_n/2}\mu_n\gamma_n & 0
 \end{bmatrix}.
\end{equation*}
Similarly, for $\lambda$ large such that $\im(Z)<0$,
\begin{gather*}
 \lambda_{\lw}^{\sigma_3/2}x^{\kappa_n\sigma_3}\mathbf{D}_n\mathbf{W}(\ii x\lambda;\kappa_n,\mu_n) \\
 \qquad= \mathbf{D}_n\begin{bmatrix}
 \mathcal{O}(\lambda^{-1}) & \ee^{\ii\pi\kappa_n/2}\big(1+\mathcal{O}\big(\lambda^{-1}\big)\big)\\
 \ee^{-3\ii\pi\kappa_n/2}\big(1+\mathcal{O}\big(\lambda^{-1}\big)\big) & \mathcal{O}\big(\lambda^{-1}\big)
 \end{bmatrix}\lambda_{\lw}^{(\kappa_n-\frac{1}{2})\sigma_3}\ee^{-\ii x\lambda\sigma_3/2},
\end{gather*}
so consistency of \eqref{infinity-parametrix-form} with \eqref{eq:Phi-asymptotic-infinity} in the sector $\im(Z)<0$ requires
\begin{equation*}
 \mathbf{K}_n^-:=\begin{bmatrix}
 0 & \ee^{3\ii\pi\kappa_n/2}\\
 -\ii\ee^{-\ii\pi\kappa_n/2}\mu_n\gamma_n & 0
 \end{bmatrix}.
\end{equation*}

Some additional useful information can be gleaned by enforcing on
$\para{\boldsymbol{\Phi}}_n^{(\infty)}(\lambda,x)$ the jump conditions for $|\lambda|>2$. The jump rays are illustrated in the $Z$-plane with their orientations in Figure~\ref{fig1}.
\begin{figure}
 \centering
 \begin{tikzpicture}
 \draw (-8, 0) -- (-2, 0);
 \node at (-5,0){\textbullet};
 \node[below] at (-5,0){$\infty$};

 \node at (-5, 1){$\im(Z) < 0$};
 \node at (-5, -1){$\im(Z) > 0$};

 \node at (-4, 0){$<$};
 \node at (-6, 0){$>$};
 \node[right] at (-2, 0){$Z \in \R_+$};
 \node[left] at (-8, 0){$Z \in \R_-$};
 \end{tikzpicture}
 \caption{Jump contour for $\para{\boldsymbol{\Phi}}_n^{(\infty)}(\lambda,x)$ near $\lambda=\infty$.}
\label{fig1}
\end{figure}
The Whittaker function $W_{\kappa,\mu_n}(Z)$ can be viewed as an analytic function on the cut plane $|{\arg}(Z)|<\pi$, and it follows from the connection formula \cite[equation~(13.14.13)]{DLMF} that the boundary values on the negative real axis are related by
\begin{align}
&W_{\kappa,\mu_n}(-Z+\ii 0)= \ee^{2\pi \ii\kappa}W_{\kappa,\mu_n}(-Z-\ii 0)\nonumber \\
&\hphantom{W_{\kappa,\mu_n}(-Z+\ii 0)=}{} +
 \frac{2\pi \ii\ee^{\ii\pi\kappa}}{\Gamma\big(\frac{1}{2}+\mu_n-\kappa\big)\Gamma\big(\frac{1}{2}-\mu_n-\kappa\big)}W_{-\kappa,\mu_n}(Z),\qquad Z>0.
\label{Whittaker-jump}
\end{align}
Note that the denominators in the second term on the right-hand side of \eqref{Whittaker-jump} are finite due to condition (iii) in the definition of generic data; see the beginning of Section \ref{sec:proof}. Indeed, it follows from~\eqref{eq:e-definitions} and \eqref{kappa-mu} that
\[
e_1^{\pm 2} = e_\infty^2 \ \Leftrightarrow \ \frac{1}{2} - \kappa_n \pm \mu_n \in \Z.
\]

On $Z\in\R_-$ the left ($+$) and right ($-$) boundary values correspond to limits from ${\im(Z)<0}$ and $\im(Z)>0$, respectively. Therefore, the second column of $\mathbf{W}(Z;\kappa,\mu_n)$ is continuous across~$\mathbb{R}_-$, and from \eqref{Whittaker-jump} (replacing $Z$ with $-Z$),
\begin{gather*}
 \mathbf{W}_{-}(Z;\kappa,\mu_n)
 =
 \begin{bmatrix}
 \alpha_{\kappa,\mu_n}Z^{-1}W_{\kappa-1,\mu_n}(Z+\ii 0) & -Z^{-1}W_{1-\kappa,\mu_n}(-Z)\vspace{1mm}\\
 W_{\kappa,\mu_n}(Z+\ii 0) & W_{-\kappa,\mu_n}(-Z)
 \end{bmatrix}\\
 =\begin{bmatrix}
 \displaystyle \ee^{2\pi \ii\kappa}\alpha_{\kappa,\mu_n}Z^{-1}W_{\kappa-1,\mu_n}(Z-\ii 0) - \frac{2\pi \ii\ee^{\ii\pi \kappa}Z^{-1}W_{1-\kappa,\mu_n}(-Z)}{\Gamma\big(\frac{1}{2}+\mu_n-\kappa\big)\Gamma\big(\frac{1}{2}-\mu_n-\kappa\big)}& -Z^{-1}W_{1-\kappa,\mu_n}(-Z)\vspace{1mm}\\
 \displaystyle \ee^{2\pi \ii\kappa}W_{\kappa,\mu_n}(Z-\ii 0)+\frac{2\pi \ii\ee^{\ii\pi\kappa}W_{-\kappa,\mu_n}(-Z)}{\Gamma\big(\frac{1}{2}+\mu_n-\kappa\big)\Gamma\big(\frac{1}{2}-\mu_n-\kappa\big)} & W_{-\kappa,\mu_n}(-Z)
 \end{bmatrix}\\
 =\mathbf{W}_{+}(Z;\kappa,\mu_n)\begin{bmatrix}
 \ee^{2\pi \ii\kappa} & 0\vspace{1mm}\\
 \displaystyle \frac{2\pi \ii\ee^{\ii\pi\kappa}}{\Gamma\big(\frac{1}{2}+\mu_n-\kappa\big)\Gamma\big(\frac{1}{2}-\mu_n-\kappa\big)} & 1
 \end{bmatrix},\qquad Z<0.
\end{gather*}
Here, on the {third} line we used the definition \eqref{M-alpha-define} of $\alpha_{\kappa,\mu_n}$ and the factorial identity $\Gamma(\diamond+1)=\diamond\Gamma(\diamond)$. Since $\mathbf{H}(\lambda,x)$ is analytic across $\ii\mathbb{R}_+$, it follows that \smash{$\para{\mathbf{\Phi}}^{(\infty)}(\lambda)=\para{\mathbf{\Phi}}_n^{(\infty)}(\lambda,x)$} satisfies the jump condition
\begin{align*}
 \para{\boldsymbol{\Phi}}_+^{(\infty)}(\lambda)&=\para{\boldsymbol{\Phi}}_-^{(\infty)}(\lambda)\big[\mathbf{W}_-(\ii x\lambda;\kappa_n,\mu_n)\mathbf{K}_n^{+}\big]^{-1}\mathbf{W}_+(\ii x\lambda;\kappa_n,\mu_n)\mathbf{K}_n^-\\
 &=\para{\boldsymbol{\Phi}}_-^{(\infty)}(\lambda)
 \big(\mathbf{K}^+_n\big)^{-1}
 \begin{bmatrix}
 \ee^{2\pi \ii\kappa_n} & 0\vspace{1mm}\\
 \displaystyle \frac{2\pi \ii\ee^{\ii\pi\kappa_n}}{\Gamma\big(\frac{1}{2}+\mu_n-\kappa_n\big)\Gamma\big(\frac{1}{2}-\mu_n-\kappa_n\big)} & 1
 \end{bmatrix}^{-1}
 \mathbf{K}^-_n\\
 &=\para{\boldsymbol{\Phi}}_-^{(\infty)}(\lambda)\begin{bmatrix}
 1 & \displaystyle\frac{2\pi \ee^{\ii\pi\kappa_n}}{\mu_n\gamma_n\Gamma\big(\frac{1}{2}+\mu_n-\kappa_n\big)\Gamma\big(\frac{1}{2}-\mu_n-\kappa_n\big)}\\
 0 & 1
 \end{bmatrix},\qquad \lambda\in \ii\mathbb{R}_+.
\end{align*}
Requiring that this matches with the corresponding jump condition in Figure \ref{fig:6} gives the condition
\begin{equation*}
 \frac{2\pi \ee^{\ii\pi\kappa_n}}{\mu_n\gamma_n\Gamma\big(\frac{1}{2}+\mu_n-\kappa_n\big)\Gamma\big(\frac{1}{2}-\mu_n-\kappa_n\big)} = \frac{e_\infty^2-e_1^2}{e_1^2e_\infty^4}.
\end{equation*}
For $Z\in\R_+$ the $\pm$ boundary values correspond to the limit from $\im(Z)\gtrless 0$. Therefore, now the first column of $\mathbf{W}(Z;\kappa,\mu_n)$ is continuous across $\mathbb{R}_+$, and from \eqref{Whittaker-jump},
\begin{align*} 
 &\mathbf{W}_{-}(Z;\kappa,\mu_n) =
 \begin{bmatrix}
 \alpha_{\kappa,\mu_n}Z^{-1}W_{\kappa-1,\mu_n}(Z) & -Z^{-1}W_{1-\kappa,\mu_n}(-Z+\ii 0)\vspace{1mm}\\
 W_{\kappa,\mu_n}(Z) & W_{-\kappa,\mu_n}(-Z+\ii 0)
 \end{bmatrix}\\
 & =\begin{bmatrix}
 \! \alpha_{\kappa,\mu_n}Z^{-1}W_{\kappa-1,\mu_n}(Z) \!&\displaystyle\!\! -\ee^{-2\pi \ii\kappa}Z^{-1} W_{1-\kappa,\mu_n}(-Z\!-\ii 0)+\frac{2\pi \ii\ee^{-\ii\pi\kappa}\alpha_{\kappa,\mu_n}Z^{-1}W_{\kappa-1,\mu_n}(Z)}{\Gamma\big(\frac{1}{2} +\mu_n+\kappa\big)\Gamma\big(\frac{1}{2} -\mu_n+\kappa\big)} \! \vspace{1mm}\\
 W_{\kappa,\mu_n}(Z)\!& \!\displaystyle \ee^{-2\pi \ii\kappa}W_{-\kappa,\mu_n}(-Z\!-\ii 0) +
 \frac{2\pi \ii \ee^{-\ii\pi\kappa}W_{\kappa,\mu_n}(Z)}{\Gamma\big(\frac{1}{2} +\mu_n+\kappa\big)\Gamma\big(\frac{1}{2} -\mu_n+\kappa\big)}
 \end{bmatrix} \\
 & =\mathbf{W}_{+}(Z;\kappa,\mu_n)\begin{bmatrix}
 1 & \displaystyle \frac{2\pi \ii\ee^{-\ii\pi\kappa}}{\Gamma\big(\frac{1}{2}+\mu_n+\kappa\big)\Gamma\big(\frac{1}{2}-\mu_n+\kappa\big)} \vspace{1mm} \\
 0 & \ee^{-2\pi \ii\kappa}
 \end{bmatrix},\qquad Z>0.
\end{align*}
Again here, the finiteness of the denominators is guaranteed by condition (iii) at the beginning of Section \ref{sec:proof}. Since $\mathbf{H}(\lambda,x)$ changes sign across $\ii\mathbb{R}_-$, we get that \smash{$\para{\mathbf{\Phi}}^{(\infty)}(\lambda)=\para{\mathbf{\Phi}}^{(\infty)}_n(\lambda,x)$} satisfies the jump condition
\begin{align*}
 \para{\boldsymbol{\Phi}}_+^{(\infty)}(\lambda)&=-\para{\boldsymbol{\Phi}}_-^{(\infty)}(\lambda)[\mathbf{W}_{-}(\ii x\lambda;\kappa_n,\mu_n)\mathbf{K}_n^-]^{-1}\mathbf{W}_{+}(\ii x\lambda;\kappa_n,\mu_n)\mathbf{K}_n^+\\
 &=\para{\boldsymbol{\Phi}}_-^{(\infty)}(\lambda)(\mathbf{K}^-_n)^{-1}\begin{bmatrix}
 -1 & \displaystyle -\frac{2\pi \ii\ee^{-\ii\pi\kappa_n}}{\Gamma\big(\frac{1}{2}+\mu_n+\kappa_n\big)\Gamma\big(\frac{1}{2}-\mu_n+\kappa_n\big)}\vspace{1mm}\\
 0 & -\ee^{-2\pi \ii\kappa_n}
 \end{bmatrix}^{-1}\mathbf{K}_n^+\\
 &=\para{\boldsymbol{\Phi}}_-^{(\infty)}(\lambda)\begin{bmatrix}
 -\ee^{2\pi \ii\kappa_n} & 0\vspace{1mm}\\
 \displaystyle \frac{2\pi \ee^{-\ii\pi\kappa_n}\mu_n\gamma_n }{\Gamma\big(\frac{1}{2}+\mu_n+\kappa_n\big)\Gamma\big(\frac{1}{2}-\mu_n+\kappa_n\big)} &-\ee^{-2\pi \ii\kappa_n}
 \end{bmatrix} \\
 &= \para{\boldsymbol{\Phi}}_-^{(\infty)}(\lambda) \begin{bmatrix}e_{\infty}^{2}&0 \vspace{1mm}\\ e_\infty^2s_2^\infty&e_{\infty}^{-2}\end{bmatrix} = \para{\boldsymbol{\Phi}}_-^{(\infty)}(\lambda) \mathbf{S}_2^\infty e_\infty^{2\sigma_3}, \qquad \lambda\in\mathrm{i}\mathbb{R}_-.
\end{align*}
The last two equalities follow by a direct calculation using the definitions of $s_2^\infty$, $\kappa_n$, $e_1$, and $e_\infty$ in \eqref{eq:stokes-parameters}, \eqref{kappa-mu}, and \eqref{eq:e-definitions}, respectively, along with the expression
\begin{equation} \label{Qinfty-gammainfty}
 \mu_n \gamma_n = \dfrac{2\pi \ee^{\pi \ii \kappa_n}}{\Gamma\big( \frac{1}{2} + \mu_n - \kappa_n\big)\Gamma\big( \frac{1}{2} - \mu_n - \kappa_n\big)} \cdot \frac{e_1^2 e_\infty^4}{e_\infty^2 - e_1^2},
\end{equation}
and the classical identity
\begin{equation}
\label{eq:Gamma-reflection}
\Gamma \left(\frac{1}{2} - z \right) \Gamma \left(\frac{1}{2} + z \right) = \dfrac{\pi }{\cos(\pi z)}.
\end{equation}

\subsubsection[The parametrix Phi\^(infty)\_n(lambda,x) in the region lambda less than 2 in absolute value]{The parametrix $\boldsymbol{\para{\boldsymbol{\Phi}}^{(\infty)}_n(\lambda,x)}$ in the region $\boldsymbol{|\lambda|<2}$}

We use the identity \cite[equation~(13.14.33)]{DLMF} to express the elements of $\mathbf{W}(Z;\kappa,\mu)$ in terms of the alternative basis of solutions $M_{-\kappa,\pm\mu}(-Z)$ of Whittaker's equation with parameters $(\kappa,\mu)$ that form a numerically satisfactory pair in a neighborhood of the origin and that are analytic for $\arg(-Z)\in (-\pi,\pi)$. Moreover, these functions are the Maclaurin series associated with the regular singular point at $Z=0$, so they have the property that
\begin{equation}
M_{\kappa,\mu}(-Z)(-Z)^{-\frac{1}{2}-\mu} = 1+\mathcal{O}(Z) \qquad \text{as} \quad Z\to 0,
\label{eq:WhittakerM-near-origin}
\end{equation}
where the power function denotes the principal branch and where the error term represents an analytic function of $Z$ vanishing at the origin. To deal with the first column of $\mathbf{W}(Z;\kappa,\mu)$ we also use the corresponding identity $M_{\kappa,\mu}(Z)=\ee^{\pm \ii\pi(\frac{1}{2}+\mu)}M_{-\kappa,\mu}(-Z)$ which holds for $\pm\im(Z)>0$ (see also \cite[equation~(13.14.10)]{DLMF}).
Using the above identities, and under the condition $2\mu \not \in \Z$ (which follows from the condition~(i) at the beginning of Section~\ref{sec:proof} in our case), we can write the elements of $\mathbf{W}(Z;\kappa,\mu)$ in the form
\begin{gather*}\nonumber
 W_{11}(Z;\kappa,\mu)= -Z^{-1}\frac{\Gamma(-2\mu)\Gamma\big(\frac{1}{2}-\mu+\kappa\big)}{\Gamma\big(\frac{1}{2}-\mu-\kappa\big)\Gamma\bigl(-\frac{1}{2}-\mu+\kappa\bigr)}\ee^{\pm \ii\pi(\frac{1}{2}+\mu)}M_{1-\kappa,\mu}(-Z)\\
\hphantom{W_{11}(Z;\kappa,\mu)=}{}
-Z^{-1}\frac{\Gamma(2\mu)\Gamma\big(\frac{1}{2}+\mu+\kappa\big)}{\Gamma\big(\frac{1}{2}+\mu-\kappa\big)\Gamma\bigl(-\frac{1}{2}+\mu+\kappa\bigr)}
\ee^{\pm \ii\pi(\frac{1}{2}-\mu)}M_{1-\kappa,-\mu}(-Z),\quad\! \pm\im(Z)>0,\\
 W_{12}(Z;\kappa,\mu)= -Z^{-1}\frac{\Gamma(-2\mu)}{\Gamma\bigl(-\frac{1}{2}-\mu+\kappa\bigr)}M_{1-\kappa,\mu}(-Z)-Z^{-1}\frac{\Gamma(2\mu)}{\Gamma\bigl({-}\frac{1}{2}+\mu+\kappa\bigr)}M_{1-\kappa,-\mu}(-Z),\\
 W_{21}(Z;\kappa,\mu)= \frac{\Gamma(-2\mu)}{\Gamma\big(\frac{1}{2}-\mu-\kappa\big)}\ee^{\pm \ii\pi (\frac{1}{2}+\mu)}M_{-\kappa,\mu}(-Z)\\
 \hphantom{W_{21}(Z;\kappa,\mu)= }{}
 +
 \frac{\Gamma(2\mu)}{\Gamma\big(\frac{1}{2}+\mu-\kappa\big)}\ee^{\pm \ii\pi(\frac{1}{2}-\mu)}M_{-\kappa,-\mu}(-Z),\qquad\pm\im(Z)>0,
\end{gather*}%
and
\begin{equation*}
 W_{22}(Z;\kappa,\mu)=\frac{\Gamma(-2\mu)}{\Gamma\big(\frac{1}{2}-\mu+\kappa\big)}M_{-\kappa,\mu}(-Z) +\frac{\Gamma(2\mu)}{\Gamma\big(\frac{1}{2}+\mu+\kappa\big)}M_{-\kappa,-\mu}(-Z).
\end{equation*}
These expressions can be usefully combined into a matrix identity:
\begin{equation}
 \mathbf{W}(Z;\kappa,\mu)=\mathbf{M}(Z;\kappa,\mu)\mathbf{G}^\pm_{\kappa,\mu},\qquad \pm\im(Z)>0,
 \label{F-to-N}
\end{equation}
where
\begin{gather}
 \mathbf{M}(Z;\kappa,\mu):=
 \begin{bmatrix}
 \big(\frac{1}{2}-\kappa+\mu\big)Z^{-1}M_{1-\kappa,\mu}(-Z) & \big(\frac{1}{2}-\kappa-\mu\big)Z^{-1}M_{1-\kappa,-\mu}(-Z)\vspace{1mm}\\
 M_{-\kappa,\mu}(-Z) & M_{-\kappa,-\mu}(-Z)
 \end{bmatrix},\nonumber\\
 \arg(-Z)\in (-\pi,\pi),
 \label{N-define}
\end{gather}
and
\begin{equation*} 
 \mathbf{G}^\pm_{\kappa,\mu}:=\begin{bmatrix}
 \displaystyle \frac{\Gamma(-2\mu)\ee^{\pm \ii\pi(\frac{1}{2}+\mu)}}{\Gamma\big(\frac{1}{2}-\mu-\kappa\big)} &
 \displaystyle \frac{\Gamma(-2\mu)}{\Gamma\big(\frac{1}{2}-\mu+\kappa\big)}\vspace{1mm}\\
 \displaystyle\frac{\Gamma(2\mu)\ee^{\pm \ii\pi(\frac{1}{2}-\mu)}}{\Gamma\big(\frac{1}{2}+\mu-\kappa\big)} &
 \displaystyle \frac{\Gamma(2\mu)}{\Gamma\big(\frac{1}{2}+\mu+\kappa\big)}
 \end{bmatrix}.
\end{equation*}

To define the parametrix $\para{\boldsymbol{\Phi}}_n^{(\infty)}(\lambda,x)$ for $|\lambda|<2$, we first introduce a constant matrix by
\begin{equation}
 \mathbf{J}_n:=\mathbf{G}_{\kappa_n,\mu_n}^+ \mathbf{K}_n^+ \mathbf{S}_1^{\infty} \mathbf{E}^{\infty}=\mathbf{G}_{\kappa_n,\mu_n}^- \mathbf{K}_n^- \mathbf{E}^{\infty}.
\label{eq:M-infty}
\end{equation}
The equality of these two expressions can be seen as follows. First, combining \eqref{infinity-parametrix-form} and \eqref{F-to-N}, and using the fact that the matrix $\mathbf{M}(\ii x\lambda;\kappa,\mu_n)$ is analytic in a neighborhood of $\lambda=2\ii$, the jump condition for \smash{$\para{\mathbf{\Phi}}_n^{(\infty)}(\lambda,x)$} across the positive imaginary axis for $|\lambda|>2$ shown in Figure~\ref{fig:6} implies the identity $\mathbf{G}_{\kappa_n,\mu_n}^+\mathbf{K}_n^+ \mathbf{S}_1^{\infty}=\mathbf{G}_{\kappa_n,\mu_n}^-\mathbf{K}_n^-$, which yields the desired equality.

Then, we set
\begin{equation}
 \para{\boldsymbol{\Phi}}_n^{(\infty)}(\lambda,x):= \lambda_{\lw}^{\sigma_3/2}x^{\kappa_n\sigma_3}\mathbf{D}_n\mathbf{M}(\ii x\lambda;\kappa_n,\mu_n)\mathbf{J}_n,\qquad\text{for} \quad |\lambda|<2.
 \label{infinity-parametrix-form-origin}
\end{equation}
It is straightforward to then check that, regardless of the choice of $\mu_n$,
the matrix $\mathbf{J}_n$ defined by~\eqref{eq:M-infty} is diagonal. Comparing \eqref{infinity-parametrix-form} and \eqref{infinity-parametrix-form-origin} shows that the jump conditions for~\smash{$\para{\mathbf{\Phi}}_n^{(\infty)}(\lambda,x)$} across the arcs of the circle $|\lambda|=2$ shown in Figure~\ref{fig:6} are satisfied. Using~\eqref{eq:WhittakerM-near-origin} then proves that~\smash{$\para{\mathbf{\Phi}}_n^{(\infty)}(\lambda,x)$} satisfies the simple jump condition across the negative imaginary axis with~${|\lambda|<2}$ shown in Figure~\ref{fig:6} and that an expansion of the form shown in \eqref{eq:Phi-asymptotic-zero} holds.
To check that the matrix $\mathbf{J}_n$ is diagonal and arrive at its final form below, we use the identity~\eqref{eq:Gamma-reflection} to get
\begin{gather}
\Gamma\left( \dfrac{1}{2} + \kappa_n - \mu_n \right) \Gamma\left( \dfrac{1}{2} - \kappa_n + \mu_n \right) = \dfrac{\pi}{\cos \left( \pi (\kappa_n - \mu_n)\right)} = \dfrac{2 \pi \ii e_1e_\infty}{e_1^2 - e_\infty^2}, \nonumber\\
\Gamma\left( \dfrac{1}{2} + \kappa_n + \mu_n \right) \Gamma\left( \dfrac{1}{2} - \kappa_n - \mu_n \right) = \dfrac{\pi}{\cos \left( \pi (\kappa_n + \mu_n)\right)} = \dfrac{2\pi \ii e_1e_\infty}{1 - e_1^2 e_\infty^2}.\label{eq:cosine-identity-1}
\end{gather}
The result is that the diagonal matrix $\mathbf{J}_n$ from \eqref{eq:M-infty} is given by
\begin{equation}
\mathbf{J}_n =
\begin{bmatrix}\dfrac{\ee^{{\ii\pi/4}} e_1 e_\infty^{7/2} \Gamma(-2 \mu_n )}{\Gamma \big(\frac{1}{2}-\kappa_n -\mu_n \big)}&0 \\
0&\dfrac{\ee^{{\ii\pi/4}} \big(e_1^4-1\big) e_\infty^{3/2} \Gamma(2 \mu_n )}{e_1 \big(e_1^2-e_\infty^2\big) \Gamma \big(\frac{1}{2}-\kappa_n +\mu_n \big)}
\label{eq:J-n}
\end{bmatrix}.
\end{equation}

\subsection[Parametrix for Phi\_n(lambda,x) near lambda=0]{Parametrix for $\boldsymbol{{\Phi}_n(\lambda,x)}$ near $\boldsymbol{\lambda=0}$}
By definition, the parametrix $\para{\boldsymbol \Phi}_n^{(0)}(\lambda,x)$ satisfies the following Riemann--Hilbert problem.
\begin{rhp} \label{rhp:zero_parametrix}
Fix generic monodromy parameters $( e_1, e_2 )$ determining the Stokes and connection matrices, $n\in\mathbb{Z}$, and $x>0$. We seek a $2 \times 2$ matrix function $\lambda\mapsto\para{\boldsymbol \Phi}_n^{(0)}(\lambda,x)$ satisfying:
\begin{itemize}\itemsep=0pt
\item Analyticity: $\para{\boldsymbol \Phi}_n^{(0)}(\lambda,x)$ is analytic in $\C \setminus \Gamma^{(0)}$, where $\Gamma^{(0)} = \big\{ | \lambda| = \frac{1}{2}\big\} \cup \big(\ii \R\cap \big\{ \im \lambda < \frac{1}{2}\big\}\big)$ is the jump contour shown in Figure {\rm\ref{fig:8}}.
\item  Jump condition: $\para{\boldsymbol \Phi}_n^{(0)}(\lambda,x)$ has continuous boundary values on $\Gamma^{(0)} \setminus \{0\}$ from each component of $\mathbb{C}\setminus \Gamma^{(0)}$, which satisfy
\[
\para{\boldsymbol \Phi}_{n,+}^{(0)}(\lambda,x) = \para{\boldsymbol \Phi}_{n,-}^{(0)}(\lambda,x)\mathbf J_{\para{\boldsymbol \Phi}_n^{(0)}}(\lambda),
\]
 where \smash{$\mathbf J_{\para{\boldsymbol \Phi}_n^{(0)}}(\lambda)$} is as shown in Figure {\rm\ref{fig:8}} and where the $+$ $($resp., $-)$ subscript denotes a boundary value taken from the left $($resp., right$)$ of an arc of $\Gamma^{(0)}$.
\item  Normalization: $\para{\boldsymbol \Phi}_n^{(0)}(\lambda,x)$ satisfies the asymptotic conditions
\begin{gather}
\label{eq:Phi_z-asymptotic-infinity}
\para{\boldsymbol \Phi}_n^{(0)}(\lambda,x) = \mathcal{O}(1) \lambda_{\lw}^{ {\mu_n \sigma_3}{}} \qquad \text{as} \quad \lambda \to \infty,
\end{gather}
where $\mathcal{O}(1)$ refers to a function analytic and bounded in a neighborhood of $\lambda=\infty$ and
\begin{equation}
\label{eq:Phi_z-asymptotic-zero}
\para{\boldsymbol \Phi}_n^{(0)}(\lambda,x) = ( \mathbb I + \mathcal{O}(\lambda) )\ee^{-\ii x\lambda^{-1}\sigma_3/2} \lambda_{\lw}^{ (n+ \Theta_0 )\sigma_3/2 } \qquad \text{as} \quad \lambda \to 0.
\end{equation}
\end{itemize}
\end{rhp}

 \begin{figure}
 \centering
\includegraphics[width=0.4\columnwidth]{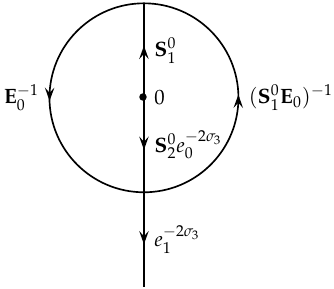}
\caption{The contour $\Gamma^{(0)}$, which includes the circle $|\lambda|=\frac{1}{2}$.}\label{fig:8}
\end{figure}
We can write down the unique solution $\para{\mathbf{\Phi}}_n^{(0)}(\lambda,x)$ explicitly in terms of the parametrix \smash{$\para{\mathbf{\Phi}}_n^{(\infty)}(\lambda,x)$} obtained in Section~\ref{sec:Parametrix-infinity}, but taken with the index $1-n$ instead of $n$ and $\Theta_\infty$ replaced by $\Theta_0$. If we indicate the dependence of \smash{$\para{\mathbf{\Phi}}_n^{(\infty)}(\lambda,x)$} and \smash{$\para{\mathbf{\Phi}}_n^{(0)}(\lambda,x)$} on $\Theta_\infty$ and $\Theta_0$ respectively with the notation \smash{$\para{\mathbf{\Phi}}_n^{(\infty)}(\lambda,x)=\para{\mathbf{\Phi}}_n^{(\infty)}(\lambda,x,\Theta_\infty)$} and \smash{$\para{\mathbf{\Phi}}_n^{(0)}(\lambda,x)=\para{\mathbf{\Phi}}_n^{(0)}(\lambda,x,\Theta_0)$},
then we have the following.
\begin{prop}\label{prop:symmetry}
Fix $\Theta_\infty, \Theta_0 \in \C$ and generic monodromy parameters $(e_1, e_2)$. Then
\begin{equation}
 \para{\mathbf{\Phi}}^{(0)}_n(\lambda,x, \Theta_0)=\begin{cases}
 e_0^{3\sigma_3}\para{\mathbf{\Phi}}^{(\infty)}_{-n}\bigl(-\lambda^{-1},x, \Theta_0\bigr) e_0^{-2\sigma_3}\quad ,& |\lambda|<\frac{1}{2}, \vspace{1mm}\\
 e_0^{3\sigma_3}\para{\mathbf{\Phi}}^{(\infty)}_{-n}\bigl(-\lambda^{-1},x, \Theta_0\bigr)\begin{bmatrix}0 & \beta_n\\-\beta_n^{-1} & 0\end{bmatrix},& |\lambda|>\frac{1}{2},
 \end{cases}
 \label{eq:Phi0-Phiinfty}
\end{equation}
where
\begin{equation}
 \beta_n:= \dfrac{1-e_1^4 }{e_1^2\big(1 - e_0^2 e_1^2\big)}.
 \label{eq:a-def}
\end{equation}
\end{prop}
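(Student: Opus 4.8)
The plan is to check directly that the right-hand side of \eqref{eq:Phi0-Phiinfty}, call it $\mathbf{F}_n(\lambda,x)$, solves Riemann--Hilbert Problem~\ref{rhp:zero_parametrix}, and then to appeal to uniqueness of the solution of that problem. Uniqueness is the standard ratio argument: if $\mathbf{F}$ and $\mathbf{G}$ both solve RHP~\ref{rhp:zero_parametrix}, then $\mathbf{F}\mathbf{G}^{-1}$ extends across $\Gamma^{(0)}$ because the jumps cancel, it is bounded near $\lambda=0$ and near $\lambda=\infty$ because the singular normalizations \eqref{eq:Phi_z-asymptotic-infinity}--\eqref{eq:Phi_z-asymptotic-zero} cancel in the ratio, and hence $\mathbf{F}\mathbf{G}^{-1}\equiv\mathbb{I}$ by Liouville's theorem together with the leading behaviour $\mathbb{I}$ in \eqref{eq:Phi_z-asymptotic-zero}.

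The motor of the construction is the M\"obius involution $\mu=-\lambda^{-1}$, which interchanges $0$ and $\infty$ and, at the level of the Riemann--Hilbert data, exchanges the local structures at the two irregular singular points of \eqref{eq:generic-system} (with $\Theta_0\leftrightarrow\Theta_\infty$, $e_0\leftrightarrow e_\infty$, and Schlesinger index $n\leftrightarrow -n$). It sends $\{|\lambda|=\tfrac12\}$ to $\{|\mu|=2\}$, and one checks that $\Gamma^{(0)}$ is exactly the preimage of $\Gamma^{(\infty)}$, with the components of $\ii\R$ matching up. Since $\mathbf{F}_n(\lambda,x)=e_0^{3\sigma_3}\para{\boldsymbol\Phi}^{(\infty)}_{-n}(-\lambda^{-1},x,\Theta_0)$ times a right factor that is globally constant except for a single jump across $\{|\lambda|=\tfrac12\}$, analyticity of $\mathbf{F}_n$ off $\Gamma^{(0)}$ is immediate from analyticity of $\para{\boldsymbol\Phi}^{(\infty)}_{-n}$ off $\Gamma^{(\infty)}$. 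I would then go arc by arc through $\Gamma^{(0)}$: on the pieces of $\ii\R$ and on the arcs of $\{|\lambda|=\tfrac12\}$, the jump of $\mathbf{F}_n$ equals the jump of $\para{\boldsymbol\Phi}^{(\infty)}_{-n}$ on the corresponding image arc in Figure~\ref{fig:6} --- with the orientation reversal caused by $\mu=-\lambda^{-1}$ accounted for --- conjugated by the constant dressing factors, and I would verify that these reduce to the jumps shown in Figure~\ref{fig:8}. Across $\{|\lambda|=\tfrac12\}$ there is the additional contribution of the jump of the right dressing factor (from $e_0^{-2\sigma_3}$ to the off-diagonal matrix appearing in \eqref{eq:Phi0-Phiinfty}); matching the resulting circle jump against the one prescribed in RHP~\ref{rhp:zero_parametrix} is exactly what pins down the value \eqref{eq:a-def} of $\beta_n$, using the parametrization \eqref{eq:stokes-parameters} of the Stokes multipliers and the genericity hypotheses (which in particular keep $e_1^2(1-e_0^2e_1^2)$ away from $0$).

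It remains to check the two normalization conditions. As $\lambda\to0$ we have $\mu=-\lambda^{-1}\to\infty$, so \eqref{eq:Phi-asymptotic-infinity} for $\para{\boldsymbol\Phi}^{(\infty)}_{-n}(\,\cdot\,,x,\Theta_0)$ applies; substituting turns $\ee^{\ii x\mu\sigma_3/2}$ into $\ee^{-\ii x\lambda^{-1}\sigma_3/2}$ and, upon relating $\mu_{\lw}^{p}$ to $\lambda_{\lw}^{-p}$ via the branch convention \eqref{eq:lw-branch-x-real}, turns the power factor into $\lambda_{\lw}^{(n+\Theta_0)\sigma_3/2}$ times a constant diagonal matrix; the left factor $e_0^{3\sigma_3}$ and the right factor $e_0^{-2\sigma_3}$ are calibrated precisely to absorb that constant (together with the $\Theta_0$-dependent constant hidden in $e_0$), leaving the leading term $\mathbb{I}$ demanded by \eqref{eq:Phi_z-asymptotic-zero}. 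As $\lambda\to\infty$ we have $\mu\to0$, so \eqref{eq:Phi-asymptotic-zero} applies; the power factor becomes $\lambda_{\lw}^{-\mu_{-n}\sigma_3}=\lambda_{\lw}^{-\mu_n\sigma_3}$ (using $\mu_{-n}=\mu_n$, since \eqref{eq:mu-n} depends on $n$ only through its parity), and the off-diagonal right factor converts $\lambda_{\lw}^{-\mu_n\sigma_3}$ into $\lambda_{\lw}^{+\mu_n\sigma_3}$ while being swallowed into the bounded prefactor, giving the $\mathcal{O}(1)\lambda_{\lw}^{\mu_n\sigma_3}$ behaviour required by \eqref{eq:Phi_z-asymptotic-infinity}. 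The hard part is the jump matching in the previous paragraph --- especially across $\{|\lambda|=\tfrac12\}$, where all the $e_0,e_1$-dependent constants and the exact form \eqref{eq:a-def} of $\beta_n$ must conspire; analyticity and the normalization checks are then just careful bookkeeping with branches of the power functions.
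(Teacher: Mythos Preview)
Your proposal is correct and follows essentially the same route as the paper: verify that the right-hand side of \eqref{eq:Phi0-Phiinfty} satisfies all conditions of Riemann--Hilbert Problem~\ref{rhp:zero_parametrix} by pulling back through the involution $\lambda\mapsto-\lambda^{-1}$ (with the parameter swap $n\to-n$, $\Theta_\infty\to\Theta_0$), check the jumps arc by arc, verify the normalizations via the branch identity $\lambda_{\lw}^p=\ee^{\ii\pi p}(-\lambda^{-1})_{\lw}^{-p}$, and conclude by uniqueness. The paper makes explicit the two identities that drive the circle-jump match --- namely $(\mathbf{E}^\infty)^{-1}\mapsto\bigl[\begin{smallmatrix}0&\beta_n\\-\beta_n^{-1}&0\end{smallmatrix}\bigr](\mathbf{S}_1^0\mathbf{E}^0)^{-1}e_0^{2\sigma_3}$ and its companion --- which is precisely the computation you flag as ``the hard part''.
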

\begin{proof}
The mapping $\lambda\mapsto -\lambda^{-1}$ takes the contour $\Gamma^{(0)}$ onto the contour $\Gamma^{(\infty)}$ up to the reversal of orientation of certain arcs, and swaps the circles centered at the origin of radii $\frac{1}{2}$ and $2$. Therefore, the domain of analyticity of \smash{$\para{\mathbf{\Phi}}^{(0)}_n(\lambda,x, \Theta_0)$} is as desired.

Under the map $n \mapsto -n$, $\Theta_\infty \mapsto \Theta_0$, the exponentials defined in \eqref{eq:e-definitions} satisfy $e_{0}^2 \mapsto e_{\infty}^2$, whereas $\mu_n = \mu_{-n}$ since this quantity depends only on the parity of $n$. This implies that the Stokes matrices defined in \eqref{eq:Stokes-infty}--\eqref{eq:Stokes-zero} satisfy the corresponding identities
\begin{equation*}
 \mathbf{S}^\infty_1 \mapsto e_0^{-2\sigma_3}\big(\mathbf{S}^0_{1}\big)^{-1}e_0^{2\sigma_3} \qquad\text{and}\qquad
 \mathbf{S}^\infty_{{2}}e_\infty^{2\sigma_3} \mapsto e_0^{-2\sigma_3}\big(\mathbf{S}^0_{2} e_0^{-2\sigma_3}\big)^{-1} e_0^{2\sigma_3}.
\end{equation*}
Comparing Figures~\ref{fig:6} and \ref{fig:8} then shows that the function defined by \eqref{eq:Phi0-Phiinfty} satisfied the required jump conditions across the imaginary axis for $|\lambda|<\frac{1}{2}$. Likewise, the jump condition on the negative imaginary axis for $|\lambda|>\frac{1}{2}$ is easily verified due to the identity valid for any $a\neq 0$:
\begin{equation*}
 e_1^{-2 \sigma_3}\begin{bmatrix}0&a\\-a^{-1} & 0\end{bmatrix} +\begin{bmatrix}0 & a\\-a^{-1} & 0\end{bmatrix}e_1^{2 \sigma_3}=\mathbf{0}.
\end{equation*}
Finally, the fact that $\para{\mathbf{\Phi}}^{(0)}_n(\lambda,x, \Theta_0)$ defined by \eqref{eq:Phi0-Phiinfty} satisfies the required jump conditions across the circle $|\lambda|=\frac{1}{2}$ follows from the corresponding jump conditions for \smash{$\para{\mathbf{\Phi}}^{(\infty)}_{1-n}(\lambda,x, \Theta_0)$} for $|\lambda|=2$ and the identities
\begin{equation*}
 (\mathbf{E}^{\infty})^{-1} \mapsto \begin{bmatrix}0 & \beta_n\\-\beta_n^{-1} & 0\end{bmatrix} \big(\mathbf{S}_1^0 \mathbf{E}^0\big)^{-1} e_0^{2\sigma_3}, \qquad(\mathbf{S}_1^\infty \mathbf{E}^{\infty})^{-1} \mapsto \begin{bmatrix}0 & \beta_n\\-\beta_n^{-1} & 0\end{bmatrix} \big( \mathbf{E}^0\big)^{-1} e_0^{2\sigma_3}
\end{equation*}
among the matrices $\mathbf{E}^\infty$, $\mathbf{E}^0$ defined in \eqref{eq:E-infinity-formula}--\eqref{eq:E-zero-formula}, which hold for the value of $\beta_n$ indicated in~\eqref{eq:a-def}.

It only remains to verify the asymptotics in \eqref{eq:Phi_z-asymptotic-infinity}--\eqref{eq:Phi_z-asymptotic-zero}. However, these follow from the corresponding formul\ae\ in \eqref{eq:Phi-asymptotic-infinity}--\eqref{eq:Phi-asymptotic-zero} with the help of the identity
\begin{equation*}
 \lambda^p_{\lw}=\ee^{\ii \pi p}\zeta^{-p}_{\lw},\qquad\zeta:=-\lambda^{-1},
\end{equation*}
which holds for all $\lambda$ not on the negative imaginary axis.

Since the matrix function $\para{\mathbf{\Phi}}_n^{(0)}(\lambda,x, \Theta_0)$ defined by \eqref{eq:Phi0-Phiinfty}--\eqref{eq:a-def} satisfies all the required Riemann--Hilbert conditions, and there is at most one solution of those conditions, as is easily confirmed by a Liouville argument, the proof is finished.
\end{proof}

\subsection{An equivalent Riemann--Hilbert problem on the unit circle}
The parametrix for $\mathbf{\Phi}_n(\lambda,x)$ is by definition the following matrix function:
\begin{equation*} 
 \para{\mathbf{\Phi}}_n(\lambda,x):=\begin{cases}
 \para{\mathbf{\Phi}}^{(\infty)}_n(\lambda,x, \Theta_\infty),& |\lambda|>1,\\
 \para{\mathbf{\Phi}}_n^{(0)}(\lambda,x, \Theta_0),& |\lambda|<1.
 \end{cases}
\end{equation*}
This matrix function satisfies exactly the same jump conditions in the domains $|\lambda|>1$ and~${|\lambda|<1}$ as does $\mathbf{\Phi}_n(\lambda,x)$ itself, and it is also consistent with the asymptotics given in \eqref{eq:Psi-n-asymptotic-infinity}--\eqref{eq:Psi-n-asymptotic-zero} (note that $\mathbf{\Psi}_n(\lambda,x)=\mathbf{\Phi}_n(\lambda,x)$ for $|\lambda|$ sufficiently large or small). The parametrix has unit determinant, so the matrix quotient
\begin{equation*}
 \mathbf{Q}_n(\lambda,x):=\mathbf{\Phi}_n(\lambda,x)\para{\mathbf{\Phi}}_n(\lambda,x)^{-1}
\end{equation*}
is an analytic function of $\lambda$ except possibly on the jump contour $\Gamma$ shown in Figure~\ref{2} and on the unit circle, where there is a discontinuity in the definition of $\para{\mathbf{\Phi}}_n(\lambda,x)$. However, since the jumps of $\para{\mathbf{\Phi}}_n(\lambda,x)$ and $\mathbf{\Phi}_n(\lambda,x)$ agree on $\Gamma$, a Morera argument shows that $\mathbf{Q}_n(\lambda,x)$ is actually analytic both for $|\lambda|>1$ and for $0<|\lambda|<1$. The asymptotic behavior of the factors in $\mathbf{Q}_n(\lambda,x)$ as $\lambda\to 0$ then shows that any singularity of $\mathbf{Q}_n(\lambda,x)$ at the origin $\lambda=0$ is removable, and the asymptotic behavior of the same factors in the limit $\lambda\to\infty$ shows that $\mathbf{Q}_n(\lambda,x)\to\mathbb{I}$ as $\lambda\to\infty$.

$\mathbf{Q}_n(\lambda,x)$ is therefore characterized by its jump condition across the unit circle $|\lambda|=1$. Taking counterclockwise orientation for the circle, the jump condition for $\mathbf{Q}_n(\lambda,x)$ reads
\begin{equation*}
 \mathbf{Q}_{n,+}(\lambda,x)=\mathbf{Q}_{n,-}(\lambda,x)\para{\mathbf{\Phi}}_n^{(\infty)}(\lambda,x, \Theta_\infty)e_2^{\sigma_3}\para{\mathbf{\Phi}}_n^{(0)}(\lambda,x, \Theta_0)^{-1},\qquad |\lambda|=1.
\end{equation*}
Using Proposition~\ref{prop:symmetry}, the jump matrix can be written as
\begin{gather}
 \para{\mathbf{\Phi}}_n^{(\infty)}(\lambda,x, \Theta_\infty)e_2^{\sigma_3}\para{\mathbf{\Phi}}_n^{(0)}(\lambda,x, \Theta_0)^{-1}\nonumber\\
 \qquad =\para{\mathbf{\Phi}}^{(\infty)}_n(\lambda,x, \Theta_\infty) e_2^{\sigma_3}
 \begin{bmatrix}0 & -\beta_n\\ \beta_n^{-1} & 0\end{bmatrix}
 \para{\mathbf{\Phi}}^{(\infty)}_{-n}(-\lambda^{-1},x, \Theta_0)^{-1}e_0^{-3\sigma_3},\qquad |\lambda|=1.\label{eq:VQ-rewrite}
\end{gather}
We summarize by writing the Riemann--Hilbert problem for $\mathbf{Q}_n(\lambda,x)$.
\begin{rhp}\label{rhp:Q}
Fix generic monodromy parameters $( e_1, e_2)$, $n\in\mathbb{Z}$, and $x\in\mathbb{C}$. Seek a $2\times 2$ matrix function $\lambda\mapsto\mathbf{Q}_n(\lambda,x)$ with the following properties:
\begin{itemize}\itemsep=0pt\samepage
 \item Analyticity: $\mathbf{Q}_n(\lambda,x)$ is an analytic function of $\lambda$ for $|\lambda|\neq 1$.
 \item Jump condition:
 $\mathbf{Q}_n(\lambda,x)$ takes analytic boundary values on the unit circle from the interior and exterior, denoted $\mathbf{Q}_{n,+}(\lambda,x)$ and $\mathbf{Q}_{n,-}(\lambda,x)$ for $|\lambda|=1$ respectively, and they are related by
 \begin{equation*}
 \mathbf{Q}_{n,+}(\lambda,x)=\mathbf{Q}_{n,-}(\lambda,x)\breve{\mathbf{\Phi}}_n^{(\infty)}(\lambda,x,\Theta_\infty)e_2^{\sigma_3}
 \begin{bmatrix}0 & -\beta_n\\\beta_n^{-1} & 0\end{bmatrix}
 \para{\mathbf{\Phi}}^{(\infty)}_{-n}\bigl(-\lambda^{-1},x, \Theta_0\bigr)^{-1}e_0^{-3\sigma_3}.
 \end{equation*}
 \item Normalization:
 $\mathbf{Q}_n(\lambda,x)\to\mathbb{I}$ as $\lambda\to\infty$.
\end{itemize}
\end{rhp}
Henceforth, to avoid the notation becoming unwieldy, we understand that all quantities appearing with subscript $n$ are evaluated at parameter $\Theta_\infty$ while quantities appearing with subscript $-n$ are evaluated at parameter $\Theta_0$.

\subsection[The limit n to infinity]{The limit $\boldsymbol{n\to+\infty}$}
Having succeeded in removing the problematic jump conditions along rays emanating from~$0$,~$\infty$ in the $\lambda$ plane by defining $\mathbf{Q}_n(\lambda,x)$, we would next like to consider the limiting behavior of this problem as $n\to+\infty$ with $x=z/n$ and $z$ fixed. It is convenient to first renormalize~$\mathbf{Q}_n(\lambda,x)$, essentially by a transformation that diagonalizes the coefficient $\mu_n \mathbf{B}_n(x)\sigma_3\mathbf{B}_n(x)^{-1}$ of $\lambda^{-1}$ in the matrix of the Lax equation \eqref{Lax-system-lambda}.
In other words, in the jump condition for $\mathbf{Q}_n(\lambda,x)$ we prefer to replace \smash{$\para{\mathbf{\Phi}}_n^{(\infty)}(\lambda,x)$} with a suitable left-diagonal multiple of~\smash{$\mathbf{B}_n(x)^{-1}\para{\mathbf{\Phi}}_n^{(\infty)}(\lambda,x)$}. Observe that the coefficient $\mathbf{B}_n(x)$ is determined up to right-multiplication by a diagonal matrix by
\eqref{eq:zero-coefficient-matrix}, in which the second row of the matrix on the right-hand side is $(c_n(x),-a_n)=\big(\gamma_n x^{1-2\kappa_n},\mu_n ^{-1}\big(\kappa_n-\frac{1}{2}\big)\big)$, where we used \eqref{kappa-mu} and \eqref{c-of-x}. Indeed, the first column \smash{$\mathbf{b}_n^{(1)}(x)$} satisfies \smash{$\big(\gamma_n x^{1-2\kappa_n},\mu_n ^{-1}\big(\kappa_n-\frac{1}{2}\big)-1\big)\mathbf{b}_n^{(1)}(x)=0$} while the second column \smash{$\mathbf{b}_n^{(2)}(x)$} satisfies \smash{$\big(\gamma_n x^{1-2\kappa_n},\mu_n ^{-1}\big(\kappa_n-\frac{1}{2}\big)+1\big)\mathbf{b}_n^{(2)}(x)=0$}. By selecting specific constant factors for each column, we obtain a matrix $\mathbf{P}_n(x)$ differing from $\mathbf{B}_n(x)$ by right-multiplication by a diagonal matrix, and given explicitly by
\begin{equation*}
 \mathbf{P}_n(x):=\begin{bmatrix}\frac{1}{2}-\kappa_n+\mu_n & \frac{1}{2}-\kappa_n-\mu_n \\
 \mu_n \gamma_n x^{1-2\kappa_n} & \mu_n \gamma_n x^{1-2\kappa_n}\end{bmatrix},
\end{equation*}
in which the dependence on the index $n$ enters via \eqref{kappa-mu} and \eqref{Qinfty-gammainfty}. Then to get the desired modification of the jump matrix we set
\begin{equation*}
 \mathbf{R}_n(\lambda,x):=\begin{cases}
 \mathbf{P}_n(x)^{-1}\mathbf{Q}_n(\lambda,x)\mathbf{P}_n(x),& |\lambda|>1,\\
 \mathbf{P}_n(x)^{-1}\mathbf{Q}_n(\lambda,x)e_0^{3\sigma_3}\mathbf{P}_{-n}(x)d_n(x),& |\lambda|<1,
 \end{cases}
\end{equation*}
where $d_n(x)$ is a scalar satisfying
\begin{equation}
 d_n(x)^2 = \frac{\det(\mathbf{P}_{n}(x))}{\det(\mathbf{P}_{-n}(x))}.
 \label{eq:dn-squared}
\end{equation}
Then, $\mathbf{R}_n(\lambda,x)$ solves the following
Riemann--Hilbert problem.

\begin{rhp}
Fix generic monodromy parameters $( e_1, e_2)$, $n\in\mathbb{Z}$, and $x\in\mathbb{C}$. Seek a $2\times 2$ matrix function $\lambda\mapsto\mathbf{R}_n(\lambda,x)$ with the following properties:
\begin{itemize}\itemsep=0pt
 \item Analyticity: $\mathbf{R}_n(\lambda,x)$ is an analytic function of $\lambda$ for $|\lambda|\neq 1$.
 \item Jump condition:
 $\mathbf{R}_n(\lambda,x)$ takes analytic boundary values on the unit circle from the interior and exterior, denoted $\mathbf{R}_{n,+}(\lambda,x)$ and $\mathbf{R}_{n,-}(\lambda,x)$ for $|\lambda|=1$ respectively, and they are related by
 \begin{align}
 & \mathbf{R}_{n,+}(\lambda,x)= \mathbf{R}_{n,-}(\lambda,x)\mathbf{P}_n(x)^{-1}\para{\mathbf{\Phi}}_n^{(\infty)}(\lambda,x)e_2^{\sigma_3}\begin{bmatrix}0 & -\beta_n\\\beta_n^{-1} & 0\end{bmatrix}\nonumber\\
 &\hphantom{\mathbf{R}_{n,+}(\lambda,x)=}{} \times
 \para{\mathbf{\Phi}}_{-n}^{(\infty)}\bigl(-\lambda^{-1},x\bigr)^{-1}\mathbf{P}_{-n}(x)d_n(x)\label{eq:R-jump}
 \end{align}
 \item Normalization:
 $\mathbf{R}_n(\lambda,x)\to\mathbb{I}$ as $\lambda\to\infty$.
\end{itemize}
\end{rhp}
The matrices $\mathbf{\Xi}_n^{(6)}(x)$ and $\mathbf{\Delta}_n^{(6)}(x)$ defined in \eqref{eq:T-n-definition} and \eqref{eq:Psi-n-asymptotic-zero}, respectively, can be expressed in terms of $\mathbf{R}_n(\lambda,x)$ as follows:
\begin{gather}
 \mathbf{\Xi}_n^{(6)}(x)=\mathbf{A}_n(x)+\mathbf{P}_n(x)\big[\lim_{\lambda\to\infty}\lambda(\mathbf{R}_n(\lambda,x)-\mathbb{I})\big]\mathbf{P}_n(x)^{-1}-\frac{1}{2}\ii x\sigma_3,\nonumber\\
 \mathbf{\Delta}_n^{(6)}(x)=\mathbf{P}_n(x)\mathbf{R}_n(0,x)\mathbf{P}_{-n}(x)^{-1}d_n(x)^{-1}e_0^{-3\sigma_3}.\label{eq:TU-R}
\end{gather}
Here, $\mathbf{A}_n(x)$ is the matrix coefficient defined in \eqref{eq:Phi-asymptotic-infinity}.

We now show that the jump matrix in \eqref{eq:R-jump} has explicit limits as $n\to +\infty$ along even or odd subsequences, with the convergence being uniform for $|\lambda|=1$ and bounded $z$ where $x=z/n$. To this end, we compute the asymptotic behavior of \smash{$\mathbf{P}_n\big(n^{-1}z\big)^{-1}\para{\mathbf{\Phi}}_n^{(\infty)}\big(\lambda,n^{-1}z\big)$} assuming that $|\lambda|=1$. The relevant formula for \smash{$\para{\mathbf{\Phi}}_n^{(\infty)}(\lambda,x)$} in this setting is \eqref{infinity-parametrix-form-origin}.
When $|\lambda|=1$,
\begin{gather*}
 \mathbf{P}_n(x)^{-1}\para{\mathbf{\Phi}}_n^{(\infty)}(\lambda,x)\\
 \qquad=\frac{1}{2\mu_n^2\gamma_nx^{1-2\kappa_n}}\begin{bmatrix}\mu_n\gamma_nx^{1-2\kappa_n} & -\frac{1}{2}+\mu_n+\kappa_n \vspace{1mm}\\-\mu_n\gamma_nx^{1-2\kappa_n} & \frac{1}{2}+\mu_n-\kappa_n\end{bmatrix}\mathbf{D}_nx^{\kappa_n\sigma_3}\lambda_{\lw}^{\sigma_3/2}\mathbf{M}(\ii x\lambda;\kappa_n,\mu_n)\mathbf{J}_n.
\end{gather*}
Using \eqref{H-define-2}, we see that
\begin{equation*}
 \mathbf{P}_n(x)^{-1}\para{\mathbf{\Phi}}_n^{(\infty)}(\lambda,x)=\frac{1}{2\mu_n^2 \gamma_n}x^{\kappa_n-\frac{1}{2}}\begin{bmatrix}
 \ii & -\frac{1}{2} + \mu_n+\kappa_n \vspace{1mm}\\-\ii & \frac{1}{2} + \mu_n-\kappa_n\end{bmatrix}x^{\sigma_3/2}\lambda_{\lw}^{\sigma_3/2}\mathbf{M}(\ii x\lambda;\kappa_n,\mu_n)\mathbf{J}_n.
\end{equation*}
Now, for $x>0$, the principal branch power $(-\ii x\lambda)^{\sigma_3/2}$ has the same domain of analyticity as~\smash{$\lambda_{\lw}^{\sigma_3/2}$}, and these two analytic functions are related by the identity \[\lambda_{\lw}^{\sigma_3/2}=x^{-\sigma_3/2}\ee^{\ii\pi\sigma_3/4}(-\ii x\lambda)^{\sigma_3/2}.\] Therefore,
\begin{gather}
 \mathbf{P}_n(x)^{-1}\para{\mathbf{\Phi}}_n^{(\infty)}(\lambda,x)\nonumber\\
 \qquad =\frac{1}{2\mu_n^2 \gamma_n}x^{\kappa_n-\frac{1}{2}}\begin{bmatrix}
 \ii & -\frac{1}{2} + \mu_n+\kappa_n \\-\ii & \frac{1}{2} + \mu_n-\kappa_n\end{bmatrix}\ee^{\ii\pi\sigma_3/4}(-\ii x\lambda)^{\sigma_3/2}\mathbf{M}(\ii x\lambda;\kappa_n,\mu_n)\mathbf{J}_n\nonumber\\
 \qquad=\frac{\ee^{-\ii\pi/4}}{2 \mu_n^2 \gamma_n}x^{\kappa_n-\frac{1}{2}}\begin{bmatrix}-1 & -\frac{1}{2} + \mu_n +\kappa_n \\1 & \frac{1}{2} + \mu_n-\kappa_n\end{bmatrix}(-Z)^{\sigma_3/2}\mathbf{M}(Z;\kappa_n,\mu_n)\mathbf{J}_n,\qquad Z=\ii x\lambda.
\label{eq:Psi-infty-hat}
\end{gather}
Now using \eqref{N-define}, we have
\begin{gather*}
 (-Z)^{\sigma_3/2}\mathbf{M}(Z;\kappa_n,\mu_n) \\
 \quad=
 \begin{bmatrix}
 \bigl(-\frac{1}{2} - \mu_n+\kappa_n\bigr)(-Z)^{-\frac{1}{2}}M_{1-\kappa_n,\mu_n}(-Z) & \bigl(-\frac{1}{2}+\mu_n+\kappa_n\bigr)(-Z)^{-\frac{1}{2}}M_{1-\kappa_n,-\mu_n}(-Z)\vspace{1mm}\\
 (-Z)^{-\frac{1}{2}}M_{-\kappa_n,\mu_n}(-Z) & (-Z)^{-\frac{1}{2}}M_{-\kappa_n,-\mu_n}(-Z)
 \end{bmatrix}.
\end{gather*}
The diagonal elements in \eqref{eq:Psi-infty-hat} can be simplified using the identity (see \cite[equation~(13.15.3)]{DLMF})
\begin{equation*}
 \big(\kappa-\mu-\tfrac{1}{2}\big)M_{\kappa-\frac{1}{2},\mu+\frac{1}{2}}(\diamond)+(1+2\mu)\diamond^{\frac{1}{2}}M_{\kappa,\mu}(\diamond) -\big(\kappa+\mu+\tfrac{1}{2}\big)M_{\kappa+\frac{1}{2},\mu+\frac{1}{2}}(\diamond)=0
\end{equation*}
replacing $\kappa\to\frac{1}{2}-\kappa_n$ and $\mu\to\mu_n-\frac{1}{2}$ for the $(1,1)$ entry and $\mu\to-\frac{1}{2}-\mu_n$ for the $(2,2)$ entry,
and the off-diagonal elements can be simplified using the identity (see \cite[equation~(13.15.4)]{DLMF})
\begin{equation*}
 2\mu M_{\kappa-\frac{1}{2},\mu-\frac{1}{2}}(\diamond)-2\mu M_{\kappa+\frac{1}{2},\mu-\frac{1}{2}}(\diamond)-\diamond^{\frac{1}{2}}M_{\kappa,\mu}(\diamond)=0
\end{equation*}
replacing $\kappa \to \frac{1}{2}-\kappa_n$ and $\mu\to\frac{1}{2}-\mu_n$ for the $(1,2)$ entry and $\mu\to\frac{1}{2}+\mu_n$ for the $(2,1)$ entry. The result is that
\begin{gather}
 \mathbf{P}_n(x)^{-1}\para{\mathbf{\Phi}}_n^{(\infty)}(\lambda,x)
 =\frac{\ee^{-\ii\pi/4}}{2 \mu_n^2 \gamma_n}x^{\kappa_n-\frac{1}{2}}\nonumber\\
 \qquad {}\times\begin{bmatrix}2\mu_n M_{\frac{1}{2}-\kappa_n,\mu_n-\frac{1}{2}}(-Z) & \left(\dfrac{\kappa_n + \mu_n - \frac{1}{2}}{1 - 2\mu_n}\right) M_{\frac{1}{2}-\kappa_n,\frac{1}{2}-\mu_n}(-Z) \\
 \left(\dfrac{\frac{1}{2} - \kappa_n + \mu_n}{1 + 2\mu_n}\right)M_{\frac{1}{2}-\kappa_n,\frac{1}{2}+\mu_n}(-Z) & 2\mu_n M_{\frac{1}{2}-\kappa_n,-\frac{1}{2}-\mu_n}(-Z)\end{bmatrix}\mathbf{J}_n,\nonumber\\ Z=\ii x\lambda.
 \label{eq:Psi-infty-hat-1}
\end{gather}
We will need the following result for the large $n$ limit of Whittaker functions appearing here, cf.~\cite[equation~(13.21.1)]{DLMF}.

\begin{Lemma}\label{lem:M-expansion}
Assume that $\mu$ is fixed with $2\mu\neq -1,-2,-3,\dots$, and let $f(\zeta;\mu)$ denote the entire function
\begin{equation}
 f(\zeta;\mu):=\sum_{s=0}^\infty \frac{(-\zeta)^s}{\Gamma(1+2\mu+s)s!}.
\label{eq:Bessel-f-define}
\end{equation}
Then the asymptotic formula
\begin{equation*}
 M_{\kappa,\mu}\left(\frac{\zeta}{\kappa}\right) = \Gamma(1+2\mu)\left(\frac{\zeta}{\kappa}\right)^{\mu+1/2}\big[f(\zeta;\mu) + \mathcal{O}\big(\kappa^{-2}\big)\big]
\end{equation*}
holds uniformly in the limit $\kappa\to\infty$ in any $($possibly complex$)$ direction under the assumption~${\zeta=\mathcal{O}(1)}$.
\end{Lemma}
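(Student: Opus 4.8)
The plan is to derive the stated asymptotic formula directly from the series representation of the Whittaker $M$-function at its regular singular point $Z=0$, exploiting the fact that, as $\kappa\to\infty$, this series degenerates termwise into the series \eqref{eq:Bessel-f-define} for the entire function $f(\zeta;\mu)$. Concretely, I would start from the standard Maclaurin expansion
\begin{equation*}
 M_{\kappa,\mu}(Z) = Z^{\mu+1/2}\ee^{-Z/2}\sum_{s=0}^\infty \frac{\bigl(\tfrac12+\mu-\kappa\bigr)_s}{\Gamma(1+2\mu+s)\,s!}\,Z^s
\end{equation*}
(see \cite[equation~(13.14.6)]{DLMF}), valid for $2\mu\notin\{-1,-2,\dots\}$, with the power function and the overall factor taken as principal branches. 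Setting $Z=\zeta/\kappa$ gives
\begin{equation*}
 M_{\kappa,\mu}\!\left(\frac{\zeta}{\kappa}\right) = \left(\frac{\zeta}{\kappa}\right)^{\mu+1/2}\ee^{-\zeta/(2\kappa)}\sum_{s=0}^\infty \frac{\bigl(\tfrac12+\mu-\kappa\bigr)_s}{\Gamma(1+2\mu+s)\,s!}\left(\frac{\zeta}{\kappa}\right)^s,
\end{equation*}
so the two things to control are the scalar prefactor $\ee^{-\zeta/(2\kappa)}=1+\mathcal{O}(\kappa^{-1})$ and, more importantly, the ratio $\kappa^{-s}\bigl(\tfrac12+\mu-\kappa\bigr)_s$, which I claim equals $(-1)^s\bigl(1+\mathcal{O}(\kappa^{-2})\bigr)$ with the error \emph{quadratic} rather than linear in $\kappa^{-1}$.

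The key combinatorial step is the following: writing $\bigl(\tfrac12+\mu-\kappa\bigr)_s = \prod_{j=0}^{s-1}\bigl(\tfrac12+\mu-\kappa+j\bigr) = (-1)^s\prod_{j=0}^{s-1}\bigl(\kappa-\tfrac12-\mu-j\bigr)$, one gets
\begin{equation*}
 \frac{\bigl(\tfrac12+\mu-\kappa\bigr)_s}{(-\kappa)^s} = \prod_{j=0}^{s-1}\left(1-\frac{\tfrac12+\mu+j}{\kappa}\right).
\end{equation*}
Expanding the product, the linear-in-$\kappa^{-1}$ term is $-\kappa^{-1}\sum_{j=0}^{s-1}(\tfrac12+\mu+j)$, which is \emph{not} $\mathcal{O}(\kappa^{-2})$ by itself; however, the claim in the lemma is that after summing against the Bessel-type series the \emph{net} correction is $\mathcal{O}(\kappa^{-2})$ uniformly. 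I would obtain this by being slightly more careful: combine the $\ee^{-\zeta/(2\kappa)}$ prefactor, which contributes $-\zeta/(2\kappa)$ at first order, with the first-order term of the product expansion, and show that in the resulting series the coefficient of $\kappa^{-1}$ is a telescoping/recombination that vanishes identically — this is exactly the mechanism by which $M_{\kappa,\mu}(\zeta/\kappa)$ limits to the confluent Bessel function $\Gamma(1+2\mu)\zeta^{\mu+1/2}f(\zeta;\mu)$ and it is the standard Whittaker$\to$Bessel confluence, cf.\ \cite[equation~(13.21.1)]{DLMF}. Alternatively, and perhaps more cleanly, I would invoke directly \cite[equation~(13.21.1)]{DLMF}, which states $\lim_{\kappa\to\infty}\kappa^{\mu+1/2}M_{\kappa,\mu}(z/\kappa) = \Gamma(1+2\mu)z^{(1-2\mu)/4}\cdot\,$(Bessel-$J$ or $I$), and then upgrade the qualitative limit to the quantitative $\mathcal{O}(\kappa^{-2})$ error by a residue/contour-integral estimate: represent $M_{\kappa,\mu}$ via a Laplace-type integral, substitute $Z=\zeta/\kappa$, and expand the integrand in powers of $\kappa^{-1}$, noting that the $\mathcal{O}(\kappa^{-1})$ term of the integrand integrates to zero by parity or by an exact cancellation, leaving the first genuine correction at order $\kappa^{-2}$.

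The remaining point is \emph{uniformity}: both in $\kappa\to\infty$ in an arbitrary complex direction and for $\zeta$ in bounded sets. Because every series in sight ($f(\zeta;\mu)$ and the product-expansion corrections) is entire in $\zeta$ with coefficients decaying super-exponentially in $s$ (the $1/(\Gamma(1+2\mu+s)s!)$ factor), the tail $\sum_{s\ge S}$ is dominated uniformly for $|\zeta|\le C$ and $|\kappa|\ge 1$ by a convergent majorant independent of $\kappa$, so one can truncate at a fixed $S=S(C)$, estimate the finitely many remaining terms by elementary inequalities $\bigl|1-(\tfrac12+\mu+j)/\kappa\bigr|\le 1+\mathrm{const}/|\kappa|$, and collect the $\mathcal{O}(\kappa^{-2})$ bound with a constant depending only on $\mu$ and $C$. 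I expect the main obstacle to be precisely the verification that the first-order-in-$\kappa^{-1}$ correction cancels, i.e.\ that the error really is $\mathcal{O}(\kappa^{-2})$ and not merely $\mathcal{O}(\kappa^{-1})$; the safest route is to extract this cancellation from the known confluence identity \cite[equation~(13.21.1)]{DLMF} together with a one-term-more-precise version of it (or from the integral representation), rather than from bare manipulation of the Pochhammer product. Everything else — termwise passage to the limit, identification of the limit with $f(\zeta;\mu)$, and the uniformity — is routine given the super-exponential decay of the series coefficients.
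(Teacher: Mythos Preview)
Your approach is essentially the paper's: start from the Maclaurin expansion \cite[equation~(13.14.6)]{DLMF}, expand the Pochhammer product $\prod_{j=0}^{s-1}(1-(\mu+\tfrac12+j)/\kappa)$, combine with the prefactor $\ee^{-\zeta/(2\kappa)}$, and argue that the $\mathcal{O}(\kappa^{-1})$ contribution cancels after summation. The paper makes the cancellation fully explicit: the $\kappa^{-1}$ coefficient, after summing over $s$, is the function
\[
g(\zeta;\mu):=\tfrac12\zeta f(\zeta;\mu)+\bigl(\mu+\tfrac12\bigr)\zeta f'(\zeta;\mu)+\tfrac12\zeta^2 f''(\zeta;\mu),
\]
and one checks directly from the series \eqref{eq:Bessel-f-define} that $g\equiv 0$. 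This is the concrete ``telescoping/recombination'' you allude to; there is no need for an integral representation or a parity argument.

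Where your proposal has a genuine gap is the uniformity. Your truncation argument does not work as stated: if you truncate at a fixed $S$, the head gives the correct expansion to $\mathcal{O}(\kappa^{-2})$, but you must then show the \emph{tail} is itself $\mathcal{O}(\kappa^{-2})$, not merely bounded by a $\kappa$-independent majorant. The product $\prod_{j=0}^{s-1}(1-(\mu+\tfrac12+j)/\kappa)$ is not uniformly close to $(-1)^0\cdot 1$ for $s\gg|\kappa|$, so a naive tail bound gives only $\mathcal{O}(1)$. The paper avoids this by proving, via a Fredholm-type expansion of the product and the elementary estimate $\bigl|\sum_{|S|=k}\prod_{l\in S}r_l\bigr|\le\binom{s}{k}R_s^k$, the \emph{uniform-in-$s$} bound
\[
\prod_{j=0}^{s-1}\Bigl(1-\tfrac{\mu+1/2+j}{\kappa}\Bigr)=1-\tfrac{1}{\kappa}\Bigl[\bigl(\mu+\tfrac12\bigr)s+\tfrac12 s(s-1)\Bigr]+\mathcal{O}\bigl(\kappa^{-2}s^s\bigr),
\]
and then observes that $\sum_{s\ge 0}s^s|\zeta|^s/(\Gamma(1+2\mu+s)s!)$ converges for bounded $\zeta$. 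That last convergence is the fact that replaces your truncation; it is what allows the term-by-term error to be summed into a global $\mathcal{O}(\kappa^{-2})$.
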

\begin{proof}
We start from the formula \cite[equation~(13.14.6)]{DLMF} which holds under the indicated condition on $\mu$:
\begin{equation*}
 M_{\kappa,\mu}\left(\frac{\zeta}{\kappa}\right)=
 \Gamma(1+2\mu)\left(\frac{\zeta}{\kappa}\right)^{\mu+1/2}\ee^{-\zeta/(2\kappa)}\sum_{s=0}^\infty\frac{(-\zeta)^s}{\Gamma(1+2\mu+s)s!}\prod_{j=0}^{s-1}\left(1-\frac{\mu+1/2+j}{\kappa}\right).
\end{equation*}
Clearly, $\ee^{-\zeta/(2\kappa)}=1-\zeta/(2\kappa)+\mathcal{O}\big(\kappa^{-2}\big)$ as $\kappa\to\infty$ for $\zeta=\mathcal{O}(1)$, and the product in the summand has the expansion
\begin{gather}
 \prod_{j=0}^{s-1}\left(1-\frac{\mu+1/2+j}{\kappa}\right) = 1 -\frac{1}{\kappa}\bigg[\left(\mu+\frac{1}{2}\right)s +\frac{1}{2}s(s-1)\bigg] + \mathcal{O}\big(\kappa^{-2}s^s\big),\qquad\kappa\to\infty\!\!\!
\label{eq:uniform-estimate}
\end{gather}
uniformly for all indices $s$.
This follows from the Fredholm expansion formula
\begin{equation*}
 \prod_{j=0}^{s-1}(1+r_j) = 1+\sum_{j=0}^{s-1}r_j + \sum_{k=2}^{s}
 \mathop{\sum_{S\subset \mathbb{Z}_s}}_{|S|=k}\prod_{l\in S}r_l
\end{equation*}
and the estimate
\begin{equation*}
 \Biggl| \sum_{\substack{S\subset \mathbb{Z}_s \\ |S|=k}}\prod_{l\in S}r_l\Biggr|\le \binom{s}{k}R_s^k,\qquad R_s:=\max_{0\le l\le s-1}\{|r_l|\}.
\end{equation*}
Indeed, with $r_j=-\kappa^{-1}(\mu+\frac{1}{2}+j)$, we have
\begin{equation*}
 \sum_{j=0}^{s-1}r_j=-\frac{1}{\kappa}\sum_{j=0}^{s-1}\left(\mu+\frac{1}{2}+j\right) = -\frac{1}{\kappa}\left[\left(\mu+\frac{1}{2}\right)s + \frac{1}{2}s(s-1)\right],
\end{equation*}
and $R_s\le |\kappa|^{-1}\big(\big|\mu+\frac{1}{2}\big| + (s-1)\big)$. Therefore, $R_s^k\le |\kappa|^{-2}\big(\big|\mu+\frac{1}{2}\big|+(s-1)\big)^k$ holds for all $k\ge 2$ whenever $|\kappa|\ge 1$. Consequently,
\begin{gather*}
 \left|\prod_{j=0}^{s-1}\left(1-\frac{\mu+1/2+j}{\kappa}\right)-1+\right. \left.\frac{1}{\kappa}\left[\left(\mu+\frac{1}{2}\right)s+ \frac{1}{2}s(s-1)\right]\right|\\
 \qquad=\Biggl|\sum_{k=2}^s\mathop{\sum_{S\subset\mathbb{Z}_s}}_{|S|=k}\prod_{l\in S}r_l\Biggr|\le \frac{1}{|\kappa|^2}\sum_{k=2}^s\binom{s}{k}\left(\left|\mu+\frac{1}{2}\right|+(s-1)\right)^k1^{s-k}\\
 \qquad\le \frac{1}{|\kappa|^2}\sum_{k=0}^s\binom{s}{k}\left(\left|\mu+\frac{1}{2}\right|+(s-1)\right)^k1^{s-k}=\frac{\big(\big|\mu+\frac{1}{2}\big|+s\big)^s}{|\kappa|^2},
\end{gather*}
which proves \eqref{eq:uniform-estimate}.
Since the series
\begin{equation*}
 \sum_{s=0}^\infty \frac{(-\zeta)^ss^s}{\Gamma(1+2\mu_n+s)s!}
\end{equation*}
converges uniformly for $|\zeta|$ bounded, it follows that
\begin{equation*}
 M_{\kappa,\mu}\left(\frac{\zeta}{\kappa}\right) = \Gamma(1+2\mu)\left(\frac{\zeta}{\kappa}\right)^{\mu+1/2}\left[f(\zeta;\mu) -\frac{1}{\kappa}g(\zeta;\mu) + \mathcal{O}\big(\kappa^{-2}\big)\right]
\end{equation*}
holds as $\kappa\to\infty$ in $\mathbb{C}$ with $\zeta=\mathcal{O}(1)$, where
\begin{equation*}
 g(\zeta;\mu):=\frac{1}{2}\zeta f(\zeta;\mu)+\left(\mu+\frac{1}{2}\right)\zeta f'(\zeta;\mu) +\frac{1}{2}\zeta^2f''(\zeta;\mu).
\end{equation*}
Now using the series \eqref{eq:Bessel-f-define} one checks that for all indicated values of $\mu$, $g(\zeta;\mu)$ vanishes identically, so the proof is complete.
\end{proof}

The series in \eqref{eq:Bessel-f-define} defines an entire function of $\zeta$ related to Bessel functions (see \cite[Chapter~10]{DLMF}) in the following way:
\begin{equation}
\label{f-bessel-j}
 f(\zeta,\mu)=\zeta^{-{\mu}{}}J_{2\mu}\big(2\sqrt{\zeta}\big).
\end{equation}
We apply Lemma~\ref{lem:M-expansion} to \eqref{eq:Psi-infty-hat-1} by taking $\zeta=-\big(\frac{1}{2}-\kappa_n\big)Z=-\big(\frac{1}{2}-\kappa_n\big)\ii x\lambda$. If $x=z/n$ and $z=\mathcal{O}(1)$, then using~\eqref{kappa-mu}, we see that $\zeta=-\frac{1}{2}\ii z\lambda + \mathcal{O}\big(n^{-1}\big)$ holds for $|\lambda|=1$. So, \eqref{eq:Psi-infty-hat-1}~becomes the statement that
\begin{gather*}
 \mathbf{P}_n(x)^{-1}\para{\mathbf{\Phi}}_n^{(\infty)}(\lambda,x)
\\
\quad=\frac{\ee^{-\ii\pi/4}}{2\mu_n^2\gamma_n}x^{\kappa_n-\frac{1}{2}}\Bigg(\frac{\rho_\infty}{2}\begin{bmatrix}
 \Gamma(2\mu_n+1) J_{2\mu_n-1}(\rho_\infty) & -\Gamma(1-2\mu_n) J_{1-2\mu_n}(\rho_\infty) \vspace{1mm}\\
 \Gamma(2\mu_n+1) J_{2\mu_n+1}(\rho_\infty) & -\Gamma(1-2\mu_n) J_{-1-2\mu_n}(\rho_\infty)
 \end{bmatrix}+\mathcal{O}\big(n^{-1}\big)\Bigg) \\
 \phantom{\quad=}{}\times\left(\dfrac{n}{2} \right)^{-\mu_n \sigma_3}
 \mathbf{J}_n
\end{gather*}
with
\begin{equation}
 \rho_\infty=\rho_\infty(\lambda,z):=(-2\ii z\lambda)^{1/2}\qquad\text{(principal branch)}
 \label{eq:rho-infty-branch}
\end{equation}
holds in the limit $n\to\infty$ with $x=n^{-1}z$ uniformly for $z=\mathcal{O}(1)$ and $|\lambda|=1$. Similarly, to study the parametrix near 0, it will be convenient to rewrite formula \eqref{f-bessel-j} in terms of modified Bessel functions:
\begin{equation*}
 f(\zeta, \mu) = (-\zeta)^{-\mu} I_{2\mu}\big(2 \sqrt{-\zeta} \big).
\end{equation*}

Replacing $n$ with $-n$, $\Theta_\infty$ with $\Theta_0$, and $\lambda$ with $-\lambda^{-1}$ and recalling that $\mu_{-n} = \mu_n$, gives in the same limit,
\begin{gather*}
 \mathbf{P}_{-n}(x)^{-1}\para{\mathbf{\Phi}}_{-n}^{(\infty)}\bigl(-\lambda^{-1},x\bigr)
\\
 \quad =\frac{\ee^{-\ii\pi/4}}{2\mu_n^2\gamma_{-n}}x^{\kappa_{-n}-\frac{1}{2}}\Bigg(\frac{\rho_0}{2}\begin{bmatrix}{\Gamma(2\mu_n+1)}{} I_{2\mu_n-1}(\rho_0) & {\Gamma(1-2\mu_n)}{} I_{1-2\mu_n}(\rho_0) \vspace{1mm}\\
 -{\Gamma(2\mu_n+1)}{}I_{2\mu_n+1}(\rho_0)&-{\Gamma(1-2\mu_n)}{}I_{-2\mu_n-1}(\rho_0)
 \end{bmatrix} +\mathcal{O}\big(n^{-1}\big)\Bigg)\\
\phantom{ \quad =}{} \times \left(\frac{n}{2}\right)^{-\mu_n \sigma_3}
 \mathbf{J}_{-n}
\end{gather*}
with
\begin{equation}
\rho_0=\rho_0(\lambda,z):=\big(2\ii z\lambda^{-1}\big)^{1/2}\qquad\text{(principal branch)}.
\label{eq:rho-zero-branch}
\end{equation}
The jump matrix in \eqref{eq:R-jump} therefore reads
\begin{gather}
 \mathbf{R}_{n,-}\big(\lambda,n^{-1}z\big)^{-1}
 \mathbf{R}_{n,+}\big(\lambda,n^{-1}z\big) \nonumber\\
 = \frac{\rho_\infty}{2}\begin{bmatrix}
 \Gamma(2\mu_n+1) J_{2\mu_n-1}(\rho_\infty)+ \mathcal{O}\big(n^{-1}\big) & -\Gamma(1-2\mu_n) J_{1-2\mu_n}(\rho_\infty)+ \mathcal{O}\big(n^{-1}\big) \vspace{1mm}\\
 \Gamma(2\mu_n+1) J_{2\mu_n+1}(\rho_\infty)+ \mathcal{O}\big(n^{-1}\big) & -\Gamma(1-2\mu_n) J_{-1-2\mu_n}(\rho_\infty)+ \mathcal{O}\big(n^{-1}\big)
 \end{bmatrix}\nonumber\\
\phantom{ =}{} \times{} \frac{\gamma_{-n}}{\gamma_n}x^{\kappa_n-\kappa_{-n}}d_n(x)\left(\frac{n}{2}\right)^{-\mu_n\sigma_3}\mathbf{J}_ne_2^{\sigma_3}\begin{bmatrix}0 & -\beta_n\\\beta_n^{-1} & 0\end{bmatrix}\mathbf{J}_{-n}^{-1}\left(\frac{n}{2}\right)^{\mu_n\sigma_3}\frac{2}{\rho_0}\nonumber
 \\
\phantom{ =}{}\times \begin{bmatrix}{\Gamma(2\mu_n+1)}{} I_{2\mu_n-1}(\rho_0)+ \mathcal{O}\big(n^{-1}\big) & {\Gamma(1-2\mu_n)}{} I_{1-2\mu_n}(\rho_0)+ \mathcal{O}\big(n^{-1}\big) \vspace{1mm}\\
 -{\Gamma(2\mu_n+1)}{}I_{2\mu_n+1}(\rho_0)+ \mathcal{O}\big(n^{-1}\big)&-{\Gamma(1-2\mu_n)}{}I_{-2\mu_n-1}(\rho_0)+ \mathcal{O}\big(n^{-1}\big)
 \end{bmatrix}^{-1}.
 \label{eq:E-hat-jump-1}
\end{gather}
Expanding \eqref{eq:dn-squared} for large $n>0$ gives
\begin{align*}
& d_n(x)^2= \frac{\gamma_n}{\gamma_{-n}}x^{2\kappa_{-n}-2\kappa_n} = \dfrac{e_\infty^5}{e_0^5} \dfrac{e_0^2 - e_1^2}{e_\infty^2 - e_1^2} \dfrac{\Gamma\big( \frac{1}{2} - \kappa_{-n} -\mu_n\big) \Gamma\big( \frac{1}{2} - \kappa_{-n} + \mu_n\big)}{\Gamma\big( \frac{1}{2} - \kappa_{n} -\mu_n\big) \Gamma\big( \frac{1}{2} - \kappa_{n} +\mu_n\big)} x^{2\kappa_{-n}-2\kappa_n}\\
&\hphantom{d_n(x)^2}{} = \dfrac{e_\infty^5 e_1^2}{e_0^3\big(1 - e_1^2 e_0^2\big)\big(e_\infty^2 - e_1^2\big)} \\
&\hphantom{d_n(x)^2=}{} \times\dfrac{ 4\pi^2x^{2\kappa_{-n}-2\kappa_n}}{\Gamma \big( \frac{1}{2} + \kappa_{-n}- \mu_n \big) \Gamma \big( \frac{1}{2} + \kappa_{-n} + \mu_n \big) \Gamma \big( \frac{1}{2} - \kappa_n + \mu_n\big) \Gamma \big( \frac{1}{2} - \kappa_n - \mu_n \big)} \\
&\hphantom{d_n(x)^2}{}=
 4\dfrac{e_\infty^4}{e_0^4}\dfrac{e_0e_\infty e_1^2}{\big(1 - e_1^2 e_0^2\big)\big(e_\infty^2 - e_1^2\big)} x^{2\kappa_{-n}-2\kappa_n}n^{-2n-\Theta_0+\Theta_\infty}\ee^{2n}2^{2n+\Theta_0-\Theta_\infty-2}\big(1+\mathcal{O}\big(n^{-1}\big)\big), \\
& n \to +\infty.
\end{align*}
We now properly define $d_n(x)$ for large $n$ by selecting a definite value for the square root of
\begin{equation}
\label{eq:V-root}
\sqrt{\dfrac{1 - e_1^2 e_0^2}{e_\infty^2 - e_1^2} }
\end{equation}
after which $d_n(x)$ has the asymptotic expansion
\begin{align*}
&d_n(x) = \dfrac{e_1 e_\infty^{5/2}}{e_0^{3/2}} \dfrac{1}{\big(1 - e_1^2 e_0^2\big)}\sqrt{\dfrac{1 - e_1^2 e_0^2}{e_\infty^2 - e_1^2} } x^{\kappa_{-n}-\kappa_n} n^{-n+\frac{1}{2}(-\Theta_0+\Theta_\infty)}\ee^{n}2^{n + \frac{1}{2}(\Theta_0-\Theta_\infty)}\\
&\hphantom{d_n(x) =}{}
\times\big(1+\mathcal{O}\big(n^{-1}\big)\big), \qquad n \to +\infty.
\end{align*}
Then, by definition, we have
\begin{align*}
\dfrac{\gamma_{-n}}{\gamma_n} d_n(x) x^{\kappa_n - \kappa_{-n}} ={}& \dfrac{e_0^{3/2}}{e_1 e_\infty^{5/2}} {\big(1 - e_1^2 e_0^2\big)}\left(\sqrt{\dfrac{1 - e_1^2 e_0^2}{e_\infty^2 - e_1^2} }\right)^{-1}n^{n-\frac{1}{2}(-\Theta_0+\Theta_\infty)}\ee^{-n}\\&
\times2^{-n - \frac{1}{2}(\Theta_0-\Theta_\infty)}\big(1+\mathcal{O}\big(n^{-1}\big)\big),\qquad n \to +\infty.
\end{align*}
Furthermore, using identities \eqref{eq:Gamma-reflection}, \eqref{eq:cosine-identity-1}, and Stirling's formula yields
\begin{align*}
 & -\left(\frac{2}{n}\right)^{2\mu_n}\beta_n\frac{J_{n,11}}{J_{-n,22}} = -\left(\frac{2}{n}\right)^{2\mu_n} \frac{e_\infty^{7/2} \big(e_1^2-e_0^2\big) \Gamma (-2 \mu_n ) \Gamma \big(\frac{1}{2}-\kappa_{-n}+\mu_n \big)}{e_0^{3/2} \big(e_0^2 e_1^2-1\big) \Gamma (2 \mu_n ) \Gamma \big(\frac{1}{2}-\kappa_{n}-\mu_n \big)} \\
 &\hphantom{-\left(\frac{2}{n}\right)^{2\mu_n}\beta_n\frac{J_{n,11}}{J_{-n,22}}}{}
 = \frac{e_\infty^{7/2}}{e_0^{3/2}} \dfrac{\ii e_0e_1}{\big(1- e_0^2 e_1^2\big) } \dfrac{\Gamma (-2 \mu_n ) }{ \Gamma (2 \mu_n ) } n^{-n+\frac{1}{2}(-\Theta_0+\Theta_\infty)}\ee^n \\
 &\hphantom{-\left(\frac{2}{n}\right)^{2\mu_n}\beta_n\frac{J_{n,11}}{J_{-n,22}}=}{}
 \times 2^{n + \frac{1}{2}(\Theta_0-\Theta_\infty)}\big(1+\mathcal{O}\big(n^{-1}\big)\big),\qquad n\to+\infty,
\end{align*}
and similarly,
\begin{align*}
&\left(\frac{n}{2}\right)^{2\mu_n}\frac{J_{n,22}}{\beta_nJ_{-n,11}} = \left(\frac{n}{2}\right)^{2\mu_n} \frac{e_\infty^{3/2}\big(e_0^2 e_1^2-1\big) \Gamma (2 \mu_n ) \Gamma \big(\frac{1}{2}-\kappa_{-n}-\mu_n \big)}{e_0^{7/2} \big(e_1^2-e_\infty^2\big) \Gamma (-2 \mu_n ) \Gamma \big(\frac{1}{2}-\kappa_n+\mu_n \big)}\\
&\hphantom{\left(\frac{n}{2}\right)^{2\mu_n}\frac{J_{n,22}}{\beta_nJ_{-n,11}}}{}
= \frac{e_\infty^{3/2}}{e_0^{7/2}} \dfrac{\ii e_0e_1}{\big(e_\infty^2 - e_1^2\big)}\dfrac{ \Gamma (2 \mu_n ) }{ \Gamma (-2 \mu_n )}n^{-n+\frac{1}{2}(-\Theta_0+\Theta_\infty)}\ee^n \\
&\hphantom{\left(\frac{n}{2}\right)^{2\mu_n}\frac{J_{n,22}}{\beta_nJ_{-n,11}}=}{}
\times 2^{n + \frac{1}{2}(\Theta_0-\Theta_\infty)}\big(1+\mathcal{O}\big(n^{-1}\big)\big),\qquad n\to+\infty.
\end{align*}
Therefore, the central factor on the right-hand side of \eqref{eq:E-hat-jump-1} satisfies
\begin{gather*}
\frac{\gamma_{-n}}{\gamma_n}x^{\kappa_n-\kappa_{-n}}d_n(x)\left(\frac{n}{2}\right)^{-\mu_n\sigma_3}\mathbf{J}_ne_2^{\sigma_3}\begin{bmatrix}0 & -\beta_n\\\beta_n^{-1} & 0\end{bmatrix}\mathbf{J}_{-n}^{-1}\left(\frac{n}{2}\right)^{\mu_n\sigma_3}\\
\qquad = \begin{bmatrix} 0 & -\dfrac{e_0 e_2 e_\infty}{\ii}\dfrac{ \Gamma (-2 \mu_n )}{ \Gamma (2 \mu_n )} \left(\sqrt{\dfrac{1 - e_0^2 e_1^2}{e_\infty^2-e_1^2 } }\right)^{-1} \\
 \dfrac{ \ii }{e_0 e_2 e_\infty} \dfrac{\Gamma (2 \mu_n )}{ \Gamma (-2 \mu_n )} \sqrt{\dfrac{1 - e_0^2 e_1^2}{e_\infty^2-e_1^2 }} & 0
\end{bmatrix} + \mathcal{O}\big(n^{-1}\big).
\end{gather*}
The leading term is independent of $n\pmod{2}$ and has unit determinant. This proves the following.
\begin{prop}
Define the constant matrix which depends only on the even/odd parity of $n$ via $\mu_n$, $e_1$, $e_0^2$, and $e_\infty^2$:
\begin{equation}
 \mathbf{V}^\mathrm{even/odd}:=\begin{bmatrix} 0 & -\dfrac{e_0 e_2 e_\infty}{\ii} \left(\sqrt{\dfrac{1 - e_0^2 e_1^2}{e_\infty^2-e_1^2 } }\right)^{-1} \\
 \dfrac{ \ii }{e_0 e_2 e_\infty} \sqrt{\dfrac{1 - e_0^2 e_1^2}{e_\infty^2-e_1^2 }} & 0
\end{bmatrix}.
 \label{eq:Vm-even}
\end{equation}
Then the following asymptotic formula holds uniformly for $|\lambda|=1$ and $z$ bounded:
\begin{align*}
& \mathbf{R}_{n,-}(\lambda,z/n)^{-1}\mathbf{R}_{n,+}(\lambda,z/n)= {\rho_\infty}\begin{bmatrix}J_{2\mu_n - 1}(\rho_\infty) & -J_{1- 2\mu_n}(\rho_\infty)\\
 J_{2\mu_n +1}(\rho_\infty) & -J_{-1-2\mu_n}(\rho_\infty)\end{bmatrix}\cdot\mathbf{V}^\mathrm{even/odd}\\
&\hphantom{\mathbf{R}_{n,-}(\lambda,z/n)^{-1}\mathbf{R}_{n,+}(\lambda,z/n)=}{}
\times
 \dfrac{1}{\rho_0}\begin{bmatrix}I_{2\mu_n - 1}(\rho_0) & I_{1- 2\mu_n}(\rho_0)\\
 -I_{2\mu_n +1}(\rho_0) & -I_{-1-2\mu_n}(\rho_0)\end{bmatrix}^{-1}+\mathcal{O}\big(n^{-1}\big),
\end{align*}
as $n\to\infty$ along even/odd subsequences.
\label{prop:limiting-jump}
\end{prop}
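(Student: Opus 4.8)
The plan is to substitute into the jump relation \eqref{eq:R-jump} for $\mathbf{R}_n$ on the unit circle the large-$n$ expansions of all of its constituents and to observe that every $n$-dependent prefactor cancels, leaving the displayed Bessel expression. All the required inputs have been prepared above: the Whittaker-$M$ representation \eqref{infinity-parametrix-form-origin} of $\para{\mathbf{\Phi}}_n^{(\infty)}(\lambda,x)$ in the disk $|\lambda|<2$ and its reduced form \eqref{eq:Psi-infty-hat-1}; the symmetry of Proposition~\ref{prop:symmetry}, which replaces $\para{\mathbf{\Phi}}_n^{(0)}\bigl(-\lambda^{-1},x,\Theta_0\bigr)$ in \eqref{eq:R-jump} by $\para{\mathbf{\Phi}}_{-n}^{(\infty)}\bigl(-\lambda^{-1},x,\Theta_0\bigr)$ and a constant factor built from $\beta_n$; the confluence Lemma~\ref{lem:M-expansion}; and the closed forms \eqref{eq:J-n} for $\mathbf{J}_n$ together with \eqref{Qinfty-gammainfty} and \eqref{eq:dn-squared} for $\gamma_n$ and $d_n$.

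First I would set $x=z/n$. For $|\lambda|=1$ both $\pm\lambda$ and $-\lambda^{-1}$ lie in the region where \eqref{infinity-parametrix-form-origin} applies, so \eqref{eq:Psi-infty-hat-1} is available. Applying Lemma~\ref{lem:M-expansion} with $\kappa=\frac12-\kappa_n$ and $\zeta=-\bigl(\frac12-\kappa_n\bigr)\ii x\lambda$, which by \eqref{kappa-mu} tends to $-\frac12\ii z\lambda$, and using \eqref{f-bessel-j} to express $f$ through $J_{2\mu}$, converts $\mathbf{P}_n(z/n)^{-1}\para{\mathbf{\Phi}}_n^{(\infty)}(\lambda,z/n)$ into the Bessel-$J$ matrix times $(n/2)^{-\mu_n\sigma_3}\mathbf{J}_n$, up to an $\mathcal{O}(n^{-1})$ error uniform for $|\lambda|=1$ and $z$ in a compact set---the uniformity coming from the uniform estimate in Lemma~\ref{lem:M-expansion} together with the compactness of the ranges of $\rho_\infty(\lambda,z)$, $\rho_0(\lambda,z)$ and the fact that Bessel functions are entire. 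Running the same computation with $(n,\Theta_\infty,\lambda)\mapsto(-n,\Theta_0,-\lambda^{-1})$, and rewriting $f$ in terms of the modified Bessel function $I_{2\mu}$, yields the analogous modified-Bessel matrix times $(n/2)^{-\mu_n\sigma_3}\mathbf{J}_{-n}$; recall $\mu_{-n}=\mu_n$.

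Substituting both expansions into \eqref{eq:R-jump} produces \eqref{eq:E-hat-jump-1}: an outer Bessel-$J$ matrix on the left, the inverse of an outer modified-Bessel matrix on the right, and the central constant factor on the right-hand side of \eqref{eq:E-hat-jump-1}. The common diagonal factor $\mathrm{diag}\bigl(\Gamma(2\mu_n+1),\Gamma(1-2\mu_n)\bigr)$ carried by the two outer matrices can be conjugated through the central factor; since $\Gamma(2\mu_n+1)/\Gamma(1-2\mu_n)=-\Gamma(2\mu_n)/\Gamma(-2\mu_n)$, this exactly cancels the $\Gamma(\pm2\mu_n)$ ratios appearing in the central factor and leaves the outer matrices in the $\Gamma$-free form stated in the Proposition. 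For the central factor itself I would then use the reflection formula \eqref{eq:Gamma-reflection}, the cosine identities \eqref{eq:cosine-identity-1}, the closed forms \eqref{eq:J-n}, \eqref{Qinfty-gammainfty}, \eqref{eq:dn-squared}, and Stirling's formula applied to the four Gamma functions with arguments $\frac12\pm\kappa_{\pm n}\pm\mu_n$. After fixing one branch of the square root \eqref{eq:V-root}, one checks that the growing factors of the form $n^{an+b}$, $\ee^{cn}$, $2^{dn}$ coming from $d_n$, $\gamma_{-n}/\gamma_n$, $\mathbf{J}_n/\mathbf{J}_{-n}$ and $(n/2)^{\pm\mu_n\sigma_3}$ cancel identically, so that the central factor converges to the antidiagonal matrix $\mathbf{V}^{\mathrm{even/odd}}$ of \eqref{eq:Vm-even}, its $n$-dependence entering only through the parity of $n$; a short calculation gives $\det\mathbf{V}^{\mathrm{even/odd}}=1$, and collecting everything gives the claimed formula.

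The step I expect to be the main obstacle is this last one: keeping careful track of the Stirling asymptotics of the several products of four Gamma functions---noting in particular that $\kappa_n$ and $\kappa_{-n}$ are distinct---and verifying that every exponentially large prefactor cancels, so that the limit is finite, has unit determinant, and is attained uniformly in $\lambda$ on the circle and in $z$ on compact sets. The cancellations are forced by the structure already visible in \eqref{eq:J-n}, \eqref{Qinfty-gammainfty} and \eqref{eq:dn-squared}, and the uniformity is inherited from Lemma~\ref{lem:M-expansion}; once these are in hand the Proposition follows by assembling the pieces.
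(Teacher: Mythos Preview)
Your proposal is correct and follows essentially the same route as the paper: substitute \eqref{eq:Psi-infty-hat-1} and its $(-n,\Theta_0,-\lambda^{-1})$ analogue into \eqref{eq:R-jump}, apply Lemma~\ref{lem:M-expansion} to get the Bessel factors and \eqref{eq:E-hat-jump-1}, then evaluate the central factor using \eqref{eq:J-n}, \eqref{Qinfty-gammainfty}, \eqref{eq:dn-squared}, \eqref{eq:cosine-identity-1} and Stirling's formula to obtain $\mathbf{V}^{\mathrm{even/odd}}$. Your remark that the diagonal factors $\mathrm{diag}\bigl(\Gamma(2\mu_n{+}1),\Gamma(1{-}2\mu_n)\bigr)$ from the outer Bessel matrices conjugate through the off-diagonal central factor and cancel its $\Gamma(\pm 2\mu_n)$ ratios is exactly the mechanism the paper uses implicitly when passing from the displayed central-factor limit to the statement of the Proposition.
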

Proposition~\ref{prop:limiting-jump} suggests defining the following limiting Riemann--Hilbert problem.
\begin{rhp}[limiting problem, even/odd subsequences of $n$]
Fix generic monodromy parameters $( e_1, e_2)$, and $z\in\mathbb{C}$ with $|{\arg}(z)|<\pi$. Seek a $2\times 2$ matrix function $\lambda\mapsto\hat{\mathbf{R}}^\mathrm{even/odd}(\lambda,z)$ with the following properties:
\begin{itemize}\itemsep=0pt
 \item Analyticity: $\hat{\mathbf{R}}^\mathrm{even/odd}(\lambda,z)$ is an analytic function of $\lambda$ for $|\lambda|\neq 1$.
 \item Jump condition:
 $\hat{\mathbf{R}}^\mathrm{even/odd}(\lambda,z)$ takes analytic boundary values on the unit circle from the interior and exterior, denoted $\hat{\mathbf{R}}^\mathrm{even/odd}_+(\lambda,z)$ and $\hat{\mathbf{R}}^\mathrm{even/odd}_-(\lambda,z)$ for $|\lambda|=1$ respectively, and they are related by
 \begin{align}
 &\hat{\mathbf{R}}^\mathrm{even/odd}_+(\lambda,z)=\hat{\mathbf{R}}^\mathrm{even/odd}_-(\lambda,z) {\rho_\infty}\begin{bmatrix}J_{2\mu_n - 1}(\rho_\infty) & -J_{1- 2\mu_n}(\rho_\infty)\\
 J_{2\mu_n +1}(\rho_\infty) & -J_{-1-2\mu_n}(\rho_\infty)\end{bmatrix}\nonumber\\
 & \hphantom{\hat{\mathbf{R}}^\mathrm{even/odd}_+(\lambda,z)=}{}
 \times\mathbf{V}^\mathrm{even/odd}
 \cdot \dfrac{1}{\rho_0}\begin{bmatrix}I_{2\mu_n - 1}(\rho_0) & I_{1- 2\mu_n}(\rho_0)\\
 -I_{2\mu_n +1}(\rho_0) & -I_{-1-2\mu_n}(\rho_0)\end{bmatrix}^{-1}.\label{r-hat-jump}
 \end{align}
 \item Normalization:
 $\hat{\mathbf{R}}^\mathrm{even/odd}(\lambda,z)\to\mathbb{I}$ as $\lambda\to\infty$.
\end{itemize}
\label{rhp:Rhat-even/odd}
\end{rhp}
Note that the Bessel functions $J_\nu(\rho_\infty)$ and $I_\nu(\rho_0)$ appearing in the jump matrix in \eqref{r-hat-jump} are analytic on the unit circle $|\lambda|=1$ except at the point $\lambda=\lambda_\mathrm{c}:=-\ii\ee^{-\ii\arg(z)}$. However, from the identities
\[
\left.J_\nu(\rho_\infty)\right|_{\lambda=\lambda_\mathrm{c}\ee^{-\ii 0}}=\ee^{\ii\pi\nu}\left.J_\nu(\rho_\infty)\right|_{\lambda=\lambda_\mathrm{c}\ee^{\ii 0}} \qquad \text{and} \qquad
\left.I_\nu(\rho_0)\right|_{\lambda=\lambda_\mathrm{c}\ee^{-\ii 0}}=\ee^{-\ii\pi\nu}\left.I_\nu(\rho_0)\right|_{\lambda=\lambda_\mathrm{c}\ee^{\ii 0}}
\]
and the fact that the indices $\nu$ in each column of the Bessel matrix factors in \eqref{r-hat-jump} differ by $2$, combined with the fact that $\mathbf{V}^\mathrm{even/odd}$ is an off-diagonal matrix, one sees easily that
\[
\begin{bmatrix}J_{2\mu_n - 1}(\rho_\infty) & -J_{1- 2\mu_n}(\rho_\infty)\\
 J_{2\mu_n +1}(\rho_\infty) & -J_{-1-2\mu_n}(\rho_\infty)\end{bmatrix}\cdot\mathbf{V}^\mathrm{even/odd}
 \cdot \begin{bmatrix}I_{2\mu_n - 1}(\rho_0) & I_{1- 2\mu_n}(\rho_0)\\
 -I_{2\mu_n +1}(\rho_0) & -I_{-1-2\mu_n}(\rho_0)\end{bmatrix}^{-1}
\]
is continuous at $\lambda=\lambda_\mathrm{c}$ and hence is an analytic function of $\lambda$ on the unit circle. The scalar factor $\rho_\infty/\rho_0$ is also analytic for $|\lambda|=1$, and therefore the jump matrix in \eqref{r-hat-jump} is an analytic function of $\lambda$ when $|\lambda|=1$. At this stage, the existence of a matrix function $\hat{\mathbf{R}}^\mathrm{even/odd}(\lambda, z)$ satisfying Riemann--Hilbert Problem \ref{rhp:Rhat-even/odd} is not clear. However, it turns out that there exists a~discrete set $\Sigma^{\mathrm{even/odd}} \subset \C$ such that for $z \in \C \setminus \Sigma^{\mathrm{even/odd}}$, such a matrix does exist and is in fact a meromorphic function of $z$, see Section \ref{sec:suleimanov-solution-connection} below.

\begin{Lemma}
\label{lemma:limit}
Let $(e_1, e_2)$ be generic monodromy parameters and take $z \in \C \setminus \Sigma^\mathrm{even/odd}$. Then,
\begin{align}
 \mathop{\lim_{n\to\infty}}_{\text{$n$ even/odd}}u_n\big(n^{-1}z;m\big)={}&-\dfrac{8 \mu_n^2}{z}\big[\hat{R}_{11}^\mathrm{even/odd}(0,z)+\hat{R}_{21}^\mathrm{even/odd}(0,z)\nonumber\\
 &-\hat{R}_{12}^\mathrm{even/odd}(0,z)+\hat{R}_{22}^\mathrm{even/odd}(0,z)\big]^{-2}.\label{eq:un-Rhat}
\end{align}
\end{Lemma}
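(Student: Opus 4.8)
The starting point is the exact reconstruction formula \eqref{eq:u-n-recover} together with the representations \eqref{eq:TU-R} of $\mathbf{\Xi}_n^{(6)}(x)$ and $\mathbf{\Delta}_n^{(6)}(x)$ in terms of $\mathbf{R}_n(\lambda,x)$. Setting $x=z/n$, the plan is to pass to the limit $n\to\infty$ along a fixed parity in each factor of \eqref{eq:u-n-recover} using, first, the convergence $\mathbf{R}_n(\cdot,z/n)\to\hat{\mathbf{R}}^{\mathrm{even/odd}}(\cdot,z)$ furnished by Proposition~\ref{prop:limiting-jump}, and, second, the large-$n$ asymptotics of the explicit prefactors $\mathbf{P}_n(z/n)$, $\mathbf{P}_{-n}(z/n)$, $d_n(z/n)$, $\gamma_{\pm n}$, $\kappa_{\pm n}$ and $\mathbf{A}_n(z/n)$ assembled in the course of this section.

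First I would record the convergence of the Riemann--Hilbert problems. By Proposition~\ref{prop:limiting-jump} the jump matrix of $\mathbf{R}_n(\cdot,z/n)$ on $|\lambda|=1$ agrees with that of $\hat{\mathbf{R}}^{\mathrm{even/odd}}(\cdot,z)$ up to an $\mathcal{O}(n^{-1})$ error, uniformly on the circle and locally uniformly for $z\in\C\setminus\Sigma^{\mathrm{even/odd}}$. Writing both problems as singular integral equations $(\mathbf{1}-\mathcal{C}_n)\,\mathbf{g}_n=\mathbf{h}_n$ on $L^2(|\lambda|=1)$ built from the Cauchy projections, and using that $\mathbf{1}-\mathcal{C}_\infty$ is boundedly invertible for $z\notin\Sigma^{\mathrm{even/odd}}$ — which is exactly the solvability of Riemann--Hilbert Problem~\ref{rhp:Rhat-even/odd} established in Section~\ref{sec:suleimanov-solution-connection} — a Neumann-series perturbation argument shows $\mathbf{1}-\mathcal{C}_n$ is invertible for $n$ large with uniformly bounded inverse. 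Hence $\mathbf{R}_n(\cdot,z/n)\to\hat{\mathbf{R}}^{\mathrm{even/odd}}(\cdot,z)$ in $L^2$ on the circle and, via the Cauchy representation, uniformly on compact subsets of $\{|\lambda|\ne1\}$; in particular $\mathbf{R}_n(0,z/n)\to\hat{\mathbf{R}}^{\mathrm{even/odd}}(0,z)$ and $\lim_{\lambda\to\infty}\lambda(\mathbf{R}_n(\lambda,z/n)-\mathbb{I})\to\lim_{\lambda\to\infty}\lambda(\hat{\mathbf{R}}^{\mathrm{even/odd}}(\lambda,z)-\mathbb{I})$.

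Next I would substitute into \eqref{eq:u-n-recover}. For the numerator, $-\ii\Xi_{n,12}^{(6)}(z/n)$ splits, by \eqref{eq:TU-R}, as $-\ii A_{n,12}(z/n)$ plus the $(1,2)$ entry of the conjugate $\mathbf{P}_n(x)[\lim_{\lambda\to\infty}\lambda(\mathbf{R}_n(\lambda,x)-\mathbb{I})]\mathbf{P}_n(x)^{-1}$ (the $-\tfrac{\ii}{2}x\sigma_3$ term is diagonal). Using \eqref{eq:little-an}, \eqref{eq:Ainfty-identity}, \eqref{c-of-x} and the explicit form of $\mathbf{P}_n$, one checks that the first term is larger by a factor $\mathcal{O}(n)$, so $-\ii\Xi_{n,12}^{(6)}(z/n)\sim\mu_n^2(1-a_n^2)/(x\,c_n(x))$ with $a_n=(n-\Theta_\infty)/(2\mu_n)$. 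For the denominator, since $d_n(x)^{-1}e_0^{-3\sigma_3}$ is diagonal, $\Delta_{n,11}^{(6)}\Delta_{n,12}^{(6)}=d_n(x)^{-2}\,M_{11}M_{12}$ with $\mathbf{M}:=\mathbf{P}_n(x)\mathbf{R}_n(0,x)\mathbf{P}_{-n}(x)^{-1}$; because the top rows of $\mathbf{P}_n$ and $\mathbf{P}_{-n}$ are $(\tfrac12-\kappa_{\pm n}+\mu_n,\ \tfrac12-\kappa_{\pm n}-\mu_n)$ with $\tfrac12-\kappa_n\sim n/2$ and $\tfrac12-\kappa_{-n}\sim -n/2$, these collapse in the limit to the fixed row/column directions $(1,1)$ and $(1,-1)$, so that $M_{11}M_{12}$ is asymptotically a nonzero constant times the square of the linear combination of entries of $\hat{\mathbf{R}}^{\mathrm{even/odd}}(0,z)$ appearing in \eqref{eq:un-Rhat}. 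Finally, using $d_n(x)^{-2}=\det\mathbf{P}_{-n}(x)/\det\mathbf{P}_n(x)$ together with the asymptotics of $d_n$, $\gamma_{\pm n}$ and $\kappa_{\pm n}$ already computed, all the exponentially large/small factors ($(z/n)^{\pm n}$, $\Gamma(\tfrac{n}{2}\pm\mu_n)$, $\ee^{\pm n}$, $2^{\pm n}$, and the powers of $e_0,e_\infty,e_1,e_2$) cancel between numerator and denominator, the residual $n$-dependence reduces to $nx=z$, and the stated identity \eqref{eq:un-Rhat} with coefficient $-8\mu_n^2/z$ emerges.

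The main obstacle is twofold. First, the convergence $\mathbf{R}_n\to\hat{\mathbf{R}}^{\mathrm{even/odd}}$ hinges on invertibility of the limiting integral operator, which is only verified in Section~\ref{sec:suleimanov-solution-connection}; one must ensure the perturbation estimate is uniform for $z$ in compact subsets of $\C\setminus\Sigma^{\mathrm{even/odd}}$ and that no poles of $\mathbf{R}_n(\cdot,z/n)$ accumulate there. Second — and this is the bulk of the work — is the bookkeeping: verifying the complete cancellation of the competing super-exponential prefactors and the precise numerical coefficient $-8$, which relies on Stirling's formula and the reflection/duplication identities \eqref{eq:Gamma-reflection}, \eqref{eq:cosine-identity-1} used earlier, applied consistently to both the $n$ and $-n$ indices.
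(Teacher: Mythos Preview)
Your proposal is correct and follows essentially the same route as the paper: establish $\mathbf{R}_n(\cdot,z/n)\to\hat{\mathbf{R}}^{\mathrm{even/odd}}(\cdot,z)$ via a small-norm argument (the paper phrases this through the matrix quotient $\mathbf{E}_n:=\mathbf{R}_n\hat{\mathbf{R}}^{-1}$ rather than the singular integral operator, but the content is identical), then combine \eqref{eq:u-n-recover} with \eqref{eq:TU-R}, noting that $A_{n,12}(z/n)$ dominates $\Xi^{(6)}_{n,12}$. The paper is in fact much terser on the second step than you are---it simply asserts that the combination yields \eqref{eq:un-Rhat} and remarks on the dominance of $A_{n,12}$---so your bookkeeping of how the top rows of $\mathbf{P}_{\pm n}$ collapse to the $(1,1)$ and $(1,-1)$ directions and how the super-exponential prefactors cancel is a useful amplification rather than a deviation.
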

\begin{proof}

Noting that $\hat{\mathbf{R}}^\mathrm{even/odd}(\lambda,z)$ necessarily has unit determinant, we form the matrix quotient
\begin{equation*}
 \mathbf{E}_n(\lambda,z):=\mathbf{R}_n\big(\lambda,n^{-1}z\big)\hat{\mathbf{R}}^\mathrm{even/odd}(\lambda,z)^{-1},\qquad |\lambda|\neq 1.
\end{equation*}
Clearly, $\mathbf{E}_n(\lambda,z)$ is analytic as a function of $\lambda$ in the domain of definition, and for each fixed $n$ it tends to $\mathbb{I}$ as $\lambda\to\infty$ as this is true for both $\mathbf{R}_n\big(\lambda,n^{-1}z\big)$ and $\hat{\mathbf{R}}^\mathrm{even/odd}(\lambda,z)$. Across the unit circle, the boundary values of $\mathbf{E}_n(\lambda,z)$ are related by
\begin{align*}
 &\mathbf{E}_{n,+}(\lambda,z)= \mathbf{E}_{n,-}(\lambda,z)\hat{\mathbf{R}}_-^\mathrm{even/odd}(\lambda,z)\big[\mathbf{R}_{n,-}\big(\lambda,n^{-1}z\big)^{-1}\mathbf{R}_{n,+}\big(\lambda,n^{-1}z\big)\big]\\
 &\hphantom{\mathbf{E}_{n,+}(\lambda,z)=}{}
 \times \big[\hat{\mathbf{R}}^\mathrm{even/odd}_-(\lambda,z)^{-1}\hat{\mathbf{R}}^\mathrm{even/odd}_+(\lambda,z)\big]^{-1}
 \hat{\mathbf{R}}^\mathrm{even/odd}_-(\lambda,z)^{-1},\qquad |\lambda|=1.
\end{align*}
Thus, the jump matrix for $\mathbf{E}_n(\lambda,z)$ is the conjugation, by a unit-determinant matrix function of $\lambda$ independent of $n$, of the matrix ratio of the jump matrices for $\mathbf{R}_n\big(\lambda,n^{-1}z\big)$ and for \smash{$\hat{\mathbf{R}}^\mathrm{even/odd}(\lambda,z)$}. But by Proposition~\ref{prop:limiting-jump}, the latter ratio is $\mathbb{I}+\mathcal{O}\big(n^{-1}\big)$ uniformly on the unit circle as $n\to\infty$ along even or odd subsequences. The conjugating factors exist and are uniformly bounded for $z$ in compact subsets of $\C\setminus \Sigma(y_1, y_2, y_3)$. 
It follows that in this limit, $\mathbf{E}_{n,+}(\lambda,z)=\mathbf{E}_{n,-}(\lambda,z)\big(\mathbb{I}+\mathcal{O}\big(n^{-1}\big)\big)$ uniformly for $|\lambda|=1$ and $z$ in compact subsets of~$\C\setminus \Sigma(y_1, y_2, y_3)$ as $n\to\infty$. By standard small-norm theory, $\mathbf{E}_n(\lambda,z)$ exists for large enough even or odd $n$, and tends to the identity as $n\to\infty$, in particular in the sense that
\begin{equation*}
 \lim_{\lambda\to\infty}\lambda(\mathbf{E}_n(\lambda,z)-\mathbb{I})\to\mathbf{0}\qquad\text{and}\qquad\mathbf{E}_n(0,z)\to\mathbb{I}
\end{equation*}
as $n\to\infty$ along even/odd subsequences. By the definition of $\mathbf{E}_n(\lambda,z)$ it follows that in the same limit
\begin{gather*}
 \lim_{\lambda\to\infty}\lambda\big(\mathbf{R}_n\big(\lambda,n^{-1}z\big)-\mathbb{I}\big)\to\lim_{\lambda\to\infty}\lambda\big(\hat{\mathbf{R}}^\mathrm{even/odd}(\lambda,z)-\mathbb{I}\big), \qquad \text{and}\\
 \mathbf{R}_n\big(0,n^{-1}z\big)\to\hat{\mathbf{R}}^\mathrm{even/odd}(0,z).
\end{gather*}
Combining \eqref{eq:u-n-recover} with \eqref{eq:TU-R} then shows \eqref{eq:un-Rhat}.
Partly, this works because the dominant term in $\Xi_{n,12}^{(6)}\big(n^{-1}z;m\big)$ is $A_{n,12}\big(n^{-1}z\big)$.
\end{proof}

\subsection{Transformations of the limiting Riemann--Hilbert problem} In this section we transform Riemann--Hilbert Problem \ref{rhp:Rhat-even/odd} to match the form of Riemann--Hilbert Problem \ref{rhp:D8}. To this end, using \cite[equations~(10.4.4) and (10.4.6)]{DLMF} to express the Bessel function~$J_\nu(\diamond)$ in terms of the Hankel functions \smash{$H_\nu^{(1)}(\diamond)$}, \smash{$H_\nu^{(2)}(\diamond)$} and the relations \cite[equation~(10.4.4) and~(10.4.6)]{DLMF},
\begin{align*}
 H_{-\nu}^{(1)}(\diamond) &= \ee^{\pi \ii \nu}H_{\nu}^{(1)}(\diamond) , \qquad
 H_{-\nu}^{(2)}(\diamond) = \ee^{-\pi \ii \nu}H_{\nu}^{(2)}(\diamond) ,
\end{align*}
we arrive at the identity:
\begin{gather}
 \rho_\infty \begin{bmatrix}J_{2\mu_n - 1}(\rho_\infty) & -J_{1- 2\mu_n}(\rho_\infty)\\
 J_{2\mu_n +1}(\rho_\infty) & -J_{-1-2\mu_n}(\rho_\infty)\end{bmatrix}\nonumber \\
 \qquad= \dfrac{\rho_\infty}{2} \begin{bmatrix}H_{2\mu_n - 1}^{(1)}(\rho_\infty) & H_{1 -2\mu_n}^{(2)}(\rho_\infty) \\ H^{(1)}_{2\mu_n + 1}(\rho_\infty) & H_{-1 -2\mu_n}^{(2)}(\rho_\infty) \end{bmatrix} \begin{bmatrix} 1 & \ee^{2\pi \ii \mu_n} \\ -\ee^{2\pi \ii \mu_n} & -1 \end{bmatrix}. \label{J-to-H}
\end{gather}
To obtain appropriate asymptotic formul\ae\ for the matrix on the right-hand side of \eqref{J-to-H}, we first apply the identity \cite[equation~(10.6.1)]{DLMF}
\begin{equation*}
 H^{(k)}_{\nu-1}(\diamond) + H^{(k)}_{\nu + 1}(\diamond) = \dfrac{2\nu}{\diamond} H^{(k)}_\nu(\diamond), \qquad k = 1, 2,
\end{equation*}
which gives
\begin{gather}
 \rho_\infty^{-\sigma_3/2} \begin{bmatrix} 1 & 0 \\ 1 & 1\end{bmatrix} \dfrac{\rho_\infty}{2} \begin{bmatrix}H_{2\mu_n - 1}^{(1)}(\rho_\infty) & H_{1 -2\mu_n}^{(2)}(\rho_\infty) \vspace{1mm}\\ H^{(1)}_{2\mu_n + 1}(\rho_\infty) & H_{-1 -2\mu_n}^{(2)}(\rho_\infty) \end{bmatrix}\nonumber\\
 \qquad= \dfrac{\sqrt{\rho_\infty}}{2} \begin{bmatrix} H_{2\mu_n-1}^{(1)}(\rho_\infty) & H_{1-2\mu_n}^{(2)}(\rho_\infty)\vspace{1mm}\\4\mu_n H_{2\mu_n}^{(1)}(\rho_\infty) & -4\mu_n H_{-2\mu_n}^{(2)}(\rho_\infty)\end{bmatrix}.\label{H-inf-identity}
\end{gather}
The matrix on the right-hand side is amenable to asymptotic analysis as $\rho_\infty \to \infty$; using the asymptotics of Hankel functions \cite[equations~(10.17.5) and (10.17.6)]{DLMF} and \eqref{H-inf-identity} yields
\begin{gather}
 \dfrac{\rho_\infty}{2} \begin{bmatrix}H_{2\mu_n - 1}^{(1)}(\rho_\infty) & H_{1 -2\mu_n}^{(2)}(\rho_\infty) \vspace{1mm}\\ H^{(1)}_{2\mu_n + 1}(\rho_\infty) & H_{-1 -2\mu_n}^{(2)}(\rho_\infty) \end{bmatrix}\nonumber\\
 \qquad= \begin{bmatrix} 1 & \dfrac{1}{2}-\dfrac{\mu_n}{2}-\dfrac{3}{32 \mu_n} \vspace{1mm}\\ -1 & \dfrac{1}{2} + \dfrac{\mu_n}{2}+\dfrac{3}{32 \mu_n}
\end{bmatrix} \Biggl( \mathbb{I}+ \dfrac{1}{128 \rho_\infty^2}  \nonumber\\
 \phantom{\qquad=}{}\times \begin{bmatrix} \big(16\mu_n^2 - 9\big)\big(16\mu_n^2 - 1\big) & \dfrac{\big(16 \mu_n^2 - 13\big)\big(16\mu_n^2 - 9\big)\big(16\mu_n^2 - 1\big)}{48\mu_n} \vspace{1mm}\\ 64\mu_n\big(16\mu_n^2 - 1\big) & - \big(16\mu_n^2 - 9\big)\big(16\mu_n^2 - 1\big)\end{bmatrix} + \mathcal{O}\big(\rho_\infty^{-4}\big)\Biggr) \rho_\infty^{\sigma_3/2} \nonumber\\
\phantom{\qquad=}{}\times (2\sqrt{\mu_n})^{\mathbb{I}-\sigma_3}{\dfrac{\ee^{-\pi \ii \mu_n}}{\sqrt{2\pi}}} {}{} \ee^{\pi \ii\sigma_3/4}\begin{bmatrix} 1 & \ii\\ 1 &-\ii \end{bmatrix} \ee^{\ii \rho_\infty \sigma_3}, \qquad \arg(\rho_\infty) \in (-\pi, \pi).\label{h-infty-asymptotics}
\end{gather}
We turn to analogously treating the final factor of the jump of $\hat{\mathbf{R}}^{\mathrm{even/odd}}(\lambda,z)$; using \cite[equation~(10.27.7)]{DLMF} and the above relations, we have
\begin{gather*}
{\rho_0} \begin{bmatrix}I_{2\mu_n - 1}(\rho_0) & I_{1- 2\mu_n}(\rho_0)\vspace{1mm}\\
 -I_{2\mu_n +1}(\rho_0) & -I_{-1-2\mu_n}(\rho_0)\end{bmatrix} \\
 \qquad= \dfrac{\ee^{-\pi \ii/2}\rho_0}{2} \begin{bmatrix} H^{(1)}_{2\mu_n-1}\big(\ee^{-\pi \ii/2} \rho_0\big) & H^{(2)}_{1-2\mu_n}\big(\ee^{-\pi \ii/2} \rho_0\big)\vspace{1mm}\\ H^{(1)}_{1+2\mu_n}\big(\ee^{-\pi \ii/2} \rho_0\big) & H^{(2)}_{-1-2\mu_n}\big(\ee^{-\pi \ii/2} \rho_0\big)\end{bmatrix} \begin{bmatrix} \ee^{\pi \ii \mu_n}& \ee^{\pi \ii \mu_n}\\ - \ee^{3\pi \ii \mu_n} & - \ee^{-\pi \ii \mu_n} \end{bmatrix}.
\end{gather*}
This allows us to find the following large-$\rho_0$ asymptotics:
\begin{gather}
\dfrac{\ee^{-\pi \ii /2}\rho_0}{2} \begin{bmatrix} H^{(1)}_{2\mu_n-1}\big(\ee^{-\pi \ii/2} \rho_0\big) & H^{(2)}_{1 -2\mu_n}\big(\ee^{-\pi \ii/2} \rho_0\big)\vspace{1mm}\\ H^{(1)}_{2\mu_n + 1}\big(\ee^{-\pi \ii/2} \rho_0\big) & H^{(2)}_{-1-2\mu_n}\big(\ee^{-\pi \ii/2} \rho_0\big)\end{bmatrix} \nonumber\\
\qquad= \begin{bmatrix} 1 & \dfrac{1}{2} - \dfrac{\mu_n}{2} - \dfrac{3}{32\mu_n} \vspace{1mm}\\ -1 & \dfrac{1}{2} + \dfrac{\mu_n}{2} + \dfrac{3}{32\mu_n} \end{bmatrix} \Biggl( \mathbb{I} - \dfrac{1}{128 \rho_0^2}  \nonumber\\
\phantom{\qquad=}{}  \times\begin{bmatrix} \big(16\mu_n^2 - 9\big)\big(16\mu_n^2 - 1\big) & \dfrac{\big(16 \mu_n^2 - 13\big)\big(16\mu_n^2 - 9\big)\big(16\mu_n^2 - 1\big)}{48\mu_n} \vspace{1mm}\\ 64\mu_n\big(16\mu_n^2 - 1\big) & - \big(16\mu_n^2 - 9\big)\big(16\mu_n^2 - 1\big)\end{bmatrix} + \mathcal{O}\big(\rho_0^{-4}\big) \Biggr) \rho_0^{\sigma_3/2}\nonumber\\
\phantom{\qquad=}{} \times \big(2\sqrt{\mu_n}\big)^{\mathbb{I}-\sigma_3}{\dfrac{\ee^{-\pi \ii \mu_n}}{\sqrt{2\pi}}} {}{} \begin{bmatrix} 1 & \ii\\ 1 &-\ii \end{bmatrix} \ee^{ \rho_0 \sigma_3}, \qquad \arg(\rho_0) \in \left( -\dfrac{\pi}{2}, \dfrac{3\pi}{2} \right).
\label{eq:H-expand-zero}
\end{gather}
For convenience, we introduce the notation
\begin{equation}
 \mathbf{H}_n(\diamond) := \sqrt{\dfrac{\pi}{4\mu_n}} \ee^{\pi \ii/4 }\ee^{\pi \ii \mu_n} \cdot \dfrac{\diamond}{2} \begin{bmatrix} H^{(1)}_{2\mu_n - 1}(\diamond) & H^{(2)}_{1 - 2\mu_n }(\diamond) \vspace{1mm}\\ H^{(1)}_{1 + 2\mu_n}(\diamond) & H^{(2)}_{-1-2\mu_n}(\diamond) \end{bmatrix},
 \label{eq:Bn-def}
\end{equation}
with a fixed determination of the square root; this choice of prefactor guarantees that we have~$\det (\mathbf{H}_n ) = 1$ identically. Using the identity \cite[equation~(10.11.4)]{DLMF}, we note that $\mathbf{H}_n$ satisfies
\begin{gather}
\label{Bn-jump}
\mathbf{H}_n(\ee^{\pi \ii}\diamond) = \mathbf{H}_n(\diamond) \begin{bmatrix}
0 & -1 \\ 1 & 2\cos(2\pi \mu_n)
\end{bmatrix}.
\end{gather}
We can now rewrite the jump condition \eqref{r-hat-jump} as
\begin{align*}
 &\hat{\mathbf{R}}^\mathrm{even/odd}_+(\lambda,z)= \hat{\mathbf{R}}^\mathrm{even/odd}_-(\lambda,z) \mathbf{H}_n(\rho_\infty) \begin{bmatrix} 1 & \ee^{2\pi \ii \mu_n} \\ -\ee^{2\pi \ii \mu_n} & -1 \end{bmatrix} \mathbf{V}^\mathrm{even/odd}\\
 & \hphantom{\hat{\mathbf{R}}^\mathrm{even/odd}_+(\lambda,z)=}{}
 \times \begin{bmatrix} \ee^{\pi \ii \mu_n}& \ee^{\pi \ii \mu_n}\\ - \ee^{3\pi \ii \mu_n} & - \ee^{-\pi \ii \mu_n} \end{bmatrix}^{-1} \mathbf{H}_n^{-1}\big(\ee^{-\pi \ii/2}\rho_0\big).
\end{align*}
Next, define
\begin{align}
 & \mathbf{\Omega}^\mathrm{even/odd} (\lambda, z) := \begin{bmatrix} 1 & \dfrac{1}{2} - \dfrac{\mu_n}{2} - \dfrac{3}{32\mu_n} \vspace{1.5mm}\\ -1 & \dfrac{1}{2} + \dfrac{\mu_n}{2} + \dfrac{3}{32\mu_n} \end{bmatrix}^{-1} \big(2\sqrt{\mu_n}\big)^{\sigma_3}\nonumber\\
 &\hphantom{\mathbf{\Omega}^\mathrm{even/odd} (\lambda, z) :=}{}
 \times\hat{\mathbf{R}}^\mathrm{even/odd}(\lambda, z) \begin{cases}
 \mathbf{H}_n(\rho_\infty), & |\lambda| >1, \\
 \mathbf{H}_n\big(\ee^{-\pi \ii/2}\rho_0\big), & |\lambda| < 1.
 \end{cases}
 \label{eq:Omega-def}
\end{align}
Then, $\mathbf{\Omega}^\mathrm{even/odd}$ satisfies
\begin{align}
 &\mathbf{\Omega}^\mathrm{even/odd}_+(\lambda, z) = \mathbf{\Omega}_-^\mathrm{even/odd}(\lambda, z) \begin{bmatrix} 1 & \ee^{2\pi \ii \mu_n} \\ -\ee^{2\pi \ii \mu_n} & -1 \end{bmatrix} \cdot \mathbf{V}^\mathrm{even/odd} \nonumber\\
 &\hphantom{\mathbf{\Omega}^\mathrm{even/odd}_+(\lambda, z) =}{}
 \times\begin{bmatrix} \ee^{\pi \ii \mu_n}& \ee^{\pi \ii \mu_n}\\ - \ee^{3\pi \ii \mu_n} & - \ee^{-\pi \ii \mu_n} \end{bmatrix}^{-1}, \qquad |\lambda| = 1,
 \label{eq:Omega-jump-circle}
\end{align}
where the jump depends only on the parity of $n$. Furthermore, since $\rho_\infty$ and $\rho_0$ change signs across the negative imaginary axis, we may use \eqref{Bn-jump} to find
\begin{equation}
 \mathbf{\Omega}_+^\mathrm{even/odd} (\lambda, z) = \mathbf{\Omega}_-^\mathrm{even/odd} (\lambda, z) \begin{bmatrix}
0 & -1 \\ 1 & 2\cos(2\pi \mu_n)
\end{bmatrix},
\end{equation}
for $\lambda$ on the negative imaginary axis with $|\lambda| > 1$, oriented towards the origin and
\begin{equation}
 \mathbf{\Omega}_+^\mathrm{even/odd} (\lambda, z) = \mathbf{\Omega}_-^\mathrm{even/odd} (\lambda, z) \begin{bmatrix}
0 & -1 \\ 1 & 2\cos(2\pi \mu_n)
\end{bmatrix},
\label{eq:Omega-jump-outside-circle}
\end{equation}
for $\lambda$ on the negative imaginary axis with $|\lambda| <1$, oriented away from the origin.

It follows from Riemann--Hilbert Problem \ref{rhp:Rhat-even/odd}, \eqref{eq:Omega-def}, and \eqref{h-infty-asymptotics} that $\mathbf{\Omega}^\mathrm{even/odd}$ has the following asymptotic behavior as $\lambda \to \infty$:
\begin{align}
\mathbf{\Omega}^\mathrm{even/odd}(\lambda, z) = \big( \mathbb{I} + \mathbf{\Xi^{\mathrm{even/odd}}}(z) \lambda^{-1} + \mathcal{O}\big(\lambda^{-2}\big) \big) \rho_\infty^{\sigma_3/2} {\dfrac{1}{\sqrt{2}}} \begin{bmatrix} \ii & -1\\ 1 &-\ii \end{bmatrix} \ee^{\ii \rho_\infty \sigma_3},
 \label{eq:Omega-expand-infty}
\end{align}
where the $\mathcal{O}\big(\lambda^{-2}\big)$ represents an asymptotic series that is differentiable term-by-term with respect to both $\lambda$ and $z$. Analogously, we have
\begin{align}
 &\mathbf{\Omega}^{\mathrm{even/odd}}(\lambda,z)= \mathbf{\Delta}^\mathrm{even/odd}(z)\big(\mathbb{I}+\mathbf{\Pi}^\mathrm{even/odd}(z)\lambda+\mathcal{O}\big(\lambda^2\big)\big)\rho_0^{\sigma_3/2}\ee^{-\pi \ii \sigma_3/4} \nonumber\\
 &\hphantom{\mathbf{\Omega}^{\mathrm{even/odd}}(\lambda,z)=}{}
 \times{\dfrac{1}{\sqrt{2}}} {}{} \begin{bmatrix} \ii & -1 \\ 1 &-\ii \end{bmatrix} \ee^{ \rho_0 \sigma_3}, \qquad \lambda\to 0,
 \label{eq:Omega-expand-zero}
\end{align}
where $\mathcal{O}\big(\lambda^2\big)$ represents an asymptotic series at the origin $\lambda=0$ which is similarly term-by-term differentiable.
Notice that we can now relate the limiting formula from Lemma \ref{lemma:limit} to \smash{$\mathbf{\Omega}^{\mathrm{even/odd}}$} using definitions \eqref{eq:Omega-def} and \eqref{eq:Bn-def} to find that
\begin{gather*}
 \hat{R}^\mathrm{even/odd}_{11}(0,z)+\hat{R}^\mathrm{even/odd}_{21}(0,z)-\hat{R}^\mathrm{even/odd}_{12}(0,z)-\hat{R}^\mathrm{even/odd}_{22}(0,z)\\
 \qquad =\sqrt{\frac{\pi}{4\mu_n}} \ee^{\pi \ii/4} \ee^{\pi \ii \mu_n} \frac{\ee^{-\pi \ii/2}\rho_0}{2} \big[ \big(H^{(2)}_{-1-2\mu_n}\big(\ee^{-\pi \ii/2}\rho_0\big) + H^{(2)}_{1-2\mu_n}\big(\ee^{-\pi \ii/2}\rho_0\big)\big)\Omega^{\mathrm{even/odd}}_{21}(\lambda, z) \\
 \phantom{\qquad =}{}- \big(H^{(1)}_{2\mu_n-1}\big(\ee^{-\pi \ii/2}\rho_0\big) + H^{(1)}_{2\mu_n +1}\big(\ee^{-\pi \ii/2}\rho_0\big)\big)\Omega^{\mathrm{even/odd}}_{22}(\lambda, z) \big]_{\lambda=0}.
\end{gather*}
Then, using \eqref{eq:Omega-expand-zero} and \eqref{eq:H-expand-zero} yields
\begin{gather*}
 \mathop{\lim_{n\to\infty}}_{\text{$n$ even/odd}}u_n\big(n^{-1}z\big)=U^\mathrm{even/odd}(z):=-\frac{1}{2z\Delta^\mathrm{even/odd}_{21}(z)^2}.
\end{gather*}

To extract the monodromy parameters of $U(z)$ from $\mathbf{\Omega}(\lambda, z)$, we notice that it solves Riemann--Hilbert Problem \ref{rhp:D8} with
\begin{equation}\label{eq:limiting-stokes-data}
t_1^\infty = t_0^0 = -2\cos(2\pi \mu_n)= -\left(e_1^2+\frac{1}{e_1^2}\right),
\end{equation}
and
\begin{equation}
\mathbf{C}_{0\infty} = \begin{bmatrix} 1 & \ee^{2\pi \ii \mu_n} \\ -\ee^{2\pi \ii \mu_n} & -1 \end{bmatrix} \cdot \mathbf{V}^\mathrm{even/odd} \cdot \begin{bmatrix} \ee^{\pi \ii \mu_n}& \ee^{\pi \ii \mu_n}\\ - \ee^{3\pi \ii \mu_n} & - \ee^{-\pi \ii \mu_n} \end{bmatrix}^{-1}.
\label{eq:limiting-connection-matrix}
\end{equation}
Since $\mu_n$ and $\mathbf{V}^\mathrm{even/odd}$ depend only on the parity of $n$ (see \eqref{eq:mu-n}, \eqref{eq:Vm-even}, respectively), and ${e_1 = e_{1, n} = \ee^{\pi \ii \mu_n}}$ (see Remark \ref{rem:n-dependent-notation}), it follows that \eqref{eq:limiting-stokes-data}--\eqref{eq:limiting-connection-matrix} depend only on the parity of $n$. Recalling the formul\ae\ for $y_i$ in Section \ref{sec:monodromy-rep-$D_8$-manifold}, one immediately arrives at formul\ae\ \eqref{eq:y1}--\eqref{eq:y3}.

\section[Small x asymptotics and proof of Proposition 1.5]{Small $\boldsymbol{x}$ asymptotics and proof of Proposition \ref{prop:un-zero-asymptotics}} \label{sec:prop-1-proof}
Inspired by \cite{N}, see also \cite[Theorem 3.2]{Jimbo}, the goal of this section is to compute the asymptotics as $x\to 0$ of the B\"acklund iterates $u_n(x)$ for fixed $n$ and, by evaluating at $n = 0$, arriving at the asymptotic behavior of a generic solution of PIII($D_6$) in this limit. Observe that the matrices~\smash{$\mathbf{\Xi}^{(6)}_n(x)$} and \smash{$\mathbf{\Delta}^{(6)}_n(x)$} defined in \eqref{eq:T-n-definition} and \eqref{eq:Psi-n-asymptotic-zero}, respectively, can be expressed in terms of $\mathbf{Q}_n(\lambda,x)$ as follows:
\begin{equation*}
 \mathbf{\Xi}^{(6)}_n(x)=\mathbf{A}_n(x) + \lim_{\lambda\to\infty}\lambda(\mathbf{Q}_n(\lambda,x)-\mathbb{I})-\frac{1}{2}\ii x\sigma_3,\qquad
 \mathbf{\Delta}^{(6)}_n(x)=\mathbf{Q}_n(0,x).
\end{equation*}
Using \eqref{eq:u-n-recover} then gives
\begin{equation}
 u_n(x)=\frac{\displaystyle -\ii A_{n,12}(x)-\ii\lim_{\lambda\to\infty}\lambda Q_{n,12}(\lambda,x)}{Q_{n,11}(0,x)Q_{n,12}(0,x)}.
 \label{eq:un-Q}
\end{equation}
Also, combining \eqref{eq:Ainfty-identity} and \eqref{c-of-x}, we have
\begin{equation}
 A_{n,12}(x)=\frac{\ii \mu_n \big(1-a_n^2\big)}{\gamma_n x^{2-2\kappa_n}}
 \label{eq:An12}
\end{equation}
in which all $x$-dependence is explicit. To analyze $u_n(x)$ for small $x$ it therefore remains to obtain asymptotics of $\mathbf{Q}_n(\lambda,x)$ as $x\to 0$.
To do this, let $\mathbf{V}_{\mathbf{Q}}$ denote the jump matrix \eqref{eq:VQ-rewrite}. Then, denoting the off-diagonal constant matrix
\begin{equation}
\label{eq:def-O-n}
\mathbf{O}_n:=\mathbf{J}_ne_2^{\sigma_3}\begin{bmatrix}0 & -\beta_n\\\beta_n^{-1} & 0\end{bmatrix}\mathbf{J}_{-n}^{-1},
\end{equation}
we arrive at
\begin{align*}
 & \mathbf{V}_\mathbf{Q}(\lambda,x)= \lambda_{\lw}^{\sigma_3/2}x^{\kappa_n\sigma_3}\mathbf{D}_n
 \mathbf{M}(\ii x\lambda;\kappa_n,\mu_n)\mathbf{O}_n\mathbf{M}\bigl(-\ii x\lambda^{-1};\kappa_{-n},\mu_n\bigr)^{-1}\\
 &\hphantom{\mathbf{V}_\mathbf{Q}(\lambda,x)=}{} \times\mathbf{D}_{-n}^{-1}x^{-\kappa_{-n}\sigma_3}\left(-\frac{1}{\lambda}\right)_{\lw}^{-\sigma_3/2} e_0^{-3\sigma_3},\qquad |\lambda|=1,
\end{align*}
where matrices $\mathbf{D}_n$, $\mathbf{M}$ are defined in \eqref{H-define-2} and \eqref{N-define}, respectively. To consider the limit $x\to 0$, we start with the Whittaker matrix $\mathbf{M}(\ii x\lambda;\kappa_n,\mu_n)$. Using \cite[equation~(13.14.6)]{DLMF}, we obtain
\begin{equation*}
 M_{\kappa,\mu}(z)=z^{\frac{1}{2}+\mu}\left(1-\frac{\kappa}{1+2\mu}z + \mathcal{O}\big(z^2\big)\right),\qquad z\to 0.
\end{equation*}
Therefore, using the definition \eqref{N-define} of $\mathbf{M}(Z;\kappa,\mu)$ we have
\begin{gather*}
 (-\ii x\lambda)^{\sigma_3/2}\mathbf{M}(\ii x\lambda;\kappa_n,\mu_n)(-\ii x\lambda)^{-\mu_n\sigma_3} \\ \qquad=
 \begin{bmatrix}\kappa_n-\frac{1}{2}-\mu_n &
 \kappa_n-\frac{1}{2}+\mu_n\\[0.2cm]
 1 & 1\end{bmatrix}+\mathcal{O}(\lambda x)=:\mathbf{M}_\infty(\lambda x),
\end{gather*}
in the limit $x\to 0$ uniformly for $|\lambda|=1$. Similarly, in the same limit,
\begin{gather*}
 \big(\ii x\lambda^{-1}\big)^{\sigma_3/2}\mathbf{M}\bigl(-\ii x\lambda^{-1};\kappa_{-n},\mu_{-n}\bigr)\big(\ii x\lambda^{-1}\big)^{-\mu_{-n}\sigma_3}
\\
 \qquad =\begin{bmatrix}\kappa_{-n}-\frac{1}{2}-\mu_{-n} &
 \kappa_{-n}-\frac{1}{2}+\mu_{-n} \\
 1 &
 1\end{bmatrix}+\mathcal{O}\big(x\lambda^{-1}\big)\\
 \qquad=:\mathbf{M}_0\big(x\lambda^{-1}\big)=\mathbf{M}_0(0)+\mathcal{O}\big(x\lambda^{-1}\big).
\end{gather*}
For $x>0$, the functions \smash{$\lambda_{\lw}^p$}, \smash{$\bigl(-\lambda^{-1}\bigr)_{\lw}^p$}, $(-\ii x\lambda)^p$, and $\big(\ii x\lambda^{-1}\big)^p$ (the latter two being principal branches) all have the same branch cut, namely $\ii\mathbb{R}_-$. One has the following identities:
\begin{equation*}
 (-\ii x\lambda)^p=\ee^{-\ii\pi p/2}x^p\lambda_{\lw}^p, \qquad
 \big(\ii x\lambda^{-1}\big)^p=\ee^{\ii\pi p/2}x^p\lambda_{\lw}^{-p}, \qquad
 \big(-\lambda^{-1}\big)^p_{\lw}=\ee^{\ii\pi p}\lambda_{\lw}^{-p}.
\end{equation*}
It follows that
\begin{equation*}
 \lambda_{\lw}^{\sigma_3/2}x^{\kappa_n\sigma_3}\mathbf{D}_n\mathbf{M}(\ii x\lambda;\kappa_n,\mu_n)=
 x^{(\kappa_n-\frac{1}{2})\sigma_3}\ee^{\ii\pi\sigma_3/4}\mathbf{D}_n
 \mathbf{M}_\infty(\lambda x)
 x^{\mu_n\sigma_3}\ee^{-\ii\pi\mu_n\sigma_3/2}\lambda_{\lw}^{\mu_n\sigma_3},
\end{equation*}
and
\begin{gather*}
 \mathbf{M}\bigl(-\ii x\lambda^{-1};\kappa_{-n},\mu_{-n}\bigr)^{-1}\mathbf{D}_{-n}^{-1}x^{-\kappa_{-n}\sigma_3}\left(-\frac{1}{\lambda}\right)_{\lw}^{-\sigma_3/2}\\
 \qquad=\ee^{-\ii\pi\mu_{-n}\sigma_3/2}x^{-\mu_{-n}\sigma_3}\lambda_{\lw}^{\mu_{-n}\sigma_3}\mathbf{M}_0\big( x\lambda^{-1}\big)^{-1}\ee^{-\ii\pi\sigma_3/4}\mathbf{D}_{-n}^{-1}x^{(\frac{1}{2}-\kappa_{-n})\sigma_3}.
\end{gather*}
Because $\mathbf{O}_n$ is off-diagonal, the central factors in $\mathbf{V}_\mathbf{Q}(\lambda,x)$ simplify as follows:
\begin{equation*}
 x^{\mu_n\sigma_3}\ee^{-\frac{1}{2}\ii\pi\mu_n\sigma_3}\lambda_{\lw}^{\mu_n\sigma_3}\mathbf{O}_n\ee^{-\frac{1}{2}\ii\pi\mu_n\sigma_3}x^{-\mu_n\sigma_3}\lambda_{\lw}^{\mu_n\sigma_3}=
 \mathbf{O}_nx^{-2\mu_n\sigma_3}.
\end{equation*}
Consequently, we have
\begin{align}
 & \mathbf{V}_\mathbf{Q}(\lambda,x)= x^{(\kappa_n-\frac{1}{2})\sigma_3}\ee^{\ii\pi\sigma_3/4}\mathbf{D}_n
 {} \cdot \mathbf{M}_\infty(\lambda x) \cdot \mathbf{O}_n x^{-2\mu_n\sigma_3}\nonumber\\
 &\hphantom{\mathbf{V}_\mathbf{Q}(\lambda,x)=}{}
 \times\mathbf{M}_0\big( x\lambda^{-1}\big)^{-1}\ee^{-\ii\pi\sigma_3/4}\mathbf{D}_{-n}^{-1}x^{(\frac{1}{2}-\kappa_{-n})\sigma_3}e_0^{-3\sigma_3}.
 \label{eq:V_Q-x-asymptotics}
\end{align}
The matrix $\mathbf{V}_{\mathbf{Q}}(\lambda, x)$ does not possess a finite limit as $x \to 0$ due to the factors \smash{$x^{(\kappa_n - \frac{1}{2})\sigma_3}$}, \smash{$x^{( \frac{1}{2} - \kappa_{-n})\sigma_3}$}; this can be handled by introducing the following transformation. Let $\varsigma := \lambda x$ and
\begin{gather}
 \widetilde{\mathbf{Q}}_n(\varsigma,x):=\mathbf{D}_n^{-1}\ee^{-\frac{1}{4}\ii\pi\sigma_3}x^{(\frac{1}{2}-\kappa_n)\sigma_3}\nonumber\\
 \quad\times\begin{cases}
 \mathbf{Q}_n(\frac{\varsigma}{x},x)x^{(-\frac{1}{2}+\kappa_n)\sigma_3}\ee^{\frac{1}{4}\ii\pi\sigma_3}\mathbf{D}_n
 ,& |\varsigma|>1, \\
 \mathbf{Q}_n(\frac{\varsigma}{x},x) \mathbf{V}_\mathbf{Q}(\frac{\varsigma}{x},x)e_0^{3\sigma_3}x^{(\kappa_{-n}-\frac{1}{2})\sigma_3}\mathbf{D}_{-n}\ee^{\frac{1}{4}\ii\pi\sigma_3} \\ \quad\times \mathbf{M}_0(0)x^{2\mu_n\sigma_3}\mathbf{O}_n^{-1}\mathbf{M}_\infty(\varsigma)^{-1},& |x|<|\varsigma|<1, \\
 \mathbf{Q}_n(\frac{\varsigma}{x},x)e_0^{3\sigma_3}x^{(\kappa_{-n}-\frac{1}{2})\sigma_3}\mathbf{D}_{-n}\ee^{\frac{1}{4}\ii\pi\sigma_3}\mathbf{M}_0(0)x^{2\mu_n\sigma_3}\mathbf{O}_n^{-1}\mathbf{M}_\infty(\varsigma)^{-1},&|\varsigma|<|x|.
 \end{cases}
 \label{eq:Q-tilde}
\end{gather}
It follows that $\widetilde{\mathbf{Q}}_n$ is analytic as a function of $\varsigma$ for $\varsigma \in \C \setminus \{|\varsigma| = 1 \}$ and satisfies
\[
\lim_{\varsigma \to \infty} \widetilde{\mathbf{Q}}_n(\varsigma, x) = \mathbb{I}.
\]
Furthermore, on the circle $|\varsigma| = 1$, the jump condition $\widetilde{\mathbf{Q}}_{n,+}(\varsigma,x)=\widetilde{\mathbf{Q}}_{n,-}(\varsigma,x)\mathbf{V}_{\widetilde{\mathbf{Q}}}(\varsigma,x)$ holds, where the jump contour has counterclockwise orientation, and
\begin{align*}
&\mathbf{V}_{\widetilde{\mathbf{Q}}}(\varsigma,x) := \mathbf{D}_n^{-1} \ee^{-\pi \ii \sigma_3/4} x^{(\frac{1}{2} - \kappa_n)\sigma_3} \mathbf{V}_{\mathbf{Q}}\left(\frac{\varsigma}{x},x\right) e_0^{3\sigma_3} x^{(\kappa_{-n} - \frac{1}{2})\sigma_3} \ee^{\pi \ii \sigma_3 /4}\\
&\hphantom{\mathbf{V}_{\widetilde{\mathbf{Q}}}(\varsigma,x) :=}{} \times \mathbf{D}_{-n} \mathbf{M}_0(0)x^{2\mu_n \sigma_3} \mathbf{O}_n^{-1} \mathbf{M}_\infty^{-1}(\varsigma).
\end{align*}
Using \eqref{eq:V_Q-x-asymptotics} immediately yields
\begin{equation*}
 \mathbf{V}_{\widetilde{\mathbf{Q}}}(\varsigma,x)= \mathbf{M}_\infty(\varsigma)\mathbf{O}_n x^{-2\mu_n\sigma_3}
 \mathbf{M}_0\big( {x^2}\varsigma^{-1}\big)^{-1}\mathbf{M}_0( 0)x^{2\mu_n\sigma_3}\mathbf{O}_n^{-1} \mathbf{M}_\infty(\varsigma)^{-1}.
\end{equation*}
Therefore, $\widetilde{\mathbf{Q}}_n(\varsigma,x)$ solves the following Riemann--Hilbert problem.
\begin{rhp}\label{rhp:Q-tilde}
Fix generic monodromy parameters $( e_1, e_2)$, and $x\in\mathbb{C}$. Seek a $2\times 2$ matrix function $\varsigma\mapsto \widetilde{\mathbf{Q}}_n(\varsigma,x)$ with the following properties:
\begin{itemize}\itemsep=0pt \samepage
 \item Analyticity: $\widetilde{\mathbf{Q}}_n(\varsigma,x)$ is an analytic function of $\varsigma$ for $|\varsigma|\neq 1$.
 \item Jump condition:
 $\widetilde{\mathbf{Q}}_n(\varsigma,x)$ takes analytic boundary values on the unit circle from the interior and exterior, denoted $\widetilde{\mathbf{Q}}_{n,+}(\varsigma,x)$ and $\widetilde{\mathbf{Q}}_{n,-}(\varsigma,x)$ for $|\varsigma|=1$ respectively, and they are related by
 \begin{equation*}
 \widetilde{\mathbf{Q}}_{n,+}(\varsigma,x)=\widetilde{\mathbf{Q}}_{n,-}(\varsigma,x)\mathbf{M}_\infty(\varsigma)\mathbf{O}_n x^{-2\mu_n\sigma_3}
 \mathbf{M}_0\big( {x^2}\varsigma^{-1}\big)^{-1}\mathbf{M}_0( 0)x^{2\mu_n\sigma_3}\mathbf{O}_n^{-1} \mathbf{M}_\infty(\varsigma)^{-1}.
 \end{equation*}
 \item Normalization:
 $\widetilde{\mathbf{Q}}_n(\varsigma,x)\to\mathbb{I}$ as $\varsigma\to\infty$.
\end{itemize}
\end{rhp}

The jump $\mathbf{V}_{\widetilde{\mathbf{Q}}}$ has a limit as $x\to 0$, uniformly for $|\lambda|=1$ for $|{\re}\,  \mu_n| <\frac{1}{2}$, and satisfies the estimate
\begin{equation*}
 \mathbf{V}_{\widetilde{\mathbf{Q}}}(\varsigma,x)=\mathbb{I}+\mathcal{O} \big(x^{2-|4\re\mu_n|} \big).
\end{equation*}
By the standard theory of small-norm Riemann--Hilbert problems, we arrive at
\begin{equation*}
 \widetilde{\mathbf{Q}}_n(\varsigma,x)=\mathbb{I} + \mathcal{O} \big(x^{2-|4\re\mu_n|} \big) \qquad \text{as} \quad x\to 0
\end{equation*}
uniformly for $\varsigma$ sufficiently small, and
\begin{equation*}
 \lim_{\varsigma\to\infty}\varsigma\big(\widetilde{\mathbf{Q}}_n(\varsigma,x)-\mathbb{I}\big)=\lim_{\varsigma\to\infty}\varsigma\big(\widetilde{\mathbf{Q}}_n(\varsigma,0)-\mathbb{I}\big)+\mathcal{O}\big(x^{2-|4\re\mu_n|} \big)=\mathcal{O}\big(x^{2-|4\re\mu_n|} \big).
\end{equation*}
We can now use the above estimate and expressions \eqref{eq:un-Q} and \eqref{eq:An12} to compute the asymptotic behavior of $u_n(x)$ as $x\to 0$. To this end, note that by \eqref{eq:Q-tilde} and the definition of $\mathbf{D}_n$ and $\varsigma$,
\begin{equation*}
 \lim_{\lambda\to\infty}\lambda Q_{n,12}(\lambda,x)=-\frac{1}{\mu_n\gamma_n} x^{2\kappa_n-2} \lim_{\varsigma\to\infty}\varsigma \widetilde{Q}_{n,12}(\varsigma,x),\qquad |\varsigma|>1,
\end{equation*}
and so
\begin{equation}
 \lim_{\lambda\to\infty}\lambda Q_{n,12}(\lambda,x)=\mathcal{O} \big(x^{2\kappa_n-|4\re\mu_n|} \big),\qquad x\to 0.
 \label{eq:Q-at-infty}
\end{equation}
Likewise, \eqref{eq:Q-tilde} gives
\begin{gather}
 Q_{n,11}(0,x)Q_{n,12}(0,x)\nonumber\\
 \qquad=-\ii\frac{D_{n,11}^2}{4\mu_n^2D_{-n,11}D_{-n,22}}
 \begin{cases} {O_{n,21}^2\big(\kappa_n-\tfrac{1}{2}+\mu_n\big)^2\big(\kappa_{-n}-\tfrac{1}{2}+\mu_n\big)}{}x^{2\kappa_n-1-4\mu_n}\\
 \quad{}+ \mathcal{O}\big(x^{2\kappa_n-|4\re\mu_n|}\big),\quad \re \mu_n>0,
 \\{O_{n,12
 }^2\big(\kappa_n-\tfrac{1}{2}-\mu_n\big)^2\big(\kappa_{-n}-\tfrac{1}{2}-\mu_n\big)}{}x^{2\kappa_n-1+4\mu_n}\\
 \quad{}+ \mathcal{O}\big(x^{2\kappa_n-|4\re\mu_n|}\big),\quad \re \mu_n<0.
 \end{cases}
 \label{eq:Q-at-zero}
\end{gather}
Using \eqref{eq:Q-at-infty}, \eqref{eq:Q-at-zero} in \eqref{eq:un-Q} yields
\begin{equation*}
u_n(x) = \dfrac{4\ii \mu_n^3 \big(1-a_n^2\big)D_{-n, 11}D_{-n, 22}}{\gamma_n D_{n, 11}^2 O_{n, 21}^2 \big(\kappa_n-\tfrac{1}{2}+\mu_n\big)^2\big(\kappa_{-n}-\tfrac{1}{2}+\mu_n\big)} x^{4\mu_n - 1} \big(1 + \mathcal{O}\big(x^\delta\big) \big) \qquad \text{as} \quad x \to 0,
\end{equation*}
when $\re \mu_n > 0$ and
\begin{align*}
&u_n(x) = \dfrac{4\ii \mu_n^3 \big(1-a_n^2\big)D_{-n, 11}D_{-n, 22}}{\gamma_n D_{n, 11}^2 O_{n, 12}^2 \big(\kappa_n-\tfrac{1}{2}-\mu_n\big)^2\big(\kappa_{-n}-\tfrac{1}{2}-\mu_n\big)}x^{-4\mu_n - 1} \big(1 + \mathcal{O}\big(x^\delta\big) \big) \qquad \text{as} \quad x \to 0,
\end{align*}
when $\re \mu_n < 0$, where $\delta = \min(1, 2 - |4\re(\mu_n)| )$ in both cases. Using \eqref{eq:def-O-n}, \eqref{eq:a-def}, \eqref{eq:J-n}, \eqref{Qinfty-gammainfty}, \eqref{H-define-2}, and \eqref{eq:little-an} gives the expression\footnote{The case $\re(\mu_n) = 0$ can be treated similarly, and produces a leading term that is a combination of both leading terms, which we omit for brevity.}
 \begin{align}
& u_n(x) =
 -\frac{ \Gamma (1 - 2\epsilon_n \mu_n)^2 \Gamma \bigl(-\frac{n}{2} + \epsilon_n \mu_n -\frac{\Theta_0}{2}\bigr) \Gamma \big(\frac n2 + \epsilon_n \mu_n - \frac{\Theta_\infty}{2} + 1 \big)}{\Gamma (2\epsilon_n \mu_n )^2 \Gamma \bigl(-\frac{n}{2} - \epsilon_n \mu_n -\frac{\Theta_0}{2}+1\bigr) \Gamma \big(\frac{n}{2} - \epsilon_n \mu_n -\frac{\Theta_\infty }{2}+1\big)} x^{4 \epsilon_n \mu_n - 1} \nonumber\\
 &\hphantom{u_n(x) =}{}
 \times\big(1 + \mathcal{O}(x^\delta) \big)
 \begin{cases} \dfrac{e_0^2 e_2^2 e_\infty^2\big(e_0^2-e_1^2\big) \big(e_1^2-e_\infty^2\big)}{\big(e_0^2 e_1^2-1\big)^2} ,& \re \mu_n >0, \\ \dfrac{\big(e_0^2 e_1^2-1\big)^2}{e_0^2 e_2^2 e_\infty^2\big(e_0^2-e_1^2\big) \big(e_1^2-e_\infty^2\big)},& \re \mu_n < 0, \end{cases}
 \label{eq:u-n-leading}
 \end{align}
 where $\epsilon_n = \sgn(\re \mu_n)$.
The concerned reader may note that the leading coefficient in \eqref{eq:u-n-leading} is finite due to the genericity conditions on $(e_1, e_2)$ (see the beginning of Section \ref{sec:proof}). Indeed, assumption (i) guarantees that $2\mu_n \not \in \Z$, condition (ii) requires $e_1 e_2 \neq 0$, and condition (iii) guarantees that
\[
\dfrac{n}{2} \pm \mu_n + \dfrac{\Theta_0}{2} \not \in \Z \qquad \text{and} \qquad \dfrac{n}{2} \pm \mu_n + \dfrac{\Theta_\infty}{2} \not \in \Z.
\]
Evaluating the above at $n =0$ yields \eqref{eq:u-0-leading} and finishes the proof of Proposition \ref{prop:un-zero-asymptotics}.

One notable application of this is to the family of rational solutions of Painlev\'e-III already discussed at the end of Section \ref{sec:initial-rhp}. This corresponds to the choice $m = \Theta_0 = \Theta_\infty - 1$ and~${\mu_0 = 1/4}$. It follows from \eqref{eq:u-n-leading} that $u_n(x;m)$ has a well-defined value at $x = 0$ which is given by \eqref{eq:u-at-zero-even}, \eqref{eq:u-at-zero-odd} in the case where $n$ is even or odd, respectively. We can verify that these values are consistent with \eqref{eq:u-n-leading} by noting that $e_1$, $e_2$, $e_0^2$, $e_\infty^2$ are invariant under an even increment $n \mapsto n+2$, and so we have the general formul\ae
\begin{align*}
\dfrac{u_{2k+2}(0)}{u_{2k}(0)} &= \frac{(2 k + 2\mu_{2k+2} + \Theta_0 ) (2 k+2\mu_{2k+2} +1-\Theta_\infty )}{(2 +2 k-2 \mu_{2k} +\Theta_0) (2+2 k-2 \mu_{2k} -\Theta_\infty )}, \\
\dfrac{u_{2k+1}(0)}{u_{2k-1}(0)} &= \frac{(2 k - 1+2 \mu_{2k+1} +\Theta_0) (2 k + 1+2 \mu_{2k+1} -\Theta_\infty )}{(2 k + 1-2 \mu_{2k-1} +\Theta_0) (+2 k + 1-2 \mu_{2k-1} -\Theta_\infty )}.
\end{align*}
Plugging in the specialized values of the parameters and using the known values of $u_0(0;m)$, $u_1(0;m)$ yields the equality of the expression in \eqref{eq:u-n-leading} with the product formul\ae\ \eqref{eq:u-at-zero-even}--\eqref{eq:u-at-zero-odd}.

\section[Alternative Riemann-Hilbert problem for Painlev\'e-III(D\_8)]{Alternative Riemann--Hilbert problem for Painlev\'e-III($\boldsymbol{D_8}$)}

\subsection[Fabry-type transformation and existence of Rhateven/odd(lambda,z)]{Fabry-type transformation and existence of $\boldsymbol{\widehat{R}^\mathrm{even/odd}(\lambda,z)}$} \label{sec:suleimanov-solution-connection}
The Lax pair \eqref{eq:Lax-system} is unusual in that its coefficient matrices have non-diagonalizable leading terms at both of its singular points $\lambda = 0$ and $\lambda = \infty$, i.e., the coefficients of $\lambda^0$ and $\lambda^{-2}$ in~\eqref{eq:Lambda-exact} are not diagonalizable. To deduce the existence of the matrix functions ${\mathbf{\Omega}}^{\mathrm{even/odd}}(\lambda, z)$ and \smash{$\widehat{\mathbf{R}}^{\mathrm{even/odd}}(\lambda, z)$}, we identify this Lax pair with ones appearing in the literature by considering the following Fabry-type transformation
\begin{align}
&\mathbf{S}(\xi, z) := {\dfrac{1}{\sqrt{2}}} \begin{bmatrix} -\ii & 1\\ -1 &\ii \end{bmatrix} (2z)^{-\sigma_3/4} \xi^{-\sigma_3/2}\nonumber
\\
&\hphantom{\mathbf{S}(\xi, z) :=}{}
\times \begin{cases}
\mathbf \Omega\big(\xi^2 \ee^{\frac{\ii\pi}{2}}, z\big),& -\frac{\pi}{2}<\arg(\xi)<\frac{\pi}{2},\\
\mathbf \Omega\big(\xi^2 \ee^{-\frac{3\pi \ii}{2}}, z\big)(-\ii\sigma_2),& \frac{\pi}{2}<\arg(\xi)<\pi, \\
\mathbf \Omega\big(\xi^2 \ee^{\frac{5\pi \ii}{2}}, z\big)\ii\sigma_2,& -\pi<\arg(\xi)<-\frac{\pi}{2},
 \end{cases}
 \label{eq:fabry-1}
\end{align}
when $|\xi|>1$, and
\begin{gather}
\mathbf{S}(\xi, z) := {\dfrac{1}{\sqrt{2}}} \begin{bmatrix} -\ii & 1\\ -1 &\ii \end{bmatrix} (2z)^{-\sigma_3/4} \xi^{-\sigma_3/2}
\cdot \begin{cases}
\mathbf \Omega\big(\xi^2 \ee^{\frac{\ii\pi}{2}}, z\big),& -\frac{\pi}{2}<\arg(\xi)<\frac{\pi}{2},\\
\mathbf \Omega\big(\xi^2 \ee^{-\frac{3\pi \ii}{2}}, z\big)\ii\sigma_2,& \frac{\pi}{2}<\arg(\xi)<\pi, \\
\mathbf \Omega\big(\xi^2 \ee^{\frac{5\pi \ii}{2}}, z\big)(-\ii\sigma_2),& -\pi<\arg(\xi)<-\frac{\pi}{2},
 \end{cases}\!\!\!\!
 \label{eq:fabry-2}
\end{gather}
when $|\xi|<1$. Denoting
\begin{equation}
\mathbf{K} := {\dfrac{1}{\sqrt{2}}} \begin{bmatrix} \ii & -1\\ 1 & -\ii \end{bmatrix}
\label{eq:K-def}
\end{equation}
and using expansions \eqref{eq:Omega-expand-infty}, \eqref{eq:Omega-expand-zero} (note the branch choices in \eqref{eq:rho-infty-branch}, \eqref{eq:rho-zero-branch}) one can directly check that
\begin{align*}
 \mathbf{S}(\xi, z) = \mathbf{K}^{-1} \Bigg( \mathbb{I} \!+\! \begin{bmatrix} 0 & 0 \\ {(2z)^{1/2}}\Xi^{(8)}_{21}(z) & 0\end{bmatrix} \dfrac{1}{\ii \xi}\! + \!\begin{bmatrix} \Xi^{(8)}_{11}(z) & 0 \\ 0 & {\Xi^{(8)}_{22}(z)} \end{bmatrix} \dfrac{1}{\ii \xi^2}\!+\! \mathcal{O}\big(\xi^{-3}\big)\Bigg)\mathbf{K}\ee^{\ii (2z)^{1/2} \xi \sigma_3},
\end{align*}
as $\xi \to \infty$, and
\begin{align*}
& \mathbf{S}(\xi, z) = \mathbf{K}^{-1} \left( \begin{bmatrix} \Delta^{(8)}_{11}(z) & 0 \\ 0 & 0\end{bmatrix} \dfrac{1}{\xi} + \begin{bmatrix} 0 & \dfrac{\Delta^{(8)}_{12}(z)}{(2z)^{1/2}} \\ (2z)^{1/2} \Delta^{(8)}_{21}(z) & 0 \end{bmatrix} \right.\\
 &\left.\hphantom{\mathbf{S}(\xi, z) =}{}
 + \begin{bmatrix} f(z)& 0 \\ 0 & {\Delta^{(8)}_{22}(z)} \end{bmatrix}\xi + \mathcal{O}\big(\xi^2\big)\right)\ee^{-\frac{\ii\pi}{4}\sigma_3}\mathbf{K} \ee^{ (2z)^{1/2}{ \xi^{-1}} \sigma_3},
\end{align*}
as $\xi\to 0$, where $f(z):= \ii\left((\Delta_{11} \Pi_{11})(z) + (\Delta_{12}\Pi_{21})(z) \right)$; this partly works due to the identity
\begin{equation}
\mathbf{K}\sigma_2 = - \sigma_3 \mathbf K.
\label{eq:K-symmetry}
\end{equation}
Furthermore, one can directly verify that the jump relations \eqref{eq:Omega-jump-circle}--\eqref{eq:Omega-jump-outside-circle} translate to the jumps shown in Figure \ref{fig:D8-limit}, where $\mathbf{C}_{0 \infty}$ is as in \eqref{eq:limiting-connection-matrix}. \begin{figure}
 \centering
 \includegraphics{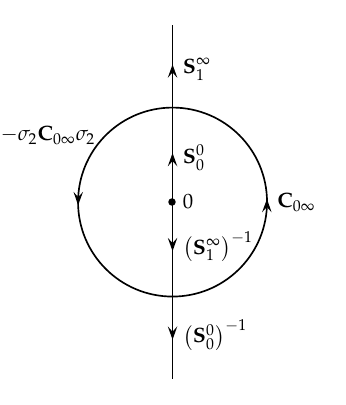}
 \caption{The jump contour and matrices for $\widetilde{\mathbf{S}}$, where $\mathbf{S}_1^\infty$, $\mathbf{S}_0^0$ are as in \eqref{eq:Stokes-zero} and $\mathbf{C}_{0\infty}$ is in~\eqref{eq:limiting-connection-matrix}.}
 \label{fig:D8-limit}
\end{figure}
The jump matrices satisfy two cyclic relations about the nonsingular self-intersection points of the jump contour, namely,
 \begin{alignat*}{3}
 &\text{about}\ \xi = +\ii\colon\ &&\mathbf{C}_{0\infty}^{-1} \mathbf{S}_1^\infty (\ii \sigma_2) \mathbf{C}_{0\infty} (\ii \sigma_2) \big(\mathbf{S}_0^0\big)^{-1} = \mathbb{I}, &\\
 &\text{about}\ \xi = -\ii\colon\ &&\left[(\ii \sigma_2) \mathbf{C}_{0\infty} (\ii \sigma_2)\right]^{-1} \big(\mathbf{S}_{0}^0\big)^{-1}\mathbf{C}_{0\infty}\mathbf S_1^\infty = \mathbb{I}.&
\end{alignat*}
Observe that the matrix ${\mathbf{S}}(\xi, z)$ possesses the following useful symmetry:
\begin{equation}
{\mathbf{S}}(\xi, z) = \sigma_2 \begin{cases}
 -{\mathbf{S}}(-\xi, z) \sigma_2, & |\xi|<1, \\
{\mathbf{S}}(-\xi, z) \sigma_2, & |\xi|>1.
\label{eq:S-symmetry}
\end{cases}
\end{equation}
This result also uses the identity \eqref{eq:K-symmetry}. Using this symmetry, it can be checked that the Fabry transformation \eqref{eq:fabry-1}--\eqref{eq:fabry-2} is invertible with{\samepage
\begin{equation}
\mathbf \Omega(\lambda ) = \rho_\infty^{\sigma_3/2}(\lambda, z) \mathbf{K} \mathbf S\big(\sqrt{-\ii \lambda}\big),
\label{eq:inverse-fabry}
\end{equation}
where all roots are principal branches.}

While the singular behavior of $\mathbf{S}(\xi, z)$ at $\xi = 0$ is concerning, the fact that the leading coefficient is a singular matrix allows us to handle this problem by letting
\begin{equation}\label{eq:triangular-transformation}
\widetilde{\mathbf{S}}(\xi, z) = \left( \mathbb{I} -\dfrac{1}{\xi} \mathbf{T}(z) \right) \mathbf{S}(\xi, z),
\end{equation}
where
\begin{equation}
\mathbf{T}(z) = \mathbf{K}^{-1}\begin{bmatrix} 0 & \dfrac{\Delta^{(8)}_{11}(z)}{(2z)^{1/2}\Delta^{(8)}_{21}(z)} \\ 0 & 0\end{bmatrix}\mathbf{K}.
\label{eq:L-def}
\end{equation}
Since the prefactor is analytic in $\C \setminus \{0\}$, the jumps of $\widetilde{\mathbf{S}}$ are unchanged. As for the asymptotic behavior, as $\xi\to\infty$
\begin{align*}
\widetilde{\mathbf S}(\xi, z) = \left( \mathbb{I} + \mathbf K^{-1} \begin{bmatrix}
 0 & -\dfrac{\Delta^{(8)}_{11}(z)}{(2z)^{1/2}\Delta^{(8)}_{21}(z)} \\ -\ii(2z)^{1/2} \Xi^{(8)}_{21}(z) & 0
\end{bmatrix} \mathbf K \dfrac{1}{\xi}+ \mathcal{O}\big(\xi^{-2}\big) \right)\ee^{\ii (2z)^{1/2} \xi \sigma_3},
\end{align*}
and as $\xi\to 0$
\begin{align*}
\widetilde{\mathbf S}(\xi, z) = \left( \mathbf{K}^{-1}\begin{bmatrix} 0 & -\dfrac{1}{(2z)^{1/2}\Delta^{(8)}_{21}(z)} \\ (2z)^{1/2}\Delta^{(8)}_{21}(z) & 0\end{bmatrix} \ee^{-\frac{\ii\pi}{4}\sigma_3}\mathbf{K} + \mathcal{O}(\xi) \right)\ee^{{ (2z)^{1/2}}{ \xi^{-1}} \sigma_3}.
\end{align*}
\begin{Remark}
Noting that
\[
\det \left( \mathbf{K}^{-1}\begin{bmatrix} 0 & -\dfrac{1}{(2z)^{1/2}\Delta^{(8)}_{21}(z)} \\ (2z)^{1/2}\Delta^{(8)}_{21}(z) & 0\end{bmatrix} \ee^{-\frac{\ii\pi}{4}\sigma_3}\mathbf{K} \right) = 1,
\]
one can carry out a computation similar to the one in Section \ref{sec:Lax-pair} to arrive at a pair of differential equations analogous to \eqref{eq:Lax-system}, but with diagonalizable leading matrices at the two singular points at $\xi = 0, \infty$; this system appears in \cite[Chapter 2]{N} and \cite{GL}, for example. Since we do not make use of this Lax pair, we omit the calculation.
\end{Remark}

Using \eqref{eq:K-symmetry}, it follows that
\[
\mathbb I - \dfrac{1}{\xi}\mathbf T(z) = \sigma_2\left( \mathbb I + \dfrac{1}{\xi}\mathbf T(z) \right) \sigma_2,
\]
which implies that matrix $\widetilde{\mathbf{S}}(\xi, z)$ also satisfies the symmetry \eqref{eq:S-symmetry}. To simplify this symmetry, let
\begin{equation}
\widehat{\mathbf{S}}(\xi, z) := \ee^{-\pi \ii \sigma_3/4 } \begin{cases} \widetilde{\mathbf{S}}(\xi, z) \ee^{\pi \ii \sigma_3/4}, & |\xi| >1, \\
\widetilde{\mathbf{S}}(\xi, z) \ee^{-\pi \ii \sigma_3/4}, & |\xi| <1.
\end{cases}
 \label{eq:rotation-transformation}
\end{equation}
Then, $\widehat{\mathbf{S}}(\xi, z)$ solves the following Riemann--Hilbert problem.
\begin{rhp}
Let $(y_1,y_2, y_3) \in \C^3$ be the monodromy data corresponding to~$U(z)$ given in \eqref{eq:y1}--\eqref{eq:y3}, and fix $z\in\mathbb{C}$. Seek a $2\times 2$ matrix function~${\xi\mapsto\widehat{\mathbf{S}}(\xi, z)}$ satisfying the following properties:
\begin{itemize}\itemsep=0pt
 \item Analyticity: $\widehat{\mathbf{S}}(\xi, z)$ is an analytic function of $\xi$ for $|\xi|\neq 1$.
 \item Jump condition:
 $\widehat{\mathbf{S}}(\xi, z)$ takes analytic boundary values on the unit circle from the interior and exterior, denoted $\widehat{\mathbf{S}}_+(\xi,z)$ and $\widehat{\mathbf{S}}_-(\xi,z)$ for $|\xi|=1$ respectively, and they are related by
 \begin{equation*}
 \widehat{\mathbf{S}}_+(\xi,z)=\widehat{\mathbf{S}}_-(\xi,z) \mathbf J_{\widehat{\mathbf{S}}}(\xi),
 \end{equation*}
 where $\mathbf J_{\widehat{\mathbf{S}}}(\xi)$ is shown in Figure {\rm\ref{fig:D8-limit-rotated}} and
 \begin{align}
 &\widehat{\mathbf C}_{0\infty} := \ee^{\pi \ii \sigma_3/4} \mathbf C_{0\infty}\ee^{\pi \ii \sigma_3/4}, \qquad
 \widehat{\mathbf S}_1^\infty := \ee^{-\pi \ii \sigma_3/4} {\mathbf S}_1^\infty \ee^{\pi \ii \sigma_3/4} = \ee^{\pi \ii \sigma_3/4} ({\mathbf S}_1^\infty)^{-1} \ee^{-\pi \ii \sigma_3/4}, \nonumber\\
 & \widehat{\mathbf{S}}_0^0:= \ee^{\pi \ii \sigma_3/4} {\mathbf S}_0^0 \ee^{-\pi \ii \sigma_3/4} = \ee^{-\pi \ii \sigma_3/4} \big({\mathbf S}_0^0\big)^{-1} \ee^{\pi \ii \sigma_3/4}. \label{eq:hat-jumps}
 \end{align}
 \item Normalization:
\begin{gather}\label{eq:s-hat-norm-inf}\widehat{\mathbf{S}}(\xi,z) = \big(\mathbb{I} +\widehat{\mathbf{\Xi}}^{(8)}(z)\xi^{-1}+ \mathcal{O}\big(\xi^{-2}\big) \big) \ee^{\ii (2z)^{1/2} \xi \sigma_3 } \qquad \text{as} \quad \xi \to \infty,
\end{gather}
 and
\begin{gather}
\widehat{\mathbf{S}}(\xi,z) = \widehat{\mathbf{\Delta}}^{(8)}(z) \big(\mathbb{I} + \widehat{\mathbf{\Pi}}(z)\xi+\mathcal{O}\big(\xi^2\big) \big) \ee^{ (2z)^{1/2} \xi^{-1} \sigma_3 } \qquad \text{as} \quad \xi \to 0,\label{eq:s-hat-norm-zero}
\end{gather}
 where $\widehat{\mathbf{\Delta}}^{(8)}(z)$ may be written in terms of entries of $\mathbf{\Delta}^{(8)}(z)$ and $\mathbf{K}$.
\end{itemize}
\label{rhp:S-hat}
\end{rhp}
\begin{figure}
 \centering
 \includegraphics{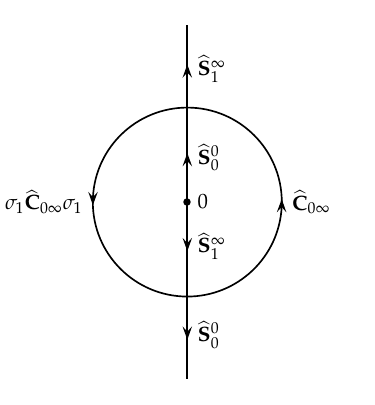}
 \caption{The jump contour and matrices for $\hat{\mathbf S}$, where the jump matrices are as in \eqref{eq:hat-jumps}.}
 \label{fig:D8-limit-rotated}
\end{figure}
Now, the matrix $\widehat{\mathbf{S}}(\xi,z)$ satisfies the symmetry
\[
\sigma_1 \widehat{\mathbf{S}}(-\xi,z) \sigma_1 = \widehat{\mathbf{S}}(\xi,z) .
\]
Furthermore, it was shown in \cite[Theorem 4]{N} that matrix $\widehat{\mathbf{S}}(\xi, z)$ exists for all $z$ outside of a~discrete set $\Sigma$ and is a meromorphic function of $z$ in $\C \setminus \Sigma$. Since the transformations used to arrive to $\widehat{\mathbf{S}}(\xi,z) $ from $\widehat{\mathbf{R}}(\lambda,z) $ and ${\mathbf{\Omega}}(\lambda,z) $ are invertible, we deduce the existence of matrix functions satisfying Riemann--Hilbert Problems \ref{rhp:Rhat-even/odd} and \ref{rhp:D8}.

It was shown in \cite{Palmer} that $\Sigma$ coincides with the set of zeros of the $\tau$-function associated to the Riemann--Hilbert problem. According to \cite{JMU} the expression for the logarithmic derivative of the $\tau$-function associated to Riemann--Hilbert Problem \ref{rhp:S-hat} is given by
\begin{gather} \label{eq:jmu-tau}
\dfrac{\mathrm{d}}{\mathrm{d}z}\ln(\tau(z))=-\frac{1}{\sqrt{2z}}\big(\Tr\big(\widehat{\mathbf{\Pi}}(z)\sigma_3\big)+\ii\Tr\big(\widehat{\mathbf{\Xi}}^{(8)}(z)\sigma_3\big)\big).
\end{gather}
 After a long symbolic computation using the transformations \eqref{eq:fabry-1}, \eqref{eq:fabry-2}, \eqref{eq:triangular-transformation}, and \eqref{eq:rotation-transformation}, we get the following result. The Lax pair \eqref{eq:lax_pair_D8}--\eqref{eq:lax_pair_D8_2} is gauge equivalent to
\[
\frac{\partial\widehat{\mathbf{S}}}{\partial\xi} (\xi, z) = {\widehat{\mathbf{\Lambda}}}^{(8)}(\xi, z) \widehat{\mathbf{S}}(\xi, z), \qquad
\frac{\partial\widehat{\mathbf{S}}}{\partial z} (\xi, z) = {\widehat{\mathbf{Z}}}(\xi, z) \widehat{\mathbf{S}}(\xi, z),
\]
where
\begin{align*}
&\widehat{\mathbf{\Lambda}}^{(8)}(\xi, z) = \ii(2z)^{1/2}\sigma_3 +\frac{1}{\xi}\frac{zU'(z)}{2U(z)}\sigma_1+\frac{\ii}{\xi^2}\left({\frac{z}{2}}\right)^{1/2}\left(U(z)-\frac{1}{U(z)}\right)\sigma_3\\
&\hphantom{\widehat{\mathbf{\Lambda}}^{(8)}(\xi, z) =}{}
+\frac{1}{\xi^2}\left({\frac{z}{2}}\right)^{1/2}\left(U(z)+\frac{1}{U(z)}\right)\sigma_2,
\end{align*}
and
\begin{align*}
&\widehat{\mathbf{Z}}(\xi, z) = \frac{\ii\xi}{(2z)^{1/2}}\sigma_3 +\frac{U'(z)}{4U(z)}\sigma_1-\frac{\ii}{2\xi}\frac{1}{(2z)^{1/2}}\left(U(z)-\frac{1}{U(z)}\right)\sigma_3\\
&\hphantom{\widehat{\mathbf{Z}}(\xi, z) =}{}
-\frac{1}{2\xi}\frac{1}{(2z)^{1/2}}\left(U(z)+\frac{1}{U(z)}\right)\sigma_2.
\end{align*}
Similarly, the coefficients in \eqref{eq:s-hat-norm-inf}--\eqref{eq:s-hat-norm-zero} have the following expressions
\begin{gather}
\widehat{\mathbf{\Delta}}^{(8)}(z)=\big(\ee^{-\ii\pi/4}U(z)^{1/2}\big)^{\sigma_1},\nonumber\\
\widehat{\mathbf{\Xi}}^{(8)}(z)=\ii\left(\frac{z}{2}\right)^{1/2}\left(\frac{zU'(z)^2}{8U(z)^2}-
 U(z)+\frac1{U(z)}\right)\sigma_3-\left(\frac{z}{2}\right)^{1/2}\frac{U'(z)}{4U(z)}\sigma_2,\nonumber\\
 \widehat{\mathbf{\Pi}}(z)=-\left(\frac{z}{2}\right)^{1/2}\left(\frac{zU'(z)^2}{8U(z)^2}-
 U(z)+\frac1{U(z)}\right)\sigma_3+\ii\left(\frac{z}{2}\right)^{1/2}\frac{U'(z)}{4U(z)}\sigma_2.\label{eq:s-hat-residues}
\end{gather}
In our computation we expressed $W(z)$, $X(z)$, and $V(z)$ in terms of $U(z)$ and $U'(z)$ using the identities \eqref{eq:UVWXidentity1}--\eqref{eq:UVWidentity1} and the first equation in \eqref{eq:PossiblyD8system}. Plugging \eqref{eq:s-hat-residues} into \eqref{eq:jmu-tau} we get
\begin{gather*}
\dfrac{\mathrm{d}}{\mathrm{d}z}\ln(\tau(z))=\frac{zU'(z)^2}{4U(z)^2}-
 2 U(z)+\frac2{U(z)}.
\end{gather*}
Differentiating once again, we have
\begin{gather} \label{eq:d8-solvability}
\dfrac{\mathrm{d}^2}{\mathrm{d}z^2}\ln(\tau(z))=-\frac{1}{4}\left(\dfrac{\mathrm{d}}{\mathrm{d}z}\ln(U(z))\right)^2.
\end{gather}
 Now we see from \eqref{eq:d8-solvability} that the set $\Sigma$ of zeros of the $\tau$-function coincides precisely with the union of poles and zeros of the function $U(z)$.

\subsection[Relationship between U\^{}\{even\}, U\^{}\{odd\}]{Relationship between $\boldsymbol{U^{\mathrm{even}}}$, $\boldsymbol{U^{\mathrm{odd}}}$}

To complete the proof of Theorem \ref{thm:general}, we must show that $U^{\mathrm{odd}}(z) = -1/U^{\mathrm{even}}(z)$. One can already observe that this should be the case by checking that the leading behavior predicted in Theorem \ref{thm:D8-asymptotics-zero} satisfies the involution, but we now present a proof on the level of Riemann--Hilbert problems. First, note that if one chooses the square root in \eqref{eq:Vm-even} in such a way that~$\mathbf{V}^\mathrm{even} = \ii \sigma_3 \mathbf{V}^\mathrm{odd}$, it follows from \eqref{eq:limiting-connection-matrix} that\footnote{One can check that making the other choice of the square root yields the same connection matrix but with the opposite sign, and so it follows from Remark \ref{rem:minus-y1-y2} that this choice is immaterial.}
\[
 \mathbf{C}_{0\infty}^{\mathrm{odd}} = \sigma_3 \mathbf{C}^{\mathrm{even}}_{0 \infty} \sigma_3.
\]
This, in particular, implies the symmetry
\[
\widetilde{\mathbf{S}}^{\mathrm{odd}}(\lambda, z) = \sigma_3 \widetilde{\mathbf{S}}^{\mathrm{even}}(\lambda, z) \sigma_3,
\]
and, in view of \eqref{eq:inverse-fabry}, we have
\begin{equation}
 \mathbf{\Omega}^{\mathrm{odd}}(\lambda, z) = \rho_\infty^{\sigma_3/2}(\lambda, z) \mathbf{K}\left( \mathbb{I} + \dfrac{1}{\sqrt{-\ii \lambda}} \mathbf{T}^{\mathrm{even}}(z) \right) \sigma_3 \widetilde{\mathbf{S}}^{\mathrm{even}}\big(\sqrt{-\ii \lambda}, z\big) \sigma_3.
 \label{eq:Omega-even-odd}
\end{equation}
Recalling \eqref{eq:rho-infty-branch}, \eqref{eq:K-def}, \eqref{eq:L-def}, and the identity $2\ii U(z)X(z) = \Delta^{(8)}_{11}(z)/\Delta^{(8)}_{21}(z)$, see \eqref{eq:Delta-U-X-identity}, we have~$\mathbf{\Omega}^{\mathrm{odd}}(\lambda, z) = \mathbf{G}(\lambda, z) \mathbf{\Omega}^{\mathrm{even}}(\lambda, z)$, where
\begin{align}
\mathbf{G}(\lambda, z):={}& \rho_\infty^{\sigma_3/2}(\lambda, z) \mathbf{K}\left( \mathbb{I} + \dfrac{1}{\sqrt{-\ii \lambda}} \mathbf{T}^{\mathrm{even}}(z) \right) \sigma_3 \left( \mathbb{I} + \dfrac{1}{\sqrt{-\ii \lambda}} \mathbf{T}^{\mathrm{even}}(z) \right) \rho_\infty^{-\sigma_3/2}(\lambda, z) \nonumber\\
={}& \frac{1}{\rho_\infty(\lambda, z)}\begin{bmatrix} 2U^{\mathrm{even}}(z)X^{\mathrm{even}}(z) & -4\ii (U^{\mathrm{even}}(z) X^{\mathrm{even}}(z))^2 \\ -\ii & -2U^{\mathrm{even}}(z)X^{\mathrm{even}}(z) \end{bmatrix}\nonumber\\
& + \rho_\infty(\lambda, z) \begin{bmatrix}
 0 & \ii \\ 0 & 0
\end{bmatrix}.\label{eq:G-def}
\end{align}
To deduce the relationship between $U^{\mathrm{even}}$, $U^{\mathrm{odd}}$, we now recall that $\mathbf{\Omega}^{\mathrm{even/odd}}$ satisfy the Lax pair~\eqref{eq:Lax-system}. Transforming $\mathbf{\Omega}^{\mathrm{even}}$ as in the right-hand side of \eqref{eq:Omega-even-odd} induces a gauge transformation of the $\lambda$-equation and we have that $\mathbf{\Omega}^{\mathrm{odd}}$ satisfies two equations; the first is the one in \eqref{eq:Lax-system} and the second is
\[
\dpd{\mathbf{\Omega}^{\mathrm{odd}}}{\lambda}(\lambda, z) = \widetilde{\mathbf{\Lambda}}(\lambda, z) \mathbf{\Omega}^{\mathrm{odd}}(\lambda, z),
\]
where
\[
\widetilde{\mathbf{\Lambda}}(\lambda, z) = \dpd{\mathbf{G}}{\lambda}(\lambda, z) \mathbf{G}^{-1}(\lambda, z) + \mathbf{G}(\lambda, z) \mathbf{\Lambda}^{\mathrm{even}}(\lambda, z)\mathbf{G}^{-1}(\lambda, z).
\]
Using \eqref{eq:G-def} and \eqref{eq:UVWXidentity1}, we see that
\begin{align*}
& \widetilde{\mathbf{\Lambda}}(\lambda, z) = \begin{bmatrix} 0 & \ii z \\ 0 & 0 \end{bmatrix}\\
 & \hphantom{\widetilde{\mathbf{\Lambda}}(\lambda, z) =}{}
 + \dfrac{1}{4 \!\lambda}\begin{bmatrix} 2 - V^{\mathrm{even}}(z) + 8\ii U^{\mathrm{even}}(z)X^{\mathrm{even}}(z) & F^\mathrm{even}(z) \\ 2 & -2 + V^{\mathrm{even}}(z) - 8\ii U^{\mathrm{even}}(z)X^{\mathrm{even}}(z) \end{bmatrix} \\
 & \hphantom{\widetilde{\mathbf{\Lambda}}(\lambda, z) =}{} -\dfrac{1}{\lambda^2} \!\begin{bmatrix} (U^{\mathrm{even}}(z))^2 X^{\mathrm{even}}(z) & 2\ii (U^{\mathrm{even}}(z))^3(X^{\mathrm{even}}(z))^2 \\ \ii U^{\mathrm{even}}(z)/2 & - (U^{\mathrm{even}}(z))^2 X^{\mathrm{even}}(z)\end{bmatrix},
\end{align*}
where
\[
F^\mathrm{even}(z) := 4\ii U^{\mathrm{even}}(z) X^{\mathrm{even}}(z) \left(V^{\mathrm{even}}(z) + 6 U^{\mathrm{even}}(z) X^{\mathrm{even}}(z)-2 \ii U^{\mathrm{even}}(z) \right)- \dfrac{4z}{ U^{\mathrm{even}}(z)}.
\]
Since $\det(\mathbf{\Omega}^\mathrm{odd}) = 1$, it follows that $\widetilde{\mathbf{\Lambda}}(\lambda, z) = \mathbf{\Lambda}^{\mathrm{odd}}(\lambda, z)$ and we arrive at identities relating all the potentials $U^{\mathrm{even/odd}}(z)$, $V^{\mathrm{even/odd}}(z)$, $W^{\mathrm{even/odd}}(z)$, $X^{\mathrm{even/odd}}(z)$; comparing the (2,1) entries of the coefficient of $\lambda^{-2}$ yields the desired relation
\[
U^{\mathrm{even/odd}}(z) = -1/U^{\mathrm{odd/even}}(z).
\]

\subsection{Solutions of Suleimanov} Considering the limit of even B\"acklund iterates when $\mu = 1/4$ yields a particularly symmetric solution of Painlev\'e-III($D_8$). The corresponding monodromy data are the following:
\[
\mathbf{S}_0^0 = \mathbf{S}_1^\infty = \mathbb{I},
\qquad \text{and} \qquad
\mathbf{C}_{0\infty} = (\ii \sigma_2) \mathbf{C}_{0\infty} (\ii \sigma_2) = \begin{bmatrix}-y_2 & y_1 \\ y_1 & y_2 \end{bmatrix}.
\]
In this case, we have that $y_3 = 0$. This is, for example, the situation when considering rational solutions of Painlev\'e-III as outlined in Section \ref{sec:rational-solutions-parameters}.

\begin{Remark}
In this setting and up to a rescaling of the $z$ and $\xi$ variables, Riemann--Hilbert Problem \ref{rhp:S-hat} is the same Riemann--Hilbert problem as in \cite[Section 13.1]{FIKN}, which corresponds to solutions of the sine-Gordon reduction of Painlev\'e-III
\begin{equation}
\dod[2]{w}{t} + \dfrac{1}{t} \dod{w}{t} + \sin w(t) = 0.
\label{eq:sine-gordon}
\end{equation}
This is partly due to the parameters $y_1$, $y_2$ satisfying the condition $y_1^2 + y_2^2 + 1 = 0$ (i.e., ${\det \mathbf{C}_{0\infty} = 1}$), and is to be expected since equation \eqref{eq:sine-gordon} is equivalent to \eqref{eq:PIII-$D_8$} and their solutions are related via the formula
\begin{gather}
U(z) = \ii \ee^{-\ii w\left(\pm\ee^{3\pi \ii/4}4\sqrt{2z}\right)}.\label{eq:U-w-correspondence}
\end{gather}
We can also mention that real-valued (for real $z$) solutions of \eqref{eq:sine-gordon} are singled out by condition~\eqref{eq:suleimanov condition} below. We use identity \eqref{eq:U-w-correspondence} taking the minus sign to formulate Theorem \ref{thm:D8-asymptotics-zero}. Since this connection will not be used further, we do not elaborate on it.
\end{Remark}

We end this discussion by noting yet another interesting connection to certain highly symmetric solutions of PIII($D_8$) which appear in the work of Suleimanov \cite{suleimanov} on nonlinear optics, and later were found in the context of the focusing nonlinear Schr\"odinger equation \cite{MR4007631, BLM20}. More precisely, note that Riemann--Hilbert Problem~\ref{rhp:S-hat} (and the Riemann--Hilbert problem satisfied by $\widetilde{\mathbf{S}}(\xi, z)$) agrees with \cite[Riemann--Hilbert Problem 4]{MR4007631} up to an appropriate rescaling of $z$, $\xi$ in the special case when $y_1$, $y_2$ are chosen such that
\begin{equation} \label{eq:suleimanov condition}
\sigma_2 \begin{bmatrix} -y_2 & y_1 \\ y_1 & y_2 \end{bmatrix} \sigma_2 = \overline{\begin{bmatrix} -y_2 & y_1 \\ y_1 & y_2 \end{bmatrix}} \Leftrightarrow y_1 = -\overline{y_1} \qquad \text{and} \qquad y_2 = -\overline{y_2}.
\end{equation}
This imposes conditions on $e_0$, $e_2$, $e_\infty$, which can be written out explicitly in the case of the rational solutions of Painlev\'e-III. Namely, in this case
\[
y_1 = \frac{\ii \ee^{\ii\pi m}}{\sqrt{1+ \ee^{2\pi \ii m}}}, \qquad y_2 = \frac{\ii }{\sqrt{1+ \ee^{2\pi \ii m}}}, \qquad y_3 = 0.
\]
These satisfy the symmetry conditions above exactly when $m \in \ii\R +\Z$.

\subsection*{Acknowledgements}

We would like to thank Roozbeh Gharakhloo and Deniz Bilman for bringing to our attention applications of our work to $2j-k$ determinants and the Suleimanov solutions respectively. We would like to thank Marco Fasondini for providing us his program that we used to numerically confirm our results and produce Figures~\ref{fig:rational} and \ref{fig:random}. The work of Andrei Prokhorov was supported by NSF MSPRF grant DMS-2103354, NSF grant DMS-1928930, and RSF grant 22-11-00070. Part of the work was done while Prokhorov was in residence at the Mathematical Sciences Research Institute in Berkeley, California, during the Fall 2021 semester. Ahmad Barhoumi was partially supported by the NSF under grant DMS-1812625. Peter Miller was partially supported by the NSF under grants DMS-1812625 and DMS-2204896.

\pdfbookmark[1]{References}{ref}
\LastPageEnding

\end{document}